\setlist{nosep} 
\newtheorem{theorem}{Theorem}[section]
\newtheorem{definition}[theorem]{Definition}
\newtheorem{proposition}[theorem]{Proposition}
\newtheorem{remark}[theorem]{Remark}
\newtheorem{lemma}[theorem]{Lemma}
\newtheorem{corollary}[theorem]{Corollary}
\newcommand{\Z}{\mathbb{Z}}
\newcommand{\C}{\mathbb{C}}
\newcommand{\R}{\mathbb{R}}
\newcommand{\F}{\mathbb{F}}
\DeclareMathOperator{\ez}{EZ}
\DeclareMathOperator{\id}{Id}
\DeclareMathOperator{\ev}{ev}
\DeclareMathOperator{\Set}{Set}
\DeclareMathOperator*{\colim}{colim}
\DeclareMathOperator{\Fun}{Fun}
\DeclareMathOperator{\Hom}{\mathrm{Hom}}
\DeclareMathOperator{\tot}{\mathrm{Tot}}
\DeclareMathOperator{\sing}{\mathrm{Sing}}
\renewcommand{\top}{\mathcal{T}\!\!op}
\newcommand{\un}[1]{\underline{#1}}
\newcommand{\ov}[1]{\overline{#1}}
\newcommand{\Ch}{\mathcal{C}\hspace{-0.04cm}h}
\newcommand{\lef}[1]{\prescript{#1}{} \!\!\> }
\newcommand{\llef}[1]{\mathstrut_{#1} }
\newcommand{\bu}{\bullet}
\newcommand{\lb}{\lef \bullet}
\newcommand{\CF}[2]{\lb \mathcal{F}(#1,#2)^\bu}
\renewcommand{\AA}{\mathcal{A}}
\newcommand{\BB}{\mathcal{B}}
\newcommand{\CC}{\mathcal{C}}
\newcommand{\DD}{\mathcal{D}}
\newcommand{\PP}{\mathcal{P}}
\newcommand{\GG}{\mathcal{G}}
\newcommand{\pGG}{\mathcal{G}^\textrm{pre}}
\newcommand{\FF}{\mathcal{F}}
\newcommand{\bM}{\mathcal{M}}
\newcommand{\MM}{\raisebox{0pt}[0pt][0pt]{$\overline{\mathcal{M}}$}}
\newlength{\hnor}
\newlength{\dnor}
\newlength{\currenttextsize}
\newcommand{\fixhd}[1]{
  \setlength{\currenttextsize}{\f@size pt}
  \raisebox{0pt}[0pt][0pt]{\fontsize{\currenttextsize}{1cm}\selectfont $#1$}
}
\newcommand{\sfixhd}[1]{
  \raisebox{0pt}[0pt][0pt]{\scriptsize $#1$}
}
\newcommand{\tl}[2]{{\lef{#1} \fixhd{#2}}}
\newcommand{\bl}[2]{{\llef{#1} \fixhd{#2}}}
\newcommand{\tr}[2]{{\fixhd{#1}^{#2}}}
\newcommand{\br}[2]{{\fixhd{#1}_{#2}}}
\newcommand{\tltr}[3]{{\lef{#1} \fixhd{#2}^{#3}}}
\newcommand{\bltr}[3]{{\llef{#1} \fixhd{#2}^{#3}}}
\newcommand{\tlbr}[3]{{\lef{#1} \fixhd{#2}_{#3}}}
\newcommand{\blbr}[3]{{\llef{#1} \fixhd{#2}_{#3}}}
\newcommand{\tlar}[4]{{\lef{#1} \fixhd{#2}^{#3}_{#4}}}
\newcommand{\tltltr}[4]{\tltr{\substack{{#1}\vspace{-0.7mm}\\{#2}\vspace{0.3mm}}}{{#3}}{{#4}}}
\newcommand{\tltlar}[5]{\tlar{\substack{{#1}\vspace{-0.7mm}\\{#2}\vspace{0.3mm}}}{{#3}}{{#4}}{{#5}}}
\newcommand{\stl}[2]{{\lef{#1} \sfixhd{#2}}}
\newcommand{\sbl}[2]{{\llef{#1} \sfixhd{#2}}}
\newcommand{\str}[2]{{\sfixhd{#1}^{#2}}}
\newcommand{\stltr}[3]{{\lef{#1} \sfixhd{#2}^{#3}}}
\newcommand{\sbltr}[3]{{\llef{#1} \sfixhd{#2}^{#3}}}
\newcommand{\cst}{\mathrm{cst}}
\newcommand{\fwcs}{\otimes_{\textrm{f}}^{\textrm{cs}}}
\newcommand{\fwlp}{\otimes_{\textrm{f}}^{\textrm{lp}}}
\newcommand{\fwrp}{\otimes_{\textrm{f}}^{\textrm{rp}}}
\newcommand{\fwbp}{\otimes_{\textrm{f}}^{\textrm{bp}}}
\newcommand{\fwq}{\otimes_{\textrm{f}}^{\textrm{?}}}
\newcommand{\bpb}[1]{\tltr{\bu}{P #1}{\bu}}
\newcommand{\sbpb}[1]{\stltr{\bu}{P #1}{\bu}}
\newcommand{\und}[1]{\underset{\makebox[0pt]{\scriptsize $#1$}}}
\newcommand{\undL}[2]{\overset{\makebox[0pt]{\scriptsize L}}{\underset{\makebox[0pt]{\scriptsize $#1$}} #2}}
\newcommand{\norm}[1]{\lVert #1 \rVert}
\newcommand{\ug}{\textrm{ug}}
\DeclarePairedDelimiter\pare{(}{)}
\DeclarePairedDelimiter\inner{\langle}{\rangle}
\renewcommand{\epsilon}{\varepsilon}
\begin{document}
\title{Local systems and vanishing Maslov class}
\author{Axel Husin, Thomas Kragh}
\date{\monthyeardate\today}
\maketitle

\begin{abstract}
  It is well known that closed exact Lagrangians in cotangent bundles of closed manifolds have vanishing Maslov class and are homotopy equivalent to the zero section. In this paper we greatly simplify the proof of vanishing Maslov class and generalize the proof to a slightly larger family of Weinstein domains.
\end{abstract}

\tableofcontents

\section{Introduction}
In this paper we give a short proof of the fact that a closed exact Lagrangian $L$ in the cotangent bundle $T^*X$ of a closed and connected manifold have vanishing Maslov class and is homology equivalent to $X$. This was originally proved by the second author of this paper and Abouzaid in \cite{MySympfib} and \cite{Abou1}. An alternate proof using micro local sheaf theory was given in \cite{Guillermou1}. The proof in this paper combine a few new ideas with ideas from those papers and ideas from \cite{MR2373371}, \cite{Kragh2016} and \cite{FSS}. Our proof even generalizes the statement to slightly more general Weinstein domains $W_X \supset D^*X$ as in Theorem~\ref{thm:article:1} below.

Let $X$ be a closed connected smooth manifold of dimension $n$. Let $W_X$ be a Weinstein domain obtained by inductively attaching sub-critical Weinstein handles to $D^*X$ over a small contractible patch in $X$. That is we fix a small $D^n \cong A \subset X$ and define $W_X$ by inductively attaching the Weinstein handles to $D^*X$ with attaching spheres over $S^*A$ (in the inductive process we allow later handles to attach onto the previously attached handles and the part of $S^*A$ that remains as boundary). The assumption that $A$ is contractible implies $W_X \simeq X \vee B$ where $B$ is the underlying CW complex defined by the attaching (after collapsing $D^*A$ to a point). In particular, we have a retraction
\begin{align*}
  r : W_X \to X
\end{align*}
of the inclusion $X \subset W_X$. We also assume that the standard quadratic complex volume form on $D^*X$ extends over $W$. In order to avoid certain technicalities we also assume that $W_X=D^*X$ when $\dim X \leq 5$ (but we don't believe this to be essential).

\begin{theorem} \label{thm:article:1}
  For any closed exact Lagrangian $L \subset W_X$ the composition $L \subset W_X \xrightarrow{r} X$ is a homology equivalence. Furthermore, the Maslov class of $L$ vanishes for any extension of the quadratic complex volume form.
\end{theorem}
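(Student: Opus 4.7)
The plan is to associate to $L$ a local system $\mathcal{L}$ of finitely generated chain complexes on $X$ whose stalks are Floer complexes $CF^*(L, F_x)$ for suitable ``cotangent fibers'' $F_x$ in $W_X$, and then to reduce both conclusions of the theorem to two structural properties of $\mathcal{L}$: that it has total rank $1$, and that it admits a global $\Z$-grading. The homology equivalence follows from the rank statement by a standard summation-over-fibers argument, and Maslov class vanishing from the grading statement.

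For $x \in X \setminus A$ the fiber $F_x := T_x^*X$ lies in $D^*X$ and is unaffected by the added Weinstein handles, so the Floer complex $CF^*(L, F_x)$ is the classical one. For $x \in A$, I would allow Hamiltonian perturbations of $F_x$ inside $W_X$; the key point is that the isotropic attaching spheres of the subcritical handles have dimension $< n$, so in the range $\dim X \geq 6$ (which is why we restrict to that range) they can be displaced from $F_x$ by a compactly supported Hamiltonian. Parallel transport of these Floer complexes along paths in $X$ then yields the local system $\mathcal{L}$. The retraction $r : W_X \to X$ provides a canonical nonzero section of $\mathcal{L}$, and closedness of $L$ combined with Poincaré-duality for Floer cohomology of closed exact Lagrangians --- following the strategies of \cite{Kragh2016, FSS} --- should force the total rank of $\mathcal{L}$ to be exactly $1$. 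Summing the stalks and comparing with $H^*(L)$ in the standard way yields $H^*(L) \cong H^*(X)$, compatibly with $r|_L$.

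For the Maslov class, each stalk $CF^*(L, F_x)$ carries only a $\Z/N$-grading, where $N$ is the order of $\mu_L \in H^1(L; \Z)$ for the chosen extension of the complex volume form, and a global $\Z$-grading of $\mathcal{L}$ exists iff $N = 0$. Assuming $\mu_L \neq 0$, the $\Z/N$-grading combined with the rank-one property and the monodromy structure of $\mathcal{L}$ would force a nontrivial periodicity incompatible with the existence of the canonical section coming from $r$, giving a contradiction and hence $\mu_L = 0$. The main obstacle, in my view, is twofold: coherently choosing the Hamiltonian displacements used for $x \in A$ so that the family $\{\mathcal{L}_x\}_{x \in X}$ really assembles into a genuine local system (and not just a constructible sheaf with stalks of varying rank), and extracting the Maslov class obstruction cleanly from the local-system structure --- this latter step is where the new ideas combining \cite{Kragh2016}, \cite{FSS} and \cite{Guillermou1} should enter.
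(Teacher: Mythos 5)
Your proposal captures the right high-level shape --- attach to $L$ a local system of (Floer) chain complexes over $X$, show it has rank one, and extract Maslov vanishing from a grading statement --- and this is in fact close in spirit to the local system $A(L,L)$ that the paper builds via cosimplicial models. But two of the three central claims are asserted rather than argued, and as stated they do not follow from what precedes them.

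First, the paper's route to rank one is not Poincar\'e duality alone. Poincar\'e duality (Lemma~\ref{lem:article:10}) only gives that the stalk of $A(L,L)$ over a point is self-dual. The paper then lifts to the universal cover $\tilde W_X \to W_X$, uses that $H_*(\tilde L)$ has no negative homology to conclude $A(\tilde L,\tilde L)$ is concentrated in degree $0$, writes the stalk as $\F^k$, and runs a degree count: on one hand $k$ divides the degree $d$ of $L \to X$ (since components of $\tilde L$ are identified by deck transformations), on the other hand the tensor decomposition $A(L,L)\cong A(L,X)\otimes_f A(X,L)$ forces $k=d^2$, so $d^2\mid d$ and $d=\pm 1$, $k=1$. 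Your sketch has no analogue of this Euler-characteristic/degree argument, and without it ``rank one'' is not justified.

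Second, and more seriously, your Maslov argument assumes access to a universal cover of $L$ (or equivalently a global lifting of the $\Z/N$-grading), and for that you need $\pi_1(L)\to\pi_1(X)$ to be injective. This is \emph{not} automatic and is where the hard Floer-theoretic input enters: the paper first proves the zero section generates (Proposition~\ref{prop:article:2}) and deduces $\pi_0$-surjectivity of $p^*\colon \tl X\GG\to\tl L\GG$ (Proposition~\ref{prop:article:3}), which then implies $\pi_1$-injectivity. Only after knowing $\tilde L \subset \tilde W_X$ has simply connected components can one $\Z$-grade the Floer complexes on the cover and make the ``shift contradiction'' precise: a loop of nonzero Maslov index lifts to a path between two distinct lifts of a base fiber, and the stalks at those lifts are simultaneously unshifted-isomorphic and isomorphic only after a nonzero grading shift, which is impossible for a finite-dimensional nonzero graded space. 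Your formulation (``the $\Z/N$-grading combined with the rank-one property and the monodromy structure would force a nontrivial periodicity incompatible with the canonical section from $r$'') names no such mechanism and is not a proof; in particular $r$ does not obviously produce a ``section'' of $\mathcal{L}$ in any sense stronger than what the degree-$d$ map $L\to X$ already gives. You should not treat $\pi_1$-injectivity as given; it is one of the hardest parts of the theorem and in the paper is precisely the output of the generation/Morita machinery you flagged as your first obstacle.
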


In a follow up paper the first author of this paper will construct counter examples to the above statement where the assumption that the handles are sub-critical is removed.

\subsection*{Acknowledgments}

The second author would like to thank Mohammed Abouzaid for insightful discussions surrounding the material.

\section{Notation and conventions}\label{sec:notation-conventions}

We will assume that $X$ is a connected smooth closed manifold of dimension at least 6.  The non-connected cases and statements are easy to infer from the connected case. To prove the theorem in the lower dimension (where we assumed $W_X=D^*X$) we may simply replace $X$ by $X\times S^6$ and $L\subset T^*X$ by $L \times S^6 \subset T^*X \times T^* S^6$.

We will let $(W, \omega=d\lambda)$ denote \emph{any} compact Liouville domain (with contact boundary). We let $W_X$ denote the Weinstein domain obtained from $X$ as described in the introduction, on $D^*X$ we use the tautological one-form $\lambda=-pdq$. We will give more details on the construction of $W_X$ in Section~\ref{sec:constr-w_x-skel}.

We will generally use homological conventions, which means that
\begin{itemize}
\item Differentials decrease degrees and action.
\item All filtrations are increasing (they are sequential colimits).
\item The total complex of a bi-graded complex is given by direct sums.
\item Our Floer complexes are defined using tensors of local systems and not homomorphism of such. 
\item Our version of $\mu_2$ is a co-product not a product. It also differs from simply being the dual of the standard $\mu_2$ as it is a generalization to more general local systems and thus takes on a slightly different form.
\end{itemize}

Let $\F$ be any field. We denote the category of chain complexes over $\F$ by $\Ch_\F$. When $\F$ has characteristic 2 we denote the category of ``ungraded chain complexes'' by $\Ch_{\F}^{\ug}$. This has objects given by vector spaces $V$ over $\F$ together with a differential $d:V \to V$ such that $d^2=0$. The morphisms in this category are also chain maps - i.e. maps $V \to V$ commuting with $d$.

In this paper we generally let $\Ch$ denote either of these two categories. We will refer to the two cases as the \emph{graded} case or the \emph{ungraded} case. There is a forgetful functor $\Ch_{\F} \to \Ch_{\F}^{\ug}$, which takes the direct sum of all degrees. Even though the ``g'' in dga means graded, we will even in the ungraded case use the terminology dga and dg-module for differential \emph{un}graded algebras and differential \emph{un}graded modules.

The tensor product in the graded case is the usual graded tensor product and the tensor product in the ungraded case is the usual tensor product of vector spaces with the differential $\partial(x\otimes y)=\partial_1x\otimes y+x\otimes \partial_2y$ (note that signs are irrelevant as we assumed characteristic 2 in the ungraded case).

In the graded case let $\Sigma^n\F$ denote the chain complex supported in degree $n$ and with the $n$th vector space given by $\F$. We use the suspension functor $\Sigma^n(-)=(\Sigma^n\F)\otimes (-)$ which raises degree by $n$ and changes $\partial$ to $(-1)^n\partial$. For any $\F$ vector space $M$ we let $M$ also denote the chain complex with zero differential, in the graded case we put $M$ in degree 0. When suspensions and tensor factors switch places we have the Koszul sign rule. For instance we will often need the isomorphism $\Sigma^{k} A\otimes \Sigma^{l} B\cong \Sigma^{k+l} A\otimes B$ sending $a\otimes b$ to $(-1)^{|a|l}a\otimes b$ where $|a|$ is the grading of $a\in A$, the sign comes from moving the suspension $\Sigma^l\F$ past $A$. We also have natural isomorphisms $\Sigma^{k}\Sigma^{l}A\to \Sigma^{k+l}A$ sending $1\otimes 1\otimes a$ to $1\otimes a$ and $\Sigma^k\Sigma^lA\to \Sigma^l\Sigma^k A$ sending $1\otimes 1\otimes a$ to $(-1)^{kl}1\otimes 1\otimes a$. However, be warned that the composition $\Sigma^k\Sigma^lA\to \Sigma^l\Sigma^kA\to \Sigma^{k+l}A$ is \emph{not} the same as the map $\Sigma^k\Sigma^l A \to \Sigma^{k+l}A$ when $kl$ is odd. In the ungraded case the suspension functor is defined to be the identity functor.

We will often discuss spectral sequences and in the ungraded case we do not want to consider ungraded spectral sequence. So to avoid this we identify the $\Ch_{\F}^{\ug}$ with the subcategory of $\Ch_\F$ given by 1-periodic graded chain complexes
\begin{align*}
  \cdots \leftarrow V \xleftarrow{d} V \xleftarrow{d} V \leftarrow \cdots
\end{align*}
and 1-periodic maps. This way both input and output of the spectral sequences can be treated as if they are graded. As all spectral sequences considered in the article are half plane with exiting differentials with Hausdorff and exhaustive filtrations, it follows from Theorem 6.1 in \cite{MR1718076} that these are strongly convergent.

Be warned, however, that considering $\Ch_{\F}^{\ug}$ as periodic chain complexes is not compatible with the tensor product.

We will often work with categories $\CC$ that has a notion of morphisms being weak equivalences. In $\Ch$ this is given by quasi isomorphisms. This will always be defined by a sub category $\CC_{\simeq} \subset \CC$ containing all objects and isomorphisms. This induces an equivalence relations on the objects of $\CC$, we say that two objects are \emph{weakly equivalent} if there exists a zig-zag of weak equivalences between them. We will write $A\simeq B$ if $A$ and $B$ are weakly equivalent.

We will call a functor $F : \CC \to \DD$ between such categories \emph{homotopical} if it maps weak equivalences to weak equivalences. In this case we call it
\begin{itemize}
\item \emph{$\pi_0$-injective} if it is injective on weak equivalence classes.
\item \emph{$\pi_0$-surjective} if it is surjective on weak equivalence classes.
\item \emph{$\pi_0$-equivalence} if it is a bijection on weak equivalence classes.
\end{itemize}

We will not need the full theory of model categories although many aspects of it is implicitly used. E.g. we will be using the notion of semi-free (recalled below), which could be a definition of cofibrant objects.

\subsection{Modules over a (possibly ungraded) dga}\label{sec:modules-over-dga}

If $\AA$ is a dga in $\Ch$ ($\Ch_\F$ or $\Ch_{\F}^{\ug}$) we may consider the category of left $\AA$-modules where the objects are given by $M$ in $\Ch$ together with an associative action $\AA \otimes M \to M$. A weak equivalence is a map of modules $M \to N$ which is also a quasi isomorphism. The bar resolution
\begin{align*}
  0 \leftarrow M \leftarrow  \AA \otimes M \leftarrow \cdots \leftarrow  \AA^{\otimes p} \otimes M \leftarrow \cdots
\end{align*}
is an acyclic complex of modules, and thus removing the last $M$ and taking total complex provides a left module, which we denote $\FF M$, weakly equivalent to $M$. It has a filtration $F^p\FF M$ satisfying $F^{-1} \FF M = 0$ and $F^\infty \FF M = N$ such that the successive quotients
\begin{align*}
  F^p \FF M / F^{p+1} \FF M= \AA^{\otimes p} \otimes M
\end{align*}
are free modules. We call any module $N$ admitting such a filtration with free filtration quotients satisfying $F^{-1} N = 0$ and $F^\infty  N = N$ a \emph{semi-free} module.

This means that one may in the usual way prove that any quasi isomorphism $f:\AA \to \BB$ of dgas induces a $\pi_0$-equivalence of left modules categories over the dgas, the equivalence is given by transport of coefficients (pullback) $f^*:\BB\text{-Mod}\to \AA\text{-Mod}$ and derived tensor $\BB \otimes_{\AA}^L(-):\AA\text{-Mod}\to \BB\text{-Mod}$. The proof uses the spectral sequences from the filtrations considered above (where ungraded complexes are treated as periodic). Note that it is very important that the filtration of the bar complex starts at $0$ to get a half plane spectral sequence.

We may even consider both left and right modules over $\AA$, and generally these categories can be very different. However in the cases we care about where $\AA=C_*\Omega X$ is chains on a loop space, we have an antipode which reverses the direction of the loop making the categories of left and right modules equivalent. We also have a diagonal map of dgas
\begin{align*}
  \Delta : C_*\Omega X \to C_*\Omega X \otimes C_*\Omega X
\end{align*}
given by the diagonal and the Alexander-Whitney map. This makes it possible to define a left $C_*\Omega X$ module structure on the tensor product over $\F$ of two left modules. As we will do this more generally in the case we care about in the next section we omit any details here.

It will, however, in this construction be important for this and other constructions that the Eilenberg-Zilber and Alexander-Whitney maps satisfy the commutativity relation given by the following commuting diagram
\begin{align} \label{eq:article:7}
  \begin{split}
  \xymatrix{
    C_*(X) \otimes C_*(Y) \ar[d]^{\ez} \ar[rr]^-{\Delta\otimes \Delta} && C_*(X)^{\otimes 2} \otimes C_*(Y)^{\otimes 2} \ar[d]^{(\ez \otimes\ez) \circ \tau}\\
    C_*(X \times Y) \ar[rr]^{\Delta} && C_*(X \times Y)^{\otimes 2}
  }    
  \end{split}
\end{align}
where $\tau$ is the signed interchange of the two middle factors.

\section{Path local systems} \label{sec:path-local-systems}

As in \cite{Husin_2023} and similar to what was considered in \cite{Abou2} and \cite{MR2373371} we will consider so-called path local systems on our manifolds as one of our models for generalized (sometimes called derived) local systems. As $X$ is connected these path local systems will be equivalent to modules over the dga of chains on the based loop space $C_*\Omega X$, and can be considered a base-point free version of this category. The equivalence is given by restriction to the fibers over the base-point, this is proved in Lemma~\ref{lem:article:15}. Everything in this section works for any connected space $X$.

Let $\tltr{x}{\PP X}{y}$ denote the space of Moore paths in $X$ from $x$ to $y$. That is the space of pairs $(\gamma,r)$ with $\gamma:[0,\infty) \to X$ continuous and $\gamma(0)=x$ and $\gamma(t)=y$ for $t\geq r$. We topologize this as a subset of a mapping space times $\R$. We let $\tltr{x}{PX}{y}=C_*(\tltr{x}{\PP X}{y})$ denote the graded or ungraded (two different cases) chain complex of singular simplicial chains with coefficients in $\F$. We have strictly associative maps
\begin{align*}
  \tltr{x}{PX}{y} \otimes \tltr{y}{PX}{z} \to \tltr{x}{PX}{z}
\end{align*}
given by the Eilenberg-Zilber map and concatenation. We use standard product notation $\alpha\beta$ for this map. We let $\cst_x \in \tltr{x}{PX}{x}$ denote the constant path with 0 parametrization (the unit for concatenation) at $x$.

\begin{definition}
  A left path local system $\tl{\bu}{D}$ on a connected manifold $X$ assigns to each $x\in X$ a fiber $\tl{x}{D}\in \Ch$ and to each pair $x,y\in X$ a chain map (called parallel transport) $\tltr{x}{PX}{y} \otimes \tl{y}{D}\to \tl{x}{D}$, again we use standard action notation $\alpha d$ for this map, and these should satisfy $\cst_xd=d$ and $\beta(\alpha d)=(\beta \alpha)d$. The category of left path local systems on $X$ is denoted by $\tl{X}{\GG}$. Morphisms are fiber-wise chain maps commuting with the path action.
\end{definition}

We could of course in the same way define left path local systems on non connected manifolds, however to have an equivalence to $C_*\Omega X$ modules and to avoid having to care about several components we assume that $X$ is connected. Also since $X$ is connected it is easy to verify that all fibers are homotopy equivalent.

Note that we think of the above as parallel transport \emph{backwards} along the path. The category of right path local systems $\tr{D}{\bu}$ on $X$ is defined similarly, but using the more natural forward parallel transport along each path, and is denoted by $\tr{\GG}{X}$. We will even consider the category $\tltr{X}{\GG}{Y}$ of bi-path local systems $\tltr{\bu}{D}{\bu}$ which assigns fibers $\tltr{x}{D}{y}$ in $\Ch$ for $x\in X$, $y\in Y$ with \emph{commuting} left and right actions by paths in $X$ and $Y$ respectively.

A natural example of such an object is $\bpb{X}\in \tltr{X}{\GG}{X}$, which we will see corresponds to $C_*\Omega X$ as a bi-module over itself. Another important example is $\tl{\bu}{\F}$ defined by $\tl{x\>}{\F}=\F$ and with the augmentation action using the canonical argumentation $\tltr{x}{PX}{y} \to \F$ sending every zero simplex to $1$. In general the classical local systems of (possibly graded) vector spaces embed into $\tl{X}{\GG}$ by using the maps
\begin{align*}
  \tltr{x}{PX}{y} \to \Z[\pi_0(\tltr{x}{\PP X}{y})]
\end{align*}
where the latter has 0 differential and acts on the fibers of the classical local system.

We have an isomorphism of categories $\tau : \tl{X}{\GG} \cong \tr{\GG}{X}$ by simply reversing the paths before acting, i.e. $\tr{\tau(D)}{x}=\tl{x}{D}$ and the parallel transport in $\tau(D)$ is defined as the composition
\begin{align*}
  \tr{\tau(D)}{x}\otimes \tltr{x}{PX}{y}\to \tltr{y}{PX}{x} \otimes \tl{x}{D} \to \tl{y}{D} = \tr{\tau(D)}{y}
\end{align*}
where the first map swaps the tensor factors and reverses the paths (recall the sign when swapping the factors), and the second map is the parallel transport in $D$. We will simply write this as $\tau(\tl{\bu}{D}) = \tr{\overline{D}}{\bu}$. Similarly, we use the notation for $\tl{\bu}{\overline{D}} \in \tl{X}{\GG}$ when $\tr{D}{\bu} \in \tr{\GG}{X}$.

A \emph{weak equivalence} in any of these three categories is a morphism which induces a quasi isomorphism on all fibers. Note that as $X$ is connected it is easy to use the parallel transport to prove that it is enough to be a quasi isomorphism on a single fiber in order to be a weak equivalence.

There is an internal fiber-wise tensor product $\fwq$ on each of these categories, here $?$ is either $\mathrm{lp}$ (left path), $\mathrm{rp}$ (right path) or $\mathrm{bp}$ (bi path). E.g. for objects $\tl{\bu} D,\tl{\bu} E \in \tl{X}{\GG}$ we write this as $\tl{\bu}{D} \fwlp \tl{\bu}{E}$ or sometimes  $\tl{\bu}{(D\fwlp E)}$, its fibers are given by
\begin{align*}
  \tl{x}{(D \fwlp E)} = \tl{x}{D}\otimes \tl{x}{E}.
\end{align*}
The path action is defined using the Alexander-Whitney diagonal map $\Delta$, which in the case of left path local systems means:
\begin{align*}
  \tltr{x}{PX}{y} \otimes (\tl{y}D \otimes \tl{y}E) & \xrightarrow{\Delta\otimes \id} \tltr{x}{PX}{y} \otimes \tltr{x}{PX}{y} \otimes (\tl{y}D \otimes \tl{y}E) \cong \\
  & \cong   (\tltr{x}{PX}{y} \otimes \tl{y}D) \otimes (\tltr{x}{PX}{y} \otimes \tl{y}E) \to \tl{x}D \otimes \tl{x}E.
\end{align*}
Similarly we define $\tr{(D \fwrp E)}{\bu}$ for $\tr{D}{\bu},\tr{E}{\bu} \in \tr{\GG}{X}$, and even $\tltr{\bu}{(D\fwbp E)}{\bu}$ for $\tltr{\bu}{D}{\bu}, \tltr{\bu}{E}{\bu} \in \tltr{X}{\GG}{Y}$. The associativity for these tensor local systems follows by the EZ and AW compatibility described in Equation~(\ref{eq:article:7}). It is also clear that the fiber-wise tensor is homotopical in the sense that if $\tl \bu D\simeq \tl \bu D'$ and $\tl \bu E\simeq \tl \bu E'$ are weakly equivalent then $\tl{\bu}{D} \fwlp \tl{\bu}{E}\simeq\tl{\bu}{D'} \fwlp \tl{\bu}{E'}$.

There is also a \emph{global} tensor product defined as follows. If $\tr{D}{\bu}\in \tr{\GG}{X}$ and $\tl{\bu} E \in \tl{X}{\GG}$ then we can define the tensor product over $\tltr{\bu}{PX}{\bu}$ as the chain complex
\begin{align*}
  \tr{D}{\bu} \und{PX}{\otimes} \tl{\bu} E=\left(\bigoplus_{x\in X} \tr{D}{x} \otimes \tl{x}{E}\right)\Big /\left\langle d\alpha\otimes e-d\otimes \alpha e \right\rangle
\end{align*}
with $d \in D^x, e\in \tl{y}{E}$ and $\alpha \in \tltr{x}{PX}{y}$. Tensoring with $\tl{\bu} PX^\bu$ is the identity
\begin{align*}
  \tl{\bu} PX^\bu \und{ PX}\otimes \tl{\bu} D \cong \tl{\bu} D
\end{align*}
where the isomorphism sends $\alpha\otimes d$ to $\alpha d$. The global tensor is very useful in terms of Morita theory, indeed, given a $\tltr{\bu}{D}{\bu} \in \tltr{X}{\GG}{Y}$ we can define the functor
\begin{align*}
  \tltr{\bu}{D}{\bu} \und{PY} \otimes (-) : \tl{Y}{\GG} \to \tl{X}{\GG}.
\end{align*}
Be warned that the global tensor is not homotopical, however we will take care of this later when defining the derived global tensor.

\begin{lemma} \label{lem:article:15}
  The functor sending a left path local system on $X$ to the fiber over a chosen base point $x\in X$ is homotopical and defines a $\pi_0$-equivalence between $\tl{X}{\GG}$ and left modules over $C_*\Omega X$. Similarly, $\tr{\GG}{X}$ and $\tltr{X}{\GG}{X}$ are $\pi_0$-equivalent to right- and bi-modules respectively.

  Furthermore, these functors send fiber-wise tensor to the tensor over $\F$ of modules.
\end{lemma}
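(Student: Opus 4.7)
The plan is to exhibit an explicit homotopical quasi-inverse to the fiber functor $\ev_x : \tl{X}{\GG} \to C_*\Omega X\text{-Mod}$, $\tl{\bu}{D}\mapsto \tl{x}{D}$, built from a two-sided bar construction over $\tltr{\bu}{PX}{x}$, and then to verify that both composites are naturally weakly equivalent to the identity. The right path and bi-path variants will follow by running the same argument on the other side (or via $\tau$) together with a double-sided bar construction, and the tensor compatibility will be tautological from the definition of the fiber-wise tensor.

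First observe that $\ev_x$ is tautologically homotopical, since a weak equivalence in $\tl{X}{\GG}$ is by definition a fiber-wise quasi-isomorphism. I would also record the one-fiber criterion alluded to in the excerpt: for any path $\gamma\in \tltr{y}{PX}{x}$ the compositions of parallel transport by $\gamma$ and by its reverse $\bar\gamma$ act on fibers via $\gamma\bar\gamma$ and $\bar\gamma\gamma\in C_*\Omega X$, which are chain-homologous to the constant loops, giving natural chain-homotopy inverses between $\tl{x}{D}$ and $\tl{y}{D}$. Hence any morphism in $\tl{X}{\GG}$ which is a quasi-isomorphism on one fiber is a weak equivalence.

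For the quasi-inverse, given a left $C_*\Omega X$-module $M$ I would define
\begin{align*}
\tl{y}{\Phi(M)} = \bigoplus_{p\geq 0} \tltr{y}{PX}{x}\otimes (C_*\Omega X)^{\otimes p}\otimes M
\end{align*}
with the standard two-sided bar differential: concatenating the rightmost path of $\tltr{y}{PX}{x}$ with the first loop factor, contracting adjacent loop factors, and acting on $M$ from the last loop factor. Since $\tltr{\bu}{PX}{x}$ carries a left path action in $y$, this gives $\Phi(M)\in \tl{X}{\GG}$ functorially, and the bar filtration exhibits $\Phi(M)$ as semi-free in the obvious path-local-system analogue of Section~\ref{sec:modules-over-dga}, so $\Phi$ is automatically homotopical. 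Evaluating at $y=x$ recovers the classical two-sided bar resolution $B(C_*\Omega X, C_*\Omega X, M)$ whose augmentation onto $M$ is a quasi-isomorphism, giving $\ev_x\Phi\simeq \id$. For the other composite, the action map $\Phi(\ev_x\tl{\bu}{D})\to \tl{\bu}{D}$ restricts at $y=x$ to the same bar augmentation; by the one-fiber criterion it is then a weak equivalence of path local systems.

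The right path case is formally identical, and the bi-path case is handled by a double-sided bar construction with fiber
\begin{align*}
\tltr{y}{\Phi(M)}{z} = \bigoplus_{p,q\geq 0}\tltr{y}{PX}{x}\otimes (C_*\Omega X)^{\otimes p}\otimes M\otimes (C_*\Omega X)^{\otimes q}\otimes \tltr{x}{PX}{z},
\end{align*}
running the same arguments on both sides. For the tensor compatibility, the fiber of $\tl{\bu}{D}\fwlp \tl{\bu}{E}$ at $x$ is by construction $\tl{x}{D}\otimes \tl{x}{E}$, and the loop action there, built from the Alexander-Whitney diagonal on $C_*\Omega X$, is literally the standard formula for the tensor of two $C_*\Omega X$-modules via the diagonal dga map $\Delta$. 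The main obstacle I anticipate is the naturality bookkeeping in the bi-path case: the double bar construction must keep path concatenations on both sides compatible with the semi-free filtrations and with $\ev_x$, whereas the one-sided case is essentially a direct translation of the classical bar resolution argument.
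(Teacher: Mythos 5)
Your proposal is correct, and it is a close relative of the paper's argument with one meaningful technical divergence. The paper defines the quasi-inverse to $\ev_x$ as the \emph{plain} tensor $M \mapsto \tltr{\bu}{PX}{x}\und{C_*\Omega X}\otimes M$, with the pay-off that one of the two natural transformations is then a literal isomorphism ($\tltr{x}{PX}{x}\und{C_*\Omega X}\otimes M\cong M$), while the other ($\tltr{\bu}{PX}{x}\und{C_*\Omega X}\otimes\tl{x}{D}\to\tl{\bu}{D}$) is an isomorphism over $x$ and hence a weak equivalence by the single-fiber criterion. You instead take the two-sided bar construction $\Phi(M)=\bigoplus_p \tltr{\bu}{PX}{x}\otimes(C_*\Omega X)^{\otimes p}\otimes M$. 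What this buys you is that $\Phi$ is \emph{manifestly} semi-free and hence homotopical; the paper's naive tensor is homotopical only because each $\tltr{y}{PX}{x}$ is a semi-free right $C_*\Omega X$-module, a fact that is left implicit in the proof (it is essentially the filtration argument of Lemma~\ref{lem:article:2}). The cost is that both of your round-trip comparisons are now genuine quasi-isomorphisms rather than one of them being an isomorphism. Both approaches are valid; yours is slightly more self-contained on the cofibrancy point, the paper's is slightly shorter. One small notational slip: in your single-fiber criterion argument the loop $\bar\gamma\gamma$ lives in $\tltr{x}{PX}{x}$ while $\gamma\bar\gamma$ lives in $\tltr{y}{PX}{y}$, so strictly only one of them is in $C_*\Omega X$ for the fixed basepoint $x$; the underlying chain-homotopy argument is of course correct for both. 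Your treatment of the bi-path case via a double bar construction, and the observation that tensor compatibility is immediate from the definitions, both match the paper (which merely states the latter without proof).
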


\begin{proof}
  We only consider the case $\tl{X}{\GG}$ as the other cases are similar. Clearly the functor restricting to a fiber is homotopical. The functor back is defined by sending a left module $M$ to the left path local system
  \begin{align*}
    \tltr{\bu}{PX}{x} \und{C_*\Omega X}\otimes M.
  \end{align*}
  For $M\in C_*\Omega X\text{-Mod}$, we have a natural isomorphism $\tltr{x}{PX}{x} \und{C_*\Omega X}\otimes M\cong M$ and for $\tl{\bu}{D} \in \tl{X}{\GG}$ we have a natural map defined by parallel transport from the composition of the functors to the identity
  \begin{align*}
    \tltr{\bu}{PX}{x} \und{C_*\Omega X}\otimes \tl{x}{D} \to \tl{\bu}{D}.
  \end{align*}
  This is an isomorphism over the fiber $x$ and hence a weak equivalence of left path local systems. It is clear that restricting path local systems to the fiber $x$ commutes with the tensor products.
\end{proof}

\begin{corollary} \label{cor:article:6}
  In the graded case, if a path local system has homology support in a single degree, then it is weakly equivalent to the classical local system defined by its homology.
\end{corollary}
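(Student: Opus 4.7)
The plan is to transfer the problem across Lemma \ref{lem:article:15} to the category of $C_*\Omega X$-modules, and then exhibit an explicit zig-zag of weak equivalences of modules using a ``good truncation''. Let $\AA = C_*\Omega X$ and let $M$ denote the $\AA$-module corresponding under Lemma \ref{lem:article:15} to the given path local system $\tl{\bu}{D}$; by hypothesis $H_*(M)$ is concentrated in a single degree $n$.

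First I would identify the target classical local system. The graded homology $H_*(M)$ is a graded $H_*(\AA)$-module; for any class $\alpha \in H_i(\AA)$ with $i>0$, the image $\alpha\cdot H_n(M)$ lies in $H_{n+i}(M)=0$, so the action factors through $H_0(\AA) = \F[\pi_1(X)]$. Thus $H_n(M)$ is canonically a $\F[\pi_1(X)]$-module, and I let $L$ denote $H_n(M)$ placed in chain degree $n$, regarded as a $\AA$-module via the dga augmentation $\AA \to H_0(\AA)$. This $L$ is exactly the classical local system defined by the homology, viewed inside $\tl{X}{\GG}$ via the embedding described before the lemma.

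Next I would construct the intermediate module $\tau_{\geq n} M$, defined as the subcomplex of $M$ equal to $M_i$ for $i>n$, equal to $Z_n(M) = \ker(d : M_n \to M_{n-1})$ in degree $n$, and zero below. The inclusion $\tau_{\geq n} M \hookrightarrow M$ is clearly a quasi-isomorphism, since in degrees $\neq n$ both sides have vanishing homology and in degree $n$ both compute $Z_n(M)/d(M_{n+1}) = H_n(M)$. I would then define $\phi : \tau_{\geq n} M \to L$ to be the natural projection $Z_n(M) \to H_n(M)$ in degree $n$ and zero in all higher degrees. This $\phi$ is a chain map because $d(M_{n+1}) \subset Z_n(M)$ projects to zero in $H_n(M)$, and is a quasi-isomorphism by direct inspection. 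The resulting zig-zag
\begin{align*}
  M \;\hookleftarrow\; \tau_{\geq n} M \;\xrightarrow{\;\phi\;}\; L
\end{align*}
of $\AA$-module weak equivalences transports back through Lemma \ref{lem:article:15} to a zig-zag of weak equivalences of path local systems, proving the corollary.

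The only real point to verify carefully, and hence the main obstacle, is that both $\tau_{\geq n} M$ and $\phi$ are compatible with the full $\AA$-module structure, not merely the underlying chain complex structure. For $\tau_{\geq n} M$, any $\alpha \in C_i\Omega X$ with $i>0$ raises degree and so automatically lands in the un-truncated part; the only non-trivial check is that $\alpha\in C_0\Omega X$ preserves $Z_n(M)$, which follows from $d(\alpha m) = \pm\alpha\, dm$ since elements of $C_0\Omega X$ have zero differential. For $\phi$, all actions by $C_{>0}\Omega X$ land in degrees strictly above $n$ and are sent to zero on both sides (on $L$ via the augmentation, on $\tau_{\geq n} M$ via $\phi$), while the $C_0\Omega X$-equivariance in degree $n$ is exactly the statement that the induced $\pi_1(X)$-action on $H_n(M)$ coincides with the augmentation action on $L$, which holds by construction.
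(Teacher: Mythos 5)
Your proof is correct and follows essentially the same route as the paper: pass to $C_*\Omega X$-modules via Lemma~\ref{lem:article:15}, replace $M$ by the good truncation $\tau_{\geq n}M$ (observing that $C_0\Omega X$ consists of cycles and hence preserves the kernel of the lowest differential), and project onto the homology viewed as a classical local system. The only cosmetic difference is that the paper first shifts so the homology sits in degree $0$, and you spell out the $\AA$-module compatibility checks a bit more explicitly.
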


\begin{proof}
  By the lemma above we may prove this for a module $M$ over $C_*\Omega X$ and by shifting we assume that the homology is supported in degree $0$. First we replace $M$ with the weakly equivalent submodule
  \begin{align*}
    \cdots \rightarrow M_i \rightarrow \cdots\rightarrow  M_1 \rightarrow  \ker d_0 \rightarrow 0.
  \end{align*}
  This is a well defined sub-module as the zero simplices in $C_0\Omega X$ are all closed and hence preserve $\ker d_0$. This submodule has a module quotient map to the homology local system, which is a weak equivalence.
\end{proof}

\begin{corollary} \label{cor:article:5}
  The pullback functor for any homotopy equivalence $f : X \to Y$ induces a $\pi_0$-equivalence $f^* : \tl{Y}{\GG} \to \tl{X}{\GG}$ (and similar for the other versions). 
\end{corollary}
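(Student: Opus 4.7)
The plan is to reduce the statement to the module-theoretic fact from Section~\ref{sec:modules-over-dga} via Lemma~\ref{lem:article:15}. Fix a basepoint $x_0\in X$ and set $y_0:=f(x_0)\in Y$. Since concatenation of Moore loops is strictly associative and $f\circ(-)$ is strictly monoidal with respect to it, the map $f$ induces a genuine dga map
\[
\phi := C_*(\Omega f) : C_*\Omega X \to C_*\Omega Y.
\]
Because $f$ is a homotopy equivalence of connected spaces, $\Omega f$ is a homotopy equivalence of loop spaces, so $\phi$ is a quasi-isomorphism of dgas.

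Next I would identify $f^*$ with restriction of scalars along $\phi$ under the equivalences of Lemma~\ref{lem:article:15}. Concretely, for $\tl{\bu}{D}\in\tl{Y}{\GG}$ the fiber $\tl{x_0}{(f^*D)}$ equals $\tl{y_0}{D}$, and by construction the $C_*\Omega X$-action on this fiber factors through $\phi$ as the given $C_*\Omega Y$-action. Thus the square
\[
\begin{tikzcd}
\tl{Y}{\GG} \arrow[r,"f^*"] \arrow[d,"\text{fiber at }y_0"'] & \tl{X}{\GG} \arrow[d,"\text{fiber at }x_0"] \\
C_*\Omega Y\text{-Mod} \arrow[r,"\phi^*"'] & C_*\Omega X\text{-Mod}
\end{tikzcd}
\]
commutes up to natural isomorphism. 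The two vertical functors are $\pi_0$-equivalences by Lemma~\ref{lem:article:15}, and the bottom horizontal functor is a $\pi_0$-equivalence by the discussion in Section~\ref{sec:modules-over-dga} applied to the quasi-isomorphism $\phi$ (with derived tensor $C_*\Omega Y\otimes^L_{C_*\Omega X}(-)$ as inverse). By the two-out-of-three property for $\pi_0$-equivalences this forces $f^*$ to be a $\pi_0$-equivalence as well.

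The other two variants follow by the same pattern. For right path local systems one may either repeat the argument with $C_*\Omega Y$-modules replaced by modules over $C_*\Omega Y$ acting on the right, or simply conjugate by the antipode isomorphism $\tau : \tl{X}{\GG}\cong\tr{\GG}{X}$ (which is compatible with pullback since reversing paths is natural in $X$). For bi-path local systems one observes that $\tltr{X}{\GG}{X}$ corresponds under Lemma~\ref{lem:article:15} to bimodules, i.e.\ modules over $C_*\Omega X\otimes (C_*\Omega X)^{\op}$, and that $\phi\otimes\phi^{\op}$ is again a quasi-isomorphism of dgas, so the same argument applies. The only non-routine point in the whole proof is that $\Omega f$ is a homotopy equivalence; this is standard for $f$ a weak equivalence between spaces having the homotopy type of CW complexes, which is automatic here since $X$ and $Y$ are manifolds (and in our application in fact CW complexes).
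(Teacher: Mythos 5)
Your proof is correct and follows essentially the same route as the paper: both set up the commuting square relating $f^*$ on path local systems to restriction of scalars along $C_*(\Omega f)$ on module categories, invoke Lemma~\ref{lem:article:15} for the vertical $\pi_0$-equivalences and Section~\ref{sec:modules-over-dga} for the bottom one, and conclude by the square. The paper is terser (it does not name $\phi$ or spell out the bimodule case), but there is no substantive difference.
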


\begin{proof}Choose a basepoint $x\in X$ and let $f(x)\in Y$ be the basepoint in $Y$. We have a commutative diagram of functors 
  \begin{center}
    \begin{tikzcd}
      \tl Y\GG\ar{r}{f^*}\ar[swap]{d}{\text{res}}&\tl X\GG\ar{d}{\text{res}}\\
      C_*\Omega Y\text{-Mod}\ar{r}{f^*}&C_*\Omega X\text{-Mod}
    \end{tikzcd}
  \end{center}
  The vertical arrows restricting a path local system to a single fiber are $\pi_0$-equivalences by Lemma~\ref{lem:article:15}, and the lower horizontal arrow is a $\pi_0$-equivalence by the discussion in Section~\ref{sec:modules-over-dga}. It follows that the top horizontal arrow $f^*:\tl Y\GG\to \tl X\GG$ is also a $\pi_0$-equivalence.
\end{proof}

We say that a left path local system $\tl{\bu} D$ is free if it is isomorphic to a (possibly infinite) direct sum with summands of the form $\Sigma^n(\tl{\bu} PX^{x})$ (for varying $x$) and we say that $\tl{\bu} D$ is semi-free if there exists a filtration
\begin{align*}
  0=\tl{\bu}D_{-1}\subset \tl{\bu} D_0\subset \tl{\bu} D_1\subset \tl{\bu}D_2\subset  \dots\subset \tl{\bu} D
\end{align*}
such that $\bigcup \tl{\bu} D_n=\tl{\bu} D$ and each $\tl{\bu} (D_n/D_{n-1})$ is free.

Similarly we define free and semi-free for right path local systems. For bi-path local systems we define these notions point wise in the sense that $\tltr{\bu}{D}{\bu}\in \tltr{X}{\GG}{Y}$ is called semi-free if all $\tltr{x}{D}{\bu}$ and $\tltr{\bu}{D}{y}$ are semi-free. Note that this is not analogous to being semi-free as a bi-module. Indeed, with our definition $\tltr{\bu}{PX}{\bu}$ is semi-free.

\begin{lemma} \label{lem:article:2}
  All left path local systems has a weakly equivalent semi-free left path local system. 
\end{lemma}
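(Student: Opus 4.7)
The plan is to construct the semi-free replacement on the $C_*\Omega X$-module side, where the standard bar resolution already provides one, and then transport it back through the $\pi_0$-equivalence of Lemma~\ref{lem:article:15}.

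First, I would fix a base point $x_0 \in X$, set $\AA = C_*\Omega X$ and $M = \tl{x_0}{D}$, viewed as a left $\AA$-module via the path action. The bar construction recalled in Section~\ref{sec:modules-over-dga} provides a semi-free $\AA$-module $\FF M$ equipped with a weak equivalence $\FF M \to M$ and a filtration $F^p\FF M$ with $F^{-1}\FF M=0$, $F^\infty \FF M=\FF M$, and free successive quotients isomorphic to $\AA^{\otimes p}\otimes M$.

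Next, I would push $\FF M$ across via the functor from Lemma~\ref{lem:article:15} and set
\begin{align*}
\tl{\bu}{F} \;=\; \tltr{\bu}{PX}{x_0} \und{\AA}\otimes \FF M.
\end{align*}
The filtration of $\FF M$ induces a filtration of $\tl{\bu}{F}$ whose successive quotients have the form
\begin{align*}
\tltr{\bu}{PX}{x_0} \und{\AA}\otimes \bigl(\AA\otimes V\bigr) \;\cong\; \tltr{\bu}{PX}{x_0} \otimes V
\end{align*}
with $V = \AA^{\otimes(p-1)}\otimes M$ as a graded $\F$-vector space, which is a (possibly infinite) direct sum of shifts of $\tltr{\bu}{PX}{x_0}$ and hence a free path local system in the sense of the definition above the lemma. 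Thus $\tl{\bu}{F}$ is semi-free.

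Finally, to exhibit the weak equivalence with $\tl{\bu}{D}$, I would apply $\tltr{\bu}{PX}{x_0} \und{\AA}\otimes (-)$ to the quasi-isomorphism $\FF M \to M$, obtaining a morphism $\tl{\bu}{F} \to \tltr{\bu}{PX}{x_0} \und{\AA}\otimes M$ of left path local systems whose fiber over $x_0$ is the original quasi-isomorphism $\FF M\to M$. By connectedness of $X$ this upgrades by parallel transport to a fiber-wise quasi-isomorphism; composing with the natural weak equivalence $\tltr{\bu}{PX}{x_0} \und{\AA}\otimes M \to \tl{\bu}{D}$ from the proof of Lemma~\ref{lem:article:15} completes the zig-zag.

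The step I expect to be the main obstacle is verifying that the bar filtration passes cleanly through the tensor $\tltr{\bu}{PX}{x_0} \und{\AA}\otimes(-)$, i.e.\ that each short exact sequence $0\to F^{p-1}\FF M\to F^p\FF M\to \AA^{\otimes p}\otimes M\to 0$ remains exact after tensoring. This reduces to the freeness (hence flatness as $\AA$-modules over a field) of the successive quotients, but it is the one spot where the two models — $\AA$-modules and path local systems — must be compared at the chain level rather than only up to weak equivalence; the rest of the argument is then a matter of bookkeeping.
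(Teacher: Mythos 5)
Your proof is correct and is essentially the paper's own argument: restrict $\tl{\bu}{D}$ to the fiber over a base point, take the bar (semi-free) replacement of the resulting $C_*\Omega X$-module, tensor it back up via $\tltr{\bu}{PX}{x_0}\und{C_*\Omega X}\otimes(-)$, and use the induced filtration (with free quotients) and a single-fiber check for the weak equivalence. The ``obstacle'' you flag is harmless: each step of the bar filtration has free graded quotient, so the short exact sequence splits as graded $\AA$-modules and remains exact after the tensor.
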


\begin{proof}
  Let $\tl\bullet D\in \tl X\GG$, restrict it to $\tl xD\in C_*\Omega X\text{-Mod}$ and take a semi-free replacement $\FF\, \tl xD$ (as discussion in Section~\ref{sec:modules-over-dga}), then the composition 
  \begin{align*}
    \tl\bu{\FF D}:=\tltr{\bu}{PX}{x} \und{C_*\Omega X}\otimes \FF\, \tl xD\to \tltr{\bu}{PX}{x} \und{C_*\Omega X}\otimes \tl xD \to  \tl \bullet D
  \end{align*}
  is a weak equivalence since the first map is a quasi isomorphisms over the fiber $x$ and the second map is an isomorphism over the fiber $x$. The left hand side is semi-free using the filtration on $\FF \, \tl xD$.
\end{proof}

Motivated by this and the fact that the global tensor is not homotopical, we define the derived global tensor
\begin{align*}
  \tr{D}{\bu} \undL{PX}{\otimes} \tl{\bu} E = \tr{D}{\bu} \und{PX}{\otimes} \tl{\bu}{\FF E} \simeq \tr{\FF D}{\bu} \und{PX}{\otimes} \tl{\bu}{\FF E} \simeq \tr{\FF D}{\bu} \und{PX}{\otimes} \tl{\bu}{E}.
\end{align*}
The following lemma proves these equivalences and that the derived global tensor is homotopical.

\begin{lemma}\label{lem:tensor_lemma}
  If $D^\bu$ is semi-free or both $\tl{\bu}E$ and $\tl{\bu}E'$ are semi-free, and $f:\tl{\bu} E\to \tl{\bu} E'$ a weak equivalence then $\id\otimes f:D^\bullet \und{PX}{\otimes} \tl{\bu} E\to D^\bullet \und{PX}{\otimes} \tl{\bu} E'$  is a quasi isomorphism.
\end{lemma}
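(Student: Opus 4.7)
The plan is to prove the two hypotheses separately. First I will treat the case when $D^\bu$ is semi-free via a filtration-spectral-sequence argument, then reduce the case when both $\tl\bu E$ and $\tl\bu E'$ are semi-free to the first case by replacing $D^\bu$ with a semi-free right path local system.

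Assume $D^\bu$ is semi-free. The base sub-case is when $D^\bu$ is actually free, say $D^\bu \cong \bigoplus_i \Sigma^{n_i} \tltr{x_i}{PX}{\bu}$. Fixing the left endpoint in the identity $\tl\bu PX^\bu \und{PX}{\otimes} \tl\bu D \cong \tl\bu D$ gives $\tltr{x}{PX}{\bu} \und{PX}{\otimes} \tl\bu E \cong \tl x E$, so $D^\bu \und{PX}{\otimes} \tl\bu E \cong \bigoplus_i \Sigma^{n_i} \tl{x_i}{E}$ and $\id \otimes f$ becomes the direct sum $\bigoplus_i \Sigma^{n_i} f_{x_i}$ of fiber-wise quasi isomorphisms, hence itself a quasi isomorphism. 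For general semi-free $D^\bu$, take its semi-free filtration $0 = D^\bu_{-1} \subset D^\bu_0 \subset D^\bu_1 \subset \cdots$ and induce filtrations $F^n := D^\bu_n \und{PX}{\otimes} \tl\bu E$ on both sides of $\id \otimes f$. The associated gradeds are $(D^\bu_n / D^\bu_{n-1}) \und{PX}{\otimes} \tl\bu E$ with free first tensor factor, and by the free sub-case the induced maps on associated graded are quasi isomorphisms. These filtrations start at filtration $0$ and are exhaustive and Hausdorff, so the spectral sequences converge strongly by Theorem 6.1 of \cite{MR1718076} as recalled in Section~\ref{sec:notation-conventions}, and $\id \otimes f$ is a quasi isomorphism on totals.

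Now assume $\tl\bu E$ and $\tl\bu E'$ are semi-free. By the right-sided analogue of Lemma~\ref{lem:article:2}, fix a semi-free replacement $\tr{\FF D}{\bu} \to D^\bu$ as right path local systems. By the case just treated, the map $\tr{\FF D}{\bu} \und{PX}{\otimes} \tl\bu E \to \tr{\FF D}{\bu} \und{PX}{\otimes} \tl\bu E'$ is a quasi isomorphism. It therefore suffices, by two-out-of-three, to show that $\tr{\FF D}{\bu} \und{PX}{\otimes} \tl\bu E \to D^\bu \und{PX}{\otimes} \tl\bu E$ is a quasi isomorphism and similarly for $E'$. For this I filter $\tl\bu E$ by its semi-free filtration $0 = E_{-1} \subset E_0 \subset E_1 \subset \cdots$ on both sides; the associated gradeds become direct sums $\bigoplus_j \Sigma^{m_j} \tr{\FF D}{y_j}$ and $\bigoplus_j \Sigma^{m_j} D^{y_j}$ respectively, and the induced map is the direct sum of the fiber-wise quasi isomorphisms $\tr{\FF D}{y_j} \to D^{y_j}$. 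Strong convergence of these spectral sequences closes the argument.

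The main technical point is strong convergence of the spectral sequences, which is immediate from the cited convergence theorem since every filtration involved starts at filtration $0$ and is both exhaustive and Hausdorff.
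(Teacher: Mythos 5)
Your proof is correct and follows essentially the same strategy as the paper: the free case by the identity $\tltr{x}{PX}{\bu}\und{PX}\otimes \tl\bu E\cong\tl xE$, the semi-free case via the filtration spectral sequence with exiting differentials, and the reduction of the ``both $E$, $E'$ semi-free'' case to the semi-free-$D$ case via a semi-free replacement $\tr{\FF D}\bu\to \tr D\bu$. The one place where you add something useful is in the last step: the paper simply writes the zig-zag $D^\bullet\und{PX}\otimes \tl\bu E\simeq \FF D^\bullet\und{PX}\otimes \tl\bu E\simeq \FF D^\bullet\und{PX}\otimes \tl\bu E'\simeq D^\bullet\und{PX}\otimes \tl\bu E'$ without spelling out why the outer maps are quasi isomorphisms, whereas you explicitly filter $\tl\bu E$ (resp.\ $\tl\bu E'$) by its semi-free filtration and observe that the associated graded maps are direct sums of the fiber-wise quasi isomorphisms $\tr{\FF D}{y_j}\to D^{y_j}$; this is exactly the left--right mirror of the spectral sequence argument in the semi-free-$D$ case, and it is the right way to justify the paper's terse chain.
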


\begin{proof}
  When $D^\bu$ is free i.e. $D^\bu=\bigoplus_i \tltr{x_i}{PX}{\bu}$ then $\id\otimes f:D^\bullet \und{PX}{\otimes} \tl{\bu} E\to D^\bullet \und{PX}{\otimes} \tl{\bu} E'$ is isomorphic to the quasi isomorphism $\bigoplus_i f_{x_i}:\bigoplus_i \tl{x_i}{E}\to \bigoplus_i \tl{x_i}{E'}$. For the case when $\tr D{\bu}$ is semi-free we consider the induced filtration on $D^\bullet \und{PX}{\otimes} \tl{\bu} E$, $D^\bullet \und{PX}{\otimes} \tl{\bu} E'$ and the map $\id\otimes f$. The map on page 0 between the spectral sequences is by the free case a quasi isomorphism. Since the spectral sequences are supported in the right half plane with exiting differentials they are convergent and hence $\id\otimes f$ is a quasi isomorphism.

  If both $\tl{\bu}E$ and $\tl{\bu}E'$ are semi-free, the lemma is proved by replacing $\tr D\bu$ with $\tr{\FF D}\bu$ from Lemma~\ref{lem:article:2}, $D^\bullet \und{PX}{\otimes} \tl{\bu} E\simeq \FF D^\bullet \und{PX}{\otimes} \tl{\bu} E\simeq \FF D^\bullet \und{PX}{\otimes} \tl{\bu} E'\simeq D^\bullet \und{PX}{\otimes} \tl{\bu} E'$.
\end{proof}

\begin{corollary}
  If $\tltr{\bu}{D}{\bu}\in \tltr{X}{\GG}{Y}$ is semi-free then the induced Morita functors $\tltr{\bu}{D}{\bu}\und{PY}\otimes (-)$ and $(-)\und{PX}\otimes \tltr{\bu}{D}{\bu}$ are homotopical.
\end{corollary}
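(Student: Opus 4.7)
The plan is to reduce the corollary to a fiberwise statement and then invoke Lemma~\ref{lem:tensor_lemma}. I will only write out the case of the functor $\tltr{\bu}{D}{\bu}\und{PY}\otimes(-):\tl{Y}{\GG}\to \tl{X}{\GG}$; the other case is completely symmetric (using the isomorphism $\tau$ if one wishes, or simply a right-handed repetition of the argument).

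First I would unpack the fibers. By the definition of the global tensor over $PY$, for each $x\in X$ the fiber at $x$ of $\tltr{\bu}{D}{\bu}\und{PY}\otimes \tl{\bu}{E}$ is canonically the chain complex $\tltr{x}{D}{\bu}\und{PY}\otimes \tl{\bu}{E}$, where $\tltr{x}{D}{\bu}\in \tr{\GG}{Y}$ is the right path local system on $Y$ obtained by restricting the left $X$-variable to $x$. The left $X$-action on $\tltr{\bu}{D}{\bu}$ commutes with the right $Y$-action, so it descends to a well-defined left path action on the tensor product, making $\tltr{\bu}{D}{\bu}\und{PY}\otimes \tl{\bu}{E}$ an object of $\tl{X}{\GG}$, and likewise the construction is functorial in $\tl{\bu}{E}$.

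Now let $f:\tl{\bu}{E}\to \tl{\bu}{E'}$ be a weak equivalence in $\tl{Y}{\GG}$, i.e.\ a fiberwise quasi-isomorphism. To check that the induced map
\begin{align*}
  \id\otimes f : \tltr{\bu}{D}{\bu}\und{PY}\otimes \tl{\bu}{E}\to \tltr{\bu}{D}{\bu}\und{PY}\otimes \tl{\bu}{E'}
\end{align*}
is a weak equivalence in $\tl{X}{\GG}$, it suffices by the definition of weak equivalence to check fiberwise, i.e.\ that for each $x\in X$ the map $\tltr{x}{D}{\bu}\und{PY}\otimes \tl{\bu}{E}\to \tltr{x}{D}{\bu}\und{PY}\otimes \tl{\bu}{E'}$ is a quasi-isomorphism. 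By the definition of semi-free for bi-path local systems, the right path local system $\tltr{x}{D}{\bu}$ is semi-free on $Y$. Hence Lemma~\ref{lem:tensor_lemma}, applied with $\tr{D}{\bu}$ replaced by the semi-free $\tltr{x}{D}{\bu}$ and $f:\tl{\bu}{E}\to \tl{\bu}{E'}$ the given weak equivalence, yields that the fiberwise map is a quasi-isomorphism. This is exactly the statement that $\tltr{\bu}{D}{\bu}\und{PY}\otimes(-)$ is homotopical.

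For the functor $(-)\und{PX}\otimes \tltr{\bu}{D}{\bu}:\tr{\GG}{X}\to \tr{\GG}{Y}$ the argument is identical, this time extracting the fiber at $y\in Y$ as $\tr{F}{\bu}\und{PX}\otimes \tltr{\bu}{D}{y}$ and invoking Lemma~\ref{lem:tensor_lemma} with the semi-free left path local system $\tltr{\bu}{D}{y}$ in place of $\tl{\bu}{E}$. There is no genuine obstacle: the only nontrivial ingredient is the identification of the fiber of a global tensor product, which is immediate from the definition, after which the corollary reduces to a single application of Lemma~\ref{lem:tensor_lemma}.
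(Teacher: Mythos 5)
Your proof is correct and matches the paper's intended (unwritten) argument: the corollary is stated immediately after Lemma~\ref{lem:tensor_lemma} precisely because, after unpacking that the fiber of $\tltr{\bu}{D}{\bu}\und{PY}\otimes \tl{\bu}{E}$ at $x$ is $\tltr{x}{D}{\bu}\und{PY}\otimes \tl{\bu}{E}$ and recalling that semi-freeness of a bi-path local system is defined pointwise, the lemma applies directly to each fiber.
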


\section{Floer theory with path local systems} \label{sec:floer-theory-with}

As in \cite{Husin_2023} we will define Floer \emph{homology} with path local systems, but in this article the Floer complex is defined as a direct sum of tensor products instead of internal homs.

We assume that we are given a pair $(K,L)$ of exact Lagrangians $K,L \subset W$ where $W$ is a compact Liouville domain. The primitives to the Liouville form are denoted by $f_K:K \to \R$ and $f_L:L \to \R$ and satisfies $df_K=\lambda_{|K}$ and $df_L=\lambda_{|L}$. For an intersection point $x\in K\cap L$ we define its action as $\mathcal A(x)=f_K(x)-f_L(x)$.

When $c_1(W)$ is 2-torsion we may pick a global quadratic complex volume form. Using this we can when the Maslov classes of both $K$ and $L$ vanish choose a grading on both $K$ and $L$ to get graded Floer complexes. In addition, if we have relative pin structures on $K$ and $L$ we can orient the moduli spaces and work even in the case when $\F$ is not of characteristic 2 (see e.g. \cite{MR2441780}).

By a strip from $x\in K\cap L$ to $y\in K\cap L$ we mean a pseudo holomorphic map $u:D^2\to W$ with two punctures at $\pm 1$ (see Figure~\ref{Fig:Stripdefn}) such that the boundaries between the punctures are mapped to $K$ respectively $L$ according to Figure~\ref{Fig:Stripdefn}.
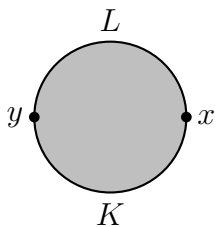
\begin{figure}[ht]
  \begin{center}
    \begin{tikzpicture}
      \fill[lightgray] (0,0) circle (1cm);
      \draw[thick] (0,0) circle (1cm);
      \fill (-1,0) circle (2pt) node[left] {$y$};
      \fill (1,0) circle (2pt)  node[right] {$x$};;
      \draw (0,1) node[above] {$L$};
      \draw (0,-1) node[below] {$K$};
    \end{tikzpicture}
  \end{center}
  \caption{Strip definition}\label{Fig:Stripdefn}
\end{figure}
Note that this depends on the order of the pair $(K,L)$. By Stokes theorem $\mathcal A(y)=\mathcal A(x)-\int_{D^2}u^*\omega$ and hence $\mathcal A(y)\leq \mathcal A(x)$. In either case by Proposition 2.1 in \cite{MR2904032} after a small perturbation of one of the Lagrangians and $J$ we may assume that:
\begin{itemize}
\item $K$ and $L$ intersect transversely.
\item For each pair of intersection points $x,y\in K\cap L$ the compactified moduli space $\tltr{y}{\MM}{x}$ of unparameterized strips as in Figure~\ref{Fig:Stripdefn} is a compact topological manifold with boundary such that
  \begin{align*}
    \partial \tltr{y}{\MM}{x} = \bigcup_{z \in K\cap L} \tltr{y}{\MM}{z} \times \tltr{z}{\MM}{x}
  \end{align*}
  where we use the convention that $\tltr{x}{\MM}{x}=\varnothing$.
\item The different compactified pieces of the boundary meet at codimension 1 submanifolds (in the boundary) given inductively as
  \begin{align*}
    \bigcup_{z_1,z_2 \in K\cap L} \tltr{y}{\MM}{z_2} \times \tltr{z_2}{\MM}{z_1} \times \tltr{z_1}{\MM}{x}.
  \end{align*}
\end{itemize}
The moduli spaces are orientable over $\F$ (vacuous statement depending on case) and the orientations can be chosen in a coherent way, see Section~12b in \cite{MR2441780}. The next Lemma states that we can choose fundamental chains on the moduli spaces in a coherent way, this is also explained after Lemma~2.3 in \cite{MR2904032}.
\begin{lemma}\label{lem:article:fundamental chains}
  We may inductively, in the \emph{local} dimension $k$, pick fundamental chains $\tlar{y}{c}{x}k \in C_k(\tltr{y}{\MM}{x})$ such that
  \begin{align}\label{equation:dc_sign}
    \partial \tlar{y}{c}{x}k = \sum_{\substack{z\in K\cap L\\k_1+k_2=k-1}} (-1)^{k_1+1}\ \tlar{y}{c}{z}{k_1}\times\tlar{z}{c}{x}{k_2} \in C_{k-1}(\partial \tltr{y}{\MM}{x}).
  \end{align}
\end{lemma}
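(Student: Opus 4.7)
The plan is induction on the local dimension $k$. For $k=0$ the moduli space $\tltr{y}{\MM}{x}$ is a finite set of coherently oriented points, so I take $\tlar{y}{c}{x}{0}$ to be the signed $0$-chain given by this point count; Equation~(\ref{equation:dc_sign}) is an empty sum and is vacuously satisfied.

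For the inductive step, suppose $\tlar{a}{c}{b}{j}$ has been chosen for all pairs $(a,b)$ and all $j<k$ satisfying the coherence relation. For a given pair $(y,x)$ and a component of $\tltr{y}{\MM}{x}$ of local dimension $k$, I define the candidate boundary
\begin{align*}
  b_{y,x} := \sum_{\substack{z\in K\cap L\\ k_1+k_2=k-1}} (-1)^{k_1+1}\ \tlar{y}{c}{z}{k_1}\times \tlar{z}{c}{x}{k_2}\in C_{k-1}(\partial \tltr{y}{\MM}{x}).
\end{align*}
Before producing $\tlar{y}{c}{x}{k}$ I need to verify that (a) $b_{y,x}$ is a cycle, and (b) $b_{y,x}$ represents the fundamental class of $\partial \tltr{y}{\MM}{x}$.

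For (a), apply the singular Leibniz rule $\partial(\alpha\times\beta)=\partial\alpha\times\beta+(-1)^{|\alpha|}\alpha\times\partial\beta$ to each summand and then expand $\partial\tlar{y}{c}{z}{k_1}$ and $\partial\tlar{z}{c}{x}{k_2}$ using the inductive formula. Every resulting triple product $\tlar{y}{c}{w}{a}\times \tlar{w}{c}{z}{b}\times \tlar{z}{c}{x}{c}$ appears exactly twice---once from splitting the left factor and once from splitting the right factor---and a short sign bookkeeping gives opposing signs $(-1)^{b+1}$ and $(-1)^{b}$, so the terms cancel. This is precisely the purpose of the shift $(-1)^{k_1+1}$ in (\ref{equation:dc_sign}), and is the standard $A_\infty$ coherence computation. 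For (b), the codimension-$1$ strata of $\partial \tltr{y}{\MM}{x}$ are by assumption exactly the products $\tltr{y}{\MM}{z}\times \tltr{z}{\MM}{x}$, each carrying the coherent product orientation from Section~12b of \cite{MR2441780}; the sign $(-1)^{k_1+1}$ is the factor relating this product orientation to the boundary-of-manifold orientation, so $b_{y,x}$ restricts to the boundary-orientation fundamental chain on each codimension-$1$ open stratum and hence represents $[\partial \tltr{y}{\MM}{x}]$.

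To finish, I use that $\tltr{y}{\MM}{x}$ is a compact oriented topological $k$-manifold with boundary, so the connecting map $H_k(\tltr{y}{\MM}{x},\partial \tltr{y}{\MM}{x})\to H_{k-1}(\partial \tltr{y}{\MM}{x})$ sends the relative fundamental class to the fundamental class of the boundary. Picking any singular fundamental chain $c_0\in C_k(\tltr{y}{\MM}{x})$, the cycle $b_{y,x}-\partial c_0$ is therefore a boundary in $C_*(\partial \tltr{y}{\MM}{x})$, say $b_{y,x}-\partial c_0=\partial\alpha$ with $\alpha\in C_k(\partial \tltr{y}{\MM}{x})$. Setting $\tlar{y}{c}{x}{k}:=c_0+\alpha$ yields a fundamental chain---we have only added a chain supported on the boundary---whose singular boundary equals $b_{y,x}$, which is exactly (\ref{equation:dc_sign}). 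The only genuinely delicate point is the sign computation in step (a); everything else is standard manifold-with-boundary homology.
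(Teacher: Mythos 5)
The paper does not actually prove this lemma; it points the reader to the discussion after Lemma~2.3 of \cite{MR2904032} for fundamental chains and to Section~12b of \cite{MR2441780} for the coherent orientations. Your proof supplies the standard argument that reference has in mind, and it is correct. The genuine content is step~(a): writing the two Leibniz terms, the left-factor split contributes $(-1)^{k_1+1}(-1)^{a+1}=(-1)^{b+1}$ (using $a+b=k_1-1$) and the right-factor split contributes $(-1)^{k_1+1}(-1)^{k_1}(-1)^{a'+1}=(-1)^{a'}$; after identifying the middle degree in the two triple products these are opposite, so everything cancels, exactly as you assert. The long-exact-sequence finish then corrects any relative fundamental chain $c_0$ by a chain $\alpha$ supported on $\partial\tltr{y}{\MM}{x}$ so that the singular boundary is exactly $b_{y,x}$; this is also right. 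Two small points. First, step~(b) quietly asserts that the coherent orientations of Seidel are normalized so that the product orientation on a codimension-one boundary stratum differs from the boundary-of-$\MM$ orientation by precisely $(-1)^{k_1+1}$. The paper attributes this factor to orienting the moduli spaces before taking the quotient by $\R$, which is Seidel's convention and hence compatible with your reading, but since your whole argument hinges on this normalization, you should state it explicitly rather than fold it into a parenthetical appeal to coherence. Second, in your displayed triple product you re-use $c$ both as the fundamental-chain symbol and as a degree index; this is purely typographical but should be fixed.
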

Notes:
\begin{itemize}
\item $\alpha \times \beta$ denotes Eilenberg-Zilber on $\alpha \otimes \beta$.
\item The sign $(-1)^{k_1+1}$ is due to the fact that we are really orienting the spaces before taking the quotient by $\R$.
\item For each component in $\tltr{y}{\MM}{x}$ of dimension $k$, breaking into two pieces can only happen when $k_1+k_2=k-1$.
\item When the Maslov index does not vanish, several different pairs $(k_1,k_2)$ could potentially contribute for the same $z$.
\item In the following we denote the sum of these classes $\tltr ycx=\sum_k \tlar ycxk$, which only has one term in the graded case. 
\end{itemize}
\vspace{5mm}

Let $\ev_K : \tltr{y}{\MM}{x} \to \tltr{x}{\mathcal PK}{y}$ denote the evaluation on the side of $D^2$ mapping to $K$ (parametrized clockwise). We parametrize this path by arc length (continuous on the moduli space by Lemma~\ref{lem:article:12}). Similarly we define $\ev_L : \tltr{y}{\MM}{x} \to \tltr{y}{\mathcal PL}{x}$ as the evaluation at the side mapping to $L$ (still clockwise and parameterized by arc length).

Given $D^\bu \in \GG^K$ and $\tl{\bu} E \in \tl{L}{\GG}$ we define
\begin{align}\label{equation:floer_complex}
  CF_*(K^{D^\bu},L^{\stl{\bu}{E}})=\bigoplus_{x\in K\cap L}\Sigma^{|x|} D^x\otimes \tl{x}{E}.
\end{align}
Note that when we do not have grading, $\Sigma^{|x|}$ is the identity functor and all signs in what follows may be ignored, and when we do have grading, $|x|$ is well defined. The reason we define the Floer complex using both left and right path local systems is that it will be useful later when doing Morita theory.

Before defining the differential on $CF_*(K^{D^\bu},L^{\stl{\bu}{E}})$ we note that we have an action
\begin{align*}
  \tltr yTx:&\Sigma^{|y|-|x|}C_*(\tltr{y}{\MM}{x})\otimes \Sigma^{|x|}D^x\otimes \tl{x}{E}\\
            &\to\Sigma^{|x|}\Sigma^{|y|-|x|}\tr{D}x\otimes C_*(\tltr{y}{\MM}{x}) \otimes \tl yE\\
  &\to\Sigma^{|y|}\tr{D}{y}\otimes \tl{y}E
\end{align*}which consists of reshuffling the tensor factors and suspensions, then taking the push forward by $\ev_K\times \ev_L$ followed by the Alexander-Whitney map and then parallel transporting in $D$ and $E$. For $\alpha\in C_*(\tltr{y}{\MM}{x})$ we use the notation $\tltr yTx(\alpha\otimes d\otimes e)=\alpha(d\otimes e)$. It follows that $\tltr yTx$ is a chain map and we have associativity in the sense that
\begin{align}\label{equation:floer_associativity}
  \alpha(\beta(d\otimes e))=(-1)^{(|y|-|z|+|\alpha|)(|z|-|x|)}(\alpha\times \beta)(d\otimes e)
\end{align}
for $\alpha\in C_*(\tltr{y}{\MM}{z})$ and $\beta \in C_*(\tltr{z}{\MM}{x})$, the sign is due to $\Sigma^{|z|-|x|}$ moving past $\Sigma^{|y|-|z|}$ and $\alpha$.

The differential $\mu_1:CF_*(K^{D^\bu},L^{\stl{\bu}{E}})\to CF_*(K^{D^\bu},L^{\stl{\bu}{E}})$ is defined on each summand by using both their \emph{internal} differential $\partial_x=\partial_{\Sigma^{|x|}\str{D}{x}\otimes \stl{x}{E}}$ and \emph{external} differentials $\tlar{y}{\mu}{x}{1}:\Sigma^{|x|}\tr{D}{x}\otimes \tl{x}{E}\to \Sigma^{|y|}\tr{D}{y}\otimes \tl{y}{E}$ defined by $\tlar{y}{\mu}{x}{1}(d\otimes e)=\tltr{y}{c}{x}(d\otimes e)$. The external differentials are not chain maps, however they are of degree minus one and satisfy
\begin{align}\label{equation:dmu1}
  \notag\partial_{\mathcal Hom}(\tlar{y}{\mu}{x}1):&=\partial_y\tlar{y}{\mu}{x}{1} +\tlar{y}{\mu}{x}{1}\partial_x\\
  \notag&=(-1)^{|y|-|x|}(\partial \tltr{y}{c}{x})(-)\\
  \notag &=(-1)^{|y|-|x|}\sum_{z}(-1)^{|z|-|y|}(\tltr ycz\times \tltr zcx)(-)\\
  \notag &=(-1)^{|y|-|x|}\sum_z(-1)^{|z|-|y|}(-1)^{|x|-|z|}\, \tlar{y}{\mu}{z}{1}\tlar{z}{\mu}{x}{1}\\
                                               &=\sum_z\tlar{y}{\mu}{z}{1}\tlar{z}{\mu}{x}{1}
\end{align}
where we in the second equality use that $\tltr yTx$ is a chain map and Leibniz rule and in the third equality use Equation~\ref{equation:dc_sign} and in the forth equality use Equation~\ref{equation:floer_associativity}.
We now define
\begin{align*}
  \mu_1=\sum_x\partial_x-\sum_{x,y}\tlar{y}{\mu}{x}{1},
\end{align*}
where we use the convention that $\partial_x$ and $\tlar{y}{\mu}x1$ is zero on all summands except $\Sigma^{|x|}\tr{D}x\otimes \tl{x}E$. We get $\mu_1^2=0$ since
\begin{align*}
  \blbr{y}{(\mu_1^2)}{x}&=-\partial_y\tlar{y}{\mu}{x}{1}-\tlar{y}{\mu}{x}{1}\partial_x+\sum_{z} \tlar{y}{\mu}{z}{1} \tlar{z}{\mu}{x}{1}=0
\end{align*}
 where $\blbr{y}{(\mu_1^2)}{x}$ denotes $\mu_1^2$ restricted to $\Sigma^{|x|}\tr{D}x\otimes \tl{x}E$ and image projected to $\Sigma^{|y|}\tr{D}y\otimes \tl{y}E$. All the terms cancel by Equation~\ref{equation:dmu1}.

\begin{lemma} \label{lem:article:8}
  Any weak equivalence $\tl{\bu}{E} \to \tl{\bu}{E'}$ induces a quasi isomorphism $CF_*(K^{D^\bu},L^{\stl{\bu}{E}}) \to  CF_*(K^{D^\bu},L^{\stl{\bu}{E'}})$.
\end{lemma}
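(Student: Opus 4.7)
The approach is to exploit an action filtration on the Floer complex. Since $K$ and $L$ are compact and meet transversely, $K\cap L$ is finite, so enumerate its points $x_1,\dots,x_N$ with $\mathcal{A}(x_1)\leq\cdots\leq \mathcal{A}(x_N)$ and define an ascending filtration
\[
F_p\,CF_*(K^{D^\bu},L^{\stl{\bu}{E}})\;:=\;\bigoplus_{i\leq p}\Sigma^{|x_i|}\tr{D}{x_i}\otimes \tl{x_i}{E},
\]
with the analogous filtration on the complex built from $E'$. The first step is to verify $\mu_1$-stability: the internal piece $\partial_{x_i}$ preserves each summand, and for the external piece $\tlar{x_j}{\mu}{x_i}{1}$ to be nonzero there must exist a nonconstant $J$-holomorphic strip from $x_i$ to $x_j$. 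Then the energy identity $\mathcal{A}(x_j)=\mathcal{A}(x_i)-\int u^*\omega$ combined with positivity of symplectic area forces $\mathcal{A}(x_j)<\mathcal{A}(x_i)$ strictly and hence $j<i$, so the external piece sends $F_p$ into $F_{p-1}$.

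The map induced by $\tl{\bu}{E}\to \tl{\bu}{E'}$ is manifestly filtration preserving, and on the associated graded $F_p/F_{p-1}\cong \Sigma^{|x_p|}\tr{D}{x_p}\otimes \tl{x_p}{E}$ only $\partial_{x_p}$ survives; the induced map there is $\mathrm{id}\otimes(\tl{x_p}{E}\to \tl{x_p}{E'})$. By hypothesis the second factor is a quasi isomorphism, and since $\F$ is a field, tensoring over $\F$ with the chain complex $\Sigma^{|x_p|}\tr{D}{x_p}$ preserves quasi isomorphisms (Künneth). So the map of associated gradeds is a quasi isomorphism in every filtration degree.

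Because the filtration has only $N$ nontrivial steps, one can either invoke the comparison theorem for strongly convergent spectral sequences (the setting from Section~\ref{sec:notation-conventions}, which in the ungraded case uses the periodic embedding of $\Ch_\F^{\ug}\hookrightarrow \Ch_\F$) or argue by induction on $p$ via the five lemma applied to the long exact sequences of $0\to F_{p-1}\to F_p\to F_p/F_{p-1}\to 0$; either way the original map is a quasi isomorphism. I do not expect a real obstacle here: the one nonformal input is the strict positivity of symplectic area for nonconstant strips, which is standard Floer input already implicit in the setup of $\mathcal{A}$.
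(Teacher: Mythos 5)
Your proposal is correct and expands exactly the argument the paper gestures at: the paper's proof is the single sentence ``This follows easily using the action filtration and the fact that fibers are quasi isomorphic,'' and you have filled in the same action-filtration plus associated-graded comparison, including the strict area-positivity used to see the external pieces drop filtration degree. No comparison needed; this is the same route.
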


\begin{proof}
  This follows easily using the action filtration and the fact that fibers are quasi isomorphic. 
\end{proof}

Given two Lagrangians $K,L\subset W$ we define (after perturbing one of the Lagrangians and $J$) the important bi path local system
\begin{align}
  \CF{K}{L} =CF_*(K^{\sbpb{K}},L^{\sbpb{L}})=\bigoplus_{x\in K\cap L}\Sigma^{|x|}\tltr{\bu}{PK}{x} \otimes \tltr{x}{PL}{\bu}
\end{align} \vspace{-0.6cm}

\noindent
in $\tltr{K}{\GG}{L}$. The parallel transport is defined by $\gamma(\sigma\otimes \tau)=(-1)^{|\gamma||x|}\gamma\sigma \otimes \tau$ and $(\sigma\otimes \tau)\gamma=\sigma \otimes \tau\gamma$, where the sign for the left parallel transport comes from moving $\gamma$ past the suspension $\Sigma^{|x|}$.

This bi path local system is important as it is somewhat universal. Indeed, we have
\begin{align} \label{eq:article:14}
  \tr{D}{\bu} \und{PK} \otimes  \CF{K}{L} \und{PL} \otimes \tl{\bu}{E} \cong CF_*(K^{D^\bu},L^{\stl{\bu}{E}}).
\end{align}

\begin{lemma}\label{lemma:article:floer-bimodule-is-semifree}
  The bi path local system $\CF{K}{L}$ is semi-free.
\end{lemma}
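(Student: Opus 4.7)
The plan is to verify the pointwise definition of semi-free directly: show that $\tltr{\bu}{\CF{K}{L}}{y}$ is a semi-free left path local system for every $y\in L$, and by the mirror argument that $\tltr{x}{\CF{K}{L}}{\bu}$ is a semi-free right path local system for every $x\in K$. I focus on the first.

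Fix $y\in L$. Recall
\[
  \tltr{\bu}{\CF{K}{L}}{y} \;=\; \bigoplus_{x\in K\cap L} \Sigma^{|x|}\, \tltr{\bu}{PK}{x} \otimes \tltr{x}{PL}{y},
\]
where the left action of paths in $K$ touches only the first tensor factor $\tltr{\bu}{PK}{x}$. The key observation is to filter by the \emph{simplicial} dimension of chains in the second (inert) tensor factor: let $F_n \tltr{x}{PL}{y}$ be the subspace spanned by singular simplices on $\tltr{x}{\PP L}{y}$ of dimension at most $n$. Since the boundary operator strictly lowers simplicial dimension, this is a subcomplex, and I put
\[
  D_n \;:=\; \bigoplus_{x\in K\cap L} \Sigma^{|x|}\, \tltr{\bu}{PK}{x} \otimes F_n \tltr{x}{PL}{y} \;\subset\; \tltr{\bu}{\CF{K}{L}}{y}.
\]
This is automatically a sub-left-path-local-system because the $PK$-action does not touch the second factor, and clearly $D_{-1}=0$ and $\bigcup_n D_n = \tltr{\bu}{\CF{K}{L}}{y}$.

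The successive quotient $D_n/D_{n-1}$ identifies with $\bigoplus_{x\in K\cap L} \Sigma^{|x|}\, \tltr{\bu}{PK}{x} \otimes C_n(\tltr{x}{\PP L}{y})$, with vanishing induced differential on the right factor in the associated graded (any contribution from the right-hand $d$ lands in $F_{n-1}$ and is killed). Since $C_n(\tltr{x}{\PP L}{y})$ is $\F$-free on the set of singular $n$-simplices, this is visibly a direct sum of shifts $\Sigma^{|x|+n} \tltr{\bu}{PK}{x}$, one for each intersection point and each $n$-simplex of $\tltr{x}{\PP L}{y}$, and so is free as a left path local system. The right-hand statement follows by the symmetric argument, filtering the first tensor factor $\tltr{\bu}{PK}{x}$ by simplicial dimension.

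There is no deep obstacle: the argument is powered entirely by the observation that the singular chain functor $C_*$ produces complexes that are canonically filtered by simplicial dimension with free $\F$-module associated graded, and that the bimodule structure on $\CF{K}{L}$ is assembled by tensoring free path local systems on the outside with inert chain complexes in the middle, so the filtration commutes with the relevant one-sided action. Both the graded and ungraded cases go through identically, since in the ungraded setting the suspensions become the identity functor and no signs intervene.
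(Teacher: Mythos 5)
Your filtration $D_n$ is not a sub-left-path-local-system of $\tltr{\bu}{\FF(K,L)}{y}$, because the Floer differential $\mu_1$ on $\tltr{\bu}{\FF(K,L)}{y}$ is not merely the tensor/direct-sum differential. Beyond the internal differentials $\partial_x$ on each summand, $\mu_1$ also contains the \emph{external} terms $\tlar{x'}{\mu}{x}{1}$, which send the summand indexed by an intersection point $x$ to the summand indexed by $x'$ with strictly smaller action. Concretely, such a term pushes forward a fundamental chain $\tltr{x'}{c}{x}\in C_k(\tltr{x'}{\MM}{x})$ ($k\leq m$) along $\ev_K\times\ev_L$, applies Alexander--Whitney, and then concatenates (via Eilenberg--Zilber) the $\ev_L$-part with the existing chain in $\tltr{x}{PL}{y}$. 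This concatenation can \emph{raise} the simplicial degree in the $PL$-factor by as much as $m$. Thus $\mu_1$ maps $D_n$ into $D_{n+m}$ but not into $D_n$, and your filtration fails to be a filtration by subcomplexes.

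This is precisely the issue the paper's proof addresses with its offset: it filters by $G^{p-i(x)(m+1)}(\tltr{x}{PL}{y})$, where the intersection points are ordered by non-decreasing action and $i(x)$ records that order. When the external differential passes from $x$ to $x'$ the index $i$ drops by at least $1$, so the offset increases by at least $m+1$, which compensates for the simplicial degree increase of at most $m$. Without the action-indexed shift, your filtration would only work if the differential had no external part, i.e.\ if you were filtering a genuine tensor product rather than a Floer complex. The rest of your argument (the associated graded being free, the observation that the left $PK$-action touches only the first tensor factor, the mirror argument for the right side) is correct and matches the paper once the filtration is fixed.
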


\begin{proof}
  We will prove that $\tltr{\bu}{\FF(K,L)}{y}$ in $\tl{K}{\GG}$ is semi-free. The other case is similar.

  Let $m$ be an upper bound on the dimension of all moduli spaces involved in defining the complex (by Gromov compactness this exists). Order the finitely many intersection points $x\in K\cap L$ in an order of non decreasing action $x_0,\dots, x_k$ and define $i(x_j)=j$. Consider the filtration
  \begin{align*}
    F^p(\tltr{\bu}{\FF(K,L)}{y}) = \bigoplus_{x\in K\cap L}\Sigma^{|x|}\tltr{\bu}{PK}{x} \otimes G^{p-i(x)(m+1)}(\tltr{x}{PL}{y})
  \end{align*}
  where $G^q$ is the simplex degree filtration on the singular simplicial complex. Any differential has to lower the action and can increase the simplicial degree by at most $m$. It follows that the graded quotient of this filtration is a direct sum
  \begin{align*}
    \bigoplus_{x\in K\cap L}\Sigma^{|x|}\tltr{\bu}{PK}{x} \otimes (G^{p-i(x)(m+1)}(\tltr{x}{PL}{y})/G^{p-i(x)(m+1)-1}(\tltr{x}{PL}{y}))
  \end{align*}
  where now the direct sum is both as chain complexes and left path local systems (unlike those above). We also note that the differential on the last tensor factor is zero - hence this sum is free.
\end{proof}

As we are using homological convention we now define a co-product
\begin{align*}
  \mu_2: \CF{K}{M} \to \CF{K}{L} \und{ PL}\otimes \CF{L}{M}
\end{align*}
which we note maps to the global tensor and not the regular tensor. To ease notation, we go through the definition without having the left and right actions along for the ride, and equivalently define it as
\begin{align*}
  \mu_2:CF_*(K^{D^\bu},M^{\stl{\bu}{E}})&\to CF_*(K^{D^\bu},L^{\stltr{\bu}{PL}{\bu}})\und{ PL}\otimes CF_*(L^{\stltr{\bu}{PL}{\bu}},M^{\stl{\bu}{E}})\\
  &\cong \bigoplus_{\substack{y\in K\cap L\\z\in L\cap M}}\Sigma^{|y|}\Sigma^{|z|}\tr{D}{y}\otimes \tltr{y}{PL}{z}\otimes \tl{z}{E}
\end{align*}
where the isomorphism sends $d\otimes \gamma_1\otimes \gamma_2\otimes e$ to $(-1)^{(|d|+|\gamma_1|)|z|}d\otimes \gamma_1\gamma_2\otimes e$, the sign is due to moving $\Sigma^{|z|}$ past $D^y\otimes \tltr{y}{PL}{\bullet}$. This isomorphism induces a differential on $\bigoplus\Sigma^{|y|}\Sigma^{|z|}\tr{D}{y}\otimes \tltr{y}{PL}{z}\otimes \tl{z}{E}$ in the following way. Similar to above we define the chain maps
\begin{align*}
  \tltr{y'}{\widetilde T}{y}:&\Sigma^{|y'|-|y|} C_*(\tltr{y'}{\,\MM}{y}) \otimes \Sigma^{|y|}\Sigma^{|z|}\tr D{y}\otimes \tltr {y}{PL}z\otimes \tl zE\\
                            &\to\Sigma^{|y|}\Sigma^{|y'|-|y|}\Sigma^{|z|}\tr D{y}\otimes  C_*(\tltr{y'}{\,\MM}{y})\otimes \tltr {y}{PL}z\otimes \tl zE\\
  &\to \Sigma^{|y'|}\Sigma^{|z|}\tr D{y'}\otimes \tltr {y'}{PL}z\otimes \tl zE
\end{align*}
and
\begin{align*}\tltr{z'}{\,\widehat T}{z}:&\Sigma^{|z'|-|z|}C_*(\tltr{z'}{\,\MM}{z})\otimes  \Sigma^{|y|}\Sigma^{|z|}\tr Dy\otimes \tltr y{PL}{z}\otimes \tl {z}E\\
  &\to \Sigma^{|y|}\Sigma^{|z|}\Sigma^{|z'|-|z|}\tr Dy\otimes \tltr y{PL}{z}\otimes C_*(\tltr{z'}{\,\MM}{z})\otimes \tl {z}E\\
  &\to \Sigma^{|y|}\Sigma^{|z'|}\tr D{y}\otimes \tltr {y}{PL}{z'}\otimes \tl {z'}E.
\end{align*}
Denote $\tltr{y'}{\widetilde{T}}{y}(\alpha\otimes d\otimes \gamma\otimes e)=\alpha(d\otimes \gamma\otimes e)$, $\tltr{z'}{\widehat{T}}{z}(\alpha\otimes d\otimes \gamma\otimes e)=\alpha(d\otimes \gamma\otimes e)$ and $\tlar{y'}{\widetilde \mu}{y}{1}(d\otimes \gamma\otimes e)=\tltr{y'}{c}{y}(d\otimes \gamma\otimes e)$, $\tlar{z'}{\widehat \mu}{z}{1}(d\otimes \gamma\otimes e)=\tltr{z'}{c}{z}(d\otimes \gamma\otimes e)$. The induced differential $\mu_1'$ on $\bigoplus\Sigma^{|y|}\Sigma^{|z|}\tr{D}y\otimes \tltr{z}{PL}{z}\otimes \tl{z}E$ then becomes
\begin{align*}
  \mu_1'&=\sum_{y,z}\partial_{y,z}- \sum_{y,y',z}\tlar{y'}{\widetilde \mu}{y}{1} - \sum_{y,z,z'}\tlar{z'}{\widehat \mu}{z}{1}
\end{align*}
where $\partial_{y,z}=\partial_{\Sigma^{|y|}\Sigma^{|z|}\str{D}y\otimes \stltr{y}{PL}{z}\otimes \stl{z}{E}}$.

Note that our $\mu_2$ lands in a tensor product over $PL$ which is somewhat different than the standard $\mu_2$ in the Fukaya category. However, in the next section we discuss another variant of $\mu_2$ which is slightly closer to the usual one.
\begin{figure}[ht]
  \begin{center}
   \begin{tikzpicture}
      \fill[lightgray] (0,0) circle (1cm);
      \draw[thick] (0,0) circle (1cm);
      \fill (1,0) circle (2pt) node[right] {$x$};
      \fill (-120:1) circle (2pt)  node[below left] {$y$};;
      \fill (120:1) circle (2pt) node[above left] {$z$};;
      \draw (-60:1) node[below] {$K$};
      \draw (-1,0) node[left] {$L$};
      \draw (60:1) node[above] {$M$};
    \end{tikzpicture}
  \end{center}
  \caption{Floer co-product} \label{Fig:FLoercoprod}
\end{figure}
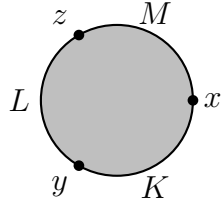
We define our $\mu_2$ by considering the compactified moduli spaces $\tltltr{z}{y}{\,\MM}{x}$ of pseudo holomorphic discs with three marked points as in Figure~\ref{Fig:FLoercoprod}. The codimension one boundary of this consists of strips bubbling of at one of the three points and the orientations of these moduli spaces can as before be chosen coherently, see Section~12b in \cite{MR2441780}. Assume that we have already inductively picked fundamental chains $\tlar{y}{c}{x}{k} \in \tltr{y}{\MM}{x}$ for the three involved Floer moduli spaces. We again inductively in dimension pick fundamental chains $\tltlar{z}{y}{\,c}{x}{k}\in C_k(\tltltr{z}{y}{\,\MM}{x})$ for these spaces extending the choices on their boundaries so that they satisfy
\begin{align}\label{article:mu2_signs}
  \notag \partial \tltlar{z}{y}{\,c}{x}{k}&=\sum_{\substack{x'\in K\cap M\\k_1+k_2=k-1}}(-1)^{k_1+1}\, \tltlar{z}{y}{\,c}{x'}{k_1}\times \tlar{x'}{c}{x}{k_2}\\
                   \notag &\quad+ \sum_{\substack{y'\in K\cap L\\k_1+k_2=k-1}}(-1)^{|z|(k_1+1)}\,\tlar{y}{c}{y'}{k_1} \times \tltlar{z}{y'}{c}{x}{k_2}\\
                                                                               &\quad + \sum_{\substack{z'\in L\cap M\\k_1+k_2=k-1}}\tlar{z}{c}{z'}{k_1} \times \tltlar{z'}{y}{c}{x}{k_2}.
\end{align}
We can now define chain maps
\begin{align*}
  \tltltr{z}{y}{\,T}{x}:&\Sigma^{|y|+|z|-|x|}C_*(\tltltr{z}{y}{\,\MM}{x})\otimes \Sigma^{|x|}\tr{D}x\otimes \tl{x}E\\
  &\to \Sigma^{|x|}\Sigma^{|y|+|z|-|x|}\tr{D}x\otimes C_*(\tltltr{z}{y}{\,\MM}{x})\otimes  \tl{x}E\\
  &\to \Sigma^{|y|}\Sigma^{|z|}\tr Dy\otimes \tltr y{PL}z\otimes \tl zE
\end{align*}by rearranging the tensor factors and suspensions, taking the push forward of $\ev : \tltltr{z}{y}{\,\MM}{x} \to \tltr{x}{\mathcal PK}{y} \times \tltr{y}{\mathcal PL}{z} \times \tltr{z}{\mathcal PM}{x}$, applying the Alexander-Whitney map, and parallel transporting in $D$ and $E$. We denote $\tltltr zyTx(\alpha\otimes d\otimes e)=\alpha(d\otimes e)$ and have the following associativity
\begin{align*}
  \alpha(\beta(d\otimes e))=(-1)^{(|y|+|z|-|x'|+|\alpha|)(|x'|-|x|)}(\alpha\times \beta)(d\otimes e)
\end{align*}
for $\alpha\in C_*(\tltltr{z}{y}{\,\MM}{x'})$, $\beta\in C_*(\tltr{x'}{\,\MM}{x})$, the sign is due to $\Sigma^{|x'|-|x|}$ moving past $\Sigma^{|y|+|z|-|x'|}$ and $\alpha$,
\begin{align*}
  \alpha(\beta(d\otimes e))=(-1)^{(|y|-|y'|+|\alpha|)(|y'|-|x|)+|\alpha||z|}(\alpha\times \beta)(d\otimes e)
\end{align*}
for $\alpha\in C_*(\tltr{y}{\,\MM}{y'})$, $\beta\in C_*(\tltltr{z}{y'}{\,\MM}{x})$, the sign is due to $\Sigma^{|y'|-|x|}$ moving past $\Sigma^{|y|-|y'|}$ and $\alpha$, and $\Sigma^{|z|}$ moves past $\alpha$,
\begin{align*}
  \alpha(\beta(d\otimes e))=(-1)^{(|z|-|z'|+|\alpha|)(|y|+|z'|-|x|)}(\alpha\times \beta)(d\otimes e)
\end{align*}
for $\alpha\in C_*(\tltr{z}{\,\MM}{z'})$, $\beta\in C_*(\tltltr{z'}{y}{\,\MM}{x})$, the sign is due to $\Sigma^{|y|+|z'|-|x|}$ moving past $\Sigma^{|z|-|z'|}$ and $\alpha$.

We then let $\tltlar{z}{y}{\mu}{x}{2}=\tltltr{z}y{\,c}{x}(d\otimes e)$.
As for $\mu_1$ we use that $\tltltr{z}{y}{\,T}{x}$ is a chain map, Leibniz rule, Equation~\ref{article:mu2_signs} and the associativity above to get the formula
\begin{align}\label{equation:dmu2}
  \notag\partial_{\mathcal Hom}\tltlar{z}y{\mu}{x}{2}:&=\partial_{y,z}\tltlar{z}y{\mu}{x}{2}-\tltlar{z}y{\mu}{x}{2}\partial_x \\
  \notag&=(-1)^{|y|+|z|-|x|}(\partial \tltltr{z}{y}{c}{x})(-)\\
  \notag&=\sum_{x'}(-1)^{|x'|-|x|+1}\, \tltlar{z}y{\mu}{x'}{2}\tlar{x'}{\mu}{x}{1}\\
                   \notag&\quad+ \sum_{y'}(-1)^{|y|-|y'|}\, \tlar{y}{\widetilde \mu}{y'}{1}\tltlar{z}{y'}{\mu}{x}{2}\\
                                                                                                 &\quad + \sum_{z'}(-1)^{|z|-|z'|}\,\tlar{z}{\widehat \mu}{z'}{1} \tltlar{z'}{y}{\mu}{x}{2}.
\end{align}
Now define
\begin{align*}
  \mu_2=\sum_{x,y,z}(-1)^{|x|-|y|-|z|}\,\tltlar{z}{y}{\mu}{x}{2},
\end{align*}
where $\tltlar{z}{y}{\mu}x2$ is zero on all summands except $\Sigma^{|x|}\tr{D}x\otimes \tl{x}E$.

From Equation~\ref{equation:dmu2} it follows that $\mu_2$ is a chain map
\begin{align*}
  \blbr{y,z}{(\mu_1\mu_2)}{x}&=(-1)^{|x|-|y|-|z|}\partial_{y,z} \tltlar{z}y{\mu}{x}{2}\\
  &\quad- \sum_{y'}(-1)^{|x|-|y'|-|z|}\,\tlar{y}{\widetilde \mu}{y'}{1}\tltlar{z}{y'}{\mu}{x}{2}-\sum_{z'}(-1)^{|x|-|y|-|z'|}\,\tlar{z}{\widehat \mu}{z'}{1}\tltlar{z'}{y}{\mu}{x}{2}\\
                                 &=(-1)^{|x|-|y|-|z|}\,\tltlar{z}y{\mu}{x}{2}\partial_x-\sum_{x'}(-1)^{|x'|-|y|-|z|}\,\tltlar{z}y{\mu}{x'}{2}\tlar{x'}{\mu}{x}{1}=\blbr{y,z}{(\mu_2\mu_1)}{x}
\end{align*}
where $\blbr{y,z}{(\mu_1\mu_2)}{x}$ is $\mu_1\mu_2$ with input in $\Sigma^{|x|}\tr{D}x\otimes \tl{x}E$ and output in $\Sigma^{|y|}\Sigma^{|z|}\tr{D}y\otimes \tltr{y}{PL}{z}\otimes \tl{z}E$ and similarly for $\blbr{y,z}{(\mu_2\mu_1)}{x}$.

\subsection{\texorpdfstring{A $\mu_2$ variant}{A mu\_2 variant}}\label{section:mu_variant}
The above defined $\mu_2$ involves paths in the middle Lagrangian. This is not exactly the usual $\mu_2$, however augmenting away these paths, we get something more recognizable. We call the following composition $\mu_2^{\mathbb F}$
\begin{align*}
  \mu_2^{\mathbb F}:CF_*(K^{\str{D}\bullet}, M^{\stl\bullet E})&\xrightarrow{\mu_2} CF_*(K^{D^\bu},L^{\stltr{\bu}{PL}{\bu}})\und{ PL}\otimes CF_*(L^{\stltr{\bu}{PL}{\bu}},M^{\stl{\bu}{E}})\\
                                             &\to CF_*(K^{D^\bu},L^{\stl{\bu}{\F}})\otimes CF_*(L^{\str{\F}{\bu}},M^{\stl{\bu}{E}})\\
  &\cong \bigoplus_{\substack{y\in K\cap L\\z\in L\cap M}}\Sigma^{|y|}\Sigma^{|z|}\tr{D}{y}\otimes \tl{z}{E}
\end{align*}
where the second map simply sends $d\otimes \gamma_1\otimes \gamma_2\otimes e$ to $\epsilon(\gamma_1\gamma_2)d\otimes e$ and the last isomorphism sends $d\otimes e$ to $(-1)^{|d||z|}d\otimes e$ where the sign is due to the suspension $\Sigma^{|z|}$ moving past $\tr Dy$.

\begin{remark} \label{rem:article:1}
  We note in particular that when $D^\bu = \F^\bu$ and $\tl{\bu}{E}=\tl{\bu}{\F}$ this recovers usual Floer complexes and $\mu_2$ as a co-product.
\end{remark}

\subsection{Continuation maps}\label{section:continuation}
As our definition of $\mu_2$ is a slightly different than usual, it is a bit difficult to describe its co-unit. It is therefore at this point more convenient to define continuation maps using continuity properties.

Let $K\hookrightarrow W$ be an exact Lagrangian embedding with primitive $f_K$ such that $df_K=\lambda_{|K}$ and $L_t\hookrightarrow W$, $t\in \R$ a one parameter family of exact Lagrangian embeddings with a primitive $f_{L_0}$ such that $df_{L_0}=\lambda_{|L_0}$ on $L_0$. We assume that $L_t=L_0$ is constant for $t<\epsilon$ and $L_t=L_1$ is constant for $t>1-\epsilon$, for some small $\epsilon>0$. We also assume that both $L_0$ and $L_1$ are transverse to $K$. It is known that such a family $L_t$ can be realized as the flow of a Hamiltonian vector field $X_{H_t}$ defined by $\omega(X_{H_t},-)=-dH_t$ for a time dependent Hamiltonian $H_t:W\to \R$, furthermore we can assume that $H_t=0$ when $t\notin (\epsilon,1-\epsilon)$. We can now define primitives $f_{L_t}$ such that $df_{L_t}=\lambda_{|L_t}$ on $L_t$ by the equation
\begin{align*}
  f_{L_t}(\varphi^t x)=f_{L_0}(x)+\int_0^t\left(\lambda_{\varphi^\tau x}(X_\tau(\varphi^\tau x))-H_\tau(\varphi^\tau x)\right)d\tau, \quad x\in L_0
\end{align*}
where $\varphi^t$ denotes the time $t$ flow of $X_{H_t}$. Let $D^\bu\in \tr \GG{K}$, $\tl \bu E_0\in \tl {L_0}\GG$ and by the pullback of the map $(\varphi^t)^{-1}:L_t\to L_0$ we define $\tl \bu E_t\in \tl {L_t}\GG$. We will then define a chain map called the continuation map $C_{K,L_t}:CF_*(K^{D^\bu},L_1^{\stl \bu E_1})\to CF_*(K^{D^\bu},L_0^{\stl \bu E_0})$ which we in Proposition~\ref{prop:article:invariant_under_isotopy} will show is a quasi isomorphism.

The construction is as follows. From the data of $K$ and $L_t$ we will define exact Lagrangians $\widetilde K,\widetilde L\subset (W\times \R_t\times \R_y,\lambda\oplus \frac 12(tdy-ydt), J\oplus J_0)$ in a bigger space 
\begin{align*}
  &\widetilde{K}=K\times \{(t,R(t^2-t))\}\subset W\times \R^2,\\
  &\widetilde L=\{(x,t,H_t(x)),\ x\in L_t\}\subset W\times \R^2
\end{align*}
for a constant $R\gg 0$ so large that the only intersection points $(x,t,y)\in \widetilde K\cap \widetilde L$ occur when $t\in \{0,1\}$. We define a primitive on $\widetilde K$ by $f_{\widetilde K}(x,t,y)=f_K(x)+\frac R6 t^3$ and a primitive on $\widetilde L$ by $f_{\widetilde L}(x,t,y)=f_{L_t}(x)+\frac 12 ty$. In case of vanishing Maslov class and a choice of gradings on $K$ and $L_t$ varying continuously in $t$, we can induce gradings on $\widetilde K$ and $\widetilde L$ such that $\deg_{\widetilde K, \widetilde L}(z,0,0)=\deg_{K,L_0}(z)$ for $z\in K\cap L_0$ and $\deg_{\widetilde K,\widetilde L}(z,1,0)=\deg_{K,L_1}(z)+1$ for $z\in K\cap L_1$. In case we have relative pin structures on $K$ and $L_t$ varying continuously in $t$ we can induce relative pin structures on $\widetilde K$ and $\widetilde L$, this way we can orient moduli spaces and work with signs.

By the pullback of $\tr D\bu$ through the projection $\widetilde K\to K$ we define $\tr {\widetilde D}\bu\in \tr \GG {\widetilde K}$ and by the pullback of $\tl\bu{E_0}$ through the composition $\widetilde L\to \bigcup L_t\to L_0$ we define $\tl \bu {\widetilde E}\in \tl {\widetilde L}\GG$.

Now consider the Floer complex $CF_*(\widetilde K^{\str {\widetilde D}\bu},\widetilde L^{\stl \bu {\widetilde E}})$ as defined in Equation~\ref{equation:floer_complex}, we choose fundamental chains on moduli spaces of strips extending the choices of fundamental chains used to define the differential in $CF_*(K^{D^\bu},L_0^{\stl \bu E_0})$ and $CF_*(K^{D^\bu},L_1^{\stl \bu E_1})$. More precisely there is a bijection of moduli spaces $\tlar{(y,t,0)}\MM{(x,t,0)}{\widetilde K,\widetilde L}\cong \tlar{y}\MM{x}{K,L_t}$ for $t\in\{0,1\}$ and we require $\tlar{(y,0,0)}c{(x,0,0)}{\widetilde K,\widetilde L}=\tlar y c x {K,L_0}$ and $\tlar{(y,1,0)}c{(x,1,0)}{\widetilde K,\widetilde L}=-\tlar y c x {K,L_1}$, the minus is because the moduli spaces $\tlar{(y,1,0)}\MM{(x,1,0)}{\widetilde K,\widetilde L}$ and $\tlar{y}\MM{x}{K,L_1}$ have different orientations, see Section~12b in \cite{MR2441780}. The rest of the fundamental chains on the moduli spaces are as usual required to satisfy Equation~\ref{equation:dc_sign} and can again be achieved by picking the fundamental chains inductively on the dimension of moduli spaces. By projection of curves in $W\times \R^2$ to $\R^2$ and by monotonicity the differential on $CF_*(\widetilde K^{\str {\widetilde D}\bu},\widetilde L^{\stl \bu {\widetilde E}})$ squares to zero and takes the form
\begin{align*}
  \begin{pmatrix}
    (\mu_{1})_{K,L_0}&C_{K,L_t}\\
    0&-(\mu_{1})_{K,L_1}
  \end{pmatrix}
\end{align*}
in a basis where the generators in $W\times \{0\}\times\{0\}$ come before the generators in $W\times \{1\}\times\{0\}$, for more sophisticated versions of monotonicity see Appendix~\ref{sec:monotonicity}. The map $C_{K,L_t}:CF_*(K^{D^\bu},L_1^{\stl \bu {E_1}})\to CF_*(K^{D^\bu},L_0^{\stl \bu {E_0}})$ is called a continuation map and is a chain map since the matrix describing the differential on $CF_*(\widetilde K^{\str {\widetilde D}\bu},\widetilde L^{\stl \bu {\widetilde E}})$ squares to zero.
\begin{remark}\label{rem:article:sign_continuation}
  We could have defined the continuation map $C_{K,L_t}$ with the opposite sign and all results in this section would still hold.
\end{remark}

\begin{lemma}\label{lem:article:constant_isotopy}
  If $L_t=L$ is the constant isotopy, then $C_{K,L_t}:CF_*(K^{D^\bu},L^{\stl \bu E})\to CF_*(K^{D^\bu},L^{\stl \bu E})$ is a quasi isomorphism.
\end{lemma}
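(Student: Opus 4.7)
For constant isotopy $L_t\equiv L$ we have $H_t\equiv 0$, so $\widetilde L=L\times\R\times\{0\}$ and $\widetilde K=K\times C$ (with $C=\{(t,R(t^2-t))\}$) are \emph{product} Lagrangians in $(W\times\R^2,\lambda\oplus\tfrac{1}{2}(tdy-ydt))$. The plan is to pick a split almost complex structure $\widetilde J=J\oplus J_0$, decompose each pseudo-holomorphic strip as $u=(u_W,u_{\R^2})$, and exploit the fact that the moduli space of unparameterized strips in $\R^2$ between $C$ and $\R\times\{0\}$ going from $(1,0)$ to $(0,0)$ is a single regular point $\phi$ (by the Riemann mapping theorem), while those with equal endpoints are empty.

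I would take the coherent fundamental chains on the moduli spaces in $W\times\R^2$ to be products of the fundamental chains already fixed for $(K,L)$ with the canonical chain on $\{\phi\}$. Since $\tr{\widetilde D}{\bu}$ and $\tl{\bu}{\widetilde E}$ are pulled back along the projection to $W$, the paths in $\R^2$ act trivially on fibers, and the continuation map $C_{K,L_t}$ decomposes into two kinds of contributions: the strip with $u_W$ constant at $x$ paired with $\phi$, which yields $\pm d\otimes e$ on an input $d\otimes e$ over $x$; and non-constant Floer strips in $W$ with boundary on $(K,L)$ paired with $\phi$, whose outputs lie in summands over $y\ne x$ with strictly smaller action $\mathcal{A}_{K,L}(y)<\mathcal{A}_{K,L}(x)$, since every non-constant pseudo-holomorphic strip has positive energy.

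I would then filter $CF_*(K^{D^\bu},L^{\stl\bu E})$ by the action $\mathcal{A}_{K,L}$; as $K\cap L$ is finite this filtration is bounded and exhaustive. Both $\mu_1$ and $C_{K,L_t}$ preserve this filtration, and on each associated graded piece $\Sigma^{|x|}\tr{D}{x}\otimes\tl{x}{E}$ with its internal differential $\partial_x$ the map $C_{K,L_t}$ reduces to $\pm\id$, which is a quasi-isomorphism. By the strong convergence of the resulting spectral sequence (as recorded in Section~\ref{sec:notation-conventions}), $C_{K,L_t}$ is a quasi-isomorphism on the whole complex.

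The main obstacle is to justify the product decomposition of moduli spaces under the split $\widetilde J$: one must either verify that $\widetilde J=J\oplus J_0$ already achieves transversality for the constant isotopy (using that $\phi$ is automatically regular), or perturb within product almost complex structures in such a way that the ``$\pm\id$ plus strictly lower-action'' shape of $C_{K,L_t}$ survives.
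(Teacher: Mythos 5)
Your argument is essentially the paper's proof, unpacked: the paper likewise appeals to the product form of the moduli spaces in $W\times\R^2$, to the action filtration, and to the observation that the constant-in-$W$ component forces the parallel transport to run along constant paths, giving $\pm\id$ on page one of the spectral sequence; so this is the same route, just with the Riemann-mapping identification of the $\R^2$-strip $\phi$ made explicit.

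The ``main obstacle'' you flag is not in fact an obstacle: the construction in Section~\ref{section:continuation} already declares the almost complex structure on $W\times\R^2$ to be the split $J\oplus J_0$, and for split $\widetilde J$ the products $(u_W,\phi)$ are automatically regular. Indeed, the linearized operator splits as $D_{u_W}\oplus D_\phi$; the factor $D_\phi$ is surjective by automatic transversality for holomorphic discs in a real $2$-manifold, and for $u_W=c_x$ constant at a transverse intersection point the operator $D_{c_x}$ has index $0$ with trivial kernel (a constant section lies in $T_xK\cap T_xL=\{0\}$), hence is an isomorphism, while for nonconstant $u_W$ regularity was already arranged by the small perturbations used to define $CF_*(K^{D^\bu},L^{\stl\bu E})$. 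So the split $\widetilde J$ is admissible as is, and the upper-triangular-with-diagonal-$\pm\id$ shape of $C_{K,L_t}$ is exactly what the paper's one-paragraph proof asserts.
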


\begin{proof}
  In the constant case we can pick $H_t=0$ and $f_{L_t}=f_{L_0}$. As the moduli spaces are on product form (before modding out by $\R$) it follows using action filtration on the original complex, that the map $C_{K,L_t}$ is upper triangular with plus or minus the identity on each generator on page one of the spectral sequence. Indeed, the map is simply given by parallel transporting along a constant path coming from the projection to $W$ of the boundary of the discs $\{x\}\times \{R(t^2-t)\leq y\leq 0\}\subset W\times \R^2$ where $x\in K\cap L$. Hence $C_{K,L_t}$ is a quasi isomorphism.
\end{proof}

\begin{lemma}\label{lem:article:general_homotopic_isotopeis}Let $L_{s,t}\hookrightarrow W$, $t,s\in \R$ be a homotopy (in $s$) of isotopies of exact Lagrangian embeddings so that $L_{s,t}=L_{s,0}$ for $t<\epsilon$, $L_{s,t}=L_{s,1}$ for $t>1-\epsilon$, $L_{s,t}=L_{0,t}$ for $s<\epsilon$ and $L_{s,t}=L_{1,t}$ for $s>1-\epsilon$. We assume that $K$ is transverse to $L_{0,0}$, $L_{0,1}$, $L_{1,0}$ and $L_{1,1}$. Let $\tr {D}\bu\in \tr{\GG}K$ and $\tl \bu{E_{0,0}}\in \tl{L_{0,0}}\GG$ and as before by pullback define $\tl \bu{E_{s,t}}\in \tl{L_{s,t}}\GG$. Then the compositions of continuation maps $C_{K,L_{0,t}}C_{K,L_{s,1}}\simeq C_{K,L_{s,0}}C_{K,L_{1,t}}$ are chain homotopic.
\end{lemma}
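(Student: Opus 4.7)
I adapt the doubling construction of Section~\ref{section:continuation} to the two-parameter family $L_{s,t}$, so that a single application of $\mu_1^2=0$ yields the desired chain homotopy.

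First, I enlarge the ambient space from $W\times\R^2$ to $W\times\R^4$ with coordinates $(s,u,t,y)$ and Liouville form $\lambda\oplus\tfrac12(sdu-uds)\oplus\tfrac12(tdy-ydt)$. Set
\begin{align*}
\widetilde{\widetilde K}&=K\times\{(s,R(s^2-s),t,R(t^2-t)):s,t\in\R\},\\
\widetilde{\widetilde L}&=\{(x,s,S_{s,t}(x),t,H_{s,t}(x)):(s,t)\in\R^2,\ x\in L_{s,t}\},
\end{align*}
where $H_{s,t}$ (resp.\ $S_{s,t}$) is a Hamiltonian generating the flow carrying $L_{s,0}$ to $L_{s,t}$ (resp.\ $L_{0,t}$ to $L_{s,t}$). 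Using that $L_{s,t}$ is constant in $(s,t)$ near the boundary of $[0,1]^2$, a short computation shows that $\widetilde{\widetilde L}$ is an exact Lagrangian submanifold admitting a primitive built as a two-variable analogue of the integral in Section~\ref{section:continuation}. For $R\gg 0$ the only intersections of $\widetilde{\widetilde K}$ and $\widetilde{\widetilde L}$ lie over the four corners of $[0,1]^2$, where they recover $K\cap L_{i,j}$ for $(i,j)\in\{0,1\}^2$. Continuously varying gradings and pin structures on the family $L_{s,t}$ induce ones on $\widetilde{\widetilde L}$ exactly as before.

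Second, I pull back $D^\bu$ to $\widetilde{\widetilde D}$ on $\widetilde{\widetilde K}$ via the projection to $K$ and pull back $E_{0,0}$ to $\widetilde{\widetilde E}$ on $\widetilde{\widetilde L}$ via the retraction $\widetilde{\widetilde L}\to L_{s,t}\to L_{0,0}$ obtained by composing the canonical flow-inverses. I then choose fundamental chains on the strip moduli spaces for $(\widetilde{\widetilde K},\widetilde{\widetilde L})$ inductively in dimension, arranging that along each of the four edges of the square they agree (up to the expected orientation sign) with the fundamental chains defining the four single-parameter continuation maps $C_{K,L_{s,0}}$, $C_{K,L_{1,t}}$, $C_{K,L_{0,t}}$, $C_{K,L_{s,1}}$, and along each of the four corners they agree with the chains defining the corresponding $\mu_1$.

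Third, applying monotonicity (Appendix~\ref{sec:monotonicity}) to the two projections $W\times\R^4\to\R^2$ independently, the differential on $CF_*(\widetilde{\widetilde K}^{\str{\widetilde{\widetilde D}}\bu},\widetilde{\widetilde L}^{\stl\bu{\widetilde{\widetilde E}}})$ is block upper triangular with respect to the partial order $(0,0)<(1,0),(0,1)<(1,1)$ on the corners. The four diagonal blocks are the Floer differentials at the corners (with signs as in Section~\ref{section:continuation}), the four edge blocks recover the four continuation maps (with signs), and the unique remaining off-diagonal block from $(1,1)$ to $(0,0)$ is a map $H:CF_*(K^{D^\bu},L_{1,1}^{\stl\bu{E_{1,1}}})\to CF_*(K^{D^\bu},L_{0,0}^{\stl\bu{E_{0,0}}})$.

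Finally, reading the $(0,0),(1,1)$-entry of $\mu_1^2=0$ yields an identity of the form
\begin{align*}
(\mu_1)_{00}H\pm H(\mu_1)_{11}\pm C_{K,L_{0,t}}C_{K,L_{s,1}}\pm C_{K,L_{s,0}}C_{K,L_{1,t}}=0,
\end{align*}
which, after absorbing signs into $H$ (using the freedom of Remark~\ref{rem:article:sign_continuation}), is precisely the required chain homotopy. I expect the main obstacle to be arranging $H_{s,t}$, $S_{s,t}$, and the primitive on $\widetilde{\widetilde L}$ coherently so that $\widetilde{\widetilde L}$ is genuinely exact Lagrangian with the prescribed corner behaviour; the remaining monotonicity and sign tracking are straightforward but lengthy extensions of the one-parameter computation already carried out in Section~\ref{section:continuation}.
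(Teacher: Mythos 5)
Your proposal is correct and takes essentially the same approach as the paper: double into $W\times\R^4$, argue by monotonicity that $\mu_1$ is block upper-triangular in the ordering $(0,0),(0,1),(1,0),(1,1)$ of the corners, and read the chain homotopy off the $(0,0)$--$(1,1)$ entry of $\mu_1^2=0$. The paper phrases the Lagrangian in $W\times\R^4$ as the result of iterating the one-variable doubling of Section~\ref{section:continuation} rather than by your direct two-variable formula; the two descriptions agree, and the coherence you flag as the main obstacle is automatic once $H_{s,t}$ and $S_{s,t}$ are extracted from a single smooth two-parameter family $\phi_{s,t}$ of Hamiltonian diffeomorphisms (available since the square is contractible), because equality of mixed partials then yields $\partial_tS_{s,t}-\partial_sH_{s,t}=\{S_{s,t},H_{s,t}\}$ up to an $(s,t)$-dependent constant that can be absorbed, which is exactly what makes $\widetilde{\widetilde L}$ Lagrangian and identifies your formula with the iterated construction.
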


\begin{proof} By the construction above we for each fixed $s$ get exact Lagrangians $\widetilde K,\widetilde L_{s}\subset W\times \R^2$. Repeating this process we get exact Lagrangians $\widehat K,\widehat L\subset W\times \R^4$ and a chain complex $CF_*(\widehat K^{\str {\widehat D}\bu},\widehat L^{\stl \bu {\widehat E}})$ where the differential by monotonicity squares to zero and takes the form
  \begin{align*}
    \begin{pmatrix}
      (\mu_1)_{K,L_{0,0}}&C_{K,L_{0,t}}&C_{K,L_{s,0}}&h\\
      0&-(\mu_1)_{K,L_{0,1}}&0&C_{K,L_{s,1}}\\
      0&0&-(\mu_1)_{K,L_{1,0}}&-C_{K,L_{1,t}}\\
      0&0&0&(\mu_1)_{K,L_{1,1}}
    \end{pmatrix}.
  \end{align*}
  The element in the top right corner of the square of the matrix should be zero so
  \begin{align*}
    (\mu_1)_{K,L_{0,0}}h+C_{K,L_{0,t}}C_{K,L_{s,1}}-C_{K,L_{s,0}}C_{K,L_{1,t}}+h(\mu_1)_{K,L_{1,1}}=0
  \end{align*}
  and hence $h:CF_*(K^{D^\bu},L_{1,1}^{\stl \bu E_{1,1}})\to \Sigma CF_{*}(K^{D^\bu},L_{0,0}^{\stl \bu E_{0,0}})$ is a chain homotopy between $C_{K,L_{0,t}}C_{K,L_{s,1}}$ and  $C_{K,L_{s,0}}C_{K,L_{1,t}}$.
\end{proof}

\begin{proposition}\label{prop:article:invariant_under_isotopy}
  The continuation map
  \begin{align*}
    C_{K,L_{t}}:CF_*(K^{D^\bu},L_{0}^{\stl \bu E_{0}})\to CF_*(K^{D^\bu},L_{1}^{\stl \bu E_{1}})
  \end{align*}
  is a quasi isomorphism and hence the homology of the above defined Floer complexes are invariant under isotopies of exact Lagrangians.
\end{proposition}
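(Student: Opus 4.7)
The plan is to construct a homotopy quasi-inverse to $C_{K,L_t}$ by applying Lemma~\ref{lem:article:general_homotopic_isotopeis} to two carefully chosen 2-parameter families of exact Lagrangians. In each case the lemma will identify a composition of $C_{K,L_t}$ with the continuation map of the reversed isotopy $L_{1-t}$, up to chain homotopy, with a composition of continuation maps for constant isotopies, which is a quasi-isomorphism by Lemma~\ref{lem:article:constant_isotopy}.

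First I would define the continuation map $C_{K,L_{1-t}}$ by applying the construction of Section~\ref{section:continuation} to the reversed isotopy $L_{1-t}$, which runs from $L_1$ at $t=0$ to $L_0$ at $t=1$; this is a chain map between the same two Floer complexes as $C_{K,L_t}$ but in the opposite direction.

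Next I would consider the 2-parameter family $L_{s,t}$ defined by $L_{0,t}=L_t$, $L_{1,t}\equiv L_0$, $L_{s,0}\equiv L_0$, and $L_{s,1}=L_{1-s}$. The four corners of the square are $L_0,L_1,L_0,L_0$, so the transversality hypotheses of the lemma hold, and the interior is filled by interpolating the underlying Hamiltonians with the required $\epsilon$-collaring. Applying Lemma~\ref{lem:article:general_homotopic_isotopeis} yields a chain homotopy
\[
    C_{K,L_t}\circ C_{K,L_{1-t}} \;\simeq\; C_{K,L_{s,0}}\circ C_{K,L_{1,t}},
\]
whose right-hand side is a composition of continuation maps for the constant isotopy at $L_0$, hence a quasi-isomorphism by Lemma~\ref{lem:article:constant_isotopy}. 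So $C_{K,L_t}\circ C_{K,L_{1-t}}$ is a quasi-isomorphism. Applying the symmetric square with the roles of $L_0$ and $L_1$ interchanged (say $L_{0,t}=L_{1-t}$, $L_{1,t}\equiv L_1$, $L_{s,0}\equiv L_1$, $L_{s,1}=L_s$) in the same way shows that $C_{K,L_{1-t}}\circ C_{K,L_t}$ is also a quasi-isomorphism. Since $C_{K,L_t}$ thus admits a homotopy quasi-inverse on both sides, its induced map on homology is both injective and surjective, so $C_{K,L_t}$ itself is a quasi-isomorphism.

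The main technical obstacle is verifying that the two 2-parameter families can be realized concretely as smooth families of Hamiltonian isotopies satisfying the collaring conditions built into Lemma~\ref{lem:article:general_homotopic_isotopeis}. This is a routine smoothing: one interpolates the prescribed boundary Hamiltonians via a smooth cutoff on the unit square, and no further transversality adjustment is required since the four corners only involve the transverse pairs $(K,L_0)$ and $(K,L_1)$. One should also check that the induced pullback local systems $\tl\bu{E_{s,t}}$ on $L_{s,t}$ assemble coherently over the 2-parameter family, but this is immediate from the functoriality of pullback along homotopic families of diffeomorphisms.
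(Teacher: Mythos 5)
Your proposal is correct and follows essentially the same route as the paper: you construct the identical two-parameter family $L_{s,t}$ (with $L_{0,t}=L_t$, $L_{1,t}\equiv L_0$, $L_{s,0}\equiv L_0$, $L_{s,1}=L_{1-s}$), invoke Lemma~\ref{lem:article:general_homotopic_isotopeis} to obtain the chain homotopy identifying $C_{K,L_t}\circ C_{K,L_{1-t}}$ with a composition of two constant-isotopy continuation maps, apply Lemma~\ref{lem:article:constant_isotopy}, and then run the symmetric argument for the other composition. This matches the paper's proof in both structure and key ingredients.
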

\begin{proof}
  Let $L'_t=L_{1-t}$ be the reverse isotopy. Create a two parameter family $L_{s,t}\hookrightarrow W$ of exact Lagrangian embeddings so $L_{s,t}=L_0$ for $t<\epsilon$, $L_{s,t}=L'_s$ for $t>1-\epsilon$, $L_{s,t}=L_t$ for $s<\epsilon$ and $L_{s,t}=L_0$ for $s>1-\epsilon$. By Lemma~\ref{lem:article:general_homotopic_isotopeis} $C_{K,L_{t}}C_{K,L'_t}\simeq C_{K,L_{0}}C_{K,L_{0}}$ are chain homotopic, $C_{K,L_{0}}$ is the continuation map defined from the constant isotopy and hence a quasi isomorphism by Lemma~\ref{lem:article:constant_isotopy}, and therefore $C_{K,L_{t}}C_{K,L'_t}$ is a quasi isomorphism. Similarly $C_{K,L'_{t}}C_{K,L_t}$ is a quasi isomorphism and the proposition follows.
\end{proof}

\section{Cosimplicial local systems} \label{sec:cosimplicial-local}

Another model for generalized (aka derived) local systems that we will employ is the notion of cosimplicial local systems. We call them this as they are reminiscent of co-sheaves modeled on semi-simplicial sets.

Let $S$ be a semi-simplicial set (no simplicial degeneracies). Recall that this is the same as a contravariant functor $S : \Delta \to \Set$ where $\Delta$ has one object $\un n=\{0,1,\dots,n\}$ for each natural number $n$ and morphisms given by \emph{injective} increasing maps $\sigma : \un n \to \un m$. The elements in the set $S(\un p)$ are called the $p$-simplices or $p$-cells in $S$.

If $S(\sigma)(\alpha)=\beta$ we write $\beta \subset \alpha$ and call it a sub-face, but note that usually (but not with the assumptions below) the same $\beta$ can be a sub-face in $\alpha$ in multiple ways. Maps of semi-simplicial sets are the same as natural transformations of such functors. The standard semi-simplicial $p$ simplex $s\Delta^p$ can be defined as the functor $\Hom_\Delta(- ,\un p)$. For each $p$-simplex $\alpha \in S(\un p)$ we thus have a canonical semi-simplicial map
\begin{align*}
  i_\alpha : s\Delta^p \to S
\end{align*}
given by the maps $\Hom_\Delta(\un q, \un p) \to S(\un q)$ natural in $\un q$ sending $\sigma$ to $S(\sigma)(\alpha)$. So, e.g. the identity on $\un p$ is mapped to $\alpha$. In fact there is a natural bijection between $p$-simplices in $S$ and semi-simplicial maps $s\Delta^p \to S$. We will also consider the boundary $\partial \alpha$, by which we mean the image of $i_\alpha$ on $s\Delta^p$ with the top simplex $\id_{\un p}$ removed.

From now on we will assume that semi-simplicial sets, denoted by $S$ (or sometimes $T$) satisfy
\begin{itemize}[topsep=7pt,leftmargin=40pt]
\item[$\qquad$\textbf{S1:}] The semi-simplicial set is connected, injective and have contractible stars $s(\alpha)$. 
\end{itemize}
Here injective means that all $p$-simplices have $p+1$ distinct corners and for all $p+1$-tuples $(\alpha_0,\dots,\alpha_p)\in S(\un 0)$ of $0$-cells, there should exist at most one $p$-simplex with corners $(\alpha_0,\dots,\alpha_p)$. The star $s(\alpha)$ for each $\alpha \in S$ is defined to be the closure of the union of simplices $\beta$ which intersects $\alpha$ non-trivially. Here closure means taking the closure under face maps (which is equivalent to the topological closure after geometric realization). The property S1 implies for instance that $|i_\alpha|:|\alpha| \to |S|$ is an embedding.

Let $\CC(S)$ be the category with an object for each simplex $\alpha$ in $S$ and a morphism $\alpha \to \beta$ if $\alpha\subset \beta$. A map of semi-simplicial sets $S\to T$ induces a functor $\CC(S) \to \CC(T)$. The geometric realization $|S|$ of $S$ is defined as the colimit in spaces
\begin{align*}
  |S|=\colim_{\CC(S)} \Delta^{-}
\end{align*}
where $\Delta^{-} : \CC(S) \to \top$ sends a $q$ simplex to the topological $q$ simplex $\Delta^q$ and each map to the associated maps on the simplices. Note that any Cw complex $X$ is homotopy equivalent to the geometric realization of a semi-simplicial set $S$ satisfying S1 above. Indeed, taking the singular simplices in $X$ and forgetting degeneracies we may take the thrice iterated barycentric sub division to obtain such (we will not need nor prove this but in Section~\ref{sec:simpl-morse-smale} we recall the barycentric subdivision).

The category $\GG_S$ of right cosimplicial local systems on $S$ will be defined to be a full subcategory of functors and natural transformations
\begin{align*}
  \GG_S \subset \Fun(\CC(S) , \Ch).
\end{align*}
The maps in the image of such a functor are called \emph{co-face} maps. We write such a functor $\br A\bu \in \Fun(\CC(S),\Ch)$ evaluated on a simplex $\alpha \in S$ as $A_\alpha$ and we will call this the \emph{fiber} of $A$ at $\alpha$. For any semi-simplicial map $f:T \to S$ we define the pullback $f^*A$ by composing with the induced functor, which yields $(f^*A)_{\alpha} = A_{f(\alpha)}$. We also define the total complex of $A$ (analogous to geometric realization) as the colimit
\begin{align*}
  \tot(\br A\bu) = \colim_{\CC(S)} \br A\bu
\end{align*}
This is not so well-behaved (homotopically) for general such functors $A$. E.g. if $A_\alpha=\F$ with all maps identities then the total complex is just $\F$. However if we e.g. put
\begin{align*}
  A_\alpha=C_*^\Delta(s\Delta^{\deg(\alpha)})
\end{align*}
(which is quasi isomorphic to $\F$) with the natural inclusions used as the co-face maps then $\tot(A) \cong C_*^\Delta(S)$.

We could work with homotopy colimits to fix this issue. However, this would obscure and make certain arguments later much harder. So, instead we restrict to the following more homotopically well-behaved sub-categories.
\begin{definition}
  A right \emph{pre-cosimplicial} local system $A$ is a functor $\CC(S) \to \Ch$ such that for any sub-semi-simplicial set $S_1\subset S$ the map $\tot(A_{\mid S_1})\to \tot(A)$ is injective. We denote the category of these and natural transformations by $\pGG_S$.

  A right \emph{cosimplicial} local system $A$ is a pre-cosimplicial local system such that each map $A_{\alpha_0}\to A_\beta$ for $\alpha_0$ any zero cell in $\beta$ is a quasi isomorphism. The category of these are denoted $\GG_S$.

  A weak equivalence $A \to B$ in $\GG_S$ or $\pGG_S$ is a map (natural transformation) such that each map $A_\alpha \to B_\alpha$ is a quasi isomorphism.
\end{definition}

The following lemma is immediate as $S$ is assumed to be connected.

\begin{lemma} \label{lem:lemmas:single_fiber}
  Let $A,B\in \GG_S$. If $f:A\to B$ is a quasi isomorphism on a single fiber $\alpha\in S$ then $f$ is a weak equivalence.
\end{lemma}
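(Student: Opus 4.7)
The plan is to reduce the statement to showing that $f$ is a quasi-isomorphism on every $0$-cell, and then to propagate this property from a $0$-face of $\alpha$ to all other $0$-cells via connectedness of $S$.

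The key observation I would use is that by the very definition of $\GG_S$, for any simplex $\beta$ and any $0$-face $\beta_0\subset\beta$, the co-face maps $A_{\beta_0}\to A_\beta$ and $B_{\beta_0}\to B_\beta$ are quasi-isomorphisms. Combined with naturality of $f$, this gives a commutative square whose two horizontal arrows are quasi-isomorphisms, so the two-out-of-three property forces $f_\beta$ to be a quasi-isomorphism if and only if $f_{\beta_0}$ is. Hence it suffices to verify that $f_x$ is a quasi-isomorphism for every $0$-cell $x\in S(\un 0)$.

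Starting from the given hypothesis at $\alpha$, I would pick any $0$-face $\alpha_0\subset\alpha$; the reduction above immediately yields that $f_{\alpha_0}$ is a quasi-isomorphism. To reach an arbitrary $0$-cell $x$, I would invoke connectedness of $S$: since $|S|$ is a connected CW complex, its $1$-skeleton is connected as a graph, so there is a chain of $1$-simplices $e_1,\dots,e_k$ whose $0$-faces interpolate a sequence $\alpha_0=y_0,y_1,\dots,y_k=x$ with $y_{i-1},y_i\subset e_i$. Applying the reduction to each edge $e_i$ in turn, $f_{y_{i-1}}$ being a quasi-isomorphism forces $f_{e_i}$ to be one, which then forces $f_{y_i}$ to be one, and a straightforward induction concludes.

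I do not anticipate any serious obstacle. The only mildly non-trivial bookkeeping is verifying that connectedness of $S$ gives a chain of edges between any two $0$-cells, which is a standard consequence of cellular approximation in the $1$-skeleton of a connected CW complex. Everything else is purely formal propagation by two-out-of-three, using only the defining hypothesis that co-face maps from $0$-cells are quasi-isomorphisms.
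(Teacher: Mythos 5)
Your argument is correct and is exactly what the paper has in mind: the paper gives no proof, simply declaring the lemma ``immediate as $S$ is assumed to be connected,'' and your write-up spells out the two-out-of-three reduction to $0$-cells followed by propagation along a chain of edges. No gap.
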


We will later work mostly with cosimplicial local systems, but as we will need pre-cosimplicial local systems at a few points we prove several lemmas also for these. The difference between $A$ being a cosimplicial local system as opposed to a pre-cosimplicial local system is essentially that the homology is locally constant. In Section~\ref{sec:equiv-glps-gcss} we describe a way of turning a pre-cosimplicial local system into an actual cosimplicial local system (a sort of fibrant replacement).

The badly behaved example above is not even a pre-cosimplicial local system. Indeed, for $\alpha \in S$ a 1 simplex the map $\tot(A_{\mid \partial \alpha}) =\F\oplus\F \to \F=\tot(A_{\mid\alpha})$ is not injective.

For any $A\in\pGG_S$ we define the \emph{top} of $A$ at $\alpha$ to be
\begin{align*}
  \ov A_\alpha = \Sigma^{-\deg(\alpha)}(A_\alpha / \tot(A_{\mid\partial \alpha})).
\end{align*}
We may filter $\tot(A)$ by simplex degree. I.e. $F^p\tot(A)=\tot(A_{\mid S[p]})$ where $S[p]\subset S$ denotes the $p$-skeleton. The following is immediate from these definitions and the part in Section~\ref{sec:notation-conventions} about convergence of spectral sequences.

\begin{lemma} \label{lem:article:14}
  For any $A \in \pGG_S$ the simplex filtration induces a spectral sequence converging to $H_*(\tot(A))$ whose page 0 is given by the direct sum of chain complexes
  \begin{align*}
    \bigoplus_{\alpha} \ov A_\alpha.
  \end{align*}
\end{lemma}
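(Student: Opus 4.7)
The plan is to verify that the simplex-degree filtration $F^p\tot(A)=\tot(A_{\mid S[p]})$ is a well-behaved filtration of $\tot(A)$, identify its associated graded as a direct sum over simplices, and apply the convergence criterion of Theorem~6.1 in \cite{MR1718076} recalled in Section~\ref{sec:notation-conventions}. By the pre-cosimplicial condition applied to each skeleton $S[p]\subset S$, the map $\tot(A_{\mid S[p]})\hookrightarrow\tot(A)$ is injective, so the $F^p$ form an honest filtration by sub-chain-complexes, with $F^{-1}=0$ (empty colimit) and $\bigcup_pF^p=\tot(A)$ since every simplex lies in some skeleton. Hence the filtration is exhaustive, Hausdorff, and half plane, so the cited theorem gives strong convergence to $H_*(\tot(A))$.

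The heart of the argument is identifying page $0$. For each $p$ I would show there is a pushout square of chain complexes with corners $\bigoplus_\alpha \tot(A_{\mid \partial\alpha})$ (top-left), $\bigoplus_\alpha A_\alpha$ (top-right), $F^{p-1}$ (bottom-left) and $F^p$ (bottom-right), with $\alpha$ ranging over $p$-simplices. The top map uses the identification $\tot(A_{\mid\ov\alpha})=A_\alpha$, which holds because $\alpha$ is the terminal object of $\CC(\ov\alpha)$ and colimits over categories with a terminal object are computed as the value there; the pre-cosimplicial property makes this top map an injection. The left map comes from $\partial\alpha\subset S[p-1]$. That this square is a pushout is verified directly via the universal property: giving a cocone on $A_{\mid S[p]}$ is the same as giving a cocone on $A_{\mid S[p-1]}$ together with a map $A_\alpha\to W$ for each $p$-simplex $\alpha$ that agrees with the $(p-1)$-skeleton cocone on the boundary, which is exactly the data of a map out of the pushout.

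From the pushout, modding $F^p$ by $F^{p-1}$ kills the image from the left column and leaves
\begin{align*}
  F^p/F^{p-1}\cong \bigoplus_{\alpha:\,\deg\alpha=p} A_\alpha/\tot(A_{\mid\partial\alpha}),
\end{align*}
using injectivity of $\bigoplus_\alpha\tot(A_{\mid\partial\alpha})\hookrightarrow\bigoplus_\alpha A_\alpha$. With the suspension convention $\ov A_\alpha=\Sigma^{-\deg\alpha}(A_\alpha/\tot(A_{\mid\partial\alpha}))$, which is the standard re-indexing placing filtration column $p$ at chain complex degree matching total degree, this becomes $\bigoplus_{\alpha:\deg\alpha=p}\ov A_\alpha$ in column $p$ of $E_0$; summing over $p$ yields the claim. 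The main obstacle is the pushout identification: it is where the pre-cosimplicial assumption earns its keep, both by realizing $\tot(A_{\mid\partial\alpha})$ as a genuine sub-chain-complex of $A_\alpha$ and by ensuring the associated graded splits cleanly over $p$-simplices rather than picking up extra contributions from kernels of co-face maps.
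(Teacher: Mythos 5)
Your proof is correct and is exactly the verification the paper leaves implicit: it offers no proof of this lemma beyond the remark preceding it that the claim is "immediate from these definitions and the part in Section~\ref{sec:notation-conventions} about convergence of spectral sequences." Your pushout identification of $F^p/F^{p-1}$ via the pre-cosimplicial injectivity condition, together with the recalled half-plane convergence criterion, spells out precisely what that remark is invoking.
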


It follows that page 1 as an $\F$ vector space is given in degree $(p,q)$ by
\begin{align*}
  \bigoplus_{|\alpha|=p} H_q(\ov A_\alpha)
\end{align*}
and the general structure means that the differential on each summand is a sum of maps
\begin{align} \label{eq:article:3}
  d_i : H_*(\ov A_\alpha) \to H_*(\ov A_{\partial_i\alpha})
\end{align}
where $\partial_i\alpha$ denotes the $i$th codimension 1 face. For $A \in \GG_S$ we will see below that the maps $d_i$ are all isomorphisms and hence we can define $H_*(A)$ as a (classical) local system of (possibly graded) homology groups on $S$.

\begin{remark} \label{rem:article:2}
  A good example to keep in mind is if $E \to |S|$ is a Serre fibration then we may define
  \begin{align*}
    A_\alpha = C_*(|i_\alpha|^*E)
  \end{align*}
  where $|i_\alpha| : \Delta^{\deg(\alpha)} \to |S|$ is the inclusion of the standard simplex associated to $\alpha$. In this example the following lemma recovers the Serre spectral sequence.  
\end{remark}

\begin{lemma}\label{lem:lemmas:spectral_sequence_for_total_space}
  For $A\in \GG_S$ the maps $d_i$ defined above are isomorphisms and the spectral sequence in Lemma~\ref{lem:article:14} has page 1 isomorphic to $C_*^\Delta(S;H_*(A))$.
\end{lemma}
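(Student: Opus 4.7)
The plan is to prove by induction on the simplex dimension $p = \deg(\alpha)$ the stronger claim that for every $\alpha \in S$ there is a natural isomorphism $H_*(\ov{A}_\alpha) \cong H_*(A_\alpha)$, from which both assertions of the lemma follow. A preliminary observation: a $2$-out-of-$3$ argument applied to the triangle $A_{v_0} \to A_\beta \to A_\gamma$ (for any vertex $v_0$ of $\beta \subset \gamma$) upgrades the cosimplicial hypothesis so that \emph{every} face inclusion $A_\beta \to A_\gamma$ is a quasi-isomorphism. The base case $p = 0$ is immediate: $\partial \alpha = \varnothing$ gives $\ov{A}_\alpha = A_\alpha$.

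For the inductive step, fix a $p$-simplex $\alpha$. The restriction $A_{\mid \partial \alpha}$ is cosimplicial on $\partial \alpha$, and pre-cosimpliciality is inherited because $\tot(A_{\mid T}) \hookrightarrow \tot(A)$ factors through $\tot(A_{\mid \partial \alpha})$ for any $T \subset \partial \alpha$. Applying the lemma inductively to $A_{\mid \partial \alpha}$ gives $E^1 \cong C_*^\Delta(\partial \alpha; H_*(A_{\mid \partial \alpha}))$ for its spectral sequence, and since every face inclusion into $\alpha$ is a quasi-isomorphism, the coefficient local system is canonically trivialized to the constant system with fiber $H_*(A_\alpha)$; thus $E^2 \cong H_*(\partial \Delta^p) \otimes H_*(A_\alpha) \cong H_*(A_\alpha) \oplus \Sigma^{p-1} H_*(A_\alpha)$. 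Granting collapse at $E^2$ (see below), this equals $H_*(\tot(A_{\mid \partial \alpha}))$. The pre-cosimplicial assumption then provides
\begin{equation*}
0 \to \tot(A_{\mid \partial \alpha}) \to A_\alpha \to A_\alpha/\tot(A_{\mid \partial \alpha}) \to 0,
\end{equation*}
and since the composition $A_{v_0} \hookrightarrow \tot(A_{\mid \partial \alpha}) \to A_\alpha$ is a quasi-isomorphism for any vertex $v_0 \in \partial \alpha$, the induced map $H_*(\tot(A_{\mid \partial \alpha})) \to H_*(A_\alpha)$ is surjective with kernel precisely the $\Sigma^{p-1} H_*(A_\alpha)$ summand. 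The long exact sequence then identifies $H_q(A_\alpha/\tot(A_{\mid \partial \alpha})) \cong H_{q-p}(A_\alpha)$, and after applying $\Sigma^{-p}$ this yields the desired natural isomorphism $H_*(\ov{A}_\alpha) \cong H_*(A_\alpha)$.

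The induction produces $E^1 \cong C_*^\Delta(S; H_*(A))$ as graded vector spaces; upgrading this to an identification of differentials, and verifying that each individual $d_i$ is an isomorphism, is done by tracking the connecting homomorphism: the component of $d_i$ landing in $H_*(\ov{A}_{\partial_i \alpha})$ corresponds under our identifications to the face-inclusion isomorphism $H_*(A_\alpha) \cong H_*(A_{\partial_i \alpha})$ (up to a sign absorbed by the suspensions), so each $d_i$ is an isomorphism and $d^1 = \sum_i (-1)^i d_i$ is precisely the simplicial boundary with coefficients in $H_*(A)$.

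The hardest ingredient is the spectral-sequence collapse claim invoked in the inductive step. For $p \leq 2$ this is automatic, but for $p \geq 3$ higher differentials have the correct bidegree to potentially be nonzero. My intended workaround is a Mayer--Vietoris decomposition of $\partial \alpha$ as the union of the contractible star of a chosen vertex $v_0$ and the opposite face $\alpha^{(0)} \cong s\Delta^{p-1}$, with intersection $\partial \alpha^{(0)}$. The total complex of $A$ restricted to $\alpha^{(0)}$ equals $A_{\alpha^{(0)}}$ since $\alpha^{(0)}$ is terminal in $\CC(s\Delta^{p-1})$; the total complex on the star is quasi-isomorphic to $A_{v_0}$ by an analogous sub-induction exploiting that the star has $v_0$ as an initial object and all face-inclusion maps are quasi-isomorphisms; and the intersection $\partial \alpha^{(0)}$ is handled by the previous step of the main induction. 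The pre-cosimplicial hypothesis supplies the Mayer--Vietoris short exact sequence whose long exact sequence computes $H_*(\tot(A_{\mid \partial \alpha}))$ directly, avoiding any higher-differential analysis.
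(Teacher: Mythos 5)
Your strategy diverges from the paper's. The paper inducts on the $p$-skeleton of $S$ and uses the chain of inclusions $\tot(A_{\mid h_i\alpha})\subset\tot(A_{\mid\partial\alpha})\subset A_\alpha$; the long exact sequence of this triple, combined with the inductively-established fact that $\tot(A_{\mid h_i\alpha})$ has the homology of a single fiber, directly shows each $d_i$ is an isomorphism (the ``$A_\alpha$ mod horn'' terms vanish), after which the identification of page $1$ with $C_*^\Delta(S;H_*(A))$ is assembled. You instead try to compute $\tot(A_{\mid\partial\alpha})$ by a Mayer--Vietoris decomposition and then deduce the claims about $d_i$; this is a more roundabout route, and it is where the gap sits.

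The concrete problem is your justification that $\tot(A_{\mid h_0\alpha})\simeq A_{v_0}$. You say this follows because ``the star has $v_0$ as an initial object.'' But the closed star of $v_0$ in $\partial\alpha$ \emph{is} the horn $h_0\alpha$, and it contains every proper face of the opposite face $\alpha^{(0)}$ (each such $\gamma$ is a subface of $\gamma\cup\{v_0\}\in h_0\alpha$), and none of those contain $v_0$. So $v_0$ is not an initial object of $\CC(h_0\alpha)$. Moreover, even if $\CC(h_0\alpha)$ had $v_0$ as an initial object that would not compute the colimit $\tot$: colimits are read off from \emph{terminal} objects, which is exactly why $\tot(A_{\mid\overline{\alpha^{(0)}}})=A_{\alpha^{(0)}}$ simplifies and $\tot(A_{\mid h_0\alpha})$ does not. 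The assertion $\tot(A_{\mid h_0\alpha})\simeq A_{v_0}$ is true and is the single computational crux both proofs turn on, but it needs its own inductive argument: either iterate your Mayer--Vietoris on $h_0\alpha=\bigcup_{i\neq 0}\overline{\alpha^{(i)}}$, or strengthen the induction hypothesis (as the paper does implicitly) to the full lemma for $A$ restricted to any sub-semi-simplicial set of $S[p-1]$, so that applied to the contractible horn it gives fiber homology for free. Relatedly, your stated induction hypothesis (only ``$H_*(\ov A_\alpha)\cong H_*(A_\alpha)$ for $\deg\alpha<p$'') is too weak to feed the Mayer--Vietoris step, which also needs the computed value of $H_*(\tot(A_{\mid\partial\alpha^{(0)}}))$ at the previous level; inducting on the whole lemma over skeleta repairs this as well.
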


\begin{proof}
  The proof is by induction on the $p$-skeleton of $S$, over the $0$-skeleton the statement is trivial. Assume that the statement is true over the $p-1$ skeleton and consider $\alpha\in S(\un p)$. From the inclusions $\tot(A_{\mid h_i\alpha})\subset \tot(A_{\mid \partial \alpha})\subset \tot(A_{\mid \alpha})$ where $h_i\alpha$ is the $i$th horn of $\alpha$ we get a long exact sequence
  \begin{align*}
    H_{p+q}\frac{\tot(A_{\mid \alpha})}{\tot(A_{\mid h_i\alpha})}\to \underbrace{H_{p+q}\frac{\tot(A_{\mid \alpha})}{\tot(A_{\mid \partial \alpha})}}_{H_{q}(\ov A_\alpha)}\to \underbrace{H_{p+q-1}\frac{\tot(A_{\mid \partial \alpha})}{\tot(A_{\mid h_i\alpha})}}_{H_q(\ov A_{\partial_i\alpha})}\to H_{p+q-1} \frac{\tot(A_{\mid \alpha})}{\tot (A_{\mid h_i\alpha})}.
  \end{align*}
  By induction and the defining condition on $A \in \GG_S$ the homology of $\tot(A_{\mid h_i\alpha})$ is the same as the homology over a single fiber and hence the first and last term vanishes, therefore the map $d_i:H_q(\ov A_\alpha)\to H_q(\ov A_{\partial_i\alpha})$ is an isomorphism.
  
  The isomorphism from page one of $\tot(A_{\mid S[p]})$ to $C_p^\Delta(S; H_*(A))$,
  \begin{align*}
    E_{p,q}^1=\bigoplus_{|\alpha|=p} H_q(\ov A_\alpha)\to \bigoplus_{|\alpha|=p} H_q(A_\alpha)=C_p^\Delta(S; H_q(A)),
  \end{align*}
  is given by maps $H_q(\ov A_\alpha)\to H_q(A_\alpha)$ defined through the factorization
  \begin{align*}
    H_q(\ov A_\alpha)\xrightarrow{(-1)^{i_1+\dots +i_q}d_{i_q}\circ \dots\circ  d_{i_1}} H_q(\ov A_{\alpha_0})=H_q(A_{\alpha_0}) \to H_q(A_\alpha)
  \end{align*}
  where $\alpha_0$ is any zero simplex in $\alpha$ and $i_1,\dots,i_q$ is any sequence of integers so that the maps $d_{i_1},\dots,d_{i_q}$ are defined. Since the co-face maps satisfy the cosimplicial relations, the isomorphism is well defined and commutes with the differentials.
\end{proof}

\begin{corollary} \label{cor:lem:lemmas:1}
  Any weak equivalence $A \to B$ in $\pGG_S$ induces a quasi isomorphism on the total complexes $\tot A \to \tot B$.
\end{corollary}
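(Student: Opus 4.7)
The plan is to compare the strongly convergent spectral sequences of Lemma~\ref{lem:article:14} applied to $A$ and $B$. The map $f : A \to B$ respects the simplex-degree filtration and hence induces a map of these spectral sequences, so by strong convergence it suffices to check that the induced map on the $E^0$ page $\bigoplus_\alpha \ov A_\alpha \to \bigoplus_\alpha \ov B_\alpha$ is a quasi isomorphism, equivalently that $\ov A_\alpha \to \ov B_\alpha$ is one for every simplex $\alpha \in S$.

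I would prove this by induction on $p = \deg(\alpha)$, bundled together with the auxiliary statement that $\tot(A_{\mid T}) \to \tot(B_{\mid T})$ is a quasi isomorphism for every sub-semi-simplicial set $T \subset S$ of dimension at most $p$. The base case $p = 0$ is immediate, since $\ov A_\alpha = A_\alpha$ on $0$-cells and $\tot$ of a $0$-dimensional set is just a direct sum of fibers. For the inductive step, the defining pre-cosimplicial injectivity yields a short exact sequence of chain complexes
\[
0 \to \tot(A_{\mid \partial \alpha}) \to A_\alpha \to \Sigma^{p} \ov A_\alpha \to 0,
\]
and similarly for $B$, fitting into a commuting ladder. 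I would then apply the five lemma to the associated long exact homology sequences: the middle vertical map is a quasi isomorphism by hypothesis on $f$, and the left vertical map is a quasi isomorphism by the auxiliary statement at level $p - 1$ applied to the sub-semi-simplicial set $\partial \alpha$ (whose dimension is $p - 1$). This establishes the fiberwise statement at level $p$, and the $T$-statement at level $p$ then follows by applying the first-paragraph spectral sequence comparison to the sub-system $A_{\mid T}$.

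The main obstacle is the apparent circularity between the fiberwise statement for $\ov A_\alpha$ and the $\tot$-statement on the boundary $\partial \alpha$; running the two inductions jointly is precisely what breaks this circularity. Once the page-$0$ quasi isomorphism is known for all $\alpha$, the passage from the finite skeleta to the full $S$ is automatic from strong convergence of the original pair of spectral sequences, so no further argument is needed.
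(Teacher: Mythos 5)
Your proposal is correct but follows a different route from the paper. The paper's own proof is a two-line reduction: form the fiberwise mapping cone $M$ of $A \to B$; since each $M_\alpha$ is acyclic, $M$ lies not merely in $\pGG_S$ but in $\GG_S$, and Lemma~\ref{lem:lemmas:spectral_sequence_for_total_space} identifies page~$1$ of its simplex spectral sequence with $C_*^\Delta(S;H_*(M))=0$, so $\tot(M)$ is acyclic and the map on total complexes is a quasi isomorphism. You instead compare the two spectral sequences of Lemma~\ref{lem:article:14} directly and prove by a joint skeleton induction (fibers $\ov A_\alpha$ together with totals over sub-semi-simplicial sets of bounded dimension) that the page-$0$ map is a quasi isomorphism. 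Both arguments are sound. The trade-off is that the paper's proof is a one-liner modulo Lemma~\ref{lem:lemmas:spectral_sequence_for_total_space} — whose own proof is the skeleton induction you are essentially re-running — while your version is more self-contained in that it invokes only Lemma~\ref{lem:article:14} and the $\pGG_S$ injectivity axiom, avoiding the passage through $\GG_S$. The mapping-cone reduction has the additional mild virtue of not needing the five-lemma ladder with the short exact sequences $0 \to \tot(A_{\mid\partial\alpha}) \to A_\alpha \to \Sigma^{\deg\alpha}\ov A_\alpha \to 0$, though you have used that sequence (and the injectivity making it exact) correctly.
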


\begin{proof}
  The assumptions mean that the mapping cones $M_\alpha$ of each map $A_\alpha \to B_\alpha$ defines a fiber wise acyclic object $M \in \GG_S$ (not just $\pGG_S$). Its total complex is the cone of the total complexes hence also acyclic by the lemma above.
\end{proof}

\begin{corollary}\label{cor_tot_is_homotopical}
  The functors $\tot : \pGG_S \to \Ch$ and $\tot : \GG_S \to \Ch$ are homotopical (i.e. preserves weak equivalences).
\end{corollary}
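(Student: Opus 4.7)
The plan is to observe that this corollary is essentially an immediate repackaging of Corollary~\ref{cor:lem:lemmas:1}, which already does the substantive work via the acyclic-cone argument. By the definition given just before Lemma~\ref{lem:lemmas:single_fiber}, a weak equivalence in either $\pGG_S$ or $\GG_S$ means a natural transformation $A\to B$ such that $A_\alpha \to B_\alpha$ is a quasi isomorphism for every simplex $\alpha$, and a weak equivalence in $\Ch$ is a quasi isomorphism. So being homotopical for $\tot$ is exactly the statement: weak equivalences of (pre-)cosimplicial local systems go to quasi isomorphisms of chain complexes.

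First I would handle the $\pGG_S$ case: this is literally the content of Corollary~\ref{cor:lem:lemmas:1}, so I would simply invoke it. Next I would handle the $\GG_S$ case by observing that $\GG_S$ is by definition a full subcategory of $\pGG_S$ and that the two notions of weak equivalence agree under this inclusion (both are fiberwise quasi isomorphism), so any weak equivalence in $\GG_S$ is in particular a weak equivalence in $\pGG_S$, and the previous corollary applies.

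There is no real obstacle here; the actual work was done in Corollary~\ref{cor:lem:lemmas:1}, where one forms the fiberwise mapping cone $M_\alpha$ of $A_\alpha\to B_\alpha$, observes this cone lies in $\GG_S$ (being fiberwise acyclic, the co-face quasi isomorphism property of $\GG_S$ is automatic), and uses Lemma~\ref{lem:lemmas:spectral_sequence_for_total_space} (or more directly the convergence of the skeletal spectral sequence recalled in Lemma~\ref{lem:article:14}) to conclude $\tot M$ is acyclic, hence $\tot A\to \tot B$ is a quasi isomorphism. The present corollary just names the resulting property ``homotopical'' and records it in both the $\pGG_S$ and $\GG_S$ versions for later reference.
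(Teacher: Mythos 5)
Your proposal is correct and matches the paper's approach exactly: the corollary is indeed stated without a separate proof, being an immediate consequence of Corollary~\ref{cor:lem:lemmas:1} together with the observation that $\GG_S$ sits inside $\pGG_S$ with the same notion of weak equivalence. Your recap of the cone argument underlying Corollary~\ref{cor:lem:lemmas:1} is also accurate.
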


\subsection{Tensors of cosimplicial local systems} \label{sec:tensors}

There is again both a fiber-wise tensor product $\fwcs$ and a global tensor product $\und{S}\otimes$ on objects in $\GG_S$ (and $\pGG_S$) both of which we define in this section.

The fiber-wise tensor we denote $\fwcs$ and is defined simplex wise by
\begin{align*}
  (A\fwcs B)_\alpha = A_\alpha \otimes B_\alpha.
\end{align*}

\begin{lemma}
  Both $\pGG_S$ and $\GG_S$ are closed under the fiber wise tensor.
\end{lemma}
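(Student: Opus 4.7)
The lemma is two claims: closure of $\pGG_S$ and of $\GG_S$ under $\fwcs$. I will reduce the second to the first: for $A, B \in \GG_S$, each coface $A_{\alpha_0} \to A_\beta$ from a $0$-cell is a quasi-isomorphism, so by the Künneth theorem over a field, the tensor $(A \fwcs B)_{\alpha_0} = A_{\alpha_0} \otimes B_{\alpha_0} \to A_\beta \otimes B_\beta = (A \fwcs B)_\beta$ is also a quasi-isomorphism. Hence the $\GG_S$ case follows once the $\pGG_S$ case is established.

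For the $\pGG_S$ case, my strategy is to give an explicit vector-space decomposition of $\tot(A \fwcs B)$ that makes the required injectivity transparent. Since $A \in \pGG_S$, the map $\tot(A_{|\partial \alpha}) \hookrightarrow A_\alpha$ is injective for each simplex $\alpha$, so I fix a linear complement $C_\alpha \subset A_\alpha$; analogously fix $D_\alpha \subset B_\alpha$. By induction on the simplices of a sub-semi-simplicial set $T$ (attaching one maximal simplex at a time, and using that a pushout $X \sqcup_Z Y$ of vector spaces with $Z \hookrightarrow X$ and $Z \hookrightarrow Y$ splits canonically), I obtain the compatible vector-space identifications
$$A_\gamma = \bigoplus_{\gamma' \subseteq \gamma} C_{\gamma'}, \qquad \tot(A_{|T}) = \bigoplus_{\gamma' \in T} C_{\gamma'},$$
under which each structure map $A_\gamma \to A_\delta$ sends the summand $C_{\gamma'}$ to the copy of $C_{\gamma'}$ inside $A_\delta$. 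An analogous decomposition holds for $B$.

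Tensoring yields $(A \fwcs B)_\gamma = \bigoplus_{\gamma_1, \gamma_2 \subseteq \gamma} C_{\gamma_1} \otimes D_{\gamma_2}$, and the structure maps still respect this bigraded decomposition. Passing to the colimit then identifies
$$\tot\bigl((A \fwcs B)_{|T}\bigr) = \bigoplus_{\substack{(\gamma_1, \gamma_2)\\ \exists \gamma \in T,\ \gamma_1, \gamma_2 \subseteq \gamma}} C_{\gamma_1} \otimes D_{\gamma_2};$$
here distinct enclosing simplices yield identified copies because, by axiom S1, the simplex spanned by $\mathrm{corners}(\gamma_1) \cup \mathrm{corners}(\gamma_2)$ is a common sub-simplex of any two enclosing simplices, and it lies in $T$ since $T$ is closed under faces. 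For $T \subseteq S$, the indexing set on the right is a subset of the one for $S$, so the natural map $\tot((A \fwcs B)_{|T}) \to \tot(A \fwcs B)$ is the inclusion of sub-summands, hence injective.

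The main bookkeeping is establishing the compatible direct-sum decomposition of each $A_\gamma$ and each $\tot(A_{|T})$ in terms of the chosen complements, with structure maps preserving summands. This uses an inner induction together with the fact that $A_{\gamma'} \hookrightarrow A_\gamma$ for $\gamma' \subseteq \gamma$, which follows from $A \in \pGG_S$ applied to $\{\gamma'\}$ sitting inside $\partial \gamma$. Once these decompositions are in place, the colimit computation and the resulting injectivity claim are essentially immediate.
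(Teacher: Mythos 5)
Your proposal is correct, and it takes a genuinely different route from the paper's proof. The paper begins by reducing to the case $S = s\Delta^n$ (using that membership in $\pGG_S$ or $\GG_S$ can be checked after restriction along each $i_\alpha$), and then for a sub-semi-simplicial set $S_1 \subset s\Delta^n$ exhibits the factorization $\tot((A\fwcs B)_{|S_1}) \subset \tot(A_{|S_1}) \otimes B_\alpha \subset A_\alpha \otimes B_\alpha$, where $\alpha$ is the top simplex; the $\GG_S$ case is then K\"unneth, exactly as you argue. You instead work directly on $S$ and build an explicit graded-vector-space decomposition $A_\gamma = \bigoplus_{\gamma'\subseteq\gamma} C_{\gamma'}$ from chosen complements $C_\alpha$ of $\tot(A_{|\partial\alpha})$ in $A_\alpha$, and then read off $\tot((A\fwcs B)_{|T})$ as a sub-sum of that of $S$. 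Both arguments are valid. The paper's is considerably shorter, but the first inclusion $\tot((A\fwcs B)_{|S_1}) \subset \tot(A_{|S_1}) \otimes B_\alpha$ is not a formal consequence of the levelwise inclusions $A_\beta\otimes B_\beta\subset A_\beta\otimes B_\alpha$: colimits do not in general preserve levelwise monomorphisms. Your summand bookkeeping is precisely what verifies that in this situation (a poset $\CC(S_1)$ of faces, together with the direct-sum decompositions) the induced colimit map is in fact injective, so your argument both establishes the lemma and makes the mechanism behind the paper's factorization explicit. Two small points worth being precise about: the complements $C_\alpha$ need only be graded subspaces, not subcomplexes (a subcomplex complement may not exist, but you never need one since you only compute underlying graded vector spaces of the colimits); and your identification of the two enclosing copies of $C_{\gamma_1}\otimes D_{\gamma_2}$ via the simplex $\rho$ spanned by $\mathrm{corners}(\gamma_1)\cup\mathrm{corners}(\gamma_2)$ genuinely uses the injectivity part of axiom S1 — without it, $\CC(T)_{\gamma_1,\gamma_2}$ could be disconnected and the colimit would pick up extra copies. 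Since S1 is a standing hypothesis in that section, this is fine, but it is worth noting that the paper's reduction to $s\Delta^n$ sidesteps the issue because a standard simplex satisfies S1 trivially.
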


\begin{proof}
  Any $A$ is in $\pGG_S$ (or $\GG_S$) if and only if $i_\alpha^*A$ is in $\pGG_{s\Delta^{\deg(\alpha)}}$ (or $\GG_{s\Delta^{\deg(\alpha)}}$) for all simplices $\alpha$ in $S$. The tensor commutes with the restriction so it is enough to prove the lemma on a standard simplex. So we consider for each $n$ the case $S=s\Delta^n$. We let $\alpha$ denote the unique $n$ simplex and note that $\tot D = D_\alpha$ for any $D \in \Fun(\CC(S),\Ch)$. For $n=0$ the requirements to be in the categories are vacuous.

  Let $A,B \in \pGG_S$. As $A_\beta \otimes B_\beta \subset A_\beta \otimes B_\alpha$ for each $\beta$ we get the factorization
  \begin{align*}
    \tot((A\fwcs B)_{\mid S_1}) \subset \tot(A_{\mid S_1}) \otimes B_\alpha \subset A_\alpha\otimes B_\alpha
  \end{align*}
  for each $S_1 \subset s\Delta^n$, which proves that $A\otimes B$ is in $\pGG_S$.

  If further $A,B\in \GG_S$ then it follows that the inclusion
  \begin{align} \label{eq:article:1}
    A_{\alpha_0} \otimes B_{\alpha_0} \to A_\alpha \otimes B_\alpha = \tot(A\fwcs B),
  \end{align}
  where $\alpha_0$ denotes any zero simplex in $S$, is a quasi isomorphism as it is a tensor of such. Hence $A\otimes B$ is in $\GG_S$.
\end{proof}

The global tensor, we denote by $\und{S}\otimes$ and it is defined as
\begin{align*}
  \br A\bu \und{S}\otimes \br B\bu = \tot(A\fwcs B).
\end{align*}

Since we later want to do Morita theory we will denote objects in $\GG_S$ by $\br{A}{\bu}$ and call $\GG_S$ the category of right cosimplicial local systems on $S$. We then also introduce the category $\bl{S}{\GG}=\br{\GG}{S}$ of left cosimplicial local systems on $S$, objects in $\bl{S}{\GG}$ are denoted by $\bl{\bu} A$. We make this distinction even though the categories $\GG_S$ and $\bl{S}{\GG}$ are exactly the same. Usually we will take the global tensor of a right and a left (pre-)cosimplicial system.

Since taking total complex is homotopical we get the following.

\begin{corollary}\label{lem:cor:tensor_lemma_cosimp}
  If $A_\bu\in \GG_S$ and $\bl{\bu} B,\bl{\bu} B'\in \bl{S}{\pGG}$ and if $f:\bl{\bu} B\to \bl{\bu} B'$ is a weak equivalence then $\id\otimes f:A_\bu\und{S}\otimes \bl{\bu} B\to A_{\bu}\und{S}\otimes \bl{\bu} B'$ is a quasi isomorphism.
\end{corollary}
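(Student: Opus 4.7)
The plan is to unwind the definition of the derived global tensor and reduce the statement to the fiber-wise claim combined with the fact that $\tot$ is homotopical on $\pGG_S$. By definition we have
\begin{align*}
  A_\bu \und{S}\otimes \bl{\bu} B = \tot(A\fwcs B)
\end{align*}
and similarly for $B'$, and the map $\id\otimes f$ is induced by $A\fwcs f : A\fwcs B \to A\fwcs B'$. So it suffices to check two things: that both $A\fwcs B$ and $A\fwcs B'$ live in $\pGG_S$, and that $A\fwcs f$ is a weak equivalence there. The conclusion then follows immediately from Corollary~\ref{cor_tot_is_homotopical}.

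The first point is essentially the previous lemma about $\pGG_S$ being closed under the fiber-wise tensor: since $A\in \GG_S \subset \pGG_S$ and $B,B'\in \pGG_S$, the same nested-inclusion argument (using $A_\beta \otimes B_\beta \subset A_\beta\otimes B_\alpha$ for $\beta \subset \alpha$ on each standard simplex) shows $A\fwcs B, A\fwcs B' \in \pGG_S$. I would just cite that lemma rather than redoing the computation.

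For the second point I would argue fiber-wise: the map $A\fwcs f$ on the fiber at $\alpha$ is $\id_{A_\alpha} \otimes f_\alpha : A_\alpha \otimes B_\alpha \to A_\alpha \otimes B'_\alpha$. Since we are working over a field $\F$ every chain complex is flat, so tensoring the quasi isomorphism $f_\alpha$ with $A_\alpha$ remains a quasi isomorphism (for instance by the Künneth formula). Thus $A\fwcs f$ is a weak equivalence in $\pGG_S$, and applying $\tot$ (which is homotopical by Corollary~\ref{cor_tot_is_homotopical}) gives the desired quasi isomorphism.

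There is no real obstacle here — the statement is a formal consequence of the previous lemma together with the fact that tensoring over a field preserves quasi isomorphisms. The only thing worth being careful about is the asymmetry: we must make sure that the mixed case ($A\in \GG_S$, $B\in \pGG_S$) still falls under the closure lemma, which it does because its proof only uses the pre-cosimplicial inclusion property on each side.
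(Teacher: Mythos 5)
Your proof is correct and is essentially the paper's argument: the paper simply prefaces the corollary with ``Since taking total complex is homotopical'' and leaves the rest implicit, which is exactly the chain you spell out — $A\fwcs B, A\fwcs B' \in \pGG_S$ by the closure lemma (using only $\GG_S\subset\pGG_S$), $A\fwcs f$ is a fiber-wise quasi isomorphism over the field $\F$, and Corollary~\ref{cor_tot_is_homotopical} then applies. (One small wording nit: the cosimplicial global tensor $\und{S}\otimes$ is not ``derived'' here — it is just $\tot$ of the fiber-wise tensor — but that has no bearing on the argument.)
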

\begin{corollary}
  Both the fiberwise and global tensor of cosimplicial local systems are homotopical.
\end{corollary}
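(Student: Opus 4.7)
The plan is to reduce the global case to the fiberwise case using the definition $\br{A}{\bu}\und{S}\otimes\bl{\bu}{B} = \tot(A\fwcs B)$ together with the already-established fact that $\tot$ is homotopical (Corollary~\ref{cor_tot_is_homotopical}). Once the fiberwise tensor is shown to preserve weak equivalences in both variables, applying $\tot$ to a weak equivalence $A\fwcs B \to A'\fwcs B'$ yields a quasi-isomorphism, and the global statement follows immediately.

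So the substantive content is the fiberwise case. Suppose $f : A \to A'$ and $g : B \to B'$ are weak equivalences in $\pGG_S$ (the argument is identical in $\GG_S$, and the preceding lemma ensures that $A\fwcs B$ and $A'\fwcs B'$ remain in the relevant category). By definition each $f_\alpha$ and each $g_\alpha$ is a quasi-isomorphism of chain complexes over the field $\F$. The map $(f\fwcs g)_\alpha = f_\alpha \otimes g_\alpha$ factors as
\begin{align*}
  A_\alpha \otimes B_\alpha \xrightarrow{\id \otimes g_\alpha} A_\alpha \otimes B'_\alpha \xrightarrow{f_\alpha \otimes \id} A'_\alpha \otimes B'_\alpha,
\end{align*}
and since we work over a field, the Künneth theorem implies that tensoring with any chain complex preserves quasi-isomorphisms. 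The same argument applies verbatim in the ungraded characteristic-$2$ setting, where $\Ch^{\ug}_\F$ is simply chain complexes of $\F$-vector spaces and Künneth remains valid (this is exactly the tensor structure defined in Section~\ref{sec:notation-conventions}, not the one coming from the periodic identification, which we were warned is incompatible). Thus each fiber map is a quasi-isomorphism, so $f\fwcs g$ is a weak equivalence.

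Combining the two pieces: given weak equivalences $f : A \to A'$ and $g : B \to B'$, the induced map $A\fwcs B \to A'\fwcs B'$ is a weak equivalence by the above, and then $\tot$ of this map is a quasi-isomorphism by Corollary~\ref{cor_tot_is_homotopical}, giving homotopicality of $\und{S}\otimes$. There is no real obstacle here; the only bookkeeping is to confirm closure of $\pGG_S$ and $\GG_S$ under $\fwcs$ (already done) and to note that working over a field is essential for the Künneth step.
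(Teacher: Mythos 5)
Your proof is correct and follows essentially the same logic the paper leaves implicit: the fiberwise tensor is fiber-by-fiber a tensor product of chain complexes over a field (so quasi-isomorphisms are preserved by Künneth, in both the graded and ungraded cases), and the global tensor is then the total complex of the fiberwise tensor, so Corollary~\ref{cor_tot_is_homotopical} finishes the job. The extra care you take to note that the relevant tensor in $\Ch^{\ug}_\F$ is the genuine tensor of vector spaces (not the one induced from the periodic identification) is exactly the right point to flag, and your factorization through $A_\alpha \otimes B'_\alpha$ to handle both variables simultaneously is standard and sound.
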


\section{Combined categories} \label{sec:combined-categories}

In the previous section we let $\br{\GG}{S}$ consists of functors that take values in $\Ch$, but of course we could consider other categories, e.g. the path local system categories from Section~\ref{sec:path-local-systems} or $\bl{T}{\GG}$ for another semi-simplicial set $T$ satisfying S1. For such $T$ and manifolds $K$ and $L$ we let $\CC$ denote either $\bl{T}{\GG} (=\br{\GG}{T}$), $\tl{K}{\GG}$, $\GG^L$ or $\tltr{K}{\GG}{L}$.

The category of cosimplicial (or pre-cosimplicial) local system in $\CC$ is a subcategory of functors $A_\bullet : \CC(S) \to \CC$ such that taking the fibers over points in $K$, $L$ or $K\times L$ or a simplex $\alpha \in T$ we get objects in $\br{\GG}{S}$ (or $\pGG_S$). Note that in this generalization of $\GG_S$, the condition that each $A_{\alpha_0} \to A_\alpha$ is a quasi isomorphism is replaced by it being a weak equivalence in $\CC$.

The categories of cosimplicial local systems over $S$ with values in $\bl{T}\GG$, $\tl{K}{\GG}$, $\GG^L$ or $\tltr{K}{\GG}{L}$ are denoted by $\blbr{T}{\GG}{S}$, $\tlbr{K}{\GG}{S}$, $\tlar{}{\GG}{L}{S}$ and $\tlar{K}{\GG}{L}{S}$ respectively. We have put a subscript bullet in the notation to signify that it is a local system where simplices can be put at the bullet. Generally we will use the convention exemplified by:
\begin{align*}
  \tlar{\bu}{A}{\bu}{\bu} \in \tlar{K}{\GG}{L}{S} \qquad \textrm{and} \qquad  \blbr{\bu}{A}{\bu} \in \blbr{T}{\GG}{S}.
\end{align*}
In the first, the two upper bullets signify fibers over $K$ and $L$ as previously used for bi path local systems, and the lower bullet is used for inserting simplices from $S$.

In the second example we similarly use a lower left bullet for the target category $\bl{T}{\GG}$. But note that there is no real difference between left and right here as both $\blbr{T}{\GG}{S}$ and $\blbr{S}{\GG}{T}$ are isomorphic to categories of functors $\CC(T) \times \CC(S) \to \Ch$ with symmetric conditions. For the same reason we will sometimes e.g. write $\tl{\bu} A_\bu \in  \tlbr{K}{\GG}{S}$ to emphasize that we would like to think of $S$ more as a right local system (in the next section when describing Morita theory this means we think of $S$ as a source).

Maps are again natural transformations of the functors, which means that over each simplex they consist of maps in the category $\CC$. The total complex $\tot(A_\bullet)$ is naturally an object in $\CC$. Weak equivalences are defined to be fiber-wise weak equivalences over $S$.

\begin{lemma} \label{cor:lem:lemmas:1b}
  The functor $\tot$ from (pre-)cosimplicial local systems in $\CC$ to $\CC$ is homotopical.
\end{lemma}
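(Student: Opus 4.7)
The plan is to reduce the statement to the already-established Corollary~\ref{cor_tot_is_homotopical} by taking further fibers: in each of the four cases $\CC \in \{\tl K\GG,\ \GG^L,\ \tltr K\GG L,\ \bl T\GG\}$, weak equivalences are themselves detected on fibers over points of $K$, $L$, $K\times L$, or over simplices of $T$, and these fiber-taking functors commute with $\tot$ because the colimit defining $\tot$ is computed pointwise on the underlying fiber data.

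First I would unpack the definitions. A morphism $f : A_\bu \to B_\bu$ in the combined category is, by construction, a weak equivalence iff each $f_\alpha : A_\alpha \to B_\alpha$ is a weak equivalence in $\CC$. A weak equivalence in $\CC$ is in turn characterized by inducing quasi-isomorphisms on every fiber over a point of $K$, $L$, or $K\times L$ (in the three path-local-system cases) or on every fiber over a simplex $\beta \in T$ (in the $\bl T\GG$ case). Combining the two, $f$ is a weak equivalence iff for every such point $x$ (resp. simplex $\beta$) the restriction $(f)_x : (A_\bu)_x \to (B_\bu)_x$ is a weak equivalence in $\pGG_S$ (or $\GG_S$), where $(A_\bu)_x$ denotes the (pre-)cosimplicial local system in $\Ch$ obtained by evaluating each $A_\alpha$ at $x$; this is essentially the very condition carving out the combined category from all functors $\CC(S) \to \CC$.

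Second I would verify the interchange $(\tot A_\bu)_x \cong \tot\bigl( (A_\bu)_x \bigr)$. In each of the four categories $\CC$, colimits are computed fiberwise on the underlying chain-complex data: the parallel transport on a colimit of path local systems is the one induced from the transport on the pieces, and the co-face maps of a colimit in $\bl T\GG$ are induced fiberwise. Since $\tot$ is by definition the colimit over $\CC(S)$ in $\CC$, evaluation at $x$ (resp. at $\beta \in T$) commutes with it.

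Putting the two steps together: if $f$ is a weak equivalence in the combined category, then each restriction $(f)_x$ is a weak equivalence in $\pGG_S$, so by Corollary~\ref{cor_tot_is_homotopical} the map $(\tot f)_x \cong \tot((f)_x)$ is a quasi-isomorphism in $\Ch$. Since this holds for every $x$ (and every $\beta$), $\tot(f)$ is a weak equivalence in $\CC$. The only non-bookkeeping point is the pointwise computation of colimits in these local-system categories; this is straightforward from the definitions, and it is the one step where the specific structure of the target $\CC$ really enters the argument.
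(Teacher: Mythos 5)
Your proof is correct and matches the paper's approach: the paper's own proof likewise appeals to Corollary~\ref{cor:lem:lemmas:1} (the predecessor of Corollary~\ref{cor_tot_is_homotopical}) together with the observation that weak equivalences in $\CC$ are detected on fibers over $K$, $L$, $K\times L$, or $T$, using implicitly that $\tot$ commutes with these fiber-evaluations. You have merely made the interchange step explicit, which is a harmless elaboration of the same argument.
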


\begin{proof}
  This follows directly from Corollary~\ref{cor:lem:lemmas:1} and the fact that weak equivalences in $\CC$ are defined as maps which are quasi isomorphisms in each fiber over $K$, $L$, $K\times L$ or $T$.
\end{proof}

We also call such a combined object \emph{semi-free} if taking fibers in all but one ``path fiber'' is semi-free. E.g. $\tlar{\bu}{A}{\bu}{\bu}$ is semi-free if $\tlar{x}{A}{\bu}{\alpha}$ and $\tlar{\bu}{A}{y}{\alpha}$ is semi-free for all $x,y,\alpha$.

\section{\texorpdfstring{Equivalence of $\protect\tl{|S|}{\GG}$ and $\protect\bl{S}{\GG}$}{Equivalence of path and cosimplicial local systems}} \label{sec:equiv-glps-gcss}

To turn a cosimplicial local system into a left or right path local system on the geometric realization $X=|S|$ we essentially use the standard idea of path space replacement (fibrant replacement) when constructing Serre fibrations. However, the best way we found to phrase this for later use is in terms of Morita theory and Morita equivalence. That is, we prove that the functors
\begin{align} \label{eq:article:2}
  \tl{\bu} PX_\bullet\und{S}\otimes - : \bl{S}{\GG}\to \tl{X}{\GG}\\
  \bl{\bu} PX^\bu \und{PX}\otimes - : \tl{X}{\GG} \to \bl{S}{\GG } \label{eq:article:2b}
\end{align}
are inverse $\pi_0$-equivalences of categories. Here $\tl{\bu} PX_\bullet\in \tlbr{X}{\GG}{S}$ have the fiber $\tlbr{x}{PX}{\alpha}$ consisting of chains of the space of Moore paths $\tltr{x}{\PP X}{|\alpha|}$ beginning at $x$ and ending anywhere in $|\alpha|$, the definition of $\bl{\bu} PX^\bu \in \bltr{S}{\GG}{X}$ is similar. That $\bltr{\bu}{PX}{\bu}\und{PX}\otimes \tl{\bu}{D}$ is pre-cosimplicial follows by definition and it is furthermore cosimplicial as we have a homotopy inverse to $\bltr{\alpha_0}{PX}{\bu}\und{PX}\otimes \tl{\bu}{D}\to \bltr{\beta}{PX}{\bu}\und{PX}\otimes \tl{\bu}{D}$ induced from a homotopy inverse (in $\tr{\GG}{X}$) to $\bltr{\alpha_0}{PX}{\bu}\to \bltr{\beta}{PX}{\bu}$.

The following lemma proves that the functors are homotopical (which is why we do not need the tensor in Equation~\eqref{eq:article:2b} to be derived) and the two lemmas after that say that the notion of fibers is preserved.

\begin{lemma}\label{lem:PX_functors_homotopical}
  The functors in Equation~(\ref{eq:article:2}) and (\ref{eq:article:2b}) are homotopical.
\end{lemma}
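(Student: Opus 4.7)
I would split the proof into independent arguments for the two functors. The first one is almost immediate from the homotopical-ness of $\tot$, while the second comes down to exhibiting a semi-free structure on the fibers of $\bl{\bu}PX^\bu$.

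For the first functor $\tl{\bu}PX_\bullet\und{S}\otimes -$, a weak equivalence $f\colon \bl{\bu}A\to \bl{\bu}B$ in $\bl{S}\GG$ is by definition a collection of fiberwise quasi-isomorphisms $A_\alpha\to B_\alpha$. Since $\F$ is a field, tensoring with the chain complex $\tlbr{x}{PX}{\alpha}$ preserves quasi-isomorphisms, so the fiberwise tensor $\tl{\bu}PX_\bullet\fwcs f$ is a weak equivalence in the combined category $\tlbr{X}{\GG}{S}$ (a quasi-isomorphism at every pair $(x,\alpha)$). Applying $\tot$, which is homotopical by the combined version of Corollary~\ref{cor:lem:lemmas:1} (namely Lemma~\ref{cor:lem:lemmas:1b}), yields the required weak equivalence in $\tl{X}\GG$.

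For the second functor $\bl{\bu}PX^\bu\und{PX}\otimes -$, given a weak equivalence $f\colon\tl{\bu}D\to\tl{\bu}E$ in $\tl{X}\GG$, I must show that each fiberwise map
\[
\bltr{\alpha}{PX}{\bu}\und{PX}\otimes\tl{\bu}D\;\longrightarrow\;\bltr{\alpha}{PX}{\bu}\und{PX}\otimes\tl{\bu}E
\]
is a quasi-isomorphism. By Lemma~\ref{lem:tensor_lemma} it suffices to exhibit a semi-free structure on $\bltr{\alpha}{PX}{\bu}$ as a right path local system on $X$ for every $\alpha$. To construct such a filtration, consider the start-point evaluation map $\tltr{|\alpha|}{\PP X}{\bu}\to|\alpha|$, which is a Serre fibration trivializable over each closed face of $|\alpha|$ by contractibility, and filter by the skeleta, $F^p\bltr{\alpha}{PX}{\bu}=C_*(\tltr{|\alpha^{[p]}|}{\PP X}{\bu})$. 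Excision plus local triviality of the fibration over each open $p$-face $\beta$ then identifies the successive quotient $F^p/F^{p-1}$ with a direct sum $\bigoplus_{\beta\subset\alpha,\;\dim\beta=p}\Sigma^p\tltr{v_\beta}{PX}{\bu}$ of shifted free right path local systems (one summand per $p$-face $\beta$, generated at a chosen interior point $v_\beta$).

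The main obstacle is making this cell-by-cell identification strict at the chain level, since singular chains on a fibration do not split on the nose over a cellular base. I would resolve this by either replacing $\bl{\bu}PX^\bu$ with a fiberwise-equivalent semi-simplicial/cellular model in which the skeletal filtration has genuinely free quotients as right path local systems, or by sidestepping strict semi-freeness via a direct spectral-sequence argument: the tensor product inherits the skeletal filtration, and on the $E^1$-page the induced map $\id\otimes f$ is a direct sum of the isomorphisms $H_*(\tl{v_\beta}D)\to H_*(\tl{v_\beta}E)$ coming from the fiberwise quasi-isomorphism of $f$; convergence of the resulting first-quadrant spectral sequence then yields the desired quasi-isomorphism.
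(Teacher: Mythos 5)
Your treatment of the first functor $\tl{\bu}PX_\bullet\und{S}\otimes -$ is essentially the paper's: the field hypothesis makes the fiber-wise tensor $\tl{\bu}PX_\bullet\fwcs f$ a weak equivalence at every $(x,\alpha)$, and taking $\tot$ (which is homotopical by Lemma~\ref{cor:lem:lemmas:1b}) finishes the argument; the paper packages the same observation as an appeal to Corollary~\ref{lem:cor:tensor_lemma_cosimp}. That part is fine.

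For the second functor $\bl{\bu}PX^\bu\und{PX}\otimes-$ there is a genuine gap, and your own honesty about the obstacle points right at it. You try to show that $\bltr{\alpha}{PX}{\bu}$ is semi-free as a right path local system for \emph{every} simplex $\alpha$, so that Lemma~\ref{lem:tensor_lemma} applies fiber by fiber. But the skeletal filtration $F^p = C_*(\tltr{|\alpha^{[p]}|}{\PP X}{\bu})$ only identifies $F^p/F^{p-1}$ with a direct sum of shifted free objects up to quasi-isomorphism (via excision), not on the nose, which is what semi-freeness demands. Both proposed repairs fail in the same way: replacing $\bl{\bu}PX^\bu$ by a fiber-wise equivalent cellular model requires you to know that the comparison map, after $\und{PX}\otimes\tl{\bu}D$, is still a quasi-isomorphism, which via Lemma~\ref{lem:tensor_lemma} needs $\tl{\bu}D$ (or the cellular model you have not yet identified with the given object) to be semi-free — exactly the hypothesis you are trying to avoid; and the ``direct spectral sequence'' route presupposes that $F^p\und{PX}\otimes\tl{\bu}D\hookrightarrow F^{p+1}\und{PX}\otimes\tl{\bu}D$ is injective, i.e.\ that the global tensor preserves inclusions, which fails because $\und{PX}\otimes$ is a colimit (quotient) construction and does not preserve monomorphisms unless the cokernel is flat in a suitable sense — which again is the semi-freeness you do not have.

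The paper sidesteps this entirely with two observations you did not use. First, the output $\bltr{\bu}{PX}{\bu}\und{PX}\otimes\tl\bu D$ lies in $\GG_S$ (this is noted just before the lemma), so by Lemma~\ref{lem:lemmas:single_fiber} it suffices to check that $\id\otimes f$ is a quasi-isomorphism over a \emph{single fiber}, which one takes to be a zero-cell $\alpha_0$. Second, over a zero-cell one has $\bltr{\alpha_0}{PX}{\bu}\cong\tltr{|\alpha_0|}{PX}{\bu}$, which is genuinely \emph{free}, so the unit axiom $\tltr{|\alpha_0|}{PX}{\bu}\und{PX}\otimes\tl{\bu}F\cong\tl{|\alpha_0|}{F}$ turns the map into $f$ evaluated at $|\alpha_0|$ — a quasi-isomorphism by hypothesis. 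No semi-freeness over higher simplices is needed. You should use the single-fiber criterion rather than trying to establish semi-freeness at every $\alpha$.
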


\begin{proof}
  If $f:\tl{\bu}{D}\to \tl{\bu}{E}$ is a weak equivalence then $\id\otimes f:\bltr{\bu}{PX}\bu\und{PX}\otimes \tl{\bu}D\to \bltr{\bu}{PX}\bu\und{PX}\otimes \tl{\bu}E$ is over any zero-cell a quasi isomorphism (as $\bltr{\alpha_0}{PX}\bu \cong \tltr{|\alpha_0|}{PX}\bu$ and $\tltr{|\alpha_0|}{PX}\bu \und{PX} \otimes \tl{\bu}{F} \cong \tl{|\alpha_0|}{F}$) and hence a weak equivalence. If $f:\bl{\bu}A\to \bl{\bu}B$ is a weak equivalence then by Corollary~\ref{lem:cor:tensor_lemma_cosimp} $\id\otimes f:\tlbr{x}{PX}{\bu}\und{S}\otimes \bl{\bu}A\to \tlbr{x}{PX}{\bu}\und{S}\otimes \bl{\bu}B$ is a quasi isomorphism for each $x$ and hence a weak equivalence.
\end{proof}

\begin{lemma}\label{lem:article:5}
  For $\bl{\bu} A \in \bl{S}{\GG}$ the inclusion $\bl{\alpha_0}{A} \to \tlbr{|\alpha_0|}{PX}{\bu} \und{S}\otimes \bl{\bu} A$ given by $a \mapsto \cst_{|\alpha_0|} \otimes a$ is a quasi isomorphism when $\alpha_0\in S$ is a zero-cell.
\end{lemma}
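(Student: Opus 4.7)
The plan is to apply the simplex-degree filtration spectral sequence of Lemma~\ref{lem:lemmas:spectral_sequence_for_total_space} to the cosimplicial local system $A'_\alpha := \tlbr{|\alpha_0|}{PX}{\alpha} \otimes \bl{\alpha}{A}$ on $S$, whose total complex is precisely the target $\tlbr{|\alpha_0|}{PX}{\bu}\und{S}\otimes \bl{\bu}{A}$. First I would verify that $A' \in \GG_S$: the factor $\tlbr{|\alpha_0|}{PX}{\bu}$ lies in $\GG_S$ because for each vertex $\alpha_0' \subseteq \beta$, the inclusion $\tltr{|\alpha_0|}{\PP X}{|\alpha_0'|} \hookrightarrow \tltr{|\alpha_0|}{\PP X}{|\beta|}$ is a homotopy equivalence (it is the inclusion of a fiber of the evaluation Serre fibration $\tltr{|\alpha_0|}{\PP X}{|\beta|} \to |\beta|$ over the contractible base $|\beta|$), and the tensor lemma for cosimplicial local systems then gives $A' \in \GG_S$.

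Applying the spectral sequence, we get strong convergence to $H_*(\tot(A'))$ with $E^1 = C_*^\Delta(S; H_*(A'))$. By the K\"unneth formula over the field $\F$, the local system $H_*(A')$ on $S$ has stalk $H_*(\tltr{|\alpha_0|}{\PP X}{|\alpha_0'|}) \otimes H_*(\bl{\alpha_0'}{A})$ at each vertex $\alpha_0'$.

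The key identification is that this $E^1$-page coincides with the $E^1$-page of the Serre spectral sequence for the evaluation path-space fibration $\tltr{|\alpha_0|}{\PP X}{X} \to X = |S|$, twisted by coefficients in the local system $H_*(\bl{\bu}{A})$ on $X$. Because the total space $\tltr{|\alpha_0|}{\PP X}{X}$ is contractible (Moore-path scaling deformation-retracts it onto $\cst_{|\alpha_0|}$), this Serre spectral sequence converges to $H_*(\bl{\alpha_0}{A})$, the stalk of the coefficient system at the basepoint, so $H_*(\tot(A')) \cong H_*(\bl{\alpha_0}{A})$. Tracing the map $a \mapsto \cst_{|\alpha_0|} \otimes a$, which sits in the $(p,q)=(0,*)$ summand at $\alpha_0$ as the identity class of $H_*(\Omega_{|\alpha_0|}X)$ tensored with $a$, shows it corresponds under the identification to the unique surviving class in $E_\infty$, delivering the quasi-isomorphism.

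The main obstacle will be making this identification rigorous. Concretely one must verify that the monodromy on $H_*(A')$ factors as the tensor product of the standard monodromy of $H_*(\Omega X)$ (loops acting on the based loop space) with the monodromy of $H_*(\bl{\bu}{A})$, and that the $d_1$-differential in our spectral sequence (the signed alternating sum of co-face maps on homology) matches the cellular Serre differential on $|S|$. Once these compatibilities are established, the contractibility of the total path-space combined with the standard convergence of the Serre spectral sequence delivers the conclusion.
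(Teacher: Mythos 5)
Your proposal filters $\tot(A')$ by simplex degree in $S$, whereas the paper filters by chain degree in the $\tlbr{|\alpha_0|}{PX}{\beta}$ factor. Before discussing the gap, let me note the paper's shortcut: with the $PX$-degree filtration the $E^0$-differential is just $d_A$, so the $E^1$-page becomes $\bigl[\bigoplus_{\beta\in S}\tlbr{|\alpha_0|}{PX}{\beta}\otimes H(A)\bigr]/\!\sim$ with $d_{PX}$ remaining, and this is \emph{literally} a subcomplex of $C_*(\tltr{|\alpha_0|}{\PP X}{X};H(A))$ (singular chains on the based Moore path space with local coefficients pulled back from $X$), namely the subcomplex of simplices whose endpoint evaluation lands in a single $|\beta|$, $\beta\in S$. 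The inclusion of this subcomplex is a quasi isomorphism because $S\hookrightarrow\sing X$ is a weak equivalence, and the ambient complex is quasi isomorphic to $H(\bl{\alpha_0}{A})$ by contractibility of the path space. The spectral sequence then degenerates at $E^2$ and the conclusion follows in one step, with no comparison between two distinct spectral sequences needed.

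Your route has a genuine gap that goes beyond the items you flag. Verifying that $H_*(A')$ factors as a tensor of monodromy local systems and that $d_1$ matches the cellular Serre differential gives you an isomorphism of $E^2$-pages between your spectral sequence and the (twisted) Serre spectral sequence of $\tltr{|\alpha_0|}{\PP X}{X}\to X$, but that is not enough to identify abutments: two strongly convergent spectral sequences with isomorphic $E^2$-pages can have different higher differentials and converge to different things. To close the argument you would need a \emph{map of filtered complexes} (or at least of spectral sequences) from $\tot(A')$ with its simplex filtration to a complex computing the Serre spectral sequence — and constructing such a map is not routine here because the coefficients $\bl{\bu}{A}$ carry an internal differential, so the target is not just $C_*(E;M)$ for a classical local system. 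In effect you would have to first collapse the $A$-direction (exactly the filtration the paper uses) before the Serre comparison becomes honest, at which point you are re-deriving the paper's argument. Your final paragraph on tracing the map $a\mapsto\cst_{|\alpha_0|}\otimes a$ through the comparison is fine in spirit but also presupposes that the identification has been established at all pages, not just $E^1$.
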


\begin{proof}
  Consider the spectral sequence we get by filtering the right hand side by the chain degree in $\tlbr{|\alpha_0|}{PX}{\beta}$. That map on page 1 reads
  \begin{align} \label{eq:article:5}
    H(\bl{\alpha_0}{A})\to \left[\bigoplus_{\beta\in S}\tlbr{|\alpha_0|}{PX}{\beta} \otimes H(A)\right]/\sim
  \end{align}
  where $H(A)$ is the local system defined on $X$ by $\bl{\bu} A$. The right hand side is a sub complex of $C_*(\tl{|\alpha_0|}{\PP X},H(A)) \simeq H(\bl{\alpha_0}{A})$ where now $H(A)$ is considered a local system on this path space by pullback from $X$. This sub complex is defined by those simplices in $\tl{|\alpha_0|}{\PP X}$ where the evaluation at the end is contained in a single simplex $|\beta|$ with $\beta \in S$. As the inclusion $S \subset \sing X$ is a weak equivalence (homotopy equivalence after geometric realization) it follows that the inclusion of this sub complex is a quasi isomorphism, which proves the inclusion in Equation~(\ref{eq:article:5}) is a quasi isomorphism.
\end{proof}

\begin{lemma}\label{lem:article:4}
  For $\tl{\bu} D \in \tl{X}{\GG}$ and $\alpha_0 \in S$ a zero simplex the inclusion $\tl{|\alpha_0|}{D} \to \bltr{\alpha_0}{PX}{\bu} \und{PX}\otimes \tl{\bu} D$ given by $d \mapsto \cst_{|\alpha_0|} \otimes d$ is a quasi isomorphism.
\end{lemma}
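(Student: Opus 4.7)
The plan is to reduce this directly to the unit property of the bi-path local system $\lb PX^\bu$ recorded in Section~\ref{sec:path-local-systems}, namely the natural isomorphism
\[
  \lb PX^\bu \und{PX}\otimes \tl{\bu}D \;\cong\; \tl{\bu}D,
\]
given by $\alpha \otimes d \mapsto \alpha d$.

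First I would unpack the definitions to get the setup in a form where this isomorphism applies. Since $\alpha_0 \in S$ is a zero cell, its geometric realization $|\alpha_0|$ is a single point of $X$, so the fiber of $\bl{\bu}PX^\bu \in \bltr{S}{\GG}{X}$ at $\alpha_0$ coincides, as an object of $\tr{\GG}{X}$, with $\tltr{|\alpha_0|}{PX}{\bu}$, the fiber of $\lb PX^\bu$ at the point $|\alpha_0|$. This identification is just the observation that chains on paths that ``start anywhere in $|\alpha_0|$'' are literally chains on paths that start at the point $|\alpha_0|$.

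Applying the unit isomorphism evaluated at the basepoint $|\alpha_0|$ then yields
\[
  \bltr{\alpha_0}{PX}{\bu} \und{PX}\otimes \tl{\bu}D \;=\; \tltr{|\alpha_0|}{PX}{\bu} \und{PX}\otimes \tl{\bu}D \;\cong\; \tl{|\alpha_0|}D,
\]
sending $\beta \otimes d$ to $\beta d$. Under this identification, the element $\cst_{|\alpha_0|} \otimes d$ in the statement is sent to $\cst_{|\alpha_0|} d = d$, because $\cst_{|\alpha_0|}$ is the unit for path concatenation and therefore acts as the identity in any left path local system. Consequently the inclusion in question corresponds to the identity map on $\tl{|\alpha_0|}D$, which is trivially a quasi isomorphism (in fact an isomorphism). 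There is no real obstacle here; the only care needed is to match the definition of $\bltr{\alpha_0}{PX}{\bu}$ as the fiber at the zero cell $\alpha_0$ with the fiber of $\lb PX^\bu$ at the single point $|\alpha_0|$, after which everything collapses to the unit axiom for the path action.
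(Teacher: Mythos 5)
Your proposal is correct and follows essentially the same route as the paper: once $\bltr{\alpha_0}{PX}{\bu}$ is identified with $\tltr{|\alpha_0|}{PX}{\bu}$, the map is the unit isomorphism $\alpha \otimes d \mapsto \alpha d$ for the global tensor over $PX$, so it is an isomorphism, not merely a quasi-isomorphism. The paper says exactly this in one line; your write-up just spells out the intermediate identification.
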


\begin{proof}
  By definition of the tensor over $PX$ this is in fact an isomorphism with inverse $\alpha\otimes d\mapsto \alpha d$.
\end{proof}

\begin{lemma}\label{lem:article:zig-zag}
  There is a weak equivalence $\tl{\bu} PX_\bullet \und{S}\otimes \bl{\bu} \,PX^\bu  \to \tl{\bu} PX^\bu $.
\end{lemma}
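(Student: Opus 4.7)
My plan is to build the weak equivalence by concatenating Moore paths through the affine segments of the simplices of $S$, and then to recognise it as a fibrewise retraction of the quasi-isomorphism produced by Lemma~\ref{lem:article:5}.

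Concretely, for each simplex $\alpha$ of $S$ and each pair of points $z,z'\in|\alpha|$, let $\ell^{\alpha}_{z,z'}$ denote the affine segment in $|\alpha|$ from $z$ to $z'$, reparameterised by arc length. Topologically I would set
\[
\Phi_\alpha\colon \tltr{x}{\PP X}{|\alpha|}\times \tltr{|\alpha|}{\PP X}{y}\longrightarrow \tltr{x}{\PP X}{y},\qquad (\gamma,\delta)\longmapsto \gamma\cdot \ell^{\alpha}_{\gamma(r),\delta(0)}\cdot \delta,
\]
and pass to singular chains via Eilenberg--Zilber. Whenever $\alpha\subset \beta$ and both endpoints lie in $|\alpha|\subset|\beta|$, the affine segment inside $|\alpha|$ coincides with the one inside $|\beta|$, so $\Phi_\alpha$ and $\Phi_\beta$ agree on the image of the coface inclusion. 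The family $\{\Phi_\alpha\}$ therefore descends through the colimit defining $\tot$ to a chain map
\[
\Phi\colon \tl{\bu}PX_\bullet\und{S}\otimes \bl{\bu}PX^\bu\longrightarrow \tl{\bu}PX^\bu
\]
which commutes with both path actions by associativity of Moore concatenation, hence is a morphism in $\tltr{X}{\GG}{X}$.

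Next I would check the weak-equivalence property after restricting $\Phi$ to a single left slice. Fix a $0$-cell $\alpha_0\in S$. Applying Lemma~\ref{lem:article:5} fibrewise over $y\in X$ to $\bl{\bu}PX^y\in \bl{S}{\GG}$ produces a weak equivalence in $\tr{\GG}{X}$
\[
\iota\colon \tltr{|\alpha_0|}{PX}{\bu}\xrightarrow{\ \sim\ }\tlbr{|\alpha_0|}{PX}{\bu}\und{S}\otimes \bl{\bu}PX^\bu,\qquad \delta\longmapsto \cst_{|\alpha_0|}\otimes \delta.
\]
Since $|\alpha_0|$ is a single point, any $\delta$ hit by $\iota$ starts at $|\alpha_0|$, and so $\ell^{\alpha_0}_{|\alpha_0|,|\alpha_0|}=\cst_{|\alpha_0|}$; the unit property of $\cst$ under Moore concatenation then gives $\Phi\circ \iota=\id$ on the nose. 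Thus the restriction of $\Phi$ to $x=|\alpha_0|$ is a retraction of a weak equivalence, hence itself a weak equivalence in $\tr{\GG}{X}$.

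Finally, since $\Phi$ is natural with respect to the left path action and parallel transport along any path of the connected space $X$ induces isomorphisms on the homologies of both sides, the fibrewise quasi-isomorphism over the slice $x=|\alpha_0|$ propagates to all fibres $(x,y)$, establishing that $\Phi$ is the required weak equivalence. The most delicate step I anticipate is the descent of $\Phi$ to the colimit, which amounts to verifying that affine concatenation is strictly invariant under coface inclusions; the arc-length convention makes this automatic, but it is the one step that requires genuine care.
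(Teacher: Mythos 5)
Your construction of the chain map coincides with the paper's: concatenate the two Moore paths through an affine segment in the common simplex, then pass to chains via Eilenberg--Zilber. However, you verify the weak-equivalence claim by a genuinely different route. The paper identifies the pre-cosimplicial local system $\alpha\mapsto C_*(\tltr{x}{\PP X}{|\alpha|}\times \tltr{|\alpha|}{\PP X}{y})$ with the one coming from the Serre fibration $\tl x{\PP X}\times_X \tr{\PP X}y\to X$ as in Remark~\ref{rem:article:2}, so its quasi-isomorphism rests on the total-complex comparison for Serre-type cosimplicial systems. You instead observe that the inclusion of Lemma~\ref{lem:article:5}, $\delta\mapsto \cst_{|\alpha_0|}\otimes\delta$, applied to $\bltr{\bullet}{PX}{y}$ fibrewise in $y$, is a \emph{strict} right inverse of the restriction of your map at $x=|\alpha_0|$; a map admitting a quasi-isomorphism as a strict right inverse is itself a quasi-isomorphism, and connectedness of $X$ together with naturality in the left path action transports the conclusion to every $x$. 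This is shorter and offloads essentially all the homotopy-theoretic work onto the already-proved Lemma~\ref{lem:article:5}. One small calibration: you single out descent to the colimit as the delicate step requiring arc length, but descent actually works with either parametrisation (duration~$1$, as the paper uses, or arc length for the simplexwise Euclidean metric, both being preserved by coface inclusions). Where your argument genuinely needs arc length is in the retraction identity $\Phi\circ\iota=\id$ on the nose: you need the inserted segment to degenerate to the Moore unit $\cst_{|\alpha_0|}$ when its endpoints coincide, which fails for the paper's duration-$1$ convention and is precisely what arc length buys. The paper's Serre-fibration route is insensitive to this choice; yours depends on it, so make the convention and the rationale explicit.
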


\begin{proof}
  For each $\alpha \in S$ and $x,y\in X$ consider the map
  \begin{align} \label{eq:article:8}
    \tltr{x}{\PP X}{|\alpha|}  \times \tltr{|\alpha|}{\PP X}{y} \to \tltr{x}{\PP X}y
  \end{align}
  which sends $((\gamma_1,r_1),(\gamma_2,r_2))$ to the path $(\gamma,r_1+r_2+1)$ where
  \begin{align*}
    \gamma(t) =
    \begin{cases}
      \gamma_1(t) & t\leq r_1 \\
      \tilde \gamma(t-r_1) & r_1\leq t\leq r_1+1 \\
      \gamma_2(t-r_1-1) & t\geq r_1+1
    \end{cases}
  \end{align*}
  where $\tilde \gamma:[0,1] \to X$ is the unique affine path in $\alpha$ (using the simplicial structure) from $\gamma_1(r_1)$ to $\gamma_2(0)$.

  Using this we define the map
  \begin{align*}
    \tlbr{\bu}{PX}\alpha \otimes \bltr{\alpha}{PX}\bu \xrightarrow{\ez} C_*(\tltr{\bu}{\PP X}{|\alpha|} \times \tltr{|\alpha|}{\PP X}{\bu}) \to \tltr{\bu}{PX}\bu.
  \end{align*}
  These are compatible with path actions from the left and right, and with inclusions $\alpha \subset \beta$ so we get a map of bi-path-local systems
  \begin{align*}
    \tlbr{\bu}{PX}\bullet \und{S}\otimes \bltr{\bu}{PX}\bu  \to \tltr{\bu}{PX}\bu.
  \end{align*}
  The Eilenberg-Zilber part is a weak equivalence so we prove the lemma by proving that
  \begin{align*}
    \tot (\alpha \mapsto C_*(\tltr{x}{\PP X}{|\alpha|} \times \tltr{|\alpha|}{\PP X}{y})) \to \tltr{x}{PX}y.
  \end{align*}
  is a quasi isomorphism. To see this we first consider the Serre fibration
  \begin{align*}
    E=\tl x{\PP X} \times_X \tr{\PP X}y \to X
  \end{align*}
  whose total space over a simplex $|\alpha| \subset X$ is a homotopy equivalent subspace in $\tltr x{\PP X}{|\alpha|} \times \tltr{|\alpha|}{\PP X}{y}$. This means that the cosimplicial local system given over $\alpha$ by $C_*(|i_\alpha|^* E)$ (as in Remark~\ref{rem:article:2}) is equivalent to the cosimplicial local system $\alpha \mapsto C_*(\tltr{x}{\PP X}{|\alpha|} \times \tltr{|\alpha|}{\PP X}{y})$. The total complexes are thus equivalent, and the lemma follows as the map from the total space of $E$ to $\tl{y} PX^x$ composing the two paths and inserting a constant path of length 1 between them is a homotopy equivalence.
\end{proof}

\begin{lemma} \label{lem:article:3}
  There is a weak equivalence $\bl{\bu} PX^\bu  \und{PX}\otimes\tl{\bu} PX_\bullet\to  \bl{\bu} PX_\bu$ that sends $\beta\otimes \alpha$ to $\beta\alpha$.
\end{lemma}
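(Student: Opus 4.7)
The plan is to show $\phi:\bl{\bu}PX^\bu\und{PX}\otimes\tl{\bu}PX_\bullet\to \bl{\bu}PX_\bu$ is a weak equivalence in $\blbr{S}{\GG}{S}$ by reducing, via the single-fiber principle, to a pair of zero simplices, where the statement collapses to the identity isomorphism. First I would verify that $\phi$ is well-defined: on the fiber $(\alpha,\beta)$ it is induced by Eilenberg--Zilber on $\bltr{\alpha}{PX}{x}\otimes \tlbr{x}{PX}{\beta}$ followed by Moore path concatenation, and this descends to the tensor over $PX$ by associativity of concatenation against the path actions in $\bltr{\alpha}{PX}{\bu}$ and $\tlbr{\bu}{PX}{\beta}$; naturality in simplex inclusions is automatic.

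Next I would check that both source and target define objects in $\blbr{S}{\GG}{S}$. For the target, the fiber at $(\alpha,\beta)$ is $C_*(\tltr{|\alpha|}{\PP X}{|\beta|})$, and the co-face maps induced by a zero-simplex inclusion $\alpha_0\subset \alpha$ (or $\beta_0\subset \beta$) are quasi-isomorphisms since the simplex $|\alpha|$ deformation retracts onto $|\alpha_0|$ and the evaluation at the endpoint (respectively start) is a Serre fibration over $|\alpha|$, so its total space is homotopy equivalent to the fiber. For the source, the key observation is that at a pair $(\alpha_0,\beta)$ with $\alpha_0$ a zero simplex, Lemma~\ref{lem:article:4} applied with $\tl{\bu}D=\tlbr{\bu}{PX}{\beta}$ provides a natural isomorphism $\bltr{\alpha_0}{PX}{\bu}\und{PX}\otimes \tlbr{\bu}{PX}{\beta}\cong \tlbr{|\alpha_0|}{PX}{\beta}$, which is manifestly cosimplicial in $\beta$; the dual statement handles $(\alpha,\beta_0)$. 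To propagate to arbitrary pairs one observes that $\tlbr{\bu}{PX}{\beta}$ is weakly equivalent to the free left path local system $\tl{\bu}PX^{|\beta_0|}$ via inclusion of any vertex $\beta_0$ of $\beta$, and combines this with Lemma~\ref{lem:tensor_lemma} after passing to a semi-free replacement (Lemma~\ref{lem:article:2}).

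With both sides cosimplicial, connectedness of $S$ from assumption \textbf{S1} together with the single-fiber principle (Lemma~\ref{lem:lemmas:single_fiber}) applied in each $S$-direction reduces the problem to showing $\phi$ is a quasi-isomorphism at a single pair $(\alpha_0,\beta_0)$ of zero simplices. There the two path local systems $\bltr{\alpha_0}{PX}{\bu}=\tltr{|\alpha_0|}{PX}{\bu}$ and $\tlbr{\bu}{PX}{\beta_0}=\tl{\bu}PX^{|\beta_0|}$ are free, so the identity $\tl{\bu}PX^\bu\und{PX}\otimes \tl{\bu}D\cong \tl{\bu}D$ identifies the source with $\tltr{|\alpha_0|}{PX}{|\beta_0|}$, which is the target; tracing through, $\phi$ sends $\cst_{|\alpha_0|}\otimes \gamma \mapsto \cst_{|\alpha_0|}\gamma=\gamma$, so it is the identity.

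The main obstacle I anticipate is the cosimplicial verification of the source in the case where neither simplex is zero-dimensional, since Lemma~\ref{lem:article:4} no longer produces a direct identification and one needs to know that the ordinary global tensor $\bltr{\alpha}{PX}{\bu}\und{PX}\otimes(-)$ preserves the weak equivalence $\tl{\bu}PX^{|\beta_0|}\to \tlbr{\bu}{PX}{\beta}$. The cleanest route I foresee is to reinterpret the tensor as the derived global tensor $\undL{PX}{\otimes}$ on suitable semi-free replacements and appeal to Lemma~\ref{lem:tensor_lemma}, which reduces the remaining case to the already-handled zero-simplex cases by a standard two-out-of-three diagram.
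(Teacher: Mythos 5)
Your overall strategy — reduce to a single bi-fiber via Lemma~\ref{lem:lemmas:single_fiber} — is more roundabout than, and ultimately not equivalent to, the paper's. Since a weak equivalence in these categories is \emph{defined} fiber-wise, one can just check quasi-isomorphism on every fiber $(\alpha,\beta)$ directly, which is what the paper does: for a fixed $(\alpha,\beta)$ and any zero cell $\alpha_0\subset\alpha$ it forms a commuting square whose top row is the isomorphism from Lemma~\ref{lem:article:4} (valid for arbitrary $\beta$, not just a vertex), whose right arrow is a homotopy equivalence of spaces, and whose left arrow is the global tensor of $\bltr{\beta}{PX}{\bu}$ with the inclusion $\tlbr{\bu}{PX}{\alpha_0}\to\tlbr{\bu}{PX}{\alpha}$. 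Two-out-of-three then closes the argument; no verification that the source lies in $\blbr S\GG S$, nor any reduction to vertices in both slots, is needed.

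The genuine gap in your write-up is precisely where you anticipate it, and your proposed repair does not close it. You need $\bltr{\alpha}{PX}{\bu}\und{PX}\otimes(-)$ to carry the weak equivalence $\tlbr{\bu}{PX}{\alpha_0}\to\tlbr{\bu}{PX}{\alpha}$ to a quasi-isomorphism; invoking Lemma~\ref{lem:tensor_lemma} would require either $\bltr{\alpha}{PX}{\bu}$ or both $\tlbr{\bu}{PX}{\alpha_0}$ and $\tlbr{\bu}{PX}{\alpha}$ to be semi-free, none of which is established, and ``reinterpreting'' the global tensor as the derived one is circular since their agreement is exactly what requires such semi-freeness. Moreover, replacing the factors by semi-free resolutions would change the object the lemma is about. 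The paper's key observation, which you miss, is that $\tlbr{\bu}{PX}{\alpha_0}\to\tlbr{\bu}{PX}{\alpha}$ is not merely a weak equivalence but a \emph{homotopy equivalence of left path local systems}, witnessed by the explicit deformation retraction that slides the endpoint along the straight segment in $|\alpha|$ back to $|\alpha_0|$ (arc-length parametrized to stay in the Moore-path model); chain homotopy equivalences are preserved by any additive functor, in particular by the non-derived global tensor, so no semi-freeness is needed. This is the same device the paper uses when first asserting that $\bltr{\bu}{PX}{\bu}\und{PX}\otimes\tl{\bu}D$ is cosimplicial and not merely pre-cosimplicial.
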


\begin{proof}
  We prove that  $\bltr{\beta}{PX}{\bu}  \und{PX}\otimes\tl{\bu} PX_\alpha \to  \blbr{\beta}{PX}{\alpha}$ is a quasi isomorphism for fixed $\alpha$ and $\beta$. Let $\alpha_0\subset \alpha$ be a zero cell and consider the commutative diagram
  \begin{center}
    \begin{tikzcd}
      \bltr{\beta}{PX}{\bu} \und{PX}\otimes\tl{\bu} PX_{\alpha_0}\ar{r}{\cong}\ar[swap]{d}{\simeq}&\blbr{\beta}{PX}{\alpha_0}\ar{d}{\simeq}\\
      \bltr{\beta}{PX}{\bu}  \und{PX}\otimes\tl{\bu} PX_\alpha\ar{r}&\blbr{\beta}{PX}{\alpha}
    \end{tikzcd}
  \end{center}
  The top row is an isomorphism since there is an inverse sending $\gamma$ to $\gamma\otimes \mathrm{cst}_{\alpha_0}$. The right map is a homotopy equivalence as it is so on the level of spaces. The map $\tlbr{\bu}{PX}{\alpha_0} \to \tlbr{\bu}{PX}{\alpha}$ is similarly a homotopy equivalence of left path local systems. Indeed, we may deformation retract the latter onto the former by continuously adding part of the straight line in $|\alpha|$ to $|\alpha_0|$ (e.g. parametrized by arc length). Tensoring this homotopy equivalence with the identity shows that the left map is a homotopy equivalence. The commutativity of the diagram now implies that the lower map is a homotopy equivalence and hence a quasi isomorphism.
\end{proof}

Note that the global tensor products are associative in the sense that
\begin{align*}
    &(A_\bullet\und{S}\otimes \bl{\bu} B^\bu)\und{PX}\otimes \tl{\bu} C \cong A_\bullet \und{S}\otimes(\bl{\bu} B^\bu\und{PX}\otimes  \tl{\bu} C ),\\
    &(A^\bullet\und{PX}\otimes \tl{\bu} B_\bu)\und{S}\otimes \bl{\bu} C \cong A^\bullet \und{PX}\otimes(\tl{\bu} B_\bu\und{S}\otimes  \bl{\bu} C ),
\end{align*}
hence we do not need to write parenthesis that specify in what order we take global tensor products.

\begin{corollary} \label{cor:article:1}
  When $\tl{\bu} D\in \tl X\GG$ then $\tl{\bu} D\simeq \tlbr{\bu}{PX}\bu \und{S}\otimes \bltr{\bu}{PX}{\bu}\und{PX}\otimes \tl{\bu} D$.
\end{corollary}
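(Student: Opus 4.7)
The plan is to combine the associativity of the global tensor products with the weak equivalence of bi path local systems $\tlbr{\bu}{PX}{\bu}\und{S}\otimes\bltr{\bu}{PX}{\bu}\simeq\tltr{\bu}{PX}{\bu}$ from Lemma~\ref{lem:article:zig-zag}, together with the identity property $\tltr{\bu}{PX}{\bu}\und{PX}\otimes\tl{\bu}D\cong\tl{\bu}D$ recalled near the start of Section~\ref{sec:path-local-systems}. First I would apply associativity to obtain
\[
\tlbr{\bu}{PX}{\bu}\und{S}\otimes\bltr{\bu}{PX}{\bu}\und{PX}\otimes\tl{\bu}D
\;\cong\;
\bigl(\tlbr{\bu}{PX}{\bu}\und{S}\otimes\bltr{\bu}{PX}{\bu}\bigr)\und{PX}\otimes\tl{\bu}D,
\]
and then push the weak equivalence of Lemma~\ref{lem:article:zig-zag} through the outer tensor with $\tl{\bu}D$ to land at $\tltr{\bu}{PX}{\bu}\und{PX}\otimes\tl{\bu}D\cong\tl{\bu}D$.

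To make the final step rigorous, I would first replace $\tl{\bu}D$ by a semi-free left path local system $\tl{\bu}{\FF D}$ using Lemma~\ref{lem:article:2}. The antipode $\tau$ exchanging left and right path local systems yields a left-right symmetric version of Lemma~\ref{lem:tensor_lemma}: when the left path local system is semi-free, tensoring with it preserves weak equivalences in the right (bi path) factor. Applied to the equivalence from Lemma~\ref{lem:article:zig-zag}, this gives
\[
\bigl(\tlbr{\bu}{PX}{\bu}\und{S}\otimes\bltr{\bu}{PX}{\bu}\bigr)\und{PX}\otimes\tl{\bu}{\FF D}
\;\simeq\;
\tltr{\bu}{PX}{\bu}\und{PX}\otimes\tl{\bu}{\FF D}
\;\cong\;
\tl{\bu}{\FF D}.
\]
Finally I would transfer back from $\tl{\bu}{\FF D}$ to $\tl{\bu}D$ using that the composite functor $\tlbr{\bu}{PX}{\bu}\und{S}\otimes\bltr{\bu}{PX}{\bu}\und{PX}\otimes(-)$ is homotopical by Lemma~\ref{lem:PX_functors_homotopical} (applied twice). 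Concatenating all weak equivalences produces the desired zig-zag $\tl{\bu}D\simeq\tlbr{\bu}{PX}{\bu}\und{S}\otimes\bltr{\bu}{PX}{\bu}\und{PX}\otimes\tl{\bu}D$.

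The main obstacle is precisely the verification that $(-)\und{PX}\otimes\tl{\bu}D$ carries the weak equivalence of Lemma~\ref{lem:article:zig-zag} to a weak equivalence: the homotopical statement formulated in Lemma~\ref{lem:tensor_lemma} varies the right (left-path) factor, whereas here we need to vary the left (bi-path) factor. The semi-free replacement trick above resolves this cleanly without any additional geometric input, reducing the corollary to formal Morita-theoretic bookkeeping around the associativity diagram.
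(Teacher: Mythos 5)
Your proof is correct and takes essentially the same route as the paper's: semi-free replacement of $\tl{\bu}D$ via Lemma~\ref{lem:article:2}, reduction to Lemma~\ref{lem:article:zig-zag} plus a tensor-preservation statement, and a final transfer by Lemma~\ref{lem:PX_functors_homotopical}. The one place you are more careful than the paper is in observing that Lemma~\ref{lem:tensor_lemma} as stated varies the right (left-path) factor, whereas here the bi-path factor is being varied; the paper cites Lemma~\ref{lem:tensor_lemma} directly for the semi-free case without flagging this, implicitly relying on the left/right symmetry that your antipode argument makes explicit, so your extra remark is a small but genuine clarification of the same argument rather than a different approach.
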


\begin{proof}
  When $\tl\bu D$ is semi-free this follows by Lemma~\ref{lem:article:zig-zag} and Lemma \ref{lem:tensor_lemma}. When $\tl \bu D$ is not semi-free we choose a semi-free replacement $\tl \bu{\FF D}$ which exists by Lemma~\ref{lem:article:2} and get
  \begin{align*}
    \tl{\bu}D \simeq \tl{\bu}{\FF D} \simeq \tlbr{\bu}{PX}{\bu} \und{S}\otimes \bltr{\bu}{PX}{\bu}\und{PX}\otimes \tl{\bu}{\FF D}\simeq \tlbr\bu{PX}\bu \und{S}\otimes \bl{\bu} \,PX^\bu\und{PX}\otimes \tl{\bu}{D}
  \end{align*}
  where the last equivalence follows by Lemma~\ref{lem:PX_functors_homotopical}.
\end{proof}

\begin{corollary} \label{cor:article:2}
  When $\bl{\bu} A\in \bl{S}{\GG}$ then $\bl{\bu} A\simeq \bl{\bu} PX^\bullet \und{PX}\otimes \tl{\bu} PX_\bu\und{S}\otimes \bl{\bu} A$.
\end{corollary}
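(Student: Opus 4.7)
My plan is to mimic the proof of Corollary~\ref{cor:article:1}, using Lemma~\ref{lem:article:3} in place of Lemma~\ref{lem:article:zig-zag}. First, by associativity of the global tensor products,
\[
\bl{\bu} PX^\bullet \und{PX}\otimes \tl{\bu} PX_\bu \und{S}\otimes \bl{\bu} A \;\cong\; \bigl(\bl{\bu} PX^\bullet \und{PX}\otimes \tl{\bu} PX_\bu\bigr) \und{S}\otimes \bl{\bu} A.
\]
Lemma~\ref{lem:article:3} provides a weak equivalence $\bl{\bu} PX^\bu \und{PX}\otimes \tl{\bu} PX_\bu \simeq \bl{\bu} PX_\bu$ in $\bltr{S}{\GG}{S}$. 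Tensoring it with $\bl{\bu} A$ over $S$ and using that the global tensor of cosimplicial local systems is homotopical reduces the claim to the equivalence
\[
\bl{\bu} PX_\bu \und{S}\otimes \bl{\bu} A \;\simeq\; \bl{\bu} A.
\]

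To establish this remaining equivalence I would perform a fiber check. At a zero-cell $\beta_0 \in S$, we have $\blbr{\beta_0}{PX}{\alpha}=\tlbr{|\beta_0|}{PX}{\alpha}$ because $|\beta_0|$ is a single point, so
\[
(\bl{\bu} PX_\bu \und{S}\otimes \bl{\bu} A)_{\beta_0} = \blbr{\beta_0}{PX}{\bu} \und{S}\otimes \bl{\bu} A = \tlbr{|\beta_0|}{PX}{\bu} \und{S}\otimes \bl{\bu} A,
\]
and Lemma~\ref{lem:article:5} identifies this with $\bl{\beta_0}{A}$ via $a\mapsto \cst_{|\beta_0|}\otimes a$. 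If one can exhibit a natural map (in $\bl{S}{\GG}$) between the two sides that restricts to this quasi-isomorphism on a zero-cell fiber, then Lemma~\ref{lem:lemmas:single_fiber} promotes it to a weak equivalence of cosimplicial local systems.

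The hard part is constructing such a natural map $\eta:\bl{\bu}A \to \bl{\bu} PX_\bu \und{S}\otimes \bl{\bu} A$. For non-zero-dimensional cells $\beta$ the geometric realization $|\beta|$ is not a point, so there is no canonical $0$-chain playing the role of ``the constant path at $|\beta|$''. Any fixed choice such as a barycenter breaks naturality under face inclusions $\beta'\subset \beta$, because the chosen point in $|\beta|$ need not lie in $|\beta'|$. A natural fix is to replace the single constant path by the image of the fundamental chain of $|\beta|$ under $p\mapsto \cst_p$, so that face maps act correctly by restriction; but then one must carefully track the degree shift hidden in the $\tot$-filtration conventions and verify chain-level compatibility. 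Alternatively, one can work with an explicit natural zig-zag routed through an intermediate resolution rather than a strict natural transformation. Either way, the naturality with respect to all face maps is the technical heart of the argument.
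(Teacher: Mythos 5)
Your reduction to $\bl{\bu} PX_\bu \und{S}\otimes \bl{\bu} A \simeq \bl{\bu} A$ via Lemma~\ref{lem:article:3} and homotopicality of the global tensor, and the subsequent zero-cell fiber check using Lemma~\ref{lem:article:5} together with Lemma~\ref{lem:lemmas:single_fiber}, are exactly the paper's route. You also correctly diagnose why a strict natural transformation $\bl{\bu}A \to \bl{\bu} PX_\bu \und{S}\otimes \bl{\bu} A$ cannot be defined directly: a single constant path breaks naturality, and the top (fundamental) $p$-chain of $|\beta|$ both shifts simplicial degree and fails to commute with the differential since its boundary is nonzero. The paper's argument is precisely your second alternative, a zig-zag through an intermediate resolution. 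Concretely, one takes $\bl{\bu}C\in\bl{S}{\GG}$ with $\bl{\alpha}{C}=C_*^\Delta(\alpha)$, the \emph{entire} simplicial chain complex of the simplex rather than its top chain. Then the augmentation $\epsilon\otimes\id : \bl{\bu}C\fwcs\bl{\bu}A \to \bl{\bu}A$ is a weak equivalence because each $C_*^\Delta(\alpha)\to\F$ is a quasi-isomorphism, while $\cst\otimes\id : \bl{\bu}C\fwcs\bl{\bu}A \to \bl{\bu} PX_\bu \und{S}\otimes \bl{\bu} A$, sending $\sigma\otimes a \mapsto \cst_\sigma\otimes a$, is a well-defined chain map and natural in $\alpha$ (faces simply restrict simplices) and is a weak equivalence by the zero-cell check you describe. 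Using all simplicial chains rather than just the top chain is what simultaneously cures the naturality, degree and Leibniz issues you flagged.
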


\begin{proof}
  By Lemma~\ref{lem:article:3} and Corollary~\ref{lem:cor:tensor_lemma_cosimp} we have to prove that
  \begin{align*}
    \bl{\bu} A\simeq \bl{\bu} PX_\bu \und{S}\otimes \bl{\bu} A.
  \end{align*}
  We do this by finding a zig-zag. Define $\bl{\bu} C\in \bl{S}{\GG}$ by $\bl{\alpha}{C}=C_*^\Delta(\alpha)$, then there is a weak equivalence $\epsilon\otimes \id:\bl{\bu} C\fwcs \bl{\bu} A\to \bl{\bu} A$ sending $\beta\otimes a$ to $\epsilon(\beta)a$ where $\epsilon$ is the augmentation, the map is a weak equivalence since  $\epsilon:C_*^\Delta(\alpha)\to \F$ is a quasi isomorphism.

  There is also a weak equivalence $\mathrm{cst}\otimes \id:\bl{\bu} C\fwcs \bl{\bu} A\to \bl{\bu} PX_\bu \und{S}\otimes \bl{\bu} A$ sending $\sigma\otimes a\in \bl{\alpha}{C}\otimes \bl{\alpha}{A}$ to $\cst_\sigma\otimes a\in \blbr{\alpha}{PX}{\alpha}\otimes \bl{\alpha}{A}$. To see that $\mathrm{cst}\otimes \id$ is a weak equivalence it is enough to show that is it a weak equivalence over a single fiber $\alpha_0$, i.e. the map
  \begin{align*}
    \bl{\alpha_0}{A}\to \blbr{\alpha_0}{PX}{\bullet}\und{S}\otimes \bl{\bu}{A} \cong \left[\bigoplus_{\beta\in S}\blbr{\alpha_0}{PX}{\beta} \otimes \bl{\beta}{A} \right] / \sim
  \end{align*}
  should be a weak equivalence, but this follows from Lemma~\ref{lem:article:5} above.
\end{proof}

\begin{proposition}\label{prop:pi_0_equivalence}
  The functors in Equation~(\ref{eq:article:2}) and Equation~(\ref{eq:article:2b}) are inverse $\pi_0$-equivalences of categories.
\end{proposition}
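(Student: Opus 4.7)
The plan is to deduce the proposition directly from the preceding corollaries, which were set up precisely to make this argument quick. By Lemma~\ref{lem:PX_functors_homotopical} both functors are homotopical, so they induce well-defined maps on weak equivalence classes. What remains is to produce natural zig-zags of weak equivalences between each composition and the identity functor, and this is exactly the content of Corollaries~\ref{cor:article:1} and~\ref{cor:article:2}.

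More precisely, I would first observe that Corollary~\ref{cor:article:1} gives, for every $\tl{\bu} D \in \tl{X}{\GG}$, a natural zig-zag
\begin{align*}
  \tl{\bu} D \;\simeq\; \tlbr{\bu}{PX}{\bu} \und{S}\otimes \bltr{\bu}{PX}{\bu} \und{PX}\otimes \tl{\bu} D,
\end{align*}
which is precisely the assertion that the composition $(\tl{\bu} PX_\bullet\und{S}\otimes -) \circ (\bl{\bu} PX^\bu \und{PX}\otimes -)$ is weakly equivalent to the identity on $\tl{X}{\GG}$. Dually, Corollary~\ref{cor:article:2} says that for $\bl{\bu} A \in \bl{S}{\GG}$ we have the natural zig-zag
\begin{align*}
  \bl{\bu} A \;\simeq\; \bl{\bu} PX^\bu \und{PX}\otimes \tl{\bu} PX_\bullet \und{S}\otimes \bl{\bu} A,
\end{align*}
so that the reverse composition is weakly equivalent to the identity on $\bl{S}{\GG}$.

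From these two facts the $\pi_0$-equivalence follows by a formal argument. Writing $F=\tl{\bu} PX_\bullet\und{S}\otimes -$ and $G=\bl{\bu} PX^\bu \und{PX}\otimes -$, if $F(\bl{\bu} A) \simeq F(\bl{\bu} B)$ in $\tl{X}{\GG}$, then applying the homotopical functor $G$ and using $G F \simeq \id$ gives $\bl{\bu} A \simeq \bl{\bu} B$, so $F$ is $\pi_0$-injective; and for any $\tl{\bu} D$, $F(G(\tl{\bu} D)) \simeq \tl{\bu} D$ shows $F$ is $\pi_0$-surjective. Symmetrically for $G$.

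I do not expect any real obstacle here, since all technical work (the path-composition map of Lemma~\ref{lem:article:zig-zag}, the straight-line deformation retract of Lemma~\ref{lem:article:3}, the semi-free replacement handling of Corollary~\ref{cor:article:1}, and the augmentation zig-zag of Corollary~\ref{cor:article:2}) has already been carried out. The only care to take is to note that the equivalences are natural, so they genuinely descend to the homotopy category in a functorial way, and that the hypotheses of Lemma~\ref{lem:PX_functors_homotopical} cover both functors so there is no issue about needing a derived tensor on either side.
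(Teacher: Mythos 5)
Your proof is correct and takes exactly the route the paper does: the paper's proof is simply ``This follows directly from Corollary~\ref{cor:article:1} and Corollary~\ref{cor:article:2}.'' You have merely unwound the formal $\pi_0$-injectivity/surjectivity argument that the paper leaves implicit, which is a harmless elaboration.
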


\begin{proof}
  This follows directly from Corollary~\ref{cor:article:1} and Corollary~\ref{cor:article:2}.
\end{proof}

\begin{corollary} \label{cor:article:3}
  Assuming that $X=|S|$ and $Y=|T|$ the functor $\tltr{X}{\GG}{Y} \to \blbr{S}{\GG}{T}$ given by
  \begin{align*}
    \tltr{\bu}{D}{\bu} \mapsto \bltr{\bu}{PX}{\bu} \und{PX}\otimes \tltr{\bu}{D}{\bu} \und{PX}\otimes \tlbr{\bu}{PX}{\bu} 
  \end{align*}
  is a $\pi_0$-equivalence.
\end{corollary}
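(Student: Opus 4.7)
The plan is to factor the functor as a composition of two \emph{one-sided} path-to-cosimplicial conversion functors, one for each variable, and to verify that each factor is a $\pi_0$-equivalence by reusing Proposition~\ref{prop:pi_0_equivalence} in the combined-category setting of Section~\ref{sec:combined-categories}. (I read the second ``$PX$'' in the statement as ``$PY$'', since otherwise the tensor factor $\tlbr{\bu}{PX}{\bu}$ has no $Y$-variable to pair against.)

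Using associativity of the global tensors, the functor factors as
\begin{align*}
  \tltr{X}{\GG}{Y} \xrightarrow{\bltr{\bu}{PX}{\bu}\und{PX}\otimes -} \tlar{}{\GG}{Y}{S} \xrightarrow{-\und{PY}\otimes \tlbr{\bu}{PY}{\bu}} \blbr{S}{\GG}{T},
\end{align*}
where the intermediate category $\tlar{}{\GG}{Y}{S}$ consists of cosimplicial local systems on $S$ with values in right path local systems on $Y$. The candidate inverse functors are $\tlbr{\bu}{PX}{\bu}\und{S}\otimes -$ and $-\und{T}\otimes \bltr{\bu}{PY}{\bu}$, respectively.

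I would then argue that each factor is a $\pi_0$-equivalence by reusing, with parameters in the other variable, all of the constructions of Section~\ref{sec:equiv-glps-gcss}. Weak equivalences in both $\tltr{X}{\GG}{Y}$ and $\tlar{}{\GG}{Y}{S}$ are defined fiberwise over $Y$, and the maps appearing in the proofs of Lemmas~\ref{lem:PX_functors_homotopical}--\ref{lem:article:3} and Corollaries~\ref{cor:article:1}--\ref{cor:article:2} (path actions, evaluation at zero-cells, the Eilenberg--Zilber and concatenation maps of Equation~\eqref{eq:article:8}, and the bar-resolution semi-free replacement of Lemma~\ref{lem:article:2}) are all natural in the right-$Y$ coefficient. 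Hence every one of those statements extends verbatim to give the ``$Y$-parameterized'' version of Proposition~\ref{prop:pi_0_equivalence}, so the first factor is a $\pi_0$-equivalence. The same reasoning applied on the $Y$-side (with parameters now in cosimplicial local systems on $S$) makes the second factor a $\pi_0$-equivalence, and the composition of two $\pi_0$-equivalences is again a $\pi_0$-equivalence.

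The one substantive point requiring verification is the parameterized version of Lemma~\ref{lem:article:2}: that the bar-resolution semi-free replacement on the left-$X$-path side of an object of $\tltr{X}{\GG}{Y}$ can be performed without disturbing its right-$Y$ structure. This is routine since the bar construction is functorial in the coefficient object; all other ingredients are transported symbol-for-symbol from Section~\ref{sec:equiv-glps-gcss}.
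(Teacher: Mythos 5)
Your proposal matches the paper's intent: the corollary is stated without a proof immediately after Proposition~\ref{prop:pi_0_equivalence}, implicitly meaning that one applies the proposition once in the $X$-variable (with the right $Y$-structure carried along as a passive parameter, exactly as you set up via the combined category $\tlar{}{\GG}{Y}{S}$) and once more in the $Y$-variable. You are also right that the displayed formula contains a typo: the rightmost tensor should be taken over $PY$ against $\tlbr{\bu}{PY}{\bu}$, not $PX$.
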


\subsection{The equivalence preserves tensors}

\begin{lemma} \label{lem:article:18}
  The functor $F=\tlbr{\bu}{PX}\bu \und{S}\otimes - :\bl{S}{\GG} \to \tl{X}{\GG}$ is weakly symmetric monoidal with respect to fiber-wise tensor of path local systems. That is, we have a natural zig-zag of weak equivalences
  \begin{align*}
   F(\bl{\bu} (A\fwcs B)) \simeq F(\bl{\bu} A) \fwlp F(\bl{\bu} B).
  \end{align*}
  It follows that so is its $\pi_0$-inverse from Equation~(\ref{eq:article:2}).
\end{lemma}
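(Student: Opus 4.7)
I would prove the lemma by constructing an explicit natural map
\[
\Phi_{\bl{\bu}A,\bl{\bu}B} : F(\bl{\bu}(A \fwcs B)) \longrightarrow F(\bl{\bu}A) \fwlp F(\bl{\bu}B),
\]
check that it is a morphism in $\tl{X}{\GG}$, and verify that it is a weak equivalence by reducing to a single $0$-cell fiber. The required zig-zag for $G$ would then be obtained formally from $FG\simeq \id$, $GF\simeq \id$ (Proposition~\ref{prop:pi_0_equivalence}) and the homotopicality of both tensor products (Lemma~\ref{lem:PX_functors_homotopical} and the homotopicality of $\fwcs$).

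Concretely, on a simplex $\alpha \in S$ the $\alpha$-summand of the source is $\tlbr{x}{PX}{\alpha} \otimes A_\alpha \otimes B_\alpha$. I define $\Phi$ on this summand as the composition of the Alexander--Whitney diagonal
\[
\Delta : \tlbr{x}{PX}{\alpha} \longrightarrow \tlbr{x}{PX}{\alpha} \otimes \tlbr{x}{PX}{\alpha}
\]
with the Koszul rearrangement that brings tensor factors into the order $(\tlbr{x}{PX}{\alpha} \otimes A_\alpha) \otimes (\tlbr{x}{PX}{\alpha} \otimes B_\alpha)$ and the canonical inclusion into the target. Well-definedness on the global tensor over $S$ follows from naturality of $\Delta$ with respect to the inclusions $\tltr{x}{\PP X}{|\alpha|}\hookrightarrow \tltr{x}{\PP X}{|\beta|}$. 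The nontrivial point is that $\Phi$ commutes with the left $PX$-action: this amounts exactly to the compatibility of concatenation with the Alexander--Whitney diagonal, which is the commutative square~(\ref{eq:article:7}) applied to $X=\tltr{x}{\PP X}{y}$, $Y=\tltr{y}{\PP X}{|\alpha|}$ with the concatenation map in place of $C_*(X)\otimes C_*(Y)\to C_*(X\times Y)$. Modulo Koszul sign bookkeeping this is exactly what the diagram guarantees, so this step — which I expect to be the main technical obstacle — reduces to invoking~(\ref{eq:article:7}).

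Next I would show $\Phi$ is a weak equivalence. Since $S$ is connected it suffices to check the induced map on a single fiber $x = |\alpha_0|$ for a $0$-simplex $\alpha_0\in S$. By Lemma~\ref{lem:article:5} applied to $A$, $B$, and $A\fwcs B$ we have natural quasi-isomorphisms
\[
\bl{\alpha_0}A \otimes \bl{\alpha_0}B \xrightarrow{\simeq} F(\bl{\bu}(A\fwcs B))_{|\alpha_0|}, \qquad \bl{\alpha_0}A \xrightarrow{\simeq} F(\bl{\bu}A)_{|\alpha_0|}, \qquad \bl{\alpha_0}B \xrightarrow{\simeq} F(\bl{\bu}B)_{|\alpha_0|},
\]
all given by $c\mapsto \cst_{|\alpha_0|}\otimes c$. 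Chasing these through $\Phi$, the image of $a\otimes b$ equals $\Delta(\cst_{|\alpha_0|})\cdot(a\otimes b)$; since $\cst_{|\alpha_0|}$ is a $0$-chain, its AW-diagonal is $\cst_{|\alpha_0|}\otimes \cst_{|\alpha_0|}$, so $\Phi_{|\alpha_0|}$ corresponds to the identity on $\bl{\alpha_0}A\otimes \bl{\alpha_0}B$ under the three quasi-isomorphisms. Two-out-of-three then forces $\Phi_{|\alpha_0|}$ to be a quasi-isomorphism, and by the path local system analogue of Lemma~\ref{lem:lemmas:single_fiber} $\Phi$ is a weak equivalence.

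For the final clause, given $\tl{\bu}D,\tl{\bu}E\in \tl{X}{\GG}$ I produce the zig-zag
\[
G(\tl{\bu}D \fwlp \tl{\bu}E) \;\simeq\; G(FG\tl{\bu}D \fwlp FG\tl{\bu}E) \;\simeq\; GF(G\tl{\bu}D \fwcs G\tl{\bu}E) \;\simeq\; G\tl{\bu}D \fwcs G\tl{\bu}E,
\]
using homotopicality of $\fwlp$ and $G$ in the first step, the zig-zag for $F$ together with homotopicality of $G$ in the second, and $GF\simeq \id$ in the third. Naturality in $(\tl{\bu}D,\tl{\bu}E)$ is automatic from naturality of $\Phi$.
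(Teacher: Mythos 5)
Your argument is correct and mirrors the paper's proof: you build the comparison map from the Alexander--Whitney diagonal on the path factor, check it on a $0$-cell fiber via Lemma~\ref{lem:article:5}, and formally transfer the statement to $G$ using $\pi_0$-inverseness and homotopicality; the only addition is that you spell out the $PX$-action compatibility via Equation~(\ref{eq:article:7}), which the paper leaves implicit.
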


\begin{proof}
  Rewriting both sides we want to construct this map as
  \begin{align} \label{eq:article:4}
    \tot(\tl{\bu} PX_\bu \fwcs (\bl{\bu}A \fwcs \!\bl{\bu} B)) \to  \tot(\tl{\bu} PX_\bu \fwcs \! \bl{\bu} A) \fwlp     \tot(\tl{\bu} PX_\bu \fwcs \! \bl{\bu} B).
  \end{align}
  For each $\alpha \in S$ we have a map
  \begin{align*}
    \tl{\bu} PX_\alpha \otimes (\bl{\alpha}{A} \otimes \bl{\alpha}{B}) \to (\tl{\bu} PX_\alpha \otimes \bl{\alpha}{A}) \otimes (\tl{\bu} PX_\alpha \otimes \bl{\alpha}{B})
   \end{align*}
   (in $\tl{X}{\GG}$) given by using Alexander-Whitney on the path factor and rearranging the factors a bit. As each factor of the target maps to the colimits (total complexes) in Equation~(\ref{eq:article:4}) we get for each $\alpha$ a map
  \begin{align*}
    \tl{\bu} PX_\alpha \otimes (\bl{\alpha}{A} \otimes \bl{\alpha}{B}) \to \tot(\tl{\bu} PX_\bu \fwcs \bl{\bu} A) \fwlp     \tot(\tl{\bu} PX_\bu \fwcs \bl{\bu} B).
  \end{align*}
  This map is compatible with face inclusions - hence we define
  \begin{align*}
    \eta_{\sbl{\bu} A,\sbl{\bu} B}:F(\bl{\bu} (A\fwcs B)) \to F(\bl{\bu} A) \fwlp F(\bl{\bu} B)
  \end{align*}
  as the colimit of these maps. It remains to show that it is a weak equivalence. Consider any zero cell $\alpha_0$ and the map
  \begin{align*}
    \bl{\alpha_0}{A} \otimes \bl{\alpha_0}{B} \to \tot(\tlbr{|\alpha_0|}{PX}{\bu} \fwcs (\bl{\bu} A \fwcs \bl{\bu} B))
  \end{align*}
  defined by sending $a \otimes b$ to $\cst_{|\alpha_0|}\otimes a \otimes b$. By Lemma~\ref{lem:article:5} this is a weak equivalence, and composing it with $\eta_{\sbl{\bu} A,\sbl{\bu} B}$ the lemma also shows that it is a tensor product of two weak equivalences - hence another weak equivalence and the first part of the lemma follows.

  To see that the $\pi_0$-inverse $G=\bltr{\bu}{PX}\bu \und{PX}\otimes - : \tl{X}{\GG}\to \bl{S}{\GG}$ is weakly symmetric monoidal we apply $F$ to $G(\tl \bu D \fwlp \tl \bu E)$ and $G(\tl \bu D)\fwcs G(\tl \bu E)$ and use that the fiber-wise tensor of path local systems is homotopical.
\end{proof}

The following corollary follows by applying the above lemma followed by its right path version.

\begin{corollary} \label{cor:article:3a}
  The functor in Corollary~\ref{cor:article:3} is weakly symmetric monoidal with respect to fiber wise tensor product.
\end{corollary}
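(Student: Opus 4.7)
The strategy, as indicated, is to factor the functor of Corollary~\ref{cor:article:3} as a composition
\begin{align*}
\tltr{X}{\GG}{Y} \xrightarrow{G_Y} \tlbr{X}{\GG}{T} \xrightarrow{G_X} \blbr{S}{\GG}{T}
\end{align*}
where $G_Y$ is the right-path-to-right-cosimplicial functor $\tltr{\bu}{D}{\bu} \mapsto \tltr{\bu}{D}{\bu} \und{PY}\otimes \tlbr{\bu}{PY}{\bu}$ and $G_X$ is the left-path-to-left-cosimplicial functor $\tlbr{\bu}{A}{\bu} \mapsto \bltr{\bu}{PX}{\bu} \und{PX}\otimes \tlbr{\bu}{A}{\bu}$. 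Both factors land in the appropriate combined categories: tensoring over $PY$ does not interfere with the orthogonal left $PX$-action, and by Proposition~\ref{prop:pi_0_equivalence} (applied fiber-wise in the $X$-variable) the result of $G_Y$ is actually cosimplicial in the $T$-direction, so it is a legitimate input to $G_X$.

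I would then apply Lemma~\ref{lem:article:18} to $G_X$, and the evident right-path analog of Lemma~\ref{lem:article:18} to $G_Y$. The lemma produces a natural zig-zag of weak equivalences
\begin{align*}
G_X(\tlbr{\bu}{A}{\bu} \otimes \tlbr{\bu}{B}{\bu}) \simeq G_X(\tlbr{\bu}{A}{\bu}) \otimes G_X(\tlbr{\bu}{B}{\bu})
\end{align*}
of fiber-wise tensors in $\blbr{S}{\GG}{T}$, and similarly for $G_Y$. Composing the two zig-zags yields the desired symmetric monoidal structure for the composite $G_X \circ G_Y$, and Corollary~\ref{cor:article:3a} follows.

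The only point to verify is that the construction underlying Lemma~\ref{lem:article:18} -- Alexander-Whitney on the relevant path factor, a shuffle of tensor factors, and the fiber-wise weak equivalence of Lemma~\ref{lem:article:5} -- is natural in any additional index ($T$-simplex or $PY$-path) that is passively carried along. This is automatic because each of these maps touches only the $X$-path factor (respectively, only the $Y$-path factor for $G_Y$) and commutes strictly with any right action or cosimplicial structure present on the other side. Consequently the proof of Lemma~\ref{lem:article:18} transports verbatim to the combined categories.

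The main obstacle is really just bookkeeping: keeping track of the Koszul signs from the suspensions and the rearrangements of tensor factors (as in Section~\ref{sec:notation-conventions}) when Alexander-Whitney is applied simultaneously on both $PX$- and $PY$-path factors, and checking that the two independent symmetric-monoidal zig-zags commute up to natural weak equivalence when composed. Neither raises a new mathematical issue beyond what is already handled in Lemma~\ref{lem:article:18}, so the corollary truly is immediate from the two applications.
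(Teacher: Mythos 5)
Your proof is correct and follows the paper's approach exactly: the paper's own argument is the one-line observation that the corollary follows by applying Lemma~\ref{lem:article:18} and then its right-path analogue, which is precisely the factorization through $G_X$ and $G_Y$ that you spell out. The additional bookkeeping remarks you make are sound but not needed beyond what the paper leaves implicit.
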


\begin{lemma} \label{lem:article:9}
  The functor $F'\times F: \tr{\GG}{X} \times \tl{X}{\GG} \to \br{\GG}{S} \times \bl{S}{\GG}$ (where each factor is given by tensoring with the appropriate bi-path local system) preserves the derived global tensor up to weak equivalence. That is, we have a natural zig zag of quasi isomorphisms between
  \begin{align*}
    \tr{D}{\bu} \undL{PX}\otimes \tl{\bu}{E} \qquad \textrm{and} \qquad F'(\tr{D}{\bu}) \und{S}\otimes F(\tl{\bu}{E}).
  \end{align*}
\end{lemma}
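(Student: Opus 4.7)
The plan is to apply Corollary~\ref{cor:article:1} to a semi-free replacement of $\tl{\bu}{E}$ and then tensor on the other side with a semi-free replacement of $\tr{D}{\bu}$ over $PX$, invoking Lemma~\ref{lem:tensor_lemma} to ensure that the resulting weak equivalence is preserved. All of the other pieces (homotopy-invariance of $F$, $F'$, and associativity of the global tensors) are already in place.

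First, I would choose semi-free replacements $\tr{\FF D}{\bu} \xrightarrow{\simeq} \tr{D}{\bu}$ and $\tl{\bu}{\FF E} \xrightarrow{\simeq} \tl{\bu}{E}$ via Lemma~\ref{lem:article:2} and its right-path-local-system analog. By definition of the derived global tensor,
\[
\tr{D}{\bu} \undL{PX}\otimes \tl{\bu}{E} \simeq \tr{\FF D}{\bu} \und{PX}\otimes \tl{\bu}{\FF E}.
\]
Since $F$ and $F'$ are homotopical by Lemma~\ref{lem:PX_functors_homotopical}, and the global tensor products are associative,
\[
F'(\tr{D}{\bu}) \und{S}\otimes F(\tl{\bu}{E}) \simeq \tr{\FF D}{\bu} \und{PX}\otimes \tlbr{\bu}{PX}{\bu} \und{S}\otimes \bltr{\bu}{PX}{\bu} \und{PX}\otimes \tl{\bu}{\FF E}.
\]

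Next, applying Corollary~\ref{cor:article:1} to the semi-free $\tl{\bu}{\FF E}$ yields a weak equivalence
\[
\tlbr{\bu}{PX}{\bu} \und{S}\otimes \bltr{\bu}{PX}{\bu} \und{PX}\otimes \tl{\bu}{\FF E} \xrightarrow{\simeq} \tl{\bu}{\FF E}
\]
in $\tl{X}{\GG}$. Tensoring on the left over $PX$ with $\tr{\FF D}{\bu}$ and applying Lemma~\ref{lem:tensor_lemma}, whose hypothesis is satisfied because the fixed factor $\tr{\FF D}{\bu}$ is semi-free, yields a quasi isomorphism
\[
\tr{\FF D}{\bu} \und{PX}\otimes \tlbr{\bu}{PX}{\bu} \und{S}\otimes \bltr{\bu}{PX}{\bu} \und{PX}\otimes \tl{\bu}{\FF E} \xrightarrow{\simeq} \tr{\FF D}{\bu} \und{PX}\otimes \tl{\bu}{\FF E}.
\]
Splicing this with the weak equivalences from the previous paragraph gives the desired natural zig-zag between $F'(\tr{D}{\bu}) \und{S}\otimes F(\tl{\bu}{E})$ and $\tr{D}{\bu} \undL{PX}\otimes \tl{\bu}{E}$.

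No serious obstacle appears here: the hypothesis of Lemma~\ref{lem:tensor_lemma} matches our situation exactly (a weak equivalence between left path local systems, tensored with a fixed semi-free right path local system on the left), and the whole proof follows the same pattern as the proofs of Corollary~\ref{cor:article:1} and Corollary~\ref{cor:article:2}. The only care required is to insert the semi-free replacements on the outermost factors and track the associativity of the iterated global tensors, so that no additional applications of Lemma~\ref{lem:tensor_lemma} are needed in the middle of the expression.
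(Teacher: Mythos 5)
Your proof is correct and follows essentially the same route as the paper: reduce to semi-free representatives, apply Corollary~\ref{cor:article:1} to the left path local system, and use Lemma~\ref{lem:tensor_lemma} with the semi-free right factor to pass the equivalence through the global tensor. The only (harmless) difference is that you also replace $\tl{\bu}{E}$ by $\tl{\bu}{\FF E}$, whereas the paper observes this is unnecessary since Corollary~\ref{cor:article:1} already holds for arbitrary $\tl{\bu}{E}$; it simply fixes a semi-free representative of $\tr{D}{\bu}$ (justified by the definition of the derived tensor) and applies the corollary and Lemma~\ref{lem:tensor_lemma} directly.
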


Here ``natural'' emphasizes that if we e.g. have a left structure on $\tr{D}{\bu}$ then the zig-zag will be compatible with this.

\begin{proof}
  As the path global tensor is derived we may assume that $\tr D\bu$ is semi-free. We get
  \begin{align*}
    F'(\tr{D}{\bu}) \und{S}\otimes F(\tl{\bu}{E}) = \tr{D}{\bu} \und{PX}\otimes (\tlbr{\bu}{PX}{\bu} \und{S}\otimes \bltr{\bu}{PX}{\bu} \und{PX}\otimes \tl{\bu}{E}) \simeq \tr{D}{\bu} \und{PX}\otimes \tl{\bu}{E}.
  \end{align*}
  where the quasi isomorphism is by Corollary~\ref{cor:article:1} and Lemma~\ref{lem:tensor_lemma}.
\end{proof}

\begin{lemma}
  The functor $G'\times G:\br{\GG}{S} \times \bl{S}{\GG}\to \tr{\GG}{X} \times \tl{X}{\GG}$ preserves global tensor, i.e. $\br{A}{\bu} \und{S}\otimes \bl{\bu}{B}\simeq G'(\br{A}{\bu}) \und{PX}\otimes G(\bl{\bu}{B})$.
\end{lemma}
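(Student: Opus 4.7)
The plan is to bootstrap Corollary~\ref{cor:article:2} by applying $\br{A}{\bu}\und{S}\otimes(-)$ to the zig-zag it produces and then rearrange via associativity of the global tensor. Unwinding the definitions, $G(\bl{\bu}{B})=\tlbr{\bu}{PX}{\bu}\und{S}\otimes \bl{\bu}{B}$ and its right-handed counterpart is $G'(\br{A}{\bu})=\br{A}{\bu}\und{S}\otimes \bltr{\bu}{PX}{\bu}$. Reassociating the global tensors then gives
\begin{align*}
G'(\br{A}{\bu})\und{PX}\otimes G(\bl{\bu}{B}) \;=\; \br{A}{\bu}\und{S}\otimes \bltr{\bu}{PX}{\bu}\und{PX}\otimes \tlbr{\bu}{PX}{\bu}\und{S}\otimes \bl{\bu}{B}.
\end{align*}

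The core step is Corollary~\ref{cor:article:2}, which supplies a natural zig-zag of weak equivalences in $\bl{S}{\GG}$ of the form $\bl{\bu}{B}\simeq \bltr{\bu}{PX}{\bu}\und{PX}\otimes \tlbr{\bu}{PX}{\bu}\und{S}\otimes \bl{\bu}{B}$. I would then tensor this zig-zag on the left with $\br{A}{\bu}\und{S}\otimes(-)$; each stage is preserved as a quasi-isomorphism by Corollary~\ref{lem:cor:tensor_lemma_cosimp}, yielding exactly the claimed weak equivalence $\br{A}{\bu}\und{S}\otimes \bl{\bu}{B}\simeq G'(\br{A}{\bu})\und{PX}\otimes G(\bl{\bu}{B})$.

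The only delicate point is checking that each intermediate object appearing in the Corollary~\ref{cor:article:2} zig-zag --- in particular the fiberwise tensor $\bl{\bu}{C}\fwcs \bl{\bu}{B}$ (with $\bl{\alpha}{C}=C_*^\Delta(\alpha)$) used in its proof, as well as the object produced by Lemma~\ref{lem:article:3} --- lies in $\bl{S}{\pGG}$, which is the mild hypothesis needed to invoke Corollary~\ref{lem:cor:tensor_lemma_cosimp}. This is immediate from the constructions: fiberwise tensors of (pre-)cosimplicial systems are again such, and the functor $\bltr{\bu}{PX}{\bu}\und{PX}\otimes(-)$ lands in $\bl{S}{\GG}\subset \bl{S}{\pGG}$ by definition (as noted just before Lemma~\ref{lem:PX_functors_homotopical}).
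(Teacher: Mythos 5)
Your proof is correct and follows essentially the same route as the paper's own argument: unwind the definitions of $G'$ and $G$, reassociate the global tensors, then apply Corollary~\ref{cor:article:2} (the two-sided $\bltr{\bu}{PX}{\bu}\und{PX}\otimes\tlbr{\bu}{PX}{\bu}\und{S}\otimes(-)\simeq\operatorname{id}$ statement) and Corollary~\ref{lem:cor:tensor_lemma_cosimp}. The extra check that the intermediate objects lie in $\bl{S}{\pGG}$ is a reasonable precaution, though the paper leaves this implicit.
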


\begin{proof}
  We have
  \begin{align*}
    G'(\br{A}{\bu}) \und{PX}\otimes G(\bl{\bu}{B})=\br A\bu\und{S}\otimes (\bltr \bu{PX}\bu \und{PX}\otimes \tlbr \bu{PX}\bu \und S\otimes \bl \bu B)\simeq \br A\bu\und S\otimes \bl \bu B
  \end{align*}
  by Corollary~\ref{cor:article:2} and Corollary~\ref{lem:cor:tensor_lemma_cosimp}.
\end{proof}

\subsection{Fibrant replacement}
The constructions so far in this section involve cosimplicial local systems. However, it can also be used to turn pre-cosimplicial local systems into cosimplicial local systems. In fact, this is completely analogous to taking any map $X\to Y$ of spaces and replacing it with a Serre fibration $X' \to Y$ with $X \simeq X'$.

\begin{lemma} \label{lem:article:16}
  We have a well-defined functor $\bl{S}{\pGG} \to \bl{S}{\GG}$ given by
  \begin{align*}
    \bl{\bu}A \mapsto \bl{\bu} PX_\bu \und{S}\otimes \bl{\bu}A
  \end{align*}
  that restricts to the identity on weak equivalence classes of $\bl{S}\GG$.
\end{lemma}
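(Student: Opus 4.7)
The plan is to split the claim into two parts: first, that $F(\bl{\bu}A):=\bl{\bu}PX_\bu\und{S}\otimes\bl{\bu}A$ genuinely lies in $\bl{S}\GG$ whenever $\bl{\bu}A\in\bl{S}\pGG$; and second, that on $\bl{S}\GG$ the functor $F$ is weakly equivalent to the identity.

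For the first part, the pre-cosimplicial condition on $F(\bl{\bu}A)$ is inherited in a routine manner from that of $\bl{\bu}PX_\bu$ in its left-bullet variable (once one commutes the two totals implicit in $\und{S}\otimes$). The substantive content is the cosimplicial condition: for any zero-cell $\alpha_0\subset\beta$ one must verify that the structure map
\[
\blbr{\alpha_0}{PX}{\bu}\und{S}\otimes\bl{\bu}A\;\longrightarrow\;\blbr{\beta}{PX}{\bu}\und{S}\otimes\bl{\bu}A
\]
is a quasi-isomorphism. I would prove this exactly as in Lemma~\ref{lem:article:3}: the inclusion $\blbr{\alpha_0}{PX}{\bu}\hookrightarrow\blbr{\beta}{PX}{\bu}$ is a chain homotopy equivalence of objects in $\br{\GG}{S}$, with explicit inverse which, on the fiber over $\gamma\in S$, prepends to each path starting at $y\in|\beta|$ the straight segment in the simplex $|\beta|$ from $|\alpha_0|$ to $y$. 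The homotopies witnessing this equivalence only modify the beginning of a path and are therefore natural in $\gamma$, so they descend through $\und{S}\otimes\bl{\bu}A$ to give a chain homotopy on the display above, in particular a quasi-isomorphism.

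For the second part, fix $\bl{\bu}A\in\bl{S}\GG$. Corollary~\ref{cor:article:2} supplies
\[
\bl{\bu}A\;\simeq\;\bl{\bu}PX^\bu\und{PX}\otimes\tl{\bu}PX_\bu\und{S}\otimes\bl{\bu}A,
\]
while Lemma~\ref{lem:article:3} gives a weak equivalence $\bl{\bu}PX^\bu\und{PX}\otimes\tl{\bu}PX_\bu\to\bl{\bu}PX_\bu$. Applying $\und{S}\otimes\bl{\bu}A$ and invoking the homotopical nature of the global tensor (Corollary~\ref{lem:cor:tensor_lemma_cosimp}) to propagate this weak equivalence yields
\[
\bl{\bu}PX^\bu\und{PX}\otimes\tl{\bu}PX_\bu\und{S}\otimes\bl{\bu}A\;\simeq\;\bl{\bu}PX_\bu\und{S}\otimes\bl{\bu}A\;=\;F(\bl{\bu}A),
\]
and combining gives $\bl{\bu}A\simeq F(\bl{\bu}A)$. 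The main obstacle is the cosimplicial verification in the first step: the deformation-retraction argument from Lemma~\ref{lem:article:3} must be carefully repackaged as a chain homotopy equivalence of the right cosimplicial local systems $\blbr{\alpha_0}{PX}{\bu}\to\blbr{\beta}{PX}{\bu}$ themselves (rather than after further tensoring against a fixed $\tl{\bu}D$, which was the form employed earlier in the paper); once that is in hand, the rest is routine assembly of the cited corollary and lemma.
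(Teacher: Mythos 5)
Your argument is correct and follows the paper's approach. The second half (that $F$ is the identity on weak equivalence classes of $\bl{S}\GG$) cites exactly the same results as the paper's proof: Lemma~\ref{lem:article:3}, Corollary~\ref{cor:article:2}, and Corollary~\ref{lem:cor:tensor_lemma_cosimp}. For the cosimplicial condition in the first half, your plan of promoting the path-retraction from Lemma~\ref{lem:article:3} to a natural chain homotopy equivalence of right cosimplicial local systems and pushing it through the global tensor is valid, but the paper reaches the conclusion more economically: since $\blbr{\alpha_0}{PX}{\beta}\to\blbr{\alpha}{PX}{\beta}$ is a quasi-isomorphism, the fiber-wise map $\blbr{\alpha_0}{PX}{\beta}\otimes\bl{\beta}{A}\to\blbr{\alpha}{PX}{\beta}\otimes\bl{\beta}{A}$ is a weak equivalence of pre-cosimplicial local systems in the $\beta$ variable, and Corollary~\ref{cor:lem:lemmas:1} (that $\tot$ is homotopical) then yields that the induced map on total complexes, i.e.\ on the fibers of $F(\bl{\bu}A)$, is a quasi-isomorphism, with no need to verify naturality of explicit chain homotopies. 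Both routes work; the paper's is shorter because it reuses the machinery already established in Section~\ref{sec:cosimplicial-local}, whereas yours trades a citation for explicit geometry.
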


\begin{proof}
  The fact that this functor is the identity on $\bl{S}\GG$ follows from Lemma~\ref{lem:article:3}, Corollary~\ref{cor:article:2} and Corollary~\ref{lem:cor:tensor_lemma_cosimp}. That $\bl{\bu} PX_\bu \und{S}\otimes \bl{\bu}A$ is a pre-cosimplicial local system follows by definition, and local homology condition making it into a cosimplicial local system follows from Lemma~\ref{cor:lem:lemmas:1b} and the fact that the inclusion
  \begin{align*}
    \bl{\alpha_0} PX_\beta \otimes \bl \beta A \subset \bl{\alpha} PX_\beta\otimes \bl \beta A
  \end{align*}
  is a quasi isomorphism.
\end{proof}

\subsection{Diagonals}
The following lemma is a reflection of $\tltr{\bu}{PX}{\bu}$ being a representation of the diagonal $X \subset X\times X$, which is equivalent to it representing the identity under global tensor.

\begin{lemma}\label{lem:skyscraper}
  For any $\tl{\bu}{D} \in \tl{X}{\GG}$ we have
  \begin{align*}
    \tltr{\bu}{PX}{\bu}\fwlp \tl{\bu}{D}\simeq \tltr{\bu}{PX}{\bu} \fwrp \tr{\overline{D}}{\bu}.
  \end{align*}
\end{lemma}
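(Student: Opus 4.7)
The plan is to exhibit an explicit map between the two bi-path local systems and then show it is a weak equivalence via a fibrewise check.

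First I would define a chain map
\begin{align*}
  \Phi : \tltr{\bu}{PX}{\bu}\fwlp \tl{\bu}{D} \to \tltr{\bu}{PX}{\bu} \fwrp \tr{\overline{D}}{\bu}
\end{align*}
whose component at the fibre $(x,y)$ sends a decomposable element $\alpha \otimes d$ (with $\alpha \in \tltr{x}{PX}{y}$ and $d \in \tl{x}{D}$) to $\sum \alpha^{(1)} \otimes \overline{\alpha^{(2)}}\, d$, where $\Delta \alpha = \sum \alpha^{(1)} \otimes \alpha^{(2)}$ is the Alexander--Whitney diagonal on simplices of the Moore path space $\tltr{x}{\PP X}{y}$, the path $\overline{\alpha^{(2)}}\colon y \to x$ is the Moore-path reversal, and $\overline{\alpha^{(2)}}\, d \in \tl{y}{D} = \tr{\overline{D}}{y}$ uses the left action of $\tl{\bu}{D}$. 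Morally, $\Phi$ transports the fibre of $D$ from the $x$-slot to the $y$-slot by splitting $\alpha$ along the AW diagonal and then using one half of the split as a reverse transport.

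I would then verify that $\Phi$ respects both path actions. Compatibility with the right parallel transport at $y$ is strict: it follows immediately from the multiplicativity of AW under path concatenation (Equation~\eqref{eq:article:7}) together with the anti-homomorphism property $\overline{\alpha\beta} = \overline{\beta}\,\overline{\alpha}$, after which both sides of the check become literally identical sums. Compatibility with the left parallel transport at $x$, by contrast, requires an instance of the antipode identity $\sum \overline{\gamma^{(2)}}\gamma^{(3)} = \epsilon(\gamma^{(2)})\cst$, which for Moore-path chains with the AW diagonal holds only up to an explicit chain homotopy coming from the standard contraction of $\overline{\gamma}\gamma$ to a constant loop. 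To make $\Phi$ a strict morphism of bi-path local systems I would either absorb this homotopy into a chain-level rectification of $\Phi$, or, more cleanly, first replace $\tltr{\bu}{PX}{\bu}$ by a semi-free model (via Lemma~\ref{lem:article:2}) in which the relation holds strictly.

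To show $\Phi$ is a weak equivalence it suffices to check it on a single fibre, since $X$ is connected and parallel transport makes all fibres weakly equivalent. At any $(x_0,y_0)$ the map realises the classical Hopf-module identification between a left and a right $C_*\Omega X$-module via the antipode; it is a quasi-isomorphism because on the homology spectral sequence induced by an appropriate filtration both sides abut to $H_*(\Omega X) \otimes H_*(\tl{x_0}{D})$ and $\Phi$ is an isomorphism on the $E^2$-page. The main obstacle I anticipate is precisely the strict left-action compatibility of $\Phi$, which depends on the only-homotopical nature of the antipode axiom for AW chains; if the direct rectification turns out to be awkward, my fallback is to transfer the statement through the $\pi_0$-equivalence with bi-cosimplicial local systems (Corollary~\ref{cor:article:3}) together with its weak monoidality (Corollary~\ref{cor:article:3a}), where cell-by-cell verification becomes straightforward.
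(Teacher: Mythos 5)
You have correctly diagnosed the fatal obstruction in your primary argument: the chain-level antipode identity $\sum \overline{\gamma^{(2)}}\gamma^{(3)}=\epsilon(\gamma^{(2)})\,\cst$ simply does not hold for the Alexander--Whitney diagonal on path chains, so your map $\Phi$ fails to be a strict morphism of bi-path local systems. But the two repairs you propose do not fix this. Replacing by a semi-free model via Lemma~\ref{lem:article:2} is a cofibrant replacement with respect to the \emph{module} structure; it leaves the coalgebra/antipode structure of $\tltr{\bu}{PX}{\bu}$ untouched and therefore does not make the antipode identity strict. And ``absorbing the homotopy into a chain-level rectification'' would require controlling an infinite tower of higher homotopies coming from the $E_1$-Hopf structure of $C_*\Omega X$; nothing in the paper's toolkit (and nothing you sketch) produces this rectification. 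So the primary approach has a genuine gap that is not patched as stated.

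Your fallback --- passing to cosimplicial local systems via Corollary~\ref{cor:article:3} and its weak monoidality Corollary~\ref{cor:article:3a} --- is indeed the route the paper takes, but I want to push back on ``cell-by-cell verification becomes straightforward.'' After the transfer you are comparing the two co-simplicial functors $(\alpha,\beta)\mapsto \blbr{\alpha}{PX}{\beta}\otimes \bl{\alpha}{A}$ and $(\alpha,\beta)\mapsto \blbr{\alpha}{PX}{\beta}\otimes \bl{\beta}{A}$, and these are \emph{not} cell-by-cell isomorphic or obviously equivalent: the extra tensor factor sits over $\alpha$ in one case and over $\beta$ in the other, which is exactly the left/right asymmetry you started with. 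The step that actually wins is Lemma~\ref{lem:article:16}: global tensoring with $\bl{\bu}PX_\bu$ is (up to weak equivalence) the identity functor, and applying it on the left to the first functor and on the right to the second produces functors both isomorphic to $(\alpha,\beta)\mapsto \tot\bigl(\blbr{\alpha}{PX}{\bu}\fwcs\bl{\bu}{A}\fwcs\blbr{\bu}{PX}{\beta}\bigr)$. Without spelling out this symmetrization you have not actually proved the cosimplicial statement; you have only relocated the asymmetry into a new setting. Add that identification and your fallback becomes the paper's proof.
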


The intuitive idea is that parallel transport along the path connects the two local systems. Indeed, the two maps to $X$ in the Serre fibration $PX \to X\times X$ are homotopic - so pullback of a local system with either of these should give equivalent results.

\begin{proof}
  By Corollary~\ref{cor:article:3a} we may prove this using cosimplicial local systems. That is we want to prove that
  \begin{align*}
    \blbr{\bu}{PX}{\bu} \fwcs \bl{\bu}{A} \simeq  \blbr{\bu}{PX}{\bu} \fwcs \br{\overline A}{\bu}.
  \end{align*}
  For cosimplicial local systems these mean the functors that sends $(\alpha,\beta)$ to
  \begin{align*}
    \blbr{\alpha}{PX}{\beta} \otimes \bl{\alpha}{A} \cong \bl{\alpha}{A} \otimes \blbr{\alpha}{PX}{\beta} \quad \textrm{and} \quad  \blbr{\alpha}{PX}{\beta} \otimes \bl{\beta}{A}
  \end{align*}
  respectively are weakly equivalent. We prove this using that global tensoring with $\bl{\bu} PX_\bu$ from left or right is the identity (Lemma~\ref{lem:article:16}). Indeed, tensoring the first from the left and the second from the right with $\bl{\bu} PX_\bu$ yields functors both isomorphic to
  \begin{align*}
    (\alpha,\beta) \mapsto \tot \blbr{\alpha}{PX}{\bu} \fwcs \bl{\bu}{A} \fwcs \blbr{\bu}{PX}{\beta}.
  \end{align*}
\end{proof}

\section{Projection formula}
In this section we prove the following proposition and corollary. For any $\tl{\bu}{E} \in \tl{W}{\GG}$ where $W$ is a compact Liouville domain, we will by abuse of notation let $\tl{\bu}{E}$ denote the restriction to any subspace, in all cases a Lagrangian. 

\begin{proposition}\label{prop:projection_formula}
  For exact Lagrangians $K,L \subset W$ together with $\tr{D}{\bu} \in \tr{\GG}{K}$, $\tr E{\bu} \in \tr{\GG}W$ and $\tl{\bu}{F} \in \tl{L}{\GG}$ we have a quasi isomorphism
  \begin{align*}
     f:CF_*(K^{\str{D}{\bu}},L^{\stl\bu{\overline E}\fwlp\stl{\bu}{F}})\to CF_*(K^{\str{D}{\bu}\fwrp \str{E}{\bu}},L^{\stl{\bu}{F}}).
  \end{align*}
\end{proposition}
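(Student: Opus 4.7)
The key observation is that both Floer complexes have the same underlying graded vector space $\bigoplus_{x \in K \cap L} \Sigma^{|x|} D^x \otimes E^x \otimes F_x$ (using the canonical identification $\overline{E}_x = E^x$), and the two differentials differ only in how each pseudo-holomorphic strip $c \in C_*(\tltr{y}{\MM}{x})$ acts on the middle $E$-factor. On the LHS, the Alexander--Whitney split of $\ev_L(c)$ sends one half (reversed and viewed in $W$) to act on $\overline{E}_x = E^x$ and the other to act on $F_x$. On the RHS, the AW-split of $\ev_K(c)$ sends one half to act on $D^x$ and the other (viewed in $W$) to act on $E^x$. Crucially, the two resulting parallel transports of $E$ from $x$ to $y$ in $W$ use paths that are boundary arcs of the strip $u$, and the disc $u$ itself provides a canonical homotopy between them.

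The plan is to construct $f = f_0 + f_1$ where $f_0$ is the identity on each summand (at $y = x$) and $f_1$ is built from a parameterized moduli space $\tltr{y}{\MM}{x} \times [0, 1]$. For each $u$ and $s \in [0, 1]$, the horizontal slice $\R \times \{s\}$ of the strip parametrization yields a path $\gamma_s^u : x \to y$ in $W$ with $\gamma_0^u = \ev_K(u)$ and $\gamma_1^u = \overline{\ev_L(u)}$ viewed in $W$. Following Lemma~\ref{lem:article:fundamental chains}, I would inductively choose fundamental chains $\widetilde{c}_{y,x}^k \in C_k(\tltr{y}{\MM}{x} \times [0, 1])$ extending $\tlar{y}{c}{x}{k}$ with a boundary relation that includes both the $s \in \{0, 1\}$ endpoint contributions and the strip-breaking contributions. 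The map $f_1$ is then defined on each summand via a chain-level decomposition of $\widetilde{c}$ through a map $C_*(\tltr{y}{\MM}{x} \times [0, 1]) \to C_*(\tltr{x}{\PP K}{y}) \otimes C_*(\tltr{x}{\PP W}{y}) \otimes C_*(\tltr{y}{\PP L}{x})$ that interpolates between the $\ev_K \times \Delta \ev_L$ splitting used by $\mu_1^L$ at $s = 1$ and the $\Delta \ev_K \times \ev_L$ splitting used by $\mu_1^R$ at $s = 0$, with the middle $\PP W$-factor acting right on $E^x$.

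The chain-map identity $\mu_1^R f = f \mu_1^L$ then follows from the induced boundary relation on $\widetilde{c}$: the $s \in \{0, 1\}$ endpoint contributions of $\partial \widetilde{c}$ produce exactly the discrepancy $\mu_1^R f_0 - f_0 \mu_1^L$, while the strip-breaking contributions cancel against $\mu_1^R f_1 + f_1 \mu_1^L$, by an argument parallel to Equation~\eqref{equation:dmu2}. To show $f$ is a quasi-isomorphism, I would filter both complexes by the action $\mathcal{A}$; since every non-trivial strip strictly decreases action by Stokes' theorem, the $E^0$-page of each spectral sequence has only the internal tensor differential on each summand, and $f$ induces the identity on $E^0$. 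Both spectral sequences are supported in the right half-plane with exiting differentials and Hausdorff, exhaustive filtrations, hence converge strongly as discussed in Section~\ref{sec:notation-conventions}, so $f$ is a quasi-isomorphism.

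The main technical obstacle is constructing the interpolating chain-level decomposition correctly: the LHS uses an AW-split of $\ev_L$ while the RHS uses an AW-split of $\ev_K$, and $f_1$ must reconcile these through a $\PP W$-piece in the middle via a three-part interpolated splitting into a $\PP K$-prefix, a $\PP W$-interior, and a $\PP L$-suffix, with all Koszul signs managed as in Section~\ref{sec:floer-theory-with}. Once that bookkeeping is complete, the quasi-isomorphism portion is routine via the action filtration.
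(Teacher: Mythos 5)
Your geometric reading of the situation is exactly right: both complexes share the same underlying graded vector space $\bigoplus_x\Sigma^{|x|}D^x\otimes E^x\otimes F_x$, the two differentials differ only in whether $E$ is transported along $\ev_K(u)$ or along $\overline{\ev_L(u)}$, and the strip $u$ itself provides a homotopy in $W$ between these two boundary arcs. The final action-filtration argument (upper-triangular with $\pm\id$ on the diagonal after passing to the associated graded) is also the right way to conclude, and matches the paper.

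The gap is in the construction of the chain map $f=f_0+f_1$ via the parametrized moduli space $\tltr{y}{\MM}{x}\times[0,1]$. The boundary of that product is
\begin{align*}
\bigl(\tltr{y}{\MM}{x}\times\{0,1\}\bigr)\ \cup\ \bigcup_{z}\ \tltr{y}{\MM}{z}\times\tltr{z}{\MM}{x}\times[0,1],
\end{align*}
and in the breaking stratum both sub-strips sit at the \emph{same} abstract parameter $s$, so the broken configurations naturally contribute a term of the form ``$f_1^s$ composed with $f_1^s$'', rather than the terms $\mu_1^R\circ f_1$ and $f_1\circ\mu_1^L$ that the chain-map identity requires (each of which has one factor specialized to $s=0$ or $s=1$). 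An abstract interpolation parameter that is shared across a broken configuration simply does not produce the compositions you need; you would have to replace $[0,1]$ by a polytope (or a domain-dependent structure) that resolves where the parameter lives when the strip breaks, and that is substantially more than ``bookkeeping.''

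The paper avoids this entirely by building the interpolation into the geometry rather than introducing an abstract parameter. It sets $\widetilde{K}=K\times\{(t,t^2-t)\}$ and $\widetilde{L}=L\times\R\times\{0\}$ in $W\times\R^2$, so the generators sit at levels $t\in\{0,1\}$, and then routes the parallel transport of $\widetilde{E}$ along the disc itself: for strips entirely at $t=0$ along $\widetilde K$, for strips entirely at $t=1$ along $\widetilde L$, and for strips straddling the two levels along a fixed route (half the $K$-boundary, then through the interior picked out by the normalization $u(0)\in W\times\{\tfrac12\}\times\R$, then half the $L$-boundary). Since each sub-strip in a broken configuration has its transport route determined by the $t$-levels of its own punctures, the transport concatenates consistently across breaking, and the chain-map identity for $f$ is then just $\mu_1^2=0$ in the enlarged Floer complex rather than something one must verify by hand with parametrized fundamental chains. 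Your closing action-filtration argument then applies verbatim to this $f$.
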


The idea of the proof is simply put that parallel transport along one side of the strips is homotopic to parallel transport along the other side, but this requires the local system - in this case $\tr E{\bu}$ to be defined over the entire target of the discs.

\begin{proof}
  Similar to the construction in Section~\ref{section:continuation} consider
  \begin{align*}
    &\widetilde{K}=K\times \{(t,t^2-t)\}\subset W\times \R^2,\\
    &\widetilde L=L\times \R\times \{0\}\subset W\times \R^2
  \end{align*}
  and the local systems $\tr{\widetilde D}\bu\in \tr \GG{\widetilde K}$, $\tr{\widetilde E}\bu\in \tr \GG{W\times \R^2}$ and $\tl\bu{\widetilde F}\in \tl {\widetilde L}\GG$ defined by pullback. Consider the Floer complex $CF_*(\widetilde{K}^{\str{\widetilde{D}}{\bu}\fwrp \str{\widetilde{E}}{\bu}},\widetilde L^{\stl{\bu}{\widetilde F}})$ but slightly modify the external differentials. Instead of always parallel transporting $\tr{\widetilde E}\bu$ along $\widetilde K$ we do the following. By monotonicity all discs $u:D^2\to W\times \R^2$ either have image in $W\times \{0\}\times\{0\}$, have image in $W\times\{1\}\times\{0\}$, or maps $-1$ to $W\times \{0\}\times \{0\}$ and maps $1$ to $W\times \{1\}\times \{0\}$. For discs whose image is in $W\times \{0\}\times \{0\}$ we parallel transport $\tr{\widetilde E}\bu$ along $\widetilde K$ and for discs whose image is in $W\times \{1\}\times \{0\}$ we parallel transport $\tr{\widetilde E}\bu$ along $\widetilde L$. In both cases we use the arc length parametrization of the paths in $W\times \R^2$. For discs such that $u(-1)\in W\times\{0\}\times\{0\}$ and $u(1)=W\times\{1\}\times\{0\}$ we choose the representative of $u$ such that $u(0)\in W\times\{\frac 12\}\times \R$ and parallel transport $\tr{\widetilde E}\bu$ along the arc length parametrization of the image under $u$ of the path $e^{it}$, $t\in [\pi,3\pi/2]$ followed by $it$, $t\in [-1,1]$ followed by $e^{-it}$, $t\in [-\pi/2,0]$. This way the differential still square to zero and takes the form
  \begin{align*}
    \begin{pmatrix}
      (\mu_{1})_{CF_*(K^{D^\bu\fwrp E^\bu},L^{\lef{\bu}F})}&f\\
      0&-(\mu_{1})_{CF_*(K^{D^\bu},L^{\lef\bu{\overline E}\fwlp\lef{\bu}F})}
    \end{pmatrix}
  \end{align*}
  for some chain map $f:CF_*(K^{\str{D}{\bu}},L^{\stl\bu{\overline E}\fwlp\stl{\bu}{F}})\to CF_*(K^{\str{D}{\bu}\fwrp \str{E}{\bu}},L^{\stl{\bu}{F}})$ which is a quasi isomorphism since it is plus or minus the identity on each generator on page one of the spectral sequence for the action filtration (argument as in the proof of Lemma~\ref{lem:article:constant_isotopy}).
\end{proof}

\begin{corollary}\label{cor:projection_formula}
  Let $K,L\subset W$ be exact Lagrangians and $\tl\bu{D}\in \tl L{\GG}$ and $\tl{\bu}{E}\in \tl{W}{\GG}$. Let $F=\tltr{\bu}{\FF(K,L)}{\bu}\und{PL}\otimes (-):\tl{L}{\GG}\to \tl{K}{\GG}$ then
  \begin{align*}
    F(\tl{\bu}{D}\fwlp \tl{\bu}{E})\simeq F(\tl{\bu}{D})\fwlp \tl{\bu}{E}.
  \end{align*}
\end{corollary}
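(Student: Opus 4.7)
The overall strategy is to universalize Proposition~\ref{prop:projection_formula} by plugging in the ``Yoneda'' bi-path local system $\tltr{\bu}{PK}{\bu}$ as the right path local system on $K$, and then use Lemma~\ref{lem:skyscraper} to transport $\tl\bu E$ from acting on $L$ (as in the LHS) to acting on $K$ (as in the RHS). The bridge between the two restrictions of $\tl\bu E$ is precisely that $\tl\bu E$ is defined on all of $W$, which is the hypothesis that makes Proposition~\ref{prop:projection_formula} applicable.

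Concretely, the chain of weak equivalences I would build is as follows. Using Equation~(\ref{eq:article:14}) fiberwise over the left bullet of $\tltr{\bu}{PK}{\bu}$, together with the fact that $\tltr{\bu}{PK}{\bu} \und{PK}\otimes (-)$ is (naturally weakly equivalent to) the identity on $\tl K\GG$ (cf.\ Lemma~\ref{lem:article:4}), I would identify
\begin{align*}
  F(\tl\bu D \fwlp \tl\bu E) \simeq \tltr{\bu}{PK}{\bu} \und{PK}\otimes \CF{K}{L} \und{PL}\otimes (\tl\bu D \fwlp \tl\bu E).
\end{align*}
Up to the symmetry $\tl\bu D \fwlp \tl\bu E \cong \tl\bu E \fwlp \tl\bu D$ of the fiber-wise tensor, the right hand side is the LHS of Proposition~\ref{prop:projection_formula} with $\tr D\bu = \tltr{\bu}{PK}{\bu}$ (regarded as a family of right path local systems on $K$ parameterized by the left bullet), $\tr E\bu = \tr{\overline{E}}{\bu}$ (converting $\tl\bu E\in \tl W\GG$ via $\tau$), and $\tl\bu F = \tl\bu D$. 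Proposition~\ref{prop:projection_formula} then yields a weak equivalence in $\tl K\GG$ to
\begin{align*}
  (\tltr{\bu}{PK}{\bu} \fwrp \tr{\overline E}{\bu}) \und{PK}\otimes \CF{K}{L} \und{PL}\otimes \tl\bu D = (\tltr{\bu}{PK}{\bu} \fwrp \tr{\overline E}{\bu}) \und{PK}\otimes F(\tl\bu D).
\end{align*}
Lemma~\ref{lem:skyscraper} then gives $\tltr{\bu}{PK}{\bu} \fwrp \tr{\overline E}{\bu} \simeq \tltr{\bu}{PK}{\bu} \fwlp \tl\bu E$ in $\tltr K\GG K$, and Lemma~\ref{lem:tensor_lemma} (noting that $\tltr{\bu}{PK}{\bu}$ and hence each side is semi-free on the right bullet) transports this to a weak equivalence after global tensoring with $F(\tl\bu D)$. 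Finally, since fiber-wise tensoring with $\tl\bu E$ on the left bullet commutes with the global tensor over $PK$ on the right bullet, there is a natural isomorphism
\begin{align*}
  (\tltr{\bu}{PK}{\bu} \fwlp \tl\bu E) \und{PK}\otimes F(\tl\bu D) \;\cong\; (\tltr{\bu}{PK}{\bu} \und{PK}\otimes F(\tl\bu D)) \fwlp \tl\bu E \;=\; F(\tl\bu D) \fwlp \tl\bu E,
\end{align*}
again using that $\tltr{\bu}{PK}{\bu}$ is an identity under global tensor.

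The main obstacle I anticipate is checking that the quasi-isomorphism from Proposition~\ref{prop:projection_formula}, when specialized to $\tr D\bu = \tltr{\bu}{PK}{\bu}$, is genuinely natural with respect to the additional left path action on $\tltr{\bu}{PK}{\bu}$, so that one really obtains a weak equivalence in $\tl K\GG$ rather than just a family of fiberwise quasi-isomorphisms. This naturality should follow by inspection of the construction in Proposition~\ref{prop:projection_formula}: the map $f$ is produced from an extended Floer complex on $\widetilde K,\widetilde L\subset W\times \R^2$ in which $\tr D\bu$ is tensored in via its right action (along the $\widetilde K$ boundary and its evaluation), so any extra structure on $\tr D\bu$, such as a left path action from $K$, passes through the construction untouched. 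The remaining details are the symmetry of $\fwlp$, the semi-freeness/homotopy bookkeeping needed to apply Lemmas~\ref{lem:tensor_lemma} and \ref{lem:article:4}, and a verification that the final ``commuting'' isomorphism indeed respects the left path actions on both sides.
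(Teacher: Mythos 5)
Your proof matches the paper's almost step for step: the paper likewise writes $F(\tl{\bu}{D}\fwlp\tl{\bu}{E})$ as $CF_*(K^{\stltr{\bu}{PK}{\bu}},L^{\stl\bu D\fwlp\stl\bu E})$, feeds $\tltr{\bu}{PK}{\bu}$ (with its spectator left action) through Proposition~\ref{prop:projection_formula} to land on $CF_*(K^{\stltr{\bu}{PK}{\bu}\fwrp\str{\overline E}{\bu}},L^{\stl\bu D})$, converts $\fwrp\tr{\overline E}{\bu}$ into $\fwlp\tl\bu E$ using Lemma~\ref{lem:skyscraper}, and finally pulls $\tl\bu E$ out of the Floer complex exactly as in your last commuting step, so the approach is essentially identical. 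The one place you should tighten up is the parenthetical claim that $\tltr{\bu}{PK}{\bu}\fwrp\tr{\overline E}{\bu}$ is semi-free because $\tltr{\bu}{PK}{\bu}$ is (a fiber-wise tensor of a free local system with an arbitrary one is not obviously semi-free); the paper sidesteps this by observing that $CF_*(K^{(-)},L^{\stl\bu D})$ is homotopical in the first variable via the action filtration, a simpler argument than invoking Lemma~\ref{lem:tensor_lemma}.
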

\begin{proof} By using Proposition~\ref{prop:projection_formula} and Lemma~\ref{lem:skyscraper} we get
  \begin{align*}
    F(\tl{\bu}{D}\fwlp \tl{\bu}{E})&=CF_*(K^{\stltr{\bu}{PK}{\bu}},L^{\stl{\bu}{D}\fwlp \stl{\bu}{E}})\\
    &\simeq CF_*(K^{\stltr{\bu}{PK}{\bu}\fwrp \str{\overline E}{\bu} },L^{\stl{\bu}{D}})\\
    &\simeq CF_*(K^{\stltr{\bu}{PK}{\bu}\fwlp\stl{\bu}{E}},L^{\stl{\bu}{D}})\\
    &\cong CF_*(K^{\stltr{\bu}{PK}{\bu}},L^{\stl{\bu}{D}})\fwlp \tl{\bu}{E}\\
    &= F(\tl{\bu}{D})\fwlp \tl{\bu}{E}
  \end{align*}
  where we in the isomorphism on the next to last row can move out $\tl \bu E$ from the direct sum since the external differentials are not acting on it.
\end{proof}

\section{Simplicial Morse functions} \label{sec:simpl-morse-smale}

In this section we describe how to construct a Morse function $f$ and a Riemannian structure $g$ on $X$ so that we get the needed control over pseudo holomorphic strips to argue later that we can define small cosimplicial models for local systems coming from Floer theory.

\begin{definition}
  We call a smooth co-dimension 0 submanifold $X' \subset X$ an \emph{insulator} with respect to a Riemannian structure $g$ if there is a tubular neighborhood of its boundary $(\partial X')\times (-1,1) \subset X$ in which $g$ is on product form. We further say that $X'$ is an \emph{insulator} for $f : X\to \R$ if $f$ is a smooth function so that $\nabla f$ is transversely outwards pointing at $\partial X'$.
\end{definition}

The main idea here is that of course this makes it impossible for negative gradient trajectories to exit $X'$. However, due to how pseudo holomorphic curves behave in relation to the Riemannian structure we will see that neither can pseudo holomorphic curves with boundary on the zero section and $df$. We prove some of these and more general statements in appendix~\ref{sec:monotonicity}.

By a smooth triangulation $|S| \cong X$ of a smooth closed manifold $X$ we will mean a semi-simplicial set together with a homeomorphism $|S|\cong X$ such that each simplex inclusion $|\alpha| \to X$ is smooth.
\begin{definition}
  By an \emph{insulated simplicial Morse function} with respect to $S$ we will mean tuple $(f,g,\{X(\alpha)\}_{\alpha \in S})$ such that
  \begin{itemize}
  \item $f:X \to \R$ is a self indexing Morse function with a critical point of index $\deg(\alpha)$ at each barycenter of $\alpha\in S$ and no other critical points.
  \item $g$ is a Riemannian structure on $X$.
  \item Each $X(\alpha)$ is a contractible insulator for $f$ with respect to $g$ containing $|\alpha|$ in its interior and the boundary of $X(\alpha)$ is diffeomorphic to a sphere.
  \item Each $X(\alpha)$ contains no other critical points of $f$ than those at the barycenters of $|\alpha|$.
  \item For $\beta \subset \alpha \in S$ we have that $X(\beta) \subset \mathring X(\alpha)$.
  \item There is a $\delta_0 \in (0,\tfrac12)$ so that 
    \begin{align*}
      f(X(\alpha_0)) < \delta_0 
    \end{align*}
    for each 0-simplex $\alpha_0$ and
    \begin{align*}
      f(\partial X(\alpha)) > \delta_0 
    \end{align*}
    for all higher dimensional simplices $\alpha$.
  \item There is a top simplex $\alpha \in S$ with a point $x \in |\alpha|$ so that
    \begin{align*}
      1 < f(x) < n
    \end{align*}
   and $x$ is not contained in $X(\beta)$ for any $\beta\neq \alpha$.
  \end{itemize}  
\end{definition}
The two last bullet points are technical conditions which can seem a bit arbitrary but will be convenient later when we have to glue cells onto $D^*X$. The remaining goal of this section is to prove the following.

\begin{proposition} \label{prop:article:1}
  Any smooth triangulation $S$ of a closed smooth manifold $X$ admits an insulated simplicial Morse function.
\end{proposition}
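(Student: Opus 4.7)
The plan is to read off $f$, $g$, and the insulators $\{X(\alpha)\}$ from the handle decomposition of $X$ canonically associated to the triangulation, with each simplex $\alpha$ contributing a critical point of index $\dim\alpha$ at its barycenter $\hat\alpha$, and $X(\alpha)$ being a suitably fattened regular neighborhood of the closed simplex $|\alpha|$.

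For the Morse function, I would put on each open $p$-simplex a local model that has a nondegenerate maximum at $\hat\alpha$ in the $p$ tangential directions and a nondegenerate minimum in the $n-p$ transverse directions, glue these via a partition of unity subordinate to the barycentric subdivision, smooth, and apply standard Morse rearrangement to obtain a self-indexing Morse function $f$ with exactly one critical point at each $\hat\alpha$ of index and critical value $\dim\alpha$. By a local rescaling I assume that on a small standard handle $H_\alpha$ around $\hat\alpha$ the function $f$ takes values in $(\dim\alpha - \delta, \dim\alpha + \delta)$, for some uniform $\delta \in (0, 1/8)$; then fix $\delta_0 = 1/4$.

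Next I define the insulators inductively in $\dim\alpha$. For a vertex $\alpha_0$, take $X(\alpha_0)$ to be the connected component of $f^{-1}([0, \delta])$ containing $\hat\alpha_0$; by the local handle model it is a smooth $n$-disk bounded by a regular spherical level set of $f$, contains only the vertex critical point, satisfies $f < \delta_0$, and has $\nabla f$ outward-pointing on the boundary. For $\alpha$ with $\dim\alpha \geq 1$, assuming $X(\beta)$ is defined for all proper faces $\beta\subsetneq\alpha$, build $X(\alpha)$ as a smoothed regular neighborhood of $|\alpha|$, concretely as the union of $H_\alpha$ with suitably thickened versions of the handles $H_\beta$ for $\beta\subsetneq\alpha$, chosen so that (a) $X(\beta)\subset\mathring X(\alpha)$ for each proper face $\beta$, (b) no critical points outside $\{\hat\beta : \beta\subseteq\alpha\}$ lie in $X(\alpha)$, and (c) $\partial X(\alpha)\subset\{f > \delta_0\}$. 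Since $|\alpha|$ is contractible in the $n$-manifold $X$, this regular neighborhood is diffeomorphic to $D^n$ with boundary $S^{n-1}$. Choose the Riemannian metric $g$ to be a standard quadratic-form metric near each critical point and of product form in a collar of each $\partial X(\alpha)$, which is possible since the $\partial X(\alpha)$ are smooth compact hypersurfaces disjoint from the critical set by (b). The outward-pointing gradient condition holds because $\partial X(\alpha)$ sits on the ``outer'' sides of the handles $H_\beta$ for $\beta\subseteq\alpha$, where the gradient flows into the excluded higher-dimensional handles.

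The value conditions $f(X(\alpha_0))<\delta_0$ and $f(\partial X(\alpha))>\delta_0$ are built in by construction. For the final top-simplex condition, pick a top $n$-simplex $\alpha_{\mathrm{top}}$ and, after slightly shrinking any neighboring insulators if necessary, a point $x$ near $\hat\alpha_{\mathrm{top}}$ with $f(x) \in (1, n)$, which exists since $f = n$ at $\hat\alpha_{\mathrm{top}}$ and varies continuously down to lower values nearby. The main obstacle is the coordinated inductive sizing of insulators in the previous paragraph, since conditions (a), (b), (c), and the outward-gradient condition must hold simultaneously. This is handled by the locally finite, compact nature of the triangulation: at each inductive step the previously-fixed $X(\beta)$ for $\beta\subsetneq\alpha$ form a finite collection of compact embedded disks in the interior of any regular neighborhood of $|\alpha|$, so by choosing the transverse thickness of $X(\alpha)$ small enough one satisfies (b) and (c), and by thickening enough near each $|\beta|$ one satisfies (a).
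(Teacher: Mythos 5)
Your high‑level plan is the same as the paper's — $f$ is a self‑indexing Morse function with a critical point at each barycenter, and each $X(\alpha)$ is a smoothed regular neighborhood of $|\alpha|$ sized so that insulators nest with the face relation. But there are two genuine gaps in how you handle the metric and gradient conditions, which are exactly the points the paper spends most of its effort on.

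First, the product‑metric requirement. An insulator by definition needs a collar $(\partial X(\alpha))\times(-1,1)$ in which $g$ is a product. You argue this is possible ``since the $\partial X(\alpha)$ are smooth compact hypersurfaces disjoint from the critical set,'' but the hypersurfaces $\partial X(\alpha)$ for \emph{different} simplices are not disjoint from each other: if $\alpha$ and $\alpha'$ are two simplices sharing a common face $\beta$, then $X(\alpha)$ and $X(\alpha')$ both contain $X(\beta)$ and overlap, and their boundaries generically intersect. You must therefore produce one $g$ that is simultaneously a product near \emph{every} $\partial X(\alpha)$, including near their mutual intersections. The paper handles this by first perturbing all $\partial X(\alpha)$ into general position, then building $g$ inductively along the induced stratification: flat Euclidean in a chart $(f_{\alpha_1},\dots,f_{\alpha_n})$ near the deepest strata $N_0$, then extending as a product $N_k'\times\R^{n-k}$ outward stratum by stratum. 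Without some argument of this kind, your single sentence does not establish that a compatible $g$ exists.

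Second, the gradient condition. You claim $\nabla f$ is outward‑pointing at $\partial X(\alpha)$ ``because $\partial X(\alpha)$ sits on the `outer' sides of the handles $H_\beta$.'' That argument controls the direction of a handle‑decomposition pseudo‑gradient, not the $g$‑gradient of $f$: once $g$ is chosen (and constrained to be a product near boundaries), $\nabla_g f$ need not agree with the pseudo‑gradient, and there is no reason it should be transverse outward at $\partial X(\alpha)$. The paper fixes this by an explicit correction, adding $h(f_\alpha)$ to $f$ for defining functions $f_\alpha$ of the boundaries, where $h$ has small total variation (so the pseudo‑gradient property for $Y$ is preserved) but large derivative at $0$ (so $\nabla_g(f+h(f_\alpha))$ gains a positive outward component at $\partial X(\alpha)=\{f_\alpha=0\}$). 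Your proposal omits any step of this type, and the outward‑gradient requirement is not automatic. A more minor point: gluing local models by a partition of unity and then invoking ``standard Morse rearrangement'' does not obviously produce a Morse function with exactly one critical point per simplex; the paper instead starts from a $C^0$ piecewise function and pseudo‑gradient on the barycentric subdivision and does a careful inductive smoothing with explicit cancellation of extraneous critical points.
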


For a continuous function $\tilde f: X \to \R$ and a continuous vector field $\tilde Y$ on $X$ we say that $\tilde Y$ is a pseudo gradient for $\tilde f$ if $\tilde f$ is strictly increasing along the non-constant flow lines of $\tilde Y$. We call the zeros of $\tilde Y$ the critical points of $\tilde f$ with respect to $\tilde Y$. As a first approximation to a simplicial Morse function we start by constructing such $C^0$ function and pseudo gradient so that the critical points are isolated at the barycenters of $S$.

Let $bS$ denote the barycentric subdivision of $S$, which satisfies $|bS|\cong |S| \cong X$. Note that this is also a smooth triangulation. Its set of $k$-simplices is defined as the set of sequences
\begin{align*}
  \alpha_{i_0} \subsetneq \alpha_{i_1} \subsetneq \cdots \subsetneq \alpha_{i_k}  
\end{align*}
with each $\alpha_{i_j}$ a simplex in $S$. Here the barycenters of each $\alpha_{i_j}$ are the corners of the new simplex in $bS$. On such a $k$-simplex in $bS$ we will use the canonical decreasing coordinates $1=t_0 \geq t_1 \geq \cdots \geq t_k \geq 0=t_{k+1}$, where $t_i=1,t_{i+1}=0$ corresponds to the corner at the barycenter at $\alpha_{i_k}$. Let $h : [0,1] \to [0,1]$ be a smooth homeomorphism such that $h(t)=t^2$ for $t$ close to $0$ and $h(t)=1-(1-t)^2$ for $t$ close to 1 and with nonzero derivative in the interior. We define
\begin{align*}
  \tilde f(t) = \sum_{i=1}^k h(t_i)
\end{align*}
using the standard coordinates $1\geq t_1 \geq \cdots \geq t_n\geq 0$ on each top dimensional simplex in $bS$. As $h$ has vanishing derivatives close to $0$ and $1$ it follows that the gradient (with respect to these standard coordinates) of $\tilde f$ on each top simplex is parallel to each of its subfaces.

Using standard coordinates on \emph{any} simplex $\beta \in bS$ represented as a sequence above this formula restricts to
\begin{align*}
  \tilde f(t) = \deg(\alpha_{i_0}) + \sum_{j=1}^{k} (\deg(\alpha_{i_j})-\deg(\alpha_{i_{j-1}}))h(t_j).
\end{align*}
It follows that the gradient of $\nabla \tilde f$ defined on each top simplex using the standard coordinates restricts to the similarly defined gradient on each subface. This thus defines a global continuous pseudo-gradient $\tilde Y$ for $\tilde f$ on all of $X$.

To construct the smooth functions from these (and later the insulators) we will need to be able to smoothen topological co-dimension 1 submanifolds. So for any co dimension 1 \emph{topological} submanifold $N \subset X$ we call a vector field $Y$ transverse to $N$ if there is an $\epsilon>0$ so that the flow map $N\times (-\epsilon,\epsilon) \to X$ for $Y$ is an embedding of a normal bundle, whose image we denote $U$. This identifies $N$ with the flow lines in $U$ and in the case $Y$ is smooth this means that we can give $N$ a canonical smooth structure. This makes $U\to N$ a smooth fiber bundle. The embedding of $N \subset U$ then corresponds to a section which is only smooth if $N$ was actually a \emph{smooth} submanifold to begin with. Replacing $N$ with the image of a $C^0$ close smooth section is what we will call a \emph{smoothing} of $N$ using $Y$. If $N$ is the boundary of a co-dimension 0 topological submanifold $X'\subset X$ we will by a smoothing of $X'$ using $Y$ mean the obvious smooth manifold with boundary a smoothing of $N$.  

\begin{lemma} \label{lem:article:6}
  There exists a self indexing smooth Morse function $f:X\to \R$ and a smooth pseudo gradient $Y$ on $X$ such that:
  \begin{itemize}
  \item $f$ has its critical points at the barycenters (the zeros of $\tilde Y$).
  \item $(f,Y)$ is arbitrarily $C^0$ close to $(\tilde f,\tilde Y)$.
  \end{itemize}
\end{lemma}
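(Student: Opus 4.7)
The plan is a three-step smoothing-and-gluing argument. First I would identify, near each barycenter $b_\alpha$ of $\alpha \in S$ with $\deg \alpha = k$, the local structure of $\tilde f$ as a piecewise-quadratic Morse model of index $k$; second, replace $\tilde f$ on a small neighborhood of each $b_\alpha$ by a genuine smooth Morse model and mollify $\tilde f$ across the codimension-one faces of $bS$ elsewhere; third, glue these local smooth pieces by a smooth partition of unity. Each local modification is arranged to be $C^0$-small.

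For the first step: fix $\alpha \in S$ with $\deg \alpha = k$ and consider a top $n$-simplex of $bS$ through $b_\alpha$, coming from a maximal chain $\alpha_0 \subsetneq \cdots \subsetneq \alpha_n$ with $\alpha_k = \alpha$. In the standard coordinates on this simplex, $b_\alpha$ lies at $t_1 = \cdots = t_k = 1$, $t_{k+1} = \cdots = t_n = 0$. Because $h(t) = t^2$ near $0$ and $h(t) = 1-(1-t)^2$ near $1$, the restriction formula for $\tilde f$ in the text reduces in a neighborhood of $b_\alpha$ inside this simplex to
\begin{align*}
\tilde f = k - \sum_{j=1}^k (1-t_j)^2 + \sum_{j=k+1}^n t_j^2,
\end{align*}
the standard Morse form of index $k$. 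These formulas are affine-compatible on the codimension-one faces shared by different top simplices through $b_\alpha$, so they fit together into a single piecewise-quadratic model on the open star of $b_\alpha$, whose underlying topological space is an $n$-ball.

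For the second step: pick disjoint open neighborhoods $U_\alpha \ni b_\alpha$ with smooth coordinates $\phi_\alpha : U_\alpha \to \R^n$, $\phi_\alpha(b_\alpha) = 0$, chosen to approximate the piecewise-affine coordinates of the star of $b_\alpha$. On a smaller $V_\alpha \Subset U_\alpha$, replace $\tilde f$ by the smooth Morse model $q_k(x) = k - \sum_{j \leq k} x_j^2 + \sum_{j > k} x_j^2$ pulled back along $\phi_\alpha$, and $\tilde Y$ by the corresponding pulled-back negative gradient. If $\phi_\alpha$ is chosen well, this change is $C^0$-small. Outside $\bigsqcup_\alpha V_\alpha$, $\tilde f$ is smooth on each top stratum of $bS$ and fails smoothness only across codimension-one faces $N$; since $\tilde Y$ is parallel to each $N$, only the normal derivative of $\tilde f$ has a corner, so mollification of $\tilde f$ and $\tilde Y$ in the normal direction inside a tubular neighborhood $N \times (-\varepsilon,\varepsilon)$ gives a smooth $C^0$-close replacement with no new critical points (since $\tilde Y$ is non-vanishing away from the $b_\alpha$). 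A smooth partition of unity subordinate to the $V_\alpha$, these tubular neighborhoods, and the open complement of slightly smaller versions of both, then combines the local smooth pieces into globally smooth $(f,Y)$.

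The main obstacle is the matching in the second step: ensuring the piecewise-quadratic model on the star of $b_\alpha$ can be realized as a $C^0$-small perturbation of the smooth Morse form $q_k \circ \phi_\alpha$ inside a single smooth chart. This is a local PL-smoothability statement, and it follows from the assumption that $S$ (hence $bS$) is a smooth triangulation of $X$: the affine coordinate system on each simplex is already the restriction of a smooth map to $X$, the PL coordinate changes between adjacent top simplices through $b_\alpha$ are transformed by smooth diffeomorphisms in an ambient smooth chart, and in any such chart the piecewise-quadratic model differs from $q_k$ by an error that can be made $C^0$-small by shrinking the chart scale. With this matching in hand, the remaining gluing and mollification are routine.
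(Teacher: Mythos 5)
Your overall strategy—local Morse model near each barycenter, mollify across faces elsewhere, glue—is a natural first attack and genuinely different from the paper's. But the gluing step has a real gap, and it is exactly the difficulty the paper's proof is structured to avoid.

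The problem is the partition of unity. Writing $f = \rho\, \tilde f + (1-\rho)\,(q_k\circ\phi_\alpha)$ gives
\begin{align*}
  df = \rho\, d\tilde f + (1-\rho)\, d(q_k\circ\phi_\alpha) + (\tilde f - q_k\circ\phi_\alpha)\, d\rho,
\end{align*}
and the last term can create new critical points in the transition annulus even when the first two terms are nonzero and roughly aligned. Near a barycenter both $d\tilde f$ and $d(q_k\circ\phi_\alpha)$ scale like $r$ (distance to the barycenter), the $C^0$ discrepancy $\tilde f - q_k\circ\phi_\alpha$ scales like $r^2$, and $d\rho$ scales like $1/w$ where $w$ is the width of the blending annulus; if $w$ is a fixed fraction of $r$ then the error term is of the \emph{same} order as the main terms, so smallness in $C^0$ does not rule out spurious critical points. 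The same issue appears for the vector field: a blend of two pseudo gradients need not be a pseudo gradient, and ``$\tilde Y$ non-vanishing away from $b_\alpha$'' does not by itself imply the mollified field is non-vanishing or that it remains a pseudo gradient for the mollified function. (The paper explicitly flags this: the claim that $C^0$-close fields remain pseudo gradients holds for the specific PL pair $(\tilde f, \tilde Y)$ by a compactness/positivity argument on each simplex, but it is \emph{not} true for general continuous $f$ and pseudo gradients, and once you have mollified you are no longer in the PL setting.) You flagged the chart-matching as the ``main obstacle,'' but the obstacle is really the critical-point control in the glue, not the PL-smoothability.

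By contrast, the paper never mixes two candidate Morse functions with a cutoff. It fixes one smooth vector field $Y'$ $C^0$-close to $\tilde Y$ with zeros inside small balls around the barycenters, and then \emph{builds} $f$ inductively on the slices $\{\,i-1+\delta \le \tilde f \le i+\delta\,\}$: away from the balls $f$ is defined by integrating along the flow of $Y'$ (so $Y'$ is automatically a pseudo gradient and no critical points can appear), and inside each $\epsilon$-ball, where spurious critical points may occur, they are cancelled down to a single one of the right index by observing that the local change of homotopy type is a standard handle attachment, after which an ambient isotopy moves it to the barycenter. That handle-cancellation step is the mechanism your proposal is missing: it is what guarantees no extra critical points and, together with the flow construction, makes self-indexing automatic. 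To repair your approach you would need a quantitative argument that the transition-region error term $(\tilde f - q_k\circ\phi_\alpha)\,d\rho$ is dominated by the gradient terms (e.g.\ by choosing $\phi_\alpha$ so the discrepancy is $o(r^2)$), and separately an argument that the mollified/blended vector field is still a pseudo gradient; neither is supplied.
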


\begin{proof}
  Pick any Riemannian structure $g$ on $X$. Given $\epsilon_0>0$ we will find such $f$ and $Y$ so that
  \begin{align*}
    |f-\tilde f| \leq 10\epsilon_0 \quad \textrm{and} \quad \norm{Y-\tilde Y}_{C^0} \leq 10\epsilon_0.
  \end{align*}
  By construction of $\tilde f$ and $\tilde Y$ it follows by considering each simplex separately that given any neighborhood $U$ around all the critical points there is an $\epsilon>0$ such that any vector field $Y'$ with $\norm{Y'-\tilde Y}_{C^0} < \epsilon$ we have that $Y'$ is a pseudo gradient for $\tilde f$ away from $U$. Note that this is not true for general continuous functions and pseudo gradients.
  
  Given a small $r>0$ we let $B_r(x)$ denote the open ball of radius $r$ with center $x$. Let $B_r$ denote the union of these balls for all barycenters $x$. Pick in order a small $\epsilon>0$, an even smaller $\delta >0$ and a smooth vector field $Y'$ such that
  \begin{itemize}
  \item $\delta < \min(\epsilon,\epsilon_0)$.
  \item $\norm{Y'-\tilde Y} \leq \epsilon_0$.
  \item For each barycenter $x$ we have $|\tilde f-\tilde f(x)| \leq \epsilon_0$ and $\norm{\tilde Y} \leq \epsilon_0$ on $B_\epsilon(x)$.
  \item The zeros of $Y'$ are within $B_\delta$.
  \item $Y'$ is transverse to all the level-sets of the type $\{\tilde f = i \pm \delta\}$.
  \item Given any point in $y\in \{i-\delta < \tilde f < i+\delta\}\setminus B_\epsilon$ the flow of $\pm Y'$ takes $y$ to $\{\tilde f=i\pm \delta\}$ (respectively).
  \end{itemize}
  The second last condition is possible by the remark in the first paragraph above. To get the last condition we make $\delta$ so small compared to $\epsilon$ that the flow of $\pm \tilde Y$ from any $B_\delta(x)$ does not leave $B_\epsilon(x)$ before it reaches the value $i \pm \delta/2$. We then also choose $Y'$ so close to $\tilde Y$ that this same condition is still satisfied with the function value replaced by $i\pm \delta$.

  We now construct $f$ and $Y$ by induction in $i=0,\dots,n$ where the induction assumption is that these have been defined on a smoothing of $\{\tilde f\leq i-1+\delta\}$ using $Y'$ and that $Y$ and $Y'$ agree in the tubular neighborhood of the boundary in which the smoothing happens. The base case of a zero simplex uses that $\{\tilde f\leq \delta\}$ is a disjoint union of balls (up to homeomorphism). Hence the smoothing of these are disjoint smooth balls. We may assume that this smoothing happens within the set $\{\tilde f < \epsilon\}$. On each of these smoothed balls we can find a function with a single critical point (with critical value $0$) which has the outwards pointing (and smooth) $Y'$ as pseudo gradient near the smoothed boundary. In fact we can assume that this $f=\delta$ on this smoothed boundary. Interpolating $Y'$ inside the balls to become a small rescaling of the gradient of this $f$ defines $Y$. Note that by the bounds above we have $|f-\tilde f| \leq \delta + \epsilon_0 < 2\epsilon_0$ on each ball.  Similarly we have $\norm{Y' - \tilde Y} < \epsilon_0$ and $\norm{\tilde Y} \leq \epsilon_0$ which implies the wanted bound on $Y$ if we scale the gradient term to be small enough.

  For the induction step we first extend the field $Y$ to a smoothing of $\{\tilde f \leq i-\delta\}$ (using $Y'$) by simply using $Y=Y'$ in the (smoothed) region $\{i-1+\delta \leq \tilde f \leq i-\delta\}$. In this region we can also extend $f$ by simply using the flow of $Y=Y'$ normalized to increase the function from $i-1+\delta$ to $i-\delta$. This way $Y$ automatically becomes a pseudo gradient. We can even make sure that this $f$ is close to $\tilde f$. Indeed, as any continuous increasing function can be approximated by a smooth increasing function with positive derivative we can find a \emph{continuous} function $s$ on the region so that using the flow of $sY'$ we define a function which $C^ 0$ approximates $\tilde f$. replacing $s$ by a $C^ 0$ close smooth function $s'$ we use the flow of $s'Y$ to define $f$ in the region. 

  We cannot do the exact same in the (smoothed) region $\{i-\delta \leq \tilde f \leq i+\delta\}$. Indeed, in that region $Y'$ has zeros. However, by the third bullet point above we can pick a smooth function $f'$ on the smoothed region such that: whenever a flow line (in this region) of $Y'$ is not fully contained in one of the $\epsilon$ balls we have that $f$ increases along this flow line from what value it had on the smoothing of $\{\tilde f = i-\delta\}$ (which can be assumed arbitrarily close to $i-\delta$) to the value $i+\delta$ on the smoothed $\{\tilde f= i+\delta\}$. In fact we pick $f'$ such that $f'=i+\delta$ on the entire smoothed $\{\tilde f=i+\delta\}$. This $f'$ may initially have many critical points inside each $\epsilon$-ball. However, we can cancel them all but one as the local change of homotopy type is a standard handle attachment. Applying an ambient isotopy with support inside the ball we may move this critical point to the barycenter. The fact that the values of $\tilde f$ and this $f$ are within $[i-\delta,i+\delta]$ makes them $C^0$ close enough to get the wanted $C^0$ bound.

  We can again define $Y=Y'$ outside of the balls but then interpolate to a rescaling of the gradient of $f$ inside the balls. The rescaling is again so that we can control the $C^0$ distance of this $Y$ to $\tilde Y$. Indeed, $\tilde Y$ is again sufficiently small inside the $\epsilon$-balls for this to work.
\end{proof}

\begin{proof}[Proof of Proposition~\ref{prop:article:1}]
  
  Let $\beta \in (bS)_n$ again be a top face represented by a sequence $\alpha_0 \subsetneq \cdots \subsetneq \alpha_n$. The $d$th front face $F_d(\beta) \subset \beta$ is the simplex represented by the sequence
  \begin{align*}
    \alpha_0 \subsetneq \cdots \subsetneq \alpha_d.
  \end{align*}
  Let $T_d^{\epsilon}(\beta) \subset |\beta|$ be the open subset defined by those sequences $1\geq t_1\geq \cdots \geq t_n\geq 0$ where $t_d< \epsilon$. This is a neighborhood in $|\beta|$ around $|F_d(\beta)|$.

  For each $\alpha\in S$ and $\epsilon>0$ we define the open neighborhood $N_\epsilon(|\alpha|)$ in $X$ around $|\alpha|$ by the formula
  \begin{align*}
    N_\epsilon(\alpha)=\cup_{|F_d(\beta)| \subset |\alpha|} T_d^{\epsilon}(\beta).
  \end{align*}
  Such a neighborhood could be defined purely using the simplices in $S$. However, with this definition it is clear that when any $t_d=\epsilon$ on the boundary the flow of $\tilde Y$ takes it directly out of the set and the negative flow of $\tilde Y$ takes it into the set. Indeed, the flow of $\tilde Y$ on a top simplex in $bS$ strictly increases all $t_i$ which are not $0$ or $1$. This means that $\tilde Y$ is transverse to the boundary of this neighborhood in the sense described above. This also proves that the boundary is a topological manifold of codimension 1 in $X$. It is also not difficult to verify that $N_\epsilon(\alpha)$ is homeomorphic to $D^n$. 

  We thus construct a $C^0$ approximation to $X(\alpha)$ by fixing small numbers $0<\epsilon_0<\epsilon_1<\cdots <\epsilon_n$ and defining
  \begin{align*}
    \tilde X(\alpha) = N_{\epsilon_{\deg(\alpha)}}(\alpha).
  \end{align*}
  The ``radius'' depends on the dimension of the simplex so that we have $\tilde X(\beta)$ is inside the interior of $\tilde X(\alpha)$ when $\beta \subsetneq \alpha$. Note, in particular that having all $\epsilon_i, i<n$ small enough we can assume for any top simplex $\alpha$ the existence of $x\in |\alpha|$ with $x\notin X(\beta)$ for $\beta\neq \alpha$ and such that $1 < \tilde f(x) <n$. Also, by then making $\epsilon_0$ even smaller compared to the other $\epsilon_i$ we may assume the existence of $\delta_0$ so that
  \begin{align*}
    \tilde f(\tilde X(\alpha_0)) < \delta_0
  \end{align*}
  for $\alpha_0$ a 0-simplex and 
  \begin{align*}
    \tilde f(\partial \tilde X(\alpha)) > \delta_0
  \end{align*}
  for other simplices.

  By Lemma~\ref{lem:article:6} we may replace $\tilde f$ and $\tilde Y$ by $C^0$ close $f$ and $Y$, where $f$ is Morse. We may make these close enough to not change the above bounds and the existence of $x$ in some top simplex $\alpha$. Again by the specific definition of $\tilde Y$ and the above neighborhood we can make sure that $Y$ is also transverse to the boundaries of $\tilde X(\alpha)$. Indeed, this follows by considering each top simplex in $bS$ at a time. It follows that we can replace each $\tilde X(\alpha)$ by domains $X(\alpha)$ whose boundaries are smoothings of $\partial \tilde X(\alpha)$ using $Y$. By constructions of smoothings we have that $Y$ is still transverse to their boundaries, and by picking these smoothings sufficiently close to the old boundaries we can retain the property that $X(\beta) \subset \mathring X(\alpha)$ when $\beta \subsetneq \alpha$, the bound using $\delta_0$ above and the existence of $x$.

  We are left with producing a Riemannian structure $g$ satisfying the requirements. To this end, we perturb each $X(\alpha)$ so that their boundaries are all transverse to each other (in the sense that the intersection of the boundaries of $k$ of them is a codimension $k$ transverse intersection) and still have the needed conditions satisfied. This provides a smooth stratification of $X$. Indeed, the $n-k$ dimensional strata (which are open manifolds) we denote $N_{n-k}$ and we define it to be the intersection locus of precisely $k$ of these boundaries not including the intersection of $k+1$ of them. In particular the top strata of dimension $n$ is that which is not part of any boundary.

  The first goal is to construct $g$ so that each $X(\alpha)$ is an insulator (after that we will modify $f$ to make sure they insulate for $f$). We start by picking any Riemannian structure $g$ on $X$ and then we will modify it inductively to make all $X(\alpha)$ insulators. Pick smooth functions $f_\alpha : X \to \R$ so that $0$ is regular and $\partial X(\alpha)=\{f_\alpha=0\}$ with $df_\alpha$ positive on outwards pointing vectors. Given distinct $\alpha_1,\dots,\alpha_n \in S$ (each of arbitrary simplex degree) and $x\in \cap_i \partial X(\alpha_i) \subset N_0$ we get that
  \begin{align*}
    (f_{\alpha_1},\dots,f_{\alpha_n}) : X \to \R^n
  \end{align*}
  is locally around $x$ a chart. We thus modify $g$ to be the standard flat Riemannian structure in a small neighborhood around $x$ using this chart. This structure does not depend on the order of the simplices $\alpha_i$. We induce such local flat structures around any $x\in N_0$ using these charts. Let $V_0$ be a small neighborhood around $N_0$ contained well within the flat area. Now consider distinct $\alpha_1,\dots,\alpha_{n-1}\in S$ and the compact 1 manifold $N_1'=\cap_i \partial X(\alpha_i)\setminus V_0 \subset N_1$. Let $\nu \to N_1'$ be its normal bundle (using the modified $g$). Pick an $\epsilon >0$ so that the exponential function is a diffeomorphism of $D_\epsilon \nu$ onto its image. Let $U_\epsilon$ be this image. Consider the map
  \begin{align*}
    \Psi : U_\epsilon \to N_1'\times \R^{n-1}
  \end{align*}
  given by taking the closest point in $N_1'$ (the source point of the exponential function) and the functions $f_{\alpha_i}$. By possibly making $\epsilon$ smaller, this is a smooth embedding and we may assume that close to the points in $\partial N_1' \in V_0$ the tubular neighborhood is contained within the neighborhood where we already made the Riemannian structure flat. It follows that in such a neighborhood of $\partial N_1'$ the restriction of $g$ is under $\Psi$ identified with a Riemannian structure on $N_1'$ times the flat structure on $\R^{n-1}$. We then extend this to all of $N_1'$ by modifying $g$ to be such a product structure in a neighborhood of $N_1'$. We may do this for all the components of $N_1$. We now pick a small neighborhood $V_1$ around $N_1$ where the structure is on product form.

  Generally in the $k$th inductive step we consider $\alpha_1,\dots,\alpha_{n-k}$ and $N_k'=\cap_i \partial X(\alpha_i)\setminus (\cup_{i=0}^{k-1}V_i) \subset N_k$. We need not care whether $\partial N_k'$ is a manifold. Again we pick a small $\epsilon>0$ and tubular neighborhood $U_\epsilon$ over $N_k'$ (using the image of the exponential function) and consider the map
  \begin{align*}
    \Psi : U_\epsilon \to N_k'\times \R^{n-k}
  \end{align*}
  given by the closest point in $N_k'$ and the functions $f_{\alpha_i}$. Again, we have by strong induction that close to $\partial N_{k-1}'$ we have that using $\Psi$ the structure $g$ is a product structure of some structure on $N_k'$ times the flat structure on $\R^{n-k}$. It follows that we again can extend this product structure over $U_\epsilon$ and modify $g$ to be this structure close to $N_k'$ and pick a small neighborhood $V_k$ as before.

  The only remaining problem is that even though $Y$ points to the right sides of each $\partial X(\alpha)$ we do not know that the gradient in $g$ does so. We fix this for each $X(\alpha)$ inductively by adding a function $h(f_\alpha)$ to $f$, where $h$ is a function as graphed in Figure~\ref{Fig:h}
  \begin{figure}[ht]
    \centering
    \begin{tikzpicture}
      \draw[->,dotted] (-4,0) -- (4,0);
      \draw[->,dotted] (0,-1) -- (0,1);
      \draw (-3,0) -- (-2,0) to[out=0,in=180] (-0.3,-0.3) to[out=0,in=180] (0.3,0.3) to[out=0,in=180] (2,0) -- (3,0);
      \draw (-2,0) node[below] {$-\delta$};
      \draw (2,0) node[below] {$\delta$};
    \end{tikzpicture}
    \caption{$h$} \label{Fig:h}
  \end{figure}
  such that
  \begin{itemize}
  \item $h'>-\delta$
  \item $h'(0)>C$ for arbitrary given $C$.
  \end{itemize}
  with this we see that
  \begin{align*}
    Y(f + h(f_\alpha)) = Y(f) + h'(f_\alpha)Y(f_\alpha)
  \end{align*}
  as $Y(f_\alpha)>0$ and $Y(f)>0$ close to $\partial X(\alpha)$ we can by picking $\delta$ small enough make sure that this is positive (and for the inductive step we make sure that $Y$ remains a pseudo gradient for the new function). We also see that (using how $g$ was constructed)
  \begin{align*}
    \nabla (f + h(f_\alpha)) = \nabla f +  h'(f_\alpha)\tfrac{\partial}{\partial f_\alpha}
  \end{align*}
  which for large $C$ is outwards pointing (positive in $\tfrac{\partial}{\partial f_\alpha}$) at $f_\alpha=0$ if we picked $C$ large enough.
\end{proof}

\section{\texorpdfstring{Construction of $W_X$ and skeletal push off}{Construction of W\_X and skeletal push off}} \label{sec:constr-w_x-skel}

In this section we give a bit more details on $W_X$ and describe how to push the skeleta of $W_X$ of itself except at critical points for a given function $f:X\to \R$. We analyze this specifically for an insulated simplicial Morse function $(f,g,\{X(\alpha)\}_{\alpha \in S})$. However, to define the ``small'' co-simplicial local systems in the next section we will need to consider some deformations of $f$ that cancel all critical points except one in a given simplex.

We first pick and fix an insulated simplicial Morse function $(f,g,\{X(\alpha)\}_{\alpha \in S})$ for a smooth triangulation $S$ such that $|S|=X$. In the introduction we described $W_X$ as given by inductively attaching sub-critical Weinstein handles to $D^*X$ over a small contractible ball $D^n \cong A \subset X$. Let $x \in|\alpha|$ satisfy the last condition in the definition of a simplicial Morse function. We can by applying an isotopy to $A$ (and simultaneously isotope the attaching maps) assume that $A$ is contained in a very small neighborhood around this $x$. In particular it is outside of all $X(\beta)$ for all simplices $\beta\neq \alpha$. We can even assume (by enlarging $A$ a bit and using an isotopy) that it is on the form $[a,b]\times D^{n-1} \xrightarrow{\varphi \cong} A \subset X$ with $f(\varphi(t,x))=t$ and $1<a<b<n$.

As we are attaching standard sub-critical Weinstein handles we may also assume that $W_X$ has a Liouville form $\lambda$ such that:
\begin{itemize}
\item The limit of the Liouville flow (called the skeleta) $S_X \subset W_X$ is given by attaching CW-handles of dimension strictly less than $n$ inductively to the zero section $X$ and previously attached cells.
\item The form $\lambda$ is the standard Liouville form $-pdq$ on $D^*X$.
\end{itemize}
Note that the last condition implies that the new handles in $S_X$ attaches to $X$ in the interior of $A$ (and inductively on each other). Hence the homotopy type of $S_X$ is $X$ wedge the space build by these new cells.

By construction $f$ has no critical points on $A$. Fix a smooth function $b : W_X \to [0,1]$ with compact support in the interior of $W_X$ and which equals $1$ on $D_{1/2}^*X$. Define
\begin{align} \label{eq:article:10}
  H_{f}(z) = b(z) f(r(z))
\end{align}
where $r : W_X \to X$ is a fixed smooth retraction equal to the projection $\pi : D^*X \to X$ on $D^*X$ and such that everything outside of $D^*X$ maps to $A$. Let $\psi^f_t : W_X \to W_X$ for $t\in \R$ be the compactly supported Hamiltonian isotopy given by the time $t$ flow associated to $X_{H_{f}}$ defined by $\omega(X_{H_f}, -)=-dH_f$. We denote
\begin{align*}
  A_W = W\setminus D^*(X\backslash A).
\end{align*}
Note that the flow on $A_W$ only depends on $f_{\mid A}$ (and our now fixed choice of function $b$ and retraction $r$).

\begin{lemma} \label{lem:article:17}
  There is an $\epsilon>0$ and arbitrary small Hamiltonian perturbations $\psi_{\pm \epsilon}$ of $\psi^f_{\pm \epsilon}$ such that
  \begin{itemize}
  \item $\psi_{\pm \epsilon}(q) = q \pm \epsilon df \in D^*X$ for $q\in X$.
  \item The intersection of any two of $\psi_{-\epsilon}(S_X),X,\psi_{\epsilon}(S_X)$ is exactly the set of critical points for $f$.
  \end{itemize}
  Furthermore, for fixed functions $b$ and $r$ as above, we may assume that this $\epsilon$ and the perturbation only depends on $f_{\mid A}$ and that the perturbation is supported in $A_W$. 
\end{lemma}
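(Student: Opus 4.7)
The plan is to verify the first bullet by a direct computation, to show the three Lagrangians meet on the zero section exactly at $\crit(f)$, and to kill all remaining intersections (each of which involves at least one isotropic cell) by a small generic Hamiltonian perturbation supported in $A_W$ but vanishing near $X$. Since $b\equiv 1$ on $D^*_{1/2}X$ and $r=\pi$ there, $H_f$ restricts to $f\circ \pi$ on the half-disc bundle, and with $\lambda=-p\,dq$ and the sign convention $\omega(X_{H_f},-)=-dH_f$ one computes the Hamiltonian vector field to be $df(q)\,\partial_p$, so $\psi^f_t(q,0)=(q,t\,df(q))$. For any $\epsilon$ small enough that $\epsilon\,df(X)\subset D^*_{1/2}X$ this is the stated formula, and any perturbed Hamiltonian $H_f+h$ with $h$ vanishing on a neighbourhood $U\supset\bigcup_{|t|\leq\epsilon}\psi^f_t(X)$ trivially preserves it.

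Next I would decompose $S_X=X\cup C$, where $C=\bigcup_i C_i$ is the finite union of sub-critical isotropic core discs with $\dim C_i=k_i<n$, each attached inductively along a sphere eventually landing in $A\subset X$. The three pairs of Lagrangians among $\{\psi^f_{-\epsilon}(X),\,X,\,\psi^f_\epsilon(X)\}$ intersect, respectively, where $-\epsilon\,df=0$, $\epsilon\,df=0$ and $2\epsilon\,df=0$, each equal to $\crit(f)$, with transverse intersections since $f$ is Morse. Every remaining pairwise intersection contains at least one $\psi^f_{\pm\epsilon}(C_i)$; pairing a $k$-dimensional isotropic disc with $\psi^f_{\pm\epsilon}(X)$ has expected dimension $k+n-2n<0$, and pairing two isotropic discs gives $k_i+k_j-2n<0$, so all of them should be empty after a generic Hamiltonian perturbation.

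To realise this perturbation I would let $\mathcal H$ be the space of smooth functions on $W_X$ supported in a compact subset of $A_W\setminus U$ and set $\psi_{\pm\epsilon}:=\psi^{H_f+h}_{\pm\epsilon}$ for a small $h\in\mathcal H$. These agree with $\psi^f_{\pm\epsilon}$ on $X$ (the orbit from $X$ lies in $U$, where $h$ vanishes) and differ from $\psi^f_{\pm\epsilon}$ only inside $A_W$. The hard part will be ensuring the cells can be moved off $X$ despite their boundaries attaching to $X$; I would resolve it as follows. Near each attaching sphere $\partial C_i\subset A$ the differential $df$ is nowhere zero, so $\psi^f_{\pm\epsilon}(\partial C_i)$ lies on the graph of $\pm\epsilon\,df$ and is already disjoint from $X$, and disjointness extends to a full neighbourhood of $\partial C_i$ in $C_i$. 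On the complementary compact remainder of each $C_i$ the perturbations $h\in\mathcal H$ sweep out an infinite-dimensional family of Hamiltonian isotopies, and a standard Sard--Smale/Thom transversality argument produces arbitrarily small $h$ simultaneously transversalising all finitely many problematic pairs, which by the above dimension count forces emptiness. Since $h$ is supported in $A_W$ and $H_f|_{A_W}$ only uses $f|_A$ (given the fixed $b$ and $r$), the resulting $\epsilon$ and $h$ depend only on $f|_A$, as required.
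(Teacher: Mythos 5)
Your proof is correct and follows essentially the same route as the paper: choose $\epsilon$ small enough that the formula on the zero section holds and the cells stay in $A_W$ under the flow, observe that the three Lagrangians restricted to $X$ already meet only at $\crit(f)$, and kill the remaining intersections (each involving at least one sub-critical cell) by a small perturbation supported in $A_W$ and away from a neighbourhood of $X$. The only cosmetic difference is mechanism: the paper perturbs the isotropic cells inductively (one cell at a time), whereas you perturb the generating Hamiltonian $H_f$ by a generic $h$ and invoke Sard--Smale; both amount to the same Hamiltonian perturbation of $\psi^f_{\pm\epsilon}$ supported in $A_W$. Two small points you should tighten: (i) the attaching spheres of the higher handles lie on previously attached cells, not on $A\subset X$, so your ``$df$ is nowhere zero near $\partial C_i$'' argument only covers the first round of handles and the remaining boundaries must be handled by the same inductive disjointness you achieve for the lower cells; (ii) since you use the \emph{same} perturbed Hamiltonian for both signs, the $\psi_{-\epsilon}(S_X)\cap\psi_{\epsilon}(S_X)$ case is really about $\psi^{H_f+h}_{2\epsilon}(C_i)\cap C_j$, which still has negative expected dimension and is covered by the transversality argument, but is worth stating explicitly.
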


\begin{proof}
  We first pick so small $\epsilon>0$ so that $\epsilon df$ lies in $D_{1/2}^*X$ where $b=1$ and so small that the flow of the new cells $S_X-X$ stays within the interior of $A_W$ for the time $[-\epsilon,\epsilon]$. This implies that the flow of $S_X-X$ for these times only depend on $f_{\mid A}$. The two flows $\psi_{\pm \epsilon}^f$ now satisfy the first condition, which implies that any two of $\psi^f_{-\epsilon}(X),X,\psi^f_{\epsilon}(X)$ precisely intersect in the critical points of $f$. Thus any ``unwanted'' intersection points in the second condition must involve the new cells in $S_X$. This means that these intersections happen within $A_W$. Using an inductive argument (induction in the cells) we may perturb these less than half dimensional isotropic subspaces away from the isotropic skeleta. This can be done purely within the interior of $A_W$.
\end{proof}

\begin{lemma} \label{lem:article:7}
  For an insulated simplicial Morse function $(f,g,\{X(\alpha)\}_{\alpha \in S})$ and any $\alpha \in S$ with a chosen vertex $\alpha_0 \in \alpha$ there exists a smooth homotopy $f_s : X \to \R_{\geq0} , s\in I$ such that:
  \begin{itemize}
  \item $f_0=f$.
  \item $f_s$ equals $f$ outside a compact set in the interior of $X(\alpha)$ and on a neighborhood of $X(\alpha_0)$. In particular, $X(\alpha)$ and $X(\alpha_0)$ are insulators for each $f_s$.
  \item $f_1$ has no other critical points than $|\alpha_0|$ inside $X(\alpha)$·
  \end{itemize}
  If $\alpha$ is the top simplex containing $A$ then we can also assume that $f_s$ is constant on $A$.
\end{lemma}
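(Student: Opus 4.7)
The plan is to reduce the problem to a classical Morse-theoretic cancellation on the compact cobordism $W = X(\alpha) \setminus \mathring X(\alpha_0)$. Since $X(\alpha)$ and $X(\alpha_0)$ are both smooth $n$-balls with $X(\alpha_0) \subset \mathring X(\alpha)$, the region $W$ is diffeomorphic to $S^{n-1} \times [0,1]$. By the insulator hypothesis, $\nabla f$ is transversely outward pointing at both $\partial X(\alpha_0)$ and $\partial X(\alpha)$, and the value separation $f|_{\partial X(\alpha_0)} < \delta_0 < f|_{\partial X(\alpha)}$ ensures that $f|_W$ is a Morse function on the cobordism $(W;\partial X(\alpha_0),\partial X(\alpha))$ with the expected boundary behavior. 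The critical points of $f$ in $W$ are precisely the barycenters of the faces $\beta \subsetneq \alpha$ with $\beta \neq \alpha_0$, each of Morse index $\dim \beta$.

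Since $X(\alpha)$ and $X(\alpha_0)$ are contractible, the relative homology $H_*(W,\partial X(\alpha_0)) \cong H_*(X(\alpha),X(\alpha_0))$ vanishes, so I would invoke the classical $h$-cobordism-style cancellation theorem for Morse functions (Smale--Milnor; see \emph{Lectures on the $h$-cobordism theorem}, \S 7--8), which applies because of the standing assumption $\dim X \geq 6$. This produces a smooth homotopy $\tilde f_s : W \to \R$, $s\in[0,1]$, with $\tilde f_0 = f|_W$, with $\tilde f_1$ free of critical points, fixed in a collar of $\partial W$, and supported compactly in $\mathring W$. I then extend $\tilde f_s$ to all of $X$ by setting $f_s = f$ outside $W$; smoothness holds because the homotopy is the identity near $\partial W$. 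Thus $f_s$ equals $f$ on a neighborhood of $X(\alpha_0)$ and outside a compact set in $\mathring X(\alpha)$, so $\nabla f_s = \nabla f$ near both $\partial X(\alpha_0)$ and $\partial X(\alpha)$, preserving the insulator property; a small constant shift if needed keeps $f_s \geq 0$. By construction $f_1$ has no critical points in $W$, so its only critical point inside $X(\alpha)$ is the barycenter of $\alpha_0$.

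For the top-simplex-containing-$A$ case, recall that $A$ lies in a small neighborhood of a point $x \in |\alpha|$ with $1 < f(x) < n$, so $A$ contains no critical points of $f$ (whose values are integers), and after shrinking it we may also assume $A$ avoids the barycenters of all faces of $\alpha$. The Smale cancellations are localized near the connecting gradient trajectories of the paired critical points, and by slightly perturbing the pseudo-gradient these trajectories (and the isotopies used to slide them) can be arranged to avoid $A$. Hence the support of the homotopy can be kept within $W \setminus A$, giving $f_s|_A = f|_A$ for all $s$, which is the additional requirement. The main technical obstacle is the coordinated sequence of handle cancellations required by the $h$-cobordism argument together with their localization away from $A$; both are classical applications of high-dimensional Morse theory and rely essentially on the $\dim X \geq 6$ assumption from Section~\ref{sec:notation-conventions}.
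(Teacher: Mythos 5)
The reduction to the $h$-cobordism theorem on $W=X(\alpha)\setminus \mathring X(\alpha_0)$ is exactly the paper's argument, and your checks (vanishing relative homology, $\dim X\geq 6$, gradient transversality at both ends) are in order; the parenthetical about adding a constant to keep $f_s\geq 0$ is a red herring, since $f_0=f$ would be violated, but the value constraint is anyway automatic because the modification is supported in $\mathring W$ where $f>0$.

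The genuinely different step is your treatment of the requirement that $f_s$ be constant on $A$, and here I think there is a gap. You claim that ``by slightly perturbing the pseudo-gradient these trajectories (and the isotopies used to slide them) can be arranged to avoid $A$,'' and that this is a ``classical application of high-dimensional Morse theory.'' But $A$ is a codimension-$0$ chunk of the cobordism $W$ (an embedded $[a,b]\times D^{n-1}$). The handle-cancellation procedure is not just a local modification near a connecting trajectory: when there are several critical points, one must first put attaching and belt spheres into cancelling position by a sequence of Whitney moves. The Whitney $2$-disks live in level sets of $f$ (which are $(n-1)$-manifolds), and since $A$ meets those level sets in codimension-$0$ balls, general position does \emph{not} let a $2$-disk avoid them. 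One could try to argue that the level-set slices of $A$ are small enough to be pushed aside without spoiling simple connectivity, but that is an additional argument that needs to be supplied, not a textbook fact. The paper sidesteps this entirely by first performing the cancellation without any constraint, then observing that the resulting critical-point-free $f_1$ (with $f_1=f$ near $\partial X(\alpha_0)$ where $f<1$) sweeps out all values in $[1,n]$, so one can choose \emph{some} embedded tube $\varphi':[a,b]\times D^{n-1}\hookrightarrow X(\alpha)\setminus X(\alpha_0)$ with $f_1\circ\varphi'(t,x)=t$, and finally apply an ambient isotopy (interior to $W$) carrying $\varphi'$ to the chart $\varphi$ describing $A$. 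Pushing $f_1$ through this isotopy yields $f_1|_A=f|_A$ for free. You should either adopt this post-processing step or fill in the missing argument that the $h$-cobordism machinery can be carried out within $W\setminus A$.
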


\begin{proof}
  As both $X(\alpha_0) \subset X(\alpha)$ are contractible with sphere boundaries it follows that $\overline{X(\alpha)\setminus X(\alpha_0)}$ is an $h$-cobordism. As we also have $f(\partial X(\alpha_0)) < \delta_0 < f(\partial X(\alpha))$ and the gradient is transverse at both ends, we can by the $h$-cobordism theorem modify $f_0$ in the interior of $X(\alpha)\setminus X(\alpha_0)$ to an $f_1$ that has no critical points there.

  In the case where $|\alpha| \supset A$ we have as 
  \begin{align*}
    f(\partial X(\alpha_0)) < 1
  \end{align*}
  that this $f_1$ takes all values between $1$ and $n$. In particular we can find an embedding $\varphi' : [a,b] \times D^{n-1} \to X(\alpha)\setminus X(\alpha_0)$ on which $f_1(\varphi'(t,x))=t$. By applying an ambient isotopy in the interior of $X(\alpha)\setminus X(\alpha_0)$ we may assume that this coincides with the $\varphi$ above describing $A$. Hence this $f_1$ equals $f_0$ on $A$.
\end{proof}

\section{Cosimplicial Floer complexes}\label{sec:cosimplicial-floer-complexes}

In this section we assume that we have exact Lagrangians $K,L \subset W_X$ and we want to replace the Floer objects $\tltr{\bu}{\FF(K,X)}{\bu}$, $\tltr{\bu}{\FF(X,L)}{\bu}$ and $\tltr{\bu}{\FF(K,L)}{\bu}$ defined in Section~\ref{sec:floer-theory-with} by certain \emph{small} equivalent cosimplicial models. They will depend on large deformations of $L$ and $K$ pushed of the skeleta using the previous section. We also give an explicit and \emph{local} representative for $\mu_2$ from Section~\ref{sec:floer-theory-with} using these models.

In Section~\ref{sec:local-syst-homol} we use this to prove that $\tltr{\bu}{\FF(L,L)}{\bu}$ is equivalent to $\tltr{\bu}{PL}{\bu}$. This could have been proven earlier, but as we need the following anyway, it was convenient to wait.

\begin{definition}
  For \emph{any} Liouville domain $W$ and exact Lagrangians $K,L \subset W$ and simplicial structures $|S|\cong K$ and $|T|\cong L$ we use the equivalences from Equation~(\ref{eq:article:2}) to define
  \begin{align*}
    \bltr{\bu}{\FF(K,L)}{\bu}=\bltr{\bu}{PK}{\bu} \und{PK}{\otimes } \tltr{\bu}{\mathcal F(K,L)}{\bu}\cong CF_*(K^{\sbltr{\bu}{PK}{\bu}}, L^{\stltr{\bu}{PL}{\bu}})  \in \bltr{S}{\GG}{L}.
  \end{align*}
  and similar for $\tlbr{\bu}{\FF(K,L)}{\bu} \in \tlbr{K}{\GG}{T}$ and $\blbr{\bu}{\FF(K,L)}{\bu} \in\blbr{S}{\GG}{T}$. 
\end{definition}

\subsection{Small local models}

To define the large deformations that will replace the above defined local systems with smaller models we let $r_t : W_X \to W_X$ denote the negative Liouville flow which pushes any subspace of $W_X$ closer to the skeleta $S$ as $t\to \infty$. We define for large $t \gg 0$ the Lagrangians
\begin{align*}
  L_t^+ = \psi_{\epsilon}(r_t(L)) \quad \textrm{and} \quad K^-_t = \psi_{-\epsilon}(r_t(K))
\end{align*}
where $\psi_{\pm \epsilon}$ are the perturbations from Lemma~\ref{lem:article:17} used with an $f$ coming from an insulated simplicial Morse function $(f,g,\{X(\alpha)\}_{\alpha \in S})$ such that the underlying semi-simplicial set $S$ satisfies S1 from Section~\ref{sec:cosimplicial-local}. Note that inside $D_{1/2}^*(X\setminus A)$ (away from the attaching area $A$) we have that  $K^-_t = e^{-t}K -\epsilon df$ and $L_t^+ = e^{-t}L+\epsilon df$. When we apply Lemma~\ref{lem:article:17} to construct $K_t^-$ and $L_t^+$ we may assume that $t$ is so large that the intersection points of any pair of $K_t^+,X,L_t^-$ are arbitrarily close to the critical points of $f$. I.e. they are inside $D^*(X \setminus A)$ and close to the zero section.

As $A$ is away from the boundary of the insulators $\partial X(\alpha)$ we may pick $t$ large enough to make sure that all the lemmas in Appendix~\ref{sec:monotonicity} hold. To use the lemmas we also define the almost complex structure on $W_X$ by extending the one on $D^*X$ induced by the metric $g$ on $X$, but as usual we are implicitly taken arbitrarily small perturbations of the Lagrangians and $J$ to define the complexes. However, we may still assume that the results of the Lemmas from Appendix~\ref{sec:monotonicity} hold by taking these perturbations small enough.

For $\alpha \in S$ let
\begin{align*}
  \bltr{\alpha}{\FF'(X,L_t^+)}{\bu} = \quad\smashoperator{\bigoplus_{x \in X(\alpha)\cap L_t^+}} \Sigma^{|x|}\ \tltr{x}{P(L_t^+)}{\bu} \quad \subset \quad CF_*(X^{\str{\F}{\bu}},(L_t^+)^{\stltr{\bu}{P(L_t^+)}{\bu}}).
\end{align*}
Note that its total complex precisely gives all of $CF_*(X^{\str{\F}{\bu}},(L_t^+)^{\stltr{\bu}{P(L_t^+)}{\bu}})$. We also define the following in between version
\begin{align*}
  \bltr{\alpha}{\tilde \FF(X,L_t^+)}{\bu} = \,\,\smashoperator{\bigoplus_{x \in X(\alpha)\cap L_t^+}}  \Sigma^{|x|} \bltr{\alpha}{PX(\alpha)}{x} \otimes \tltr{x}{P(L_t^+)}{\bu} \subset \bltr{\alpha}{\FF(X,L_t^+)}{\bu}.
\end{align*}
By Lemma~\ref{lem:article:monotonicity1} the differential in both $\bltr{\alpha}{\FF'(X,L_t^+)}{\bu}$ and $\bltr{\alpha}{\tilde \FF(X,L_t^+)}{\bu}$ stay within the defined subspace. By construction they are both pre-cosimplicial local systems.

\begin{lemma}\label{lem:article:F_are_cosimplicial}
  Both $\bltr{\bu}{\FF'(X,L_t^+)}{\bu}$ and $\bltr{\bu}{\tilde \FF(X,L_t^+)}{\bu}$ are cosimplicial local systems in $\bltr{S}{\GG}{L_t^+}$.
\end{lemma}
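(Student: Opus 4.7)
The plan is to check the two conditions in the definition of a cosimplicial local system in $\bltr{S}{\GG}{L_t^+}$: the pre-cosimplicial condition, and the condition that each co-face map out of a zero cell is a weak equivalence in $\GG^{L_t^+}$. Pre-cosimplicial is essentially formal: the insulator axiom gives $X(\alpha) \subset X(\beta)$ and $PX(\alpha) \subset PX(\beta)$ whenever $\alpha \subset \beta$, so each co-face map is the inclusion of a direct-summand subcomplex, and the total complex over any sub-semi-simplicial set sits as a subspace of the total complex over $S$.

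For the weak equivalence condition on $\bltr{\bu}{\FF'(X,L_t^+)}{\bu}$ I would fix a zero cell $\alpha_0 \subset \beta$ and argue fiber-wise over $L_t^+$. Because $L_t^+$ is connected it suffices to check a single fiber, reducing the problem to chain complexes. The strategy is to deform $L_t^+$ through an exact Lagrangian isotopy so that, over $X(\beta)$, only the intersection point at the barycenter of $\alpha_0$ survives. Concretely I would apply Lemma~\ref{lem:article:7} with $\alpha_0 \subset \beta$ as the chosen vertex to produce a homotopy $f_s$ from $f_0=f$ to an $f_1$ whose only critical point inside $X(\beta)$ is the barycenter of $\alpha_0$, and which agrees with $f$ outside a compact subset of $\mathring X(\beta)$ and near $X(\alpha_0)$. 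Substituting $f_s$ for $f$ in (\ref{eq:article:10}) then yields a compactly supported exact Lagrangian isotopy $L_{t,s}^+$ stationary outside $\pi^{-1}(\mathring X(\beta))$.

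Invoking the monotonicity results of Appendix~\ref{sec:monotonicity}, together with the insulator properties of $X(\alpha_0)\subset X(\beta)\subset X$, all pseudo-holomorphic strips for $(X,L_{t,s}^+)$ and all continuation discs with asymptotes in $\pi^{-1}(X(\alpha))$ remain inside $\pi^{-1}(X(\alpha))$. Consequently the Floer differentials and the continuation chain maps of Section~\ref{section:continuation} restrict to the subcomplexes $\bltr{\alpha}{\FF'(X,L_{t,s}^+)}{\bu}$ for $\alpha \in \{\alpha_0,\beta\}$, yielding the commutative square
\[
\xymatrix{
\bltr{\alpha_0}{\FF'(X,L_{t,0}^+)}{\bu} \ar[r] \ar[d]^{\simeq} & \bltr{\beta}{\FF'(X,L_{t,0}^+)}{\bu} \ar[d]^{\simeq} \\
\bltr{\alpha_0}{\FF'(X,L_{t,1}^+)}{\bu} \ar[r] & \bltr{\beta}{\FF'(X,L_{t,1}^+)}{\bu}
}
\]
whose verticals are restricted continuation maps and are quasi-isomorphisms by the argument of Proposition~\ref{prop:article:invariant_under_isotopy} performed inside each insulator, and whose bottom row is literally an isomorphism since $X(\beta)\cap L_{t,1}^+ = X(\alpha_0)\cap L_{t,1}^+$ consists of only the barycenter of $\alpha_0$. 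The top row is then a quasi-isomorphism.

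For $\bltr{\bu}{\tilde\FF(X,L_t^+)}{\bu}$ I would not repeat the argument but compare to $\FF'$: the augmentations $\bltr{\alpha}{PX(\alpha)}{x}\to \F$ assemble into a natural map of pre-cosimplicial local systems $\bltr{\bu}{\tilde\FF(X,L_t^+)}{\bu}\to \bltr{\bu}{\FF'(X,L_t^+)}{\bu}$, which is a fiber-wise quasi-isomorphism because each $X(\alpha)$ (and hence the relevant path space) is contractible; the check on total complexes uses the action filtration as in Lemma~\ref{lem:article:8}, where the augmentation is a summand-wise quasi-isomorphism on the $E^1$-page. The cosimplicial condition for $\tilde\FF$ then follows from the one for $\FF'$ by two-out-of-three. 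The main obstacle in the whole plan is the monotonicity/insulator step: ensuring Floer and continuation moduli stay inside the lift of each insulator across the deformation requires careful use of the statements in Appendix~\ref{sec:monotonicity}, together with the fact (guaranteed by Lemma~\ref{lem:article:7}) that both $X(\alpha_0)$ and $X(\beta)$ remain insulators for every $f_s$.
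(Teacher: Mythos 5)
Your proposal is correct and essentially follows the paper's own argument for $\bltr{\bu}{\FF'(X,L_t^+)}{\bu}$: deform via Lemma~\ref{lem:article:7}, invoke the monotonicity lemmas to see that the continuation maps restrict to the insulator subcomplexes, and conclude from the commuting square. For $\bltr{\bu}{\tilde \FF(X,L_t^+)}{\bu}$ the paper simply repeats this deformation argument (``the argument is similar''), whereas you instead use the augmentation $\bltr{\bu}{\tilde\FF}{\bu}\to\bltr{\bu}{\FF'}{\bu}$ together with two-out-of-three; this is a legitimate small shortcut that anticipates one half of Lemma~\ref{lem:article:2-old-prop}, and it is correct because the augmentation $\bltr{\alpha}{PX(\alpha)}{x}\to\F$ is a summand-wise quasi-isomorphism (contractibility of $X(\alpha)$) and the resulting map commutes with the Floer differentials by Lemma~\ref{lem:article:monotonicity1}.
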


\begin{proof}
  For a simplex $\alpha \in S$ and a 0-simplex $\alpha_0 \subset \alpha$ we have the inclusion
  \begin{align*}
    \bltr{\alpha_0}{\FF'(X,L_t^+)}{\bu} \subset \bltr{\alpha}{\FF'(X,L_t^+)}{\bu}
  \end{align*}
  and we want to prove that this is a weak equivalence. We may use Lemma~\ref{lem:article:7} to get a homotopy $f_s$. Using $f_s$ we may use this to push off $L$ defining $L_{t,s}^+$, which is an isotopy of Lagrangians. This isotopy is constant over $X(\alpha_0)$, over the complement of $X(\alpha)$, and inside $A_W$. The isotopy removes all the intersection points of $X$ and $L_t^+$ inside $X(\alpha)$ except those in $X(\alpha_0)$. As $f_s$ is constant in a neighborhood of $\partial X(\alpha)$ and $\partial X(\alpha_0)$ it follows that the continuation maps satisfy the same monotonicity lemmas with respect to $X(\alpha)$ and $X(\alpha_0)$ as in Appendix~\ref{sec:monotonicity}. This implies by the results in Section~\ref{section:continuation} that the sub complexes defined using the intersection points inside either of these are preserved up to quasi isomorphism.

  Since all the intersections $X\cap L_{t,1}^+$ inside $X(\alpha)$ are also inside $X(\alpha_0)$ we see that these complexes are the same for $s=1$, and hence the original inclusion was a weak equivalence. Indeed, the continuation map on the sub-complex is the restricted continuation map so we have a commuting diagram
  \begin{center}
    \begin{tikzcd}[column sep=40]
      \bltr{\alpha}{\FF'(X,L_t^+)}{\bu}&\bltr{\alpha}{\FF'(X,L_{t,1}^+)}{\bu}\ar[swap]{l}{C_{X,L_{t,s}^+}}\\
      \bltr{\alpha_0}{\FF'(X,L_t^+)}{\bu}\ar[hook]{u}&\bltr{\alpha_0}{\FF'(X,L_{t,1}^+)}{\bu}\ar[equal]{u}\ar[swap]{l}{C_{X,L_{t,s}^+}}
    \end{tikzcd}
  \end{center}
  where both continuation maps are weak equivalences.
  
  The argument for $\bltr{\bu}{\tilde \FF(X,L_t^+)}{\bu}$ is similar.
\end{proof}

\begin{lemma} \label{lem:article:2-old-prop}
  We have canonical weak equivalences
  \begin{align*}
    \bltr{\bu}{\FF'(X,L_t^+)}{\bu}  \leftarrow \bltr{\bu}{\tilde \FF(X,L_t^+)}{\bu} \to \bltr{\bu}{\FF(X,L_t^+)}{\bu}.
  \end{align*}
\end{lemma}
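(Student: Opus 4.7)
The plan is to verify each arrow of the zigzag separately as a weak equivalence in $\bltr{S}{\GG}{L_t^+}$, i.e.\ a fiber-wise quasi-isomorphism in $\tl{L_t^+}{\GG}$ for every simplex $\alpha \in S$.

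For the augmentation $\bltr{\bu}{\tilde \FF(X,L_t^+)}{\bu} \to \bltr{\bu}{\FF'(X,L_t^+)}{\bu}$, on each summand this is the map $\bltr{\alpha}{PX(\alpha)}{x} \to \F$ tensored with the identity on $\tltr{x}{P(L_t^+)}{\bu}$. Since $X(\alpha)$ is a contractible insulator and $|\alpha| \subset X(\alpha)$ is also contractible, the evaluation fibration $\tltr{|\alpha|}{\PP X(\alpha)}{x} \to |\alpha|$ has contractible fibers (homotopy equivalent to $\Omega X(\alpha)$) over a contractible base, so its total space is contractible and the augmentation is a quasi-isomorphism. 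I then filter both complexes by the action of the intersection points; this filtration is preserved since the Floer differential decreases action. On the $E^1$ page, both sides decompose as direct sums over $x \in X(\alpha) \cap L_t^+$ with only internal differentials, and the map becomes a direct sum of augmentations tensored with $\tltr{x}{P(L_t^+)}{\bu}$, each a fiber-wise quasi-isomorphism on $L_t^+$. Strong convergence (half-plane with exiting differentials) then promotes this to a quasi-isomorphism on total complexes.

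For the inclusion $\bltr{\bu}{\tilde \FF(X,L_t^+)}{\bu} \to \bltr{\bu}{\FF(X,L_t^+)}{\bu}$, I first observe that both sides are cosimplicial local systems in the combined category. This is Lemma~\ref{lem:article:F_are_cosimplicial} for $\tilde \FF$; for $\FF$ one writes $\bltr{\alpha}{\FF} = \bltr{\alpha}{PX}{\bu} \und{PX}\otimes \tltr{\bu}{\FF(X,L_t^+)}{\bu}$ and notes that for a vertex $\alpha_0 \subset \alpha$ the inclusion $\bltr{\alpha_0}{PX}{\bu} \to \bltr{\alpha}{PX}{\bu}$ is a weak equivalence of semi-free right path local systems (by the straight-line deformation retract inside $|\alpha|$, exactly as in the proof of Lemma~\ref{lem:article:3}). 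By the combined-category analog of Lemma~\ref{lem:lemmas:single_fiber}, it then suffices to check that $\bltr{\alpha_0}{\tilde \FF} \to \bltr{\alpha_0}{\FF}$ is a weak equivalence for some single 0-simplex $\alpha_0$. After composing with the augmentation equivalence already established, this amounts to showing the full Floer complex $\bltr{\alpha_0}{\FF(X,L_t^+)}{\bu}$ (with $\tltr{\alpha_0}{PX}{\bu}$ coefficients on the $X$-side) is fiber-wise quasi-isomorphic to the one-summand complex $\Sigma^{|x_0|}\tltr{x_0}{P(L_t^+)}{\bu}$, where $x_0$ is the intersection point near the barycenter of $\alpha_0$.

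This last identification is the main obstacle. My plan is to apply Lemma~\ref{lem:article:7} with $\alpha$ taken as the top simplex containing the handle-attachment patch $A$ and $\alpha_0$ a vertex of it, producing a homotopy $f_s$ of $f$ that cancels every critical point inside $X(\alpha)$ except the one at $\alpha_0$. Pushing $L$ off with $f_s$ yields an isotopy $L_{t,s}^+$, and by Section~\ref{section:continuation} the associated continuation map is a quasi-isomorphism of the corresponding Floer complexes. For the deformed Lagrangian $L_{t,1}^+$ only $x_0$ remains as an intersection point inside $X(\alpha)$; by the monotonicity results of Appendix~\ref{sec:monotonicity} the Floer differentials decouple contributions inside and outside $X(\alpha)$. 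The technical heart of the argument will be to show that, in the presence of the basepointed coefficient system $\tltr{\alpha_0}{PX}{\bu}$, the contribution of the intersection points outside $X(\alpha)$ becomes acyclic --- essentially a Floer-Morse Serre-type spectral sequence collapse. The main difficulty I expect is reconciling the monotonicity estimates of the appendix, which control the spatial behavior of the pseudo-holomorphic discs, with the chain-level path-coefficient machinery.
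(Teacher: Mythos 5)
Your treatment of the left arrow (the augmentation $\bltr{\bu}{\tilde \FF}{\bu} \to \bltr{\bu}{\FF'}{\bu}$) matches the paper exactly: action filtration plus contractibility of $\bltr{\alpha}{PX(\alpha)}{x}$ is precisely what the paper uses. Your reduction of the right arrow to the observation that both sides are cosimplicial local systems and then to checking over a single zero-cell $\alpha_0$ is also a legitimate structural move (it uses the cosimplicity condition and the combined-category analog of Lemma~\ref{lem:lemmas:single_fiber}).

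The gap is in what remains after that reduction. Over $\alpha_0$, the inclusion is $\Sigma^{|x_0|}\bltr{\alpha_0}{PX(\alpha_0)}{x_0}\otimes \tltr{x_0}{PL_t^+}{\bu} \hookrightarrow \bigoplus_{x\in X\cap L_t^+}\Sigma^{|x|}\bltr{\alpha_0}{PX}{x}\otimes\tltr{x}{PL_t^+}{\bu}$, so you need the full Floer complex of $X$ with $L_t^+$ (with $\bltr{\alpha_0}{PX}{\bu}$ coefficients) to collapse to the contribution of the single intersection point near $|\alpha_0|$. Your plan to reach this via Lemma~\ref{lem:article:7} only cancels critical points \emph{inside} $X(\alpha)$; it does nothing about the generators outside $X(\alpha)$, which are all still present in the deformed complex. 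The statement you actually need---that the chunk of the Floer complex coming from intersection points outside $X(\alpha_0)$ is acyclic once one puts $\bltr{\alpha_0}{PX}{\bu}$ coefficients on the $X$-side---is essentially Floer's lemma ($\tltr{\bu}{\FF(X,X)}{\bu}\simeq\tltr{\bu}{PX}{\bu}$), which in the paper is Lemma~\ref{lem:cor:article:7} and is proved \emph{using} the present lemma. So the route you sketch is circular, and you have correctly identified that you are missing an idea at the ``technical heart.''

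The paper sidesteps the local question entirely by working globally: it applies the $\pi_0$-equivalence $\tl{\bu}PX_\bu \und{S}\otimes(-)$ of Proposition~\ref{prop:pi_0_equivalence}, then filters by action and reduces, for each fixed $x$, to showing that the inclusion $\tot\pare*{\alpha\mapsto\tl{\bu}PX_\alpha\otimes\bltr{\alpha}{PX(\alpha)}{x}}\subset\tot\pare*{\alpha\mapsto\tl{\bu}PX_\alpha\otimes\bltr{\alpha}{PX}{x}}\simeq\tl{\bu}PX^x$ is a quasi-isomorphism. The point is that the left-hand total complex only receives contributions from the contractible star $s(x)$ of the simplices containing $x$ (this is where condition S1 enters), which gives the equivalence directly without ever needing to compare Floer complexes over different subsets of $X$. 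If you want to salvage your fiber-wise strategy you would first need an independent proof of the single-point collapse --- that is a genuine extra input, not a routine estimate.
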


\begin{proof}
  
 The map in the lemma going left is induced by the augmentation $\epsilon : \bltr{\alpha}{PX(\alpha)}{x} \to \tr{\F}{x}$. We see that this is a weak equivalence by using the action filtration (perturbed to make each intersection action unique). Indeed, as $\bltr{\alpha}{PX(\alpha)}{x}$ is contractible this induces a quasi isomorphism on the filtered quotients (one summand for each intersection point $x$).

  The map going right is simply the inclusion from the above definition of $\bltr{\bu}{\tilde \FF(X,L_t^+)}{\bu}$. To see that this is an equivalence one has to consider it more ``globally''. To this end it is convenient to show that we have a weak equivalence after applying the functor $\tl{\bu} PX_{\bu} \und{S}\otimes (-)$ (using Proposition~\ref{prop:pi_0_equivalence}). The left hand side of this equation becomes the total complex of
  \begin{align*}
    \alpha \mapsto \bigoplus_{x \in X(\alpha)\cap L_t^+} \Sigma^{|x|}\ \tl{\bu} PX_{\alpha} \otimes \bltr{\alpha}{PX(\alpha)}{x} \otimes \tltr{x}{P(L_t^+)}{\bu}
  \end{align*}
  while the right hand side becomes the total complex of 
  \begin{align*}
    \alpha \mapsto \bigoplus_{x \in X\cap L_t^+} \Sigma^{|x|}\ \tl{\bu} PX_{\alpha} \otimes \bltr{\alpha}{PX}{x} \otimes \tltr{x}{P(L_t^+)}{\bu}.
  \end{align*}
  We now again use the action filtration and prove that for fixed $x$ the inclusion is a weak equivalence. Note, however, that for fixed $x$ the first is only a pre-cosimplicial local system. The statement follows from that the inclusion
  \begin{align*}
    \tot \pare*{ \tl{\bu} PX_{\alpha} \otimes \bltr{\alpha}{PX(\alpha)}{x} } \subset \tot (\tl{\bu} PX_{\alpha} \otimes \bltr{\alpha}{PX}{x}) \simeq \tl{\bu} PX^x
  \end{align*}
  is a weak equivalence. Indeed, the first only has contributions from the contractible semi simplicial set $s(x)$ (star of $x$) defined by those simplices $\alpha$ where $x\in X(\alpha)$. Note that $s(x)$ is contractible by S1.
\end{proof}

We may similarly define
\begin{align*}
  \tl{\bu} \FF'(K_t^-,X)_\alpha = \quad\smashoperator{\bigoplus_{x \in X(\alpha)\cap K_t^-}}  \Sigma^{|x|}\ \tl{\bu} P(K_t^-)^x \quad \subset \quad CF_*((K_t^-)^{\stl{\bu} P(K_t^-)^\bu},X^{\stl \bu\F}).
\end{align*}
and
\begin{align*}
  \tl{\bu}{\tilde \FF}(K_t^-,X)_\alpha = \quad\smashoperator{\bigoplus_{x \in X(\alpha)\cap K_t^-}}  \Sigma^{|x|}\ \tl{\bu} P(K_t^-)^x \otimes \tlbr{x}{PX(\alpha)}{\alpha} .
\end{align*}
We also have similar weak equivalences:
\begin{align*}
  \tl{\bu} \FF'(K_t^-,X)_\bu \leftarrow \tl{\bu}{\tilde \FF}(K_t^-,X)_\bu \to \tl{\bu}{\FF(K_t^-,X)}_\bu.
\end{align*}
We will define an explicit representative of the important co-product
\begin{align*}
  \mu_2 : \tl{\bu} \FF(K_t^-,L_t^+)^\bu \to \tl{\bu} \FF(K_t^-,X)^\bu \und{PX}\otimes \tl{\bu} \FF(X,L_t^+)^\bu
\end{align*}
using these small models (and the functors from Section~\ref{sec:equiv-glps-gcss} relating the two types of local systems). We will consider the following important cosimplicial local system taking values in bi-path local systems.
\begin{definition} \label{def:article:1}
  We define $\tlar{\bu}{\FF(K_t^-,L_t^+)}{\bu}{\bu} \in \tlar{K_t^-}{\GG}{L_t^+}{S}$ for large $t$ over $\alpha \in S$ as the sub complex 
  \begin{align*}
    \quad\smashoperator{\bigoplus_{x \in K_t^-\cap L_t^+  \cap r^{-1}X(\alpha)}}  \Sigma^{|x|}\, \tl{\bu} P(K_t^-)^x \otimes \tltr{x}{P(L_t^+)}{\bu} \quad \subset \quad \tltr{\bu}{\FF(K_t^-,L_t^+)}{\bu}
  \end{align*}
  where $r : W_X \to X$ is any retraction whose restriction to $D^*X$ equals the projection $D^*X \to X$. This is a well defined cosimplicial local system by Lemma~\ref{lem:article:monotonicity2} and the fact that all intersection points are inside $D^*X$, all inclusion maps are quasi isomorphisms in the same way as in the proof of Lemma~\ref{lem:article:F_are_cosimplicial}.
\end{definition}
Note that by definition we have that
\begin{align*}
  \tot \tlar{\bu}{\FF(K_t^-,L_t^+)}{\bu}{\bu} = \tltr{\bu}{\FF(K_t^-,L_t^+)}{\bu},
\end{align*}
but more importantly we have (using Lemma~\ref{lem:article:monotonicity3}) a map
\begin{align*}
  (\mu_2')_\alpha : \tl{\bu} \FF(K_t^-,L_t^+)^\bu_\alpha \to \tlbr{\bu}{\FF'(K_t^-,X)}{\alpha} \otimes \bltr{\alpha}{\FF'(X,L_t^+)}{\bu}
\end{align*}
defined over each $\alpha$ as the restriction of the map $\mu_2^{\F}$ defined in Section~\ref{section:mu_variant}. This map commutes with inclusion of simplices and is hence a map of cosimplicial local systems
\begin{align}\label{eq:article:6}
  (\mu_2')_\bullet:\tl{\bu} \FF(K_t^-,L_t^+)^\bu_\bullet \to \tlbr{\bu}{\FF'(K_t^-,X)}{\bullet} \fwcs \bltr{\bullet}{\FF'(X,L_t^+)}{\bu}.
\end{align}

\begin{lemma}\label{lem:article:1}
  The map $\mu_2' = \tot (\mu_2')_\bullet$ represents $\mu_2$ in the sense that we define a commuting diagram:
  \begin{center}
    \begin{tikzcd}[row sep=10]
      &\tlbr{\bu}{\FF'(K_t^-,X)}{\bu}\und{S}\otimes \bltr{\bu}{\FF'(X,L_t^+)}{\bu} \\ 
      \tltr{\bu}{\FF(K_t^-,L_t^+)}{\bu} \ar{ur}{\mu_2'} \ar{dr}{\mu_2} \ar{r}{\tilde \mu_2} & \tltr{\bu}{\tilde \FF(K_t^-,L_t^+)}{\bu} \ar[->,swap]{u}{\simeq} \ar[->]{d}{\simeq}\\
      &\tltr{\bu}{\FF(K_t^-,X)}{\bu}\und{PX}\otimes \tltr{\bu}{\FF(X,L_t^+)}{\bu} 
    \end{tikzcd}
  \end{center}
  with the vertical arrows are weak equivalences in $\tltr{K_t^-}{\GG}{L_t^+}$.
\end{lemma}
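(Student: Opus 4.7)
The strategy is to build an intermediate bi-path local system $\tltr{\bu}{\tilde \FF(K_t^-,L_t^+)}{\bu}$ that interpolates between the small-model tensor and the global-tensor targets of $\mu_2'$ and $\mu_2$, in direct analogy with the constructions of $\bltr{\bu}{\tilde \FF(X,L_t^+)}{\bu}$ and $\tlbr{\bu}{\tilde \FF(K_t^-,X)}{\bu}$ above. Concretely, I would define a cosimplicial local system on $S$ with values in $\tltr{K_t^-}{\GG}{L_t^+}$ by
\[
  \tltr{\bu}{\tilde \FF(K_t^-,L_t^+)}{\bu}_\alpha = \bigoplus_{\substack{y\in X(\alpha)\cap K_t^-\\z\in X(\alpha)\cap L_t^+}} \Sigma^{|y|+|z|}\, \tl\bu P(K_t^-)^y\otimes \bltr{y}{PX(\alpha)}{z}\otimes \tltr{z}{P(L_t^+)}{\bu},
\]
with co-face maps induced by the inclusions $X(\alpha)\subset X(\beta)$, with Floer strip differentials on both the $K_t^-$- and $L_t^+$-sides (the same moduli as used in $\tlbr{\bu}{\tilde \FF(K_t^-,X)}{\bu}$ and $\bltr{\bu}{\tilde \FF(X,L_t^+)}{\bu}$), and with the obvious internal differential; then $\tltr{\bu}{\tilde \FF(K_t^-,L_t^+)}{\bu}:=\tot_\alpha$ of this cosimplicial object.

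\textbf{Construction of $\tilde\mu_2$ and the vertical arrows.} The map $\tilde\mu_2$ on a generator $x\in K_t^-\cap L_t^+\cap r^{-1}X(\alpha)$ is assembled from the fundamental chains $\tltlar{z}{y}{\,c}{x}{k}$ on the pseudoholomorphic triangle moduli $\tltltr{z}{y}{\,\MM}{x}$, evaluated on all three boundary sides and parallel-transported exactly as in the definition of $\mu_2$ in Section~\ref{sec:floer-theory-with}; Lemma~\ref{lem:article:monotonicity3} guarantees that the whole triangle remains in $r^{-1}X(\alpha)$, so $y,z\in X(\alpha)$ and the middle evaluation genuinely lives in $\bltr{y}{PX(\alpha)}{z}$, landing in the fiber of $\tilde\FF$ at $\alpha$. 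The upper vertical arrow is augmentation $\bltr{y}{PX(\alpha)}{z}\to \F$ on the middle factor, a fiberwise quasi-isomorphism by contractibility of $X(\alpha)$, hence a weak equivalence on totals by Corollary~\ref{cor:lem:lemmas:1}. The lower vertical arrow is the inclusion $\bltr{y}{PX(\alpha)}{z}\hookrightarrow \bltr{y}{PX}{z}$ on the middle factor, followed by descent to the $\und{PX}\otimes$-quotient.

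\textbf{Commutativity.} Both triangles commute on the nose because $\mu_2'$, $\tilde\mu_2$, and $\mu_2$ are all assembled from the same triangle moduli and the same fundamental chains, differing only in how the $X$-side evaluation is recorded: post-composing $\tilde\mu_2$ with augmentation reproduces the $\mu_2^{\F}$ construction behind $(\mu_2')_\alpha$ from Equation~\eqref{eq:article:6}, while post-composing with the path inclusion reproduces the Section~\ref{sec:floer-theory-with} definition of $\mu_2$ restricted to the $\alpha$-summand and then summed over all $\alpha$.

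\textbf{Main obstacle.} The substantive difficulty is showing that the lower vertical map is a weak equivalence in $\tltr{K_t^-}{\GG}{L_t^+}$: the single-fiber inclusion $\bltr{y}{PX(\alpha)}{z}\hookrightarrow \bltr{y}{PX}{z}$ is not itself a quasi-isomorphism (the source is contractible while the target has $\Omega X$-type homology), so one cannot verify the equivalence simplex-by-simplex. The argument has to be global: rewrite the lower map as a zig-zag through $F'(\FF(K_t^-,X))\und{S}\otimes F(\FF(X,L_t^+))$, using Lemma~\ref{lem:article:18} and Corollary~\ref{cor:article:3a} to identify this zig-zag with the natural map from $\tilde\FF$, and then invoke the abstract equivalence $\FF'(K_t^-,X)\und{S}\otimes \FF'(X,L_t^+)\simeq \FF(K_t^-,X)\und{PX}\otimes \FF(X,L_t^+)$ that follows from Lemma~\ref{lem:article:9} combined with Lemma~\ref{lem:article:2-old-prop}. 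The action filtration on $\tltr{\bu}{\FF(K_t^-,L_t^+)}{\bu}$ together with contractibility of stars (property \textbf{S1}) reduces the remaining content to variants of Lemma~\ref{lem:article:5} and Lemma~\ref{lem:article:zig-zag}. Once this weak equivalence is established, the upper equivalence and the two commutativities are essentially formal.
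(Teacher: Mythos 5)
Your proposal is structurally the same as the paper's proof: you introduce the same intermediate cosimplicial object $\tltr{\bu}{\tilde\FF(K_t^-,L_t^+)}{\bu}$ (with paths in $X(\alpha)$ sitting between the two Floer generators), you build $\tilde\mu_2$ from the same triangle moduli, you take the upper arrow to be augmentation of the middle factor and the lower arrow to be inclusion, and you correctly observe that commutativity of both triangles holds on the nose because all three maps arise from the same fundamental chains and differ only in how the $X$-side of the evaluation is recorded. You also correctly pinpoint that the lower inclusion cannot be checked fiberwise over a single $\alpha$ and that this is where the real work lies.

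The divergence is in how you handle that lower weak equivalence. The paper constructs, over each $\alpha$, an explicit commuting square whose horizontal maps are inclusions $\tilde\FF \hookrightarrow \FF$ (on each tensor factor) and whose vertical maps are the concatenation maps inserting an affine segment (as in Equation~\eqref{eq:article:8}). The left vertical is an equivalence because $P(X(\alpha))$ is contractible, the top horizontal is the tensor of the equivalences of Lemma~\ref{lem:article:2-old-prop}, and after totalizing over $S$ the right vertical becomes an application of Lemma~\ref{lem:article:zig-zag}; the lower horizontal (which is the map you want) is then squeezed to be a weak equivalence. You instead invoke Lemma~\ref{lem:article:9} combined with Lemma~\ref{lem:article:2-old-prop} to get an \emph{abstract} zig-zag between the $\und{S}\otimes$ and $\und{PX}\otimes$ expressions. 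That is a legitimate route, but you owe an argument that the abstract zig-zag of Lemma~\ref{lem:article:9} is compatible with the \emph{specific} inclusion map $\tilde\FF(K_t^-,L_t^+) \hookrightarrow \FF(K_t^-,X)\und{PX}\otimes \FF(X,L_t^+)$; Lemma~\ref{lem:article:9} only produces some natural zig-zag of quasi-isomorphisms, and without pinning it down (which the paper's concatenation maps do explicitly) you cannot conclude anything about your particular map. Your invocations of Lemma~\ref{lem:article:18} and Corollary~\ref{cor:article:3a} are also misplaced here: those concern fiber-wise tensors, not the global tensors that appear in this diagram. Finally, for the upper arrow you should justify the fiberwise quasi-isomorphism via the action filtration (the differential mixes the contractible middle factor with the path factors), as the paper does — ``contractibility of $X(\alpha)$'' alone does not directly apply to the full complex. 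None of this is a wrong idea, but the paper's direct construction via concatenation maps is both simpler and avoids the compatibility question you would otherwise need to address.
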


\begin{proof}
  The proof is similar to the proof of Lemma~\ref{lem:article:2-old-prop}. We first define the intermediate system using
  \begin{align*}
    \tlar{\bu}{\tilde \FF(K_t^-,L_t^+)}{\bu}{\alpha} =  \qquad \smashoperator{\bigoplus_{y,z \in K_t^-\cap L_t^+\cap r^{-1}(X(\alpha))}} \Sigma^{|x|}\tltr{\bu}{P(K_t^-)}{y} \otimes \tltr{y}{PX(\alpha)}{z} \otimes \tltr{z}{P(L_t^+)}{\bu}. 
  \end{align*}
  This and its total complex, denoted $\tltr{\bu}{\tilde \FF(K_t^-,L_t^+)}{\bu}$, is contained in
  \begin{align*}
    \tltr{\bu}{\FF(K_t^-,X)}{\bu}\und{PX}\otimes \tltr{\bu}{\FF(X,L_t^+)}{\bu} \cong \quad \smashoperator{\bigoplus_{y,z \in K_t^-\cap L_t^+}}\Sigma^{|x|} \tltr{\bu}{P(K_t^-)}{y} \otimes \tltr{y}{PX}{z} \otimes \tltr{z}{P(L_t^+)}{\bu} 
  \end{align*}
  and Lemma~\ref{lem:article:monotonicity3} implies that $\mu_2$ over $\alpha$ lands in this subcomplex and defines a local map:
  \begin{align*}
    (\tilde \mu_2)_\alpha : \tl{\bu}{\FF(K_t^-,L_t^+)}^\bu_\alpha \to \tlar{\bu}{\tilde \FF(K_t^-,L_t^+)}{\bu}{\alpha}
  \end{align*}
  whose total map defines $\tilde \mu_2$ in the diagram. The map going up in the diagram is again augmentation, and an equivalence (over each $\alpha$). We finish the proof by proving that the inclusion going down in the diagram is a weak equivalence.

  We prove this by considering the diagrams
  \begin{center}
    \begin{tikzcd}
      & \tlbr{\bu}{\tilde \FF(K_t^-,X)}{\alpha} \otimes \bltr{\alpha}{\tilde \FF(X,L_t^+)}{\bu} \ar[->]{r}{\simeq} \ar[->]{d}{\simeq} & \tlbr{\bu}{\FF(K_t^-,X)}{\alpha}\otimes \bltr{\alpha}{\FF(X,L_t^+)}{\bu}  \ar[->]{d} \\
      & \tlar{\bu}{\tilde \FF(K_t^-,L_t^+)}{\bu}{\alpha} \ar[->]{r}  & \tltr{\bu}{\FF(K_t^-,X)}{\bu}\und{PX}\otimes \tltr{\bu}{\FF(X,L_t^+)}{\bu} 
    \end{tikzcd}
  \end{center}
  where the horizontal maps are the inclusions and the vertical maps are concatenations inserting an affine piece similar to the one used in Equation~(\ref{eq:article:8}). The top horizontal map is the tensor of the equivalences (one of which we) established in Lemma~\ref{lem:article:2-old-prop}. The left vertical map is an equivalence for each summand (indexed by $(y,z)$) as the path spaces $P(X(\alpha))$ are contractible.

  Taking the induced maps of total complexes we get the diagram
  \begin{center}
    \begin{tikzcd}
      & \tlbr{\bu}{\tilde \FF(K_t^-,X)}{\bu} \und{S} \otimes \bltr{\bu}{\tilde \FF(X,L_t^+)}{\bu} \ar[->]{r}{\simeq} \ar[->]{d}{\simeq} & \tlbr{\bu}{\FF(K_t^-,X)}{\bu}\und{S}\otimes \bltr{\bu}{\FF(X,L_t^+)}{\bu}  \ar[->]{d}{\simeq} \\
      & \tltr{\bu}{\tilde \FF(K_t^-,L_t^+)}{\bu} \ar[->]{r}  & \tltr{\bu}{\FF(K_t^-,X)}{\bu}\und{PX}\otimes \tltr{\bu}{\FF(X,L_t^+)}{\bu} 
    \end{tikzcd}
  \end{center}
  where now the right vertical map is also checked to be a weak equivalence, indeed, for each pair $(y,z)$ in the summand this is the case of Lemma~\ref{lem:article:zig-zag}.
\end{proof}

\section{The zero section generates}
In this section we prove the following proposition and corollary.

\begin{proposition} \label{prop:article:2}
  For exact Lagrangians $K,L \subset W_X$ the map
  \begin{align*}
    \mu_2 : \tltr{\bu}{\FF(K,L)}{\bu} \to \tltr{\bu}{\FF(K,X)}{\bu} \und{PX}\otimes \tltr{\bu}{\FF(X,L)}{\bu}
  \end{align*}  
  is a weak equivalence. In fact, $\mu_2'$ from Equation~(\ref{eq:article:6}) is an equivalence of cosimplicial local systems taking values in bi path local systems.
\end{proposition}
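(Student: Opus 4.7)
The plan starts with an isotopy reduction: since $K_t^- = \psi_{-\epsilon}(r_t(K))$ and $L_t^+ = \psi_{\epsilon}(r_t(L))$ are Hamiltonian isotopic to $K$ and $L$, Proposition~\ref{prop:article:invariant_under_isotopy} together with the compatibility of continuation maps and $\mu_2$ allows us to replace $(K,L)$ by $(K_t^-,L_t^+)$ for large $t$. By Lemma~\ref{lem:article:1} together with the homotopicality of $\tot$ in combined categories (Lemma~\ref{cor:lem:lemmas:1b}), the weak equivalence of $\mu_2$ will follow once we prove that $(\mu_2')_\bullet$ is a weak equivalence of cosimplicial local systems in $\tlar{K_t^-}{\GG}{L_t^+}{S}$, which is also the stronger second sentence of the proposition. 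Both the source $\tlar{\bu}{\FF(K_t^-,L_t^+)}{\bu}{\bu}$ (Definition~\ref{def:article:1}) and the target $\tlbr{\bu}{\FF'(K_t^-,X)}{\bu}\fwcs\bltr{\bu}{\FF'(X,L_t^+)}{\bu}$ (Lemma~\ref{lem:article:F_are_cosimplicial} plus the fact that $\fwcs$ preserves cosimpliciality) are cosimplicial, so for every inclusion $\alpha_0\subset\alpha$ the two vertical maps in the naturality square of $(\mu_2')_\bullet$ are weak equivalences of bi-path local systems. Hence $(\mu_2')_\alpha$ is a weak equivalence iff $(\mu_2')_{\alpha_0}$ is, and it suffices to treat a single, conveniently chosen, $0$-cell $\alpha_0\in S$.

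Pick $\alpha_0$ whose insulator $X(\alpha_0)$ is disjoint from the attaching region $A$ of $W_X$, so that $r^{-1}(X(\alpha_0))=D^*X(\alpha_0)$ is a small contractible neighborhood containing the unique critical point of $f$ at $|\alpha_0|$ (of index $0$). For $t$ large and the perturbations of Lemma~\ref{lem:article:17}, each of the three sets $K_t^-\cap X$, $X\cap L_t^+$, and $K_t^-\cap L_t^+\cap r^{-1}(X(\alpha_0))$ contributes a single point of grading $0$ inside $X(\alpha_0)$; call them $y_0$, $z_0$, and $x_0$. The two sides of $(\mu_2')_{\alpha_0}$ then collapse to the single bi-path local systems
\begin{align*}
  \tl{\bu} P(K_t^-)^{x_0}\otimes\tltr{x_0}{P(L_t^+)}{\bu}\qquad\text{and}\qquad \tl{\bu} P(K_t^-)^{y_0}\otimes\tltr{z_0}{P(L_t^+)}{\bu}.
\end{align*}

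The map between them is by construction the signed count of pseudo-holomorphic triangles with corners $x_0,y_0,z_0$ and boundary arcs on $K_t^-$, $X$, $L_t^+$. Since $y_0,z_0,x_0$ cluster near the critical point, every such triangle has arbitrarily small energy (controlled by the action differences), and the monotonicity results of Appendix~\ref{sec:monotonicity} confine the image of any such triangle to a small Darboux neighborhood inside $X(\alpha_0)$. A linearized local model in $\C^n$---three pairwise-transverse flat Lagrangians meeting at a point---has a unique regular holomorphic triangle, so by a standard compactness and transversality argument the moduli space is a single transverse point. Its contribution is parallel transport along three short arcs contained in contractible neighborhoods of $y_0$ in $K_t^-$ and $z_0$ in $L_t^+$, hence homotopic to constants. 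Thus $(\mu_2')_{\alpha_0}$ becomes an isomorphism on each fiber after passing to homology, i.e.\ a weak equivalence of bi-path local systems, and the proposition follows. The main obstacle I expect is precisely this small-triangle computation---verifying the single-transverse-point count---which is the direct analog of the local step in the PSS isomorphism and uses the insulator structure together with the monotonicity estimates of the appendix in an essential way.
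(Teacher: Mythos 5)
Your reduction to checking $(\mu_2')_{\alpha_0}$ on a single $0$-cell is exactly the paper's strategy, and your use of Lemma~\ref{lem:article:1}, Lemma~\ref{lem:article:F_are_cosimplicial}, and the monotonicity results to justify that reduction is sound. However, the core local computation has a genuine gap. You assert that each of $K_t^-\cap X$, $X\cap L_t^+$, and $K_t^-\cap L_t^+\cap r^{-1}(X(\alpha_0))$ contributes a \emph{single} intersection point of grading $0$ near $|\alpha_0|$. This is false in general: $K$ and $L$ intersect the cotangent fiber over $|\alpha_0|$ in finitely many points (there is no reason for that number to be one), and after pushing off there are correspondingly many \emph{layers} of $K_t^-$ and $L_t^+$ over $X(\alpha_0)$. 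In the paper's normalization (via Lemma~\ref{lem:article:11}) the layers are $D_\delta^n\times\{(a_i,0,\dots,0)\}$ for $K$ and $D_\delta^n\times\{(b_j,0,\dots,0)\}$ for $L$, so there are $k$ points in $K_t^-\cap X$, $l$ in $X\cap L_t^+$, and $kl$ in $K_t^-\cap L_t^+$ inside the chart, of varying grading. Consequently $(\mu_2')_{\alpha_0}$ is a map between direct sums of many tensor factors, and your claim of a single transverse triangle establishing an isomorphism does not apply.

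The paper closes this gap with an \emph{upper-triangularity} argument that your approach is missing. One orders the $kl$ generators $x$ of $K_t^-\cap L_t^+$ by action, transports this order to pairs $(y,z)$ via the bijection $x\leftrightarrow(y,z)$ coming from the unique small holomorphic triangle through $x$, and then uses two quantitative estimates: the triangle associated to $x$ has area $O(e^{-2t})$, while any triangle from $x$ to a pair $(y',z')\neq(y,z)$ must cross a gap between layers and therefore has area $\geq C_4 e^{-t}$. For $t$ large the second dwarfs the first, so $(\mu_2')_{\alpha_0}$ is an upper-triangular map of free modules with $\pm 1$ on the diagonal (the count of the associated triangle), hence a quasi-isomorphism even though off-diagonal entries (coming from other triangles and the parallel transport corrections) are nonzero. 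Your ``unique regular holomorphic triangle'' intuition correctly identifies the diagonal entry, but without the layer structure, the action ordering, and the $e^{-t}$ vs.\ $e^{-2t}$ comparison there is no reason for the resulting matrix to be invertible.
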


\begin{proof}[Proof of Proposition~\ref{prop:article:2}]
  Using continuation maps we may prove this after applying any Hamiltonian isotopies to $K$ and $L$. So, pick an insulated simplicial Morse function $(f,g,\{X(\alpha)\}_{\alpha \in S})$ satisfying the same conditions as in Section~\ref{sec:cosimplicial-floer-complexes}. Pick a zero-cell (point) $\alpha_0 \in S$. Using Lemma~\ref{lem:article:11} we may assume that we have a small chart $\varphi:D_\delta^n \to X(\alpha_0) \subset X$ such that
  \begin{itemize}
  \item $\varphi(0)=|\alpha_0|$.
  \item The image of $\varphi$ does not meet $A$ (the attaching domain).
  \item $f(\varphi(q))=\tfrac12 \norm{q}^2$ (Morse chart).
  \item Using the induced symplectic chart $\varphi^* : T^*D_\delta^n \to T^*X$ there are some distinct $a_i\in \R$ and $b_i\in \R$ so that
    \begin{align*}
      \varphi^*K = \bigsqcup_{i=1}^k D_\delta^n\times\{(a_i,0,\dots,0)\} \quad \textrm{and} \quad
      \varphi^*L = \bigsqcup_{i=1}^l D_\delta^n\times\{(b_i,0,\dots,0)\}.
    \end{align*}
  \item By shrinking the chart and modifying the Riemannian structure near $\alpha_0$ (which is far from the boundary of any insulator) we may assume that $g$ pulls back to the standard metric on $D_\delta^n$.
  \end{itemize}
  We will refer to each component of $\varphi^*K$ and $\varphi^*L$ as \emph{layers}. Now fix global primitives for the Liouville form on $K$ and $L$. We denote the primitives of the layers for $\varphi^*K$ by $f_i$ and the primitives of the layers for $\varphi^*L$ by $g_i$, they are on the form:
  \begin{align*}
    f_i(q) = -a_iq_1+c_i \qquad \textrm{and} \qquad g_i(q)=-b_iq_1+d_i
  \end{align*}
  for some $c_i,d_i\in \R$ (with $q=(q_1,\dots,q_n)\in D_\delta^n$).

  As $g$ is standard in the chart it means that the almost complex structure is standard in the chart $\varphi^*$ (before any potential small perturbations).

  We then use $K$ and $L$ as in Section~\ref{sec:cosimplicial-floer-complexes} to construct $K_t^-$ and $L_t^+$ for each $t$. In Lemma~\ref{lem:article:1} we saw that $\mu_2$ is a weak equivalence if $\mu_2'$ is a weak equivalence of cosimplicial local systems (in bi-path local systems and for large $t$). It is enough to check this on a single fiber. So we check this over $\alpha_0$ and consider the definition of $\mu_2'$ in Equation~(\ref{eq:article:6}) over this zero cell:
  \begin{align*}
    \tlar{\bu}{\mathcal F(K_t^-,L_t^+)}{\bu}{\alpha_0}\xrightarrow{(\mu_2')_{\alpha_0}} \tl{\bu}\FF'(K_t^-,X)_{\alpha_0} \otimes \bltr{\alpha_0}{\FF'(X,L_t^+)}{\bu}.
  \end{align*}
  Unpacking the definitions this is a map
  \begin{align*}
    \quad\smashoperator{ \bigoplus_{\substack{x\in K_t^- \cap L_t^+\\x\in  r^{-1}X(\alpha_0)}}} \Sigma^{|x|}\,\tltr{\bu}{(PK_t^-)}{x} \otimes \tltr{x}{(PL_t^+)}{\bu}
    \ \to\ 
    \smashoperator{\bigoplus_{y\in K_t^- \cap X(\alpha_0)}} \Sigma^{|y|}\,\tltr{\bu}{(PK_t^-)}{y} \ \otimes\ 
     \smashoperator{\bigoplus_{z\in \cap L_t^+ \cap X(\alpha_0)}} \Sigma^{|z|}\,\tltr{z}{(PL_t^+)}{\bu}.
  \end{align*}
  We may assume for large $t$ that all these intersections happen inside the chart $\varphi^*$.

  The Lagrangians $K_t^-$ and $L_t^+$ have induced primitives by scaling those on $K$ and $L$ and adding $\pm \epsilon df$ ($\epsilon$ from Lemma~\ref{lem:article:17}). They also have similar layers inside the chart. The primitives for the layers of $K_t^-$ are:
  \begin{align*}
    f_i^t(q)=-e^{-t}a_iq_1 + e^{-t}c_i + \tfrac12\epsilon \norm{q}^2
  \end{align*}
  while the primitives for the layers of $L_t^+$ are:
  \begin{align*}
    g_i^t(q)=-e^{-t}b_iq_1 +e^{-t}d_i - \tfrac12\epsilon \norm{q}^2.
  \end{align*}
  Examples of such Lagrangians in $T^*D_\delta^1$
  \begin{figure}[ht]
    \centering
    \begin{tikzpicture}
      \fill[yellow] (0.7,0) -- (0,0) -- (0.35,-0.35);
      \draw[thick] (-3,0) -- (3,0);
      \draw[thick, red] (-2,0) -- (2,0);
      \draw[red] (-2,0) node {$($};
      \draw[red] (2,0) node {$)$};
      \draw[brown] (-2,2) -- (2,-2);
      \draw[brown] (-2.4,2) -- (1.6,-2);
      \draw[brown] (-1.7,2) -- (2.3,-2);
      \draw[green!70!black] (2.5,2) -- (-1.5,-2);
      \draw[green!70!black] (2.7,2) -- (-1.3,-2);
      \draw[green!70!black] (1.9,2) -- (-2.1,-2);
      \fill[gray!!20] (-2.0,1.7) -- (-2.0,1.9) -- (-0.5,0.4) -- (-0.5,0.2) -- cycle;
    \end{tikzpicture}
    \caption{Intersection of {\color{brown} $K_t^-$}, {\color{green!70!black} $L_t^+$} and {\color{red} $D_\delta^n$}} \label{fig:triangle} 
  \end{figure}
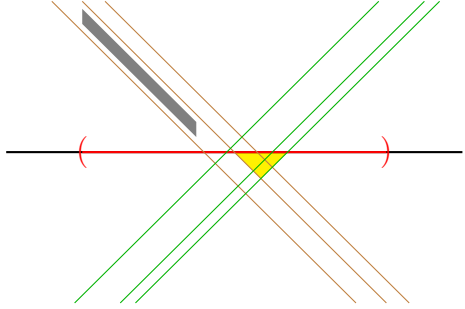
  are illustrated in Figure~\ref{fig:triangle}. It follows that we have a $C_1>0$ such that
  \begin{align} \label{eq:article:12}
    \norm{x} \leq C_1e^{-t}
  \end{align}
  for any intersection point $x$ of a pair of $K_t^-,X,L_t^+$ in this chart. In turn it follows that there is a bijection between $x\in K_t^- \cap L_t^+ \cap T^*X(\alpha)$ and pairs $(y,z)$ with $y \in K_t^- \cap X(\alpha)$ and $z\in L_t^+ \cap X(\alpha)$. In fact for each such $x$ there is a unique pseudo holomorphic triangle (one of which is illustrated in yellow in Figure~\ref{fig:triangle}) with corners in $x,y$ and $z$ and sides on $K_t^-,X,L_t^+$.

  Each such triangle has area $O(e^{-2t})$, which implies that we have a constant $C_2>0$ such that
  \begin{align*}
    \mathcal A(x)-\mathcal A(y)-\mathcal A(z) \leq C_2e^{-2t}
  \end{align*}
  for $(y,z)$ the pair associated to $x$. We also see that the gray area in Figure~\ref{fig:triangle} between two strands is bounded from below by some $C_3e^{-t}$ with $C_3>0$, and similar for the other gaps between layers. This means that any pseudo holomorphic strip from some $x$ to another pair $(y,z)$ which it is not the pair associated to $x$ must have an area larger than $C_4e^{-t}$ for some $C_4>0$. 

  It follows that if we order the generators $x$ by action and give the pairs $(y,z)$ the induced order (which may not be the same as their action order) then for large $t$ the differential is upper triangular with $\pm 1$ on the diagonals (the 1 being the count of the triangle). It follows that it is a quasi isomorphism.
\end{proof}

\section{Local systems of homology} \label{sec:local-syst-homol}

In this section we consider the generalized local systems from the two previous sections when they define standard Floer homology (i.e we take coefficients in $\F$). In particular, we prove some lemmas for the classical (possibly graded) local systems defined by their homologies. We also use this to prove the following lemma, which could have been proved directly after the definitions in Section~\ref{sec:floer-theory-with}, but the signs are easier to handle using the work we have done until this point.

\begin{lemma} \label{lem:cor:article:7}
  For any exact Lagrangian $L \subset W$ there is a weak equivalence
  \begin{align*}
    \tltr{\bu}{\FF(L,L)}{\bu} \simeq \tltr{\bu}{PL}{\bu}.
  \end{align*}
\end{lemma}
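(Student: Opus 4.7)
The plan is to compute $\tltr{\bu}{\FF(L,L)}{\bu}$ via a Morse-theoretic perturbation of $L$ inside a Weinstein tubular neighborhood, and then to apply the cosimplicial Floer technology of Section~\ref{sec:cosimplicial-floer-complexes} adapted to $L$ itself. Both sides of the asserted equivalence should correspond under the $\pi_0$-equivalence of Lemma~\ref{lem:article:15} to the regular $C_*\Omega L$-bimodule, so once a ``small'' cosimplicial model is in place the identification becomes mechanical.

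First I would fix a smooth triangulation $|S|\cong L$ together with an insulated simplicial Morse function $(f,g,\{L(\alpha)\}_{\alpha\in S})$ on $L$ via Proposition~\ref{prop:article:1}. By the Weinstein tubular neighborhood theorem there is a symplectic neighborhood $U\subset W$ of $L$ identified with a small disc cotangent bundle $D_\delta^* L$ sending $L$ to the zero section; let $\pi:U\to L$ denote the projection. Extending $\epsilon f\circ\pi$ by a cutoff to a compactly supported Hamiltonian on $W$ and flowing for time~$1$ produces a perturbed Lagrangian $L'$; for sufficiently small $\epsilon$ the intersection $L\cap L'$ coincides with $\mathrm{Crit}(f)$ and lies entirely inside $U$. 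Proposition~\ref{prop:article:invariant_under_isotopy} then gives $\tltr{\bu}{\FF(L,L)}{\bu}\simeq \tltr{\bu}{\FF(L,L')}{\bu}$.

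Second, mimicking Definition~\ref{def:article:1}, I would set
\begin{align*}
  \tlar{\bu}{\FF'(L,L')}{\bu}{\alpha}=\bigoplus_{x\in L(\alpha)\cap L'}\Sigma^{|x|}\,\tltr{\bu}{PL}{x}\otimes \tltr{x}{PL'}{\bu}.
\end{align*}
The monotonicity estimates of Appendix~\ref{sec:monotonicity}, applied inside $U\cong D_\delta^* L$ and to each insulator $L(\alpha)$, confine the relevant pseudo-holomorphic strips to $\pi^{-1}L(\alpha)$, so that the differential preserves this subspace. The proofs of Lemma~\ref{lem:article:F_are_cosimplicial} and Lemma~\ref{lem:article:2-old-prop} then apply essentially verbatim, yielding a cosimplicial bi-path local system $\tlar{\bu}{\FF'(L,L')}{\bu}{\bu}\in \tlar{L}{\GG}{L'}{S}$ whose total complex is weakly equivalent to $\tltr{\bu}{\FF(L,L')}{\bu}$.

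Third I would identify $\tlar{\bu}{\FF'(L,L')}{\bu}{\bu}$ with a canonical cosimplicial model for $\tltr{\bu}{PL}{\bu}$. Applying Lemma~\ref{lem:article:7} inside each $L(\alpha)$ to cancel all but one critical point $x_\alpha$ of $f$, and then running the triangle-counting argument from the end of the proof of Proposition~\ref{prop:article:2}, the fiber over $\alpha$ collapses up to weak equivalence to the single summand $\tltr{\bu}{PL}{x_\alpha}\otimes \tltr{x_\alpha}{PL'}{\bu}$. Composing with the concatenation map $\tltr{\bu}{PL}{x_\alpha}\otimes \tltr{x_\alpha}{PL}{\bu}\to \tltr{\bu}{PL}{\bu}$, a fiberwise quasi-isomorphism by the Serre-fibration argument in the proof of Lemma~\ref{lem:article:zig-zag}, and using the Weinstein projection to identify $L'$ with $L$, we obtain a natural weak equivalence between $\tlar{\bu}{\FF'(L,L')}{\bu}{\bu}$ and the canonical cosimplicial model of $\tltr{\bu}{PL}{\bu}$ induced by the triangulation $S$. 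Translating both models back via Corollary~\ref{cor:article:3} yields the desired equivalence $\tltr{\bu}{\FF(L,L)}{\bu}\simeq \tltr{\bu}{PL}{\bu}$. The main obstacle is arranging the identification symmetrically in the left and right $PL$-actions: the cosimplicial machinery of Section~\ref{sec:cosimplicial-floer-complexes} is set up asymmetrically, so one must either apply the construction once on each side and reconcile using Lemma~\ref{lem:skyscraper}, or work with a two-sided cosimplicial model indexed by a triangulation of $L\times L$ adapted to the diagonal.
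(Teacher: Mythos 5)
Your proposal tracks the paper's proof through the setup: both work inside a Weinstein neighborhood $D^*L$, fix an insulated simplicial Morse function on $L$, take the pushoff $\epsilon\, df$, form a cosimplicial Floer model using the insulators, and invoke the monotonicity lemmas to see that the differential is confined. The divergence, and the genuine gap, is in the final identification step. The paper's proof factors the cosimplicial model as a fiber-wise tensor of two cosimplicial local systems: one is $\bltr{\bu}{PL}{\bu}$, and the other is a rank-$1$ system whose homology is $A(L,L)$. It does \emph{not} claim this second factor is trivial a priori; that is the whole point. Instead it packages it as a rank-1 path local system $\tl{\bu}{D}$, so that $\tltr{\bu}{\FF(L,L)}{\bu}\simeq \tl{\bu}{D}\fwlp\tltr{\bu}{PL}{\bu}$, and then uses the idempotence $\tltr{\bu}{\FF(L,L)}{\bu}\simeq\tltr{\bu}{\FF(L,L)}{\bu}\und{PL}\otimes\tltr{\bu}{\FF(L,L)}{\bu}$ from Proposition~\ref{prop:article:2} (in the special case $W_L=D^*L$) to conclude $D\fwlp D\simeq D$, forcing $D$ to be trivial.

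Your step three instead proposes to collapse each fiber to a single summand by cancelling critical points via Lemma~\ref{lem:article:7} and continuation maps, then identify the result with the canonical cosimplicial model of $\tltr{\bu}{PL}{\bu}$ via concatenation and the Serre-fibration argument of Lemma~\ref{lem:article:zig-zag}. This operation is performed simplex by simplex with a different deformed Morse function $f_1$ for each $\alpha$, so the resulting identifications are not compatible with the coface maps in any obvious way; they are zig-zags of continuation maps, not a strict natural transformation. More importantly, even if one assembles these into a well-defined weak equivalence of cosimplicial local systems, this is exactly the place where a nontrivial rank-1 local system $A(L,L)$ (e.g.\ a sign or orientation system) could persist in the coface maps, and nothing in the outlined collapse argument rules that out. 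The triangle-counting argument you cite from the proof of Proposition~\ref{prop:article:2} is about $\mu_2$ being an isomorphism and does not address this twist. Your closing remark about the left/right asymmetry of the cosimplicial machinery is not the right diagnosis: the obstacle is not asymmetry but the possible nontriviality of $A(L,L)$, and the paper supplies a specific new ingredient (the idempotence coming from the fact that the zero section generates) to kill it. Without some replacement for that ingredient, the proposed route does not close.
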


We assume the same setup as in the previous two sections (i.e. we have an insulated simplicial Morse function). For any exact $K,L \subset W_X$ we consider the cosimplicial local system on $X$ which over $\alpha$ are, using Definition~\ref{def:article:1}, given by
\begin{align*}
  \tr{\F}{\bu} \und{PK_t^-} \otimes \tlar{\bu}{\FF(K_t^-, L_t^+)}{\bu}{\alpha} \und{PL_t^+} \otimes \tl{\bu}{\F} \subset CF_*((K_t^-)^{\str{\F}{\bu}},(L_t^+)^{\stl{\bu}{\F}}).
\end{align*}
Here the last inclusion uses Equation~(\ref{eq:article:14}), which also shows that its total complex gives usual Floer homology of $K$ with $L$. We define
\begin{align} \label{eq:article:9}
  A(K,L) = H_*(\tr{\F}{\bu} \und{PK_t^-} \otimes \tlar{\bu}{\FF(K_t^-, L_t^+)}{\bu}{\bu} \und{PL_t^+} \otimes \tl{\bu}{\F})
\end{align}
for large $t$ as the classical local system of (possibly graded) homology on $X$ associated to this cosimplicial local system.

Note that by definition $A(K,X)$ and $A(X,L)$ are the local systems of homology of
\begin{align*}
  \tr{\F}{\bu} \und{PK_t^-} \otimes \tl{\bu} \FF'(K_t^-,X)_\bu \qquad\textrm{and}\qquad  \bltr{\bu}{\FF'(X,L_t^+)}{\bu} \und{PL_t^+} \otimes \tl{\bu}{\F}
\end{align*}
respectively. In particular the last sentence in Proposition~\ref{prop:article:2} and Künneth theorem implies that $\mu_2'$ induce an isomorphism of local systems:
\begin{align} \label{eq:article:13}
  A(K,L) \cong A(K,X) \otimes_f A(X,L),
\end{align}
where $\otimes_f$ denotes the fiber-wise tensor of classical (possibly graded) local systems. For a (possibly graded) vector space $V$ we denote its dual by $V^\dagger$, when $V$ is graded we use the convention that $V_k^\dagger$ is the dual of $V_{-k}$. For a classical local system $A$ of (possible graded) vector spaces we let $A^\dagger$ denote the fiber-wise dual local system.

\begin{lemma} \label{lem:article:10}
  Let $\alpha_0 \in S$ be a zero simplex. There is a Poincare duality isomorphism $A(K,L)_{\alpha_0} \cong A(L,K)_{\alpha_0}^\dagger$. In particular $A(L,L)_{\alpha_0}$ is self dual. 
\end{lemma}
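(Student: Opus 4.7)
The plan is to realise the duality by an explicit strip-reversal pairing at the chain level.

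First, by Proposition~\ref{prop:article:invariant_under_isotopy} (invariance under Hamiltonian isotopy of either Lagrangian) I may compute $A(L,K)_{\alpha_0}$ using the pair $(L_t^+,K_t^-)$ instead of the default $(L_t^-,K_t^+)$; this is realised by the obvious Hamiltonian isotopy applied separately to each factor. With this choice the set of generators $K_t^-\cap L_t^+\cap r^{-1}X(\alpha_0)$ is literally the \emph{same} for both complexes. Tensoring away the path factors in Definition~\ref{def:article:1} the fibre at $\alpha_0$ of the complex defining $A(K,L)$ becomes
\begin{align*}
  C_{\alpha_0}(K,L)=\bigoplus_{x\in K_t^-\cap L_t^+\cap r^{-1}X(\alpha_0)}\Sigma^{|x|_{K,L}}\,\F\cdot x,
\end{align*}
and similarly $C_{\alpha_0}(L,K)$ has the same generators placed in the opposite-ordered degree $|x|_{L,K}$. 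By the monotonicity results of Appendix~\ref{sec:monotonicity} the differentials really do count only strips staying inside $r^{-1}X(\alpha_0)$, so $C_{\alpha_0}(K,L)$ and $C_{\alpha_0}(L,K)$ are honest local Floer complexes.

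Next, strip reversal---the involution $(s,t)\mapsto(-s,1-t)$ on the standard strip, which swaps the roles of the two boundary components---provides a canonical diffeomorphism
\begin{align*}
  \mathcal M^{K_t^-,L_t^+}(y,x)\;\cong\;\mathcal M^{L_t^+,K_t^-}(x,y)
\end{align*}
that respects the boundary stratifications of the Gromov compactifications. Using this I would define the evaluation pairing
\begin{align*}
  \langle -,-\rangle : C_{\alpha_0}(K,L)\otimes C_{\alpha_0}(L,K)\to\F,\qquad x\otimes y\mapsto \pm\delta_{x,y}
\end{align*}
on the shared basis. The bijection of moduli spaces above, combined with the signed Leibniz identity of Lemma~\ref{lem:article:fundamental chains}, translates into the chain map identity $\langle\mu_1^{K,L}x,y\rangle\pm\langle x,\mu_1^{L,K}y\rangle=0$. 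Since the pairing is diagonal with unit entries it is a perfect pairing and hence identifies $C_{\alpha_0}(K,L)$ with $C_{\alpha_0}(L,K)^\dagger$ as chain complexes. Taking homology gives $A(K,L)_{\alpha_0}\cong A(L,K)_{\alpha_0}^\dagger$, and the special case $K=L$ yields the self-duality statement.

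The main obstacle I anticipate is the sign and degree bookkeeping. The coherent orientations on $\mathcal M^{K,L}(y,x)$ and $\mathcal M^{L,K}(x,y)$ differ by a universal sign depending on the Maslov indices of $x$ and $y$, and this must be aligned with the Koszul sign implicit in the paper's convention $V^\dagger_k=V_{-k}^\ast$. The sign analyses already performed for $\mu_1$ in Equation~(\ref{equation:dmu1}) and $\mu_2$ in Equation~(\ref{equation:dmu2}) provide the template and so this should be routine, if tedious.
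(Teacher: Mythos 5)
Your strip-reversal idea is indeed the mechanism underlying the duality, but the step where you invoke Proposition~\ref{prop:article:invariant_under_isotopy} to compute $A(L,K)_{\alpha_0}$ with the swapped push-offs $(L_t^+,K_t^-)$ rather than the standard $(L_t^-,K_t^+)$ is exactly where a genuine gap appears. That proposition gives invariance of the \emph{global} Floer complex; the homology local system $A(L,K)$ lives on the cosimplicial level, and to carry the isotopy there you must show that the continuation strips stay inside $r^{-1}X(\alpha_0)$. The relevant isotopy carries both Lagrangians across the zero section (it interpolates between a positive and a negative conormal graph), so at intermediate times the two boundary conditions no longer project to opposite half-planes of $T^*[-1,1]$ near $\partial X(\alpha_0)$, and the degree/winding argument in Lemma~\ref{lem:article:monotonicity2} no longer rules out escape. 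Without a monotonicity statement for these particular continuation strips you cannot identify the two local complexes, and this is precisely the difficulty that your plan leaves unaddressed rather than routine.

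The paper circumvents this by first factoring $A(K,L)\cong A(K,X)\otimes_f A(X,L)$ via Equation~(\ref{eq:article:13}) and reducing the duality to the case where one of the two Lagrangians is the zero section. At a zero simplex $\alpha_0$ the insulator structure lets one replace the local piece of $X$ by the graph of a function $dh$ concentrated near $T^*_{|\alpha_0|}X$, identifying $A(X,L)_{\alpha_0}$ with $HF_*(F,L)$ and $A(L,X)_{\alpha_0}$ with $HF_*(L,F)$, $F$ a cotangent fiber. The duality between $HF_*(F,L)$ and $HF_*(L,F)$ is then the standard one, with the grading shift computed explicitly from the graded lift of $F$ along the two isotopies $\pm dh \rightsquigarrow F$. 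This handles both the intersection-point identification issue above and the degree bookkeeping you flagged as a ``main obstacle''—the latter is not routine at all; it is carried out carefully in the paper's proof and is where the minus sign in $|x|_{K,L}=-|x|_{L,K}$ (matching the convention $V_k^\dagger = V_{-k}^*$) is actually produced.
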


\begin{remark} \label{rem:article:3}
  Note that we will later in Section~\ref{sec:proof-main-theorem} use this to show that $A(L,L)$ is in fact trivial. Combining this with Equation~(\ref{eq:article:13}) above we see that $A(L,X)$ is in fact the dual of $A(X,L)$ as local systems (not just over a single fiber).
\end{remark}

\begin{proof}[Proof of Lemma~\ref{lem:article:10}]
  Because of Equation~(\ref{eq:article:13}) it is enough to prove that $A(X,L)_{\alpha_0}$ is the dual of $A(L,X)_{\alpha_0}$ for any exact $L\subset W_X$. 
  
  Let $U = \partial X(\alpha_0) \times [-1,1]$ be a tubular neighborhood around $\partial X(\alpha_0)$ in which $g$ is on product form and $\nabla f$ is outwards pointing. Let $h : U\cup X(\alpha_0) \to \R$ be such that $h=0$ on the part inside $X(\alpha_0)$ and such that $h\to -\infty$ at the boundary of $U$ outside of $X(\alpha_0)$ and $\nabla h$ is inwards pointing. The Lagrangian given by $dh$ is a small perturbation of the fiber $F=T_{|\alpha_0|}^*X$. Lemma~\ref{lem:article:monotonicity1} shows that
  \begin{align*}
    \br{A(X,L)}{\alpha_0} \cong HF_*(dh, L_t^+)\cong HF_*(F, L).
  \end{align*}
  Indeed, as the gradients of $f$ and $h$ point in different directions in $U$ and $h=0$ on the part inside $X(\alpha_0)$ the generators (intersection points) $X\cap L_t^+$ and $dh\cap L_t^+$ are the same. A similar argument shows that $\br{A(L,X)}{\alpha_0}\cong HF_*(L_t^-, -dh)\cong HF_*(L, F)$.

  In both cases we see that we get the Floer homology of $L$ with the fiber. We, however, get them in the opposite order so it follows that they are dual with a possible grading shift. However the grading on $F$ in $HF_*(F,L)$ is $-n/2$ and the grading on $F$ in $HF_*(L,F)$ is $n/2$, that is since the grading of $F$ is induced from the isotopy from $dh$ respectively $-dh$ to $F$ in the continuation maps above. This implies that the grading of a generator in $HF_*(F,L)$ is minus the grading of the same generator in $HF_*(L,F)$.
\end{proof}

\begin{proof}[Proof of Lemma~\ref{lem:cor:article:7}]
  We will prove the equivalent fact that $\bltr{\bu}{PL}{\bu}$ is weakly equivalent to $\bltr{\bu}{\FF(L,L)}{\bu}$. We only need to consider a small Weinstein neighborhood $W_L$ of $L$ so we can use the above and the previous sections with $X=L$ and where $W_X=W_L=D^*L$ has no handles attached. We thus pick an insulated simplicial Morse function $(f,g,\{L(\alpha)\}_{\alpha \in S})$ with $|S|\cong L$. We have $L_t^\pm = \pm \epsilon df$ and identify canonically $PL_t^\pm \cong PL$ in the following. Recall from Lemma~\ref{lem:article:2-old-prop} that $\bltr{\bu}{\FF(L,L)}{\bu}$ is equivalent to
  \begin{align*}
    \alpha \mapsto \bltr{\alpha}{\tilde \FF(L,L_t^+)}{\bu} = \smashoperator{\bigoplus_{x \in L(\alpha)\cap \epsilon df}} \Sigma^{|x|} \bltr{\alpha}{PL(\alpha)}{x} \otimes \tltr{x}{PL}{\bu}.
  \end{align*}
  We then use $\tlbr{x}{PL(\alpha)}{\alpha} \otimes \bltr{\alpha}{PL}{\bu} \xrightarrow{\simeq} \tltr{x}{PL}{\bu}$ when $x\in L(\alpha)$ (defined using concatenation as in Equation~(\ref{eq:article:8})) to expand the last factor in each summand to see that:
  \begin{align*}
    \bltr{\alpha}{\FF(L,L)}{\bu} \simeq \smashoperator{\bigoplus_{x \in L(\alpha)\cap \epsilon df}} \Sigma^{|x|}  \bltr{\alpha}{PL(\alpha)}{x} \otimes \tlbr{x}{PL(\alpha)}{\alpha} \otimes \bltr{\alpha}{PL}{\bu}.
  \end{align*}
  This is now written as a fiber wise tensor product of two cosimplicial local systems, one of which we may augment
  \begin{align*}
    \smashoperator{\bigoplus_{x \in L(\alpha)\cap \epsilon df}} \Sigma^{|x|} \bltr{\alpha}{PL(\alpha)}{x} \otimes \tlbr{x}{PL(\alpha)}{\alpha} \to
    \smashoperator{\bigoplus_{x \in L(\alpha)\cap \epsilon df}} \Sigma^{|x|} \F \subset CF_*(L^{\str{\F}{\bu}},df^{\stl{\bu}{\F}})
  \end{align*}
  which proves that this is in fact the cosimplicial local system whose homology is $A(L,L)$ (now defined on $L$). In this special case the rank of $A(L,L)$ is 1. The other tensor factor is $\bltr{\bu}{PL}{\bu}$.

  Let $\tl{\bu}{D}$ denote a path local system with homology of rank 1 equivalent to $A(L,L)$. It follows that $ \tltr{\bu}{\FF(L,L)}{\bu} \simeq \tl{\bu}{D} \fwlp \tltr{\bu}{PL}{\bu}$, where the fiber homology of the left path local system $\tl{\bu}{D}$ is rank 1. Note that, by Lemma~\ref{lem:skyscraper} we can equivalently write this as $\tltr{\bu}{PL}{\bu}$ tensor with the opposite rank 1 right path local system.

  It remains to show that $\tl{\bu}{D}$ is equivalent to the rank 1 trivial local system. Using Proposition \ref{prop:article:2} in our current special case we see that we must have
  \begin{align*}
    \tltr{\bu}{\FF(L,L)}{\bu} \simeq \tltr{\bu}{\FF(L,L)}{\bu} \und{PL} \otimes \tltr{\bu}{\FF(L,L)}{\bu}
  \end{align*}
  which is satisfied by $\tltr{\bu}{PL}{\bu}$, but not if twisted by a non-trivial rank 1 local system. Hence $\tl{\bu}{D}$ is trivial.
\end{proof}

The triviality of $\tl{\bu}{D}$ really proves that self Floer homology of $L$ is the usual \emph{homology} of $L$ (Floer's lemma).

\begin{remark}
  For general $L\subset W_X$ the above argument does not show that the homology local system $A(L,L)$ on $X$ is trivial. This we prove later.
\end{remark}

\begin{lemma} \label{lem:cor:article:6}
  For any exact Lagrangian $L \subset T^*X$ there is a spectral sequence converging to $H_*(L)$ with page two equal to
  \begin{align*}
    H_*(X; A(L,L)).
  \end{align*}
\end{lemma}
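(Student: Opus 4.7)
The plan is to apply the spectral sequence of Lemma~\ref{lem:lemmas:spectral_sequence_for_total_space} to the cosimplicial local system on $X \cong |S|$ constructed from the small Floer model of Section~\ref{sec:cosimplicial-floer-complexes} with $K=L$. Concretely, fix an insulated simplicial Morse function $(f,g,\{X(\alpha)\}_{\alpha\in S})$ and for $t \gg 0$ set
\begin{align*}
  \br{A}{\bullet} := \tr{\F}{\bu}\und{PL_t^-}\otimes \tlar{\bu}{\FF(L_t^-,L_t^+)}{\bu}{\bullet}\und{PL_t^+}\otimes \tl{\bu}{\F}.
\end{align*}
This is a cosimplicial local system in $\GG_S$: the inclusions of subcomplexes indexed by $\bullet$ are weak equivalences by Definition~\ref{def:article:1}, and tensoring fiberwise with $\tr{\F}{\bu}$ on the left and $\tl{\bu}{\F}$ on the right preserves this (the Floer bi path local system is semi-free by Lemma~\ref{lemma:article:floer-bimodule-is-semifree}, so no derived issues arise). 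By the definition of $A(L,L)$ in Equation~(\ref{eq:article:9}), the fiberwise homology of $\br{A}{\bullet}$ is precisely the classical local system $A(L,L)$ on $X$.

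Next I would invoke Lemma~\ref{lem:lemmas:spectral_sequence_for_total_space}, which yields a strongly convergent spectral sequence whose page one is $C_*^\Delta(S;A(L,L))$ and whose page two is therefore $H_*(X;A(L,L))$, converging to $H_*(\tot\br{A}{\bullet})$.

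The remaining step is to identify the abutment. Since total complex commutes with the global tensor products over $PL_t^\pm$, we have
\begin{align*}
  \tot\br{A}{\bullet} \;=\; \tr{\F}{\bu}\und{PL_t^-}\otimes \tltr{\bu}{\FF(L_t^-,L_t^+)}{\bu}\und{PL_t^+}\otimes \tl{\bu}{\F} \;\cong\; CF_*((L_t^-)^{\str{\F}{\bu}},(L_t^+)^{\stl{\bu}{\F}})
\end{align*}
by Equation~(\ref{eq:article:14}). Using continuation invariance (Proposition~\ref{prop:article:invariant_under_isotopy}) to remove the push-off, combined with Lemma~\ref{lem:cor:article:7} giving $\tltr{\bu}{\FF(L,L)}{\bu}\simeq\tltr{\bu}{PL}{\bu}$, this is quasi-isomorphic to $\tr{\F}{\bu}\und{PL}\otimes\tltr{\bu}{PL}{\bu}\und{PL}\otimes\tl{\bu}{\F}\simeq \tr{\F}{\bu}\und{PL}\otimes\tl{\bu}{\F}$, whose homology is $H_*(L)$ (this is Floer's lemma, as already remarked after the proof of Lemma~\ref{lem:cor:article:7}).

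The only non-routine ingredient is the identification of the abutment with $H_*(L)$, and this is essentially just a reshuffling of Lemma~\ref{lem:cor:article:7}; everything else is a direct application of the spectral sequence machinery of Section~\ref{sec:cosimplicial-local} to the cosimplicial Floer models of Section~\ref{sec:cosimplicial-floer-complexes}.
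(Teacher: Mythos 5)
Your proposal is correct and follows exactly the paper's route: you apply the spectral sequence of Lemma~\ref{lem:lemmas:spectral_sequence_for_total_space} to the cosimplicial local system from Equation~\eqref{eq:article:9} with $K=L$, and identify the abutment with $HF_*(L,L)\cong H_*(L)$ via Lemma~\ref{lem:cor:article:7}. The paper's proof is the same argument condensed to two lines (citing Lemma~\ref{lem:lemmas:spectral_sequence_for_total_space}, Lemma~\ref{lem:article:14}, and ``Floer's lemma''), and you have simply unpacked the citations; no substantive difference.
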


\begin{proof}
  From Lemma~\ref{lem:lemmas:spectral_sequence_for_total_space}, Lemma~\ref{lem:article:14} and Floers lemma the spectral sequence converges to
  \begin{align*}
    H_*(\tot(\tr{\F}{\bu} \und{PL_t^-} \otimes \tlar{\bu}{\FF(L_t^-, L_t^+)}{\bu}{\bu} \und{PL_t^+} \otimes \tl{\bu}{\F}))\cong HF_*(L,L)\cong H_*(L).
  \end{align*}
\end{proof}

\section{Proof of main theorem} \label{sec:proof-main-theorem}

For this section we assume that $L\subset W_X$ is closed, connected and exact. For any Lagrangian $K \subset W_X$ we let $i_K : K \to W_X$ denote the inclusion. Also recall that we have a retraction $r:W_X \to X$. We denote the composition
\begin{align*}
  p = r\circ i_L : L \to X.
\end{align*}

\begin{proposition} \label{prop:article:3}
  Assume that the Floer complexes $\tltr{\bu}{\FF(L,X)}{\bu}$ and $\tltr{\bu}{\FF(X,L)}{\bu}$ are defined. Then $p^* :\tl{X}{\GG} \to \tl{L}{\GG}$ is $\pi_0$-surjective.
\end{proposition}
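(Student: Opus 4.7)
The plan is to combine Proposition~\ref{prop:article:2} with Floer's lemma (Lemma~\ref{lem:cor:article:7}) and the projection formula (Corollary~\ref{cor:projection_formula}) to show that the Floer functor $G := \tltr{\bu}{\FF(L,X)}{\bu}\und{PX}\otimes(-):\tl{X}{\GG}\to \tl{L}{\GG}$ differs from $p^*$ only by $\fwlp$-tensoring with the pullback of a classical rank-one local system on $X$. Since $G$ is already a $\pi_0$-equivalence, $\pi_0$-surjectivity of $p^*$ follows.

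Concretely, I would first set $H := \tltr{\bu}{\FF(X,L)}{\bu}\und{PL}\otimes(-):\tl{L}{\GG}\to \tl{X}{\GG}$ and observe, using Proposition~\ref{prop:article:2} and Lemma~\ref{lem:cor:article:7}, that
\begin{align*}
\tltr{\bu}{\FF(L,X)}{\bu}\und{PX}\otimes \tltr{\bu}{\FF(X,L)}{\bu} \simeq \tltr{\bu}{\FF(L,L)}{\bu}\simeq \tltr{\bu}{PL}{\bu}
\end{align*}
in $\tltr{L}{\GG}{L}$, so that tensoring with any $\tl{\bu}{D}\in\tl{L}{\GG}$ gives $\tl{\bu}{D}\simeq G(H(\tl{\bu}{D}))$ and in particular $G$ and $H$ are inverse $\pi_0$-equivalences. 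Applying Corollary~\ref{cor:projection_formula} with the extension of a local system on $X$ to $W_X$ along the retraction $r$, and writing $Q := G(\tl{\bu}{\F})\in\tl{L}{\GG}$, I would get
\begin{align*}
G(\tl{\bu}{E}) \simeq Q\fwlp p^*\tl{\bu}{E}
\end{align*}
for every $\tl{\bu}{E}\in\tl{X}{\GG}$, and hence $\tl{\bu}{D}\simeq Q\fwlp p^*H(\tl{\bu}{D})$. Specialising to $\tl{\bu}{D}=\tl{\bu}{\F}$ yields the key identity $\tl{\bu}{\F}\simeq Q\fwlp p^*H(\tl{\bu}{\F})$ in $\tl{L}{\GG}$.

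Next I would argue that $H(\tl{\bu}{\F})$ is a classical rank-one local system on $X$. At any $l\in L$ the fibers of the fiber-wise tensor are ordinary tensors of chain complexes over $\F$, so $Q_l\otimes H(\tl{\bu}{\F})_{p(l)}\simeq \F$, and Künneth over the field $\F$ forces both $H_*(Q_l)$ and $H_*(H(\tl{\bu}{\F})_{p(l)})$ to be one-dimensional, concentrated in complementary degrees in the graded case. By parallel transport the degrees are globally constant on $X$, so Corollary~\ref{cor:article:6} gives a weak equivalence $H(\tl{\bu}{\F})\simeq L_X$ in $\tl{X}{\GG}$ with $L_X$ a classical rank-one local system (an analogous statement works in the ungraded case, where a rank-one $C_*\Omega X$-module is classified by its character).

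Since $L_X$ is $\fwlp$-invertible with inverse $L_X^\dagger$, running Corollary~\ref{cor:projection_formula} backwards gives
\begin{align*}
Q \simeq G(\tl{\bu}{\F})\simeq G(L_X\fwlp L_X^\dagger)\simeq G(L_X)\fwlp p^*L_X^\dagger \simeq G(H(\tl{\bu}{\F}))\fwlp p^*L_X^\dagger\simeq p^*L_X^\dagger,
\end{align*}
so $Q$ is itself a pullback. Consequently
\begin{align*}
\tl{\bu}{D}\simeq Q\fwlp p^*H(\tl{\bu}{D})\simeq p^*\bigl(L_X^\dagger\fwlp H(\tl{\bu}{D})\bigr),
\end{align*}
exhibiting $\tl{\bu}{D}$ as a pullback. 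The main technical obstacle is the rank-one reduction: one must confirm the single-degree hypothesis of Corollary~\ref{cor:article:6} (which uses connectedness of $X$ and constancy of homology rank under parallel transport) and handle the ungraded case, where ``single degree'' must be replaced by a classifying-character formulation.
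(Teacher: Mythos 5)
Your argument is correct in the graded case but genuinely diverges from the paper's and, as written, has a gap for the ungraded case. The paper's proof is purely formal: once it is known that $G\circ H$ is the identity on weak equivalence classes, $G$ is $\pi_0$-surjective; by the projection formula $G = T_{G(\slbl{\bu}{\F})}\circ p^*$ where $T_A := A \fwlp (-)$; since the outer factor of a $\pi_0$-surjection is a $\pi_0$-surjection, $T_{G(\slbl{\bu}{\F})}$ hits $\tl{\bu}{\F}$, i.e.\ there is $\tl{\bu}{D}$ with $G(\tl{\bu}{\F}) \fwlp \tl{\bu}{D} \simeq \tl{\bu}{\F}$; tensoring with $\tl{\bu}{D}$ then shows $T_{G(\slbl{\bu}{\F})}$ is a $\pi_0$-equivalence, and surjectivity of $p^*$ follows. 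No identification of $G(\tl{\bu}{\F})$ is needed at all.

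You instead reach the key identity $\tl{\bu}{\F} \simeq G(\tl{\bu}{\F}) \fwlp p^* H(\tl{\bu}{\F})$ and then carry out the extra work of proving that $H(\tl{\bu}{\F})$ is a classical rank-one local system $L_X$ and that $G(\tl{\bu}{\F}) \simeq p^* L_X^\dagger$. This buys you more refined information (you realise every $\tl{\bu}{D}$ explicitly as $p^*(L_X^\dagger \fwlp H(\tl{\bu}{D}))$), but it relies on Corollary~\ref{cor:article:6}, which the paper states and proves only in the graded case: the truncation $\ker d_0 \subset M$ is a submodule only because of the grading, and that argument has no ungraded analogue. The proposition, however, must first be invoked with $\F=\F_2$ in the ungraded case, before any grading on $L$ has been produced (this is exactly how the paper uses it to prove $\pi_1$-injectivity). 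Your parenthetical appeal to a ``classifying-character formulation'' for ungraded rank-one $C_*\Omega X$-modules points at a true statement, but it is not in the paper and would need its own proof. You can close this gap and shorten the argument by noticing that the identity $\tl{\bu}{\F} \simeq G(\tl{\bu}{\F}) \fwlp p^* H(\tl{\bu}{\F})$ already exhibits an $\fwlp$-inverse to $G(\tl{\bu}{\F})$, so $T_{G(\slbl{\bu}{\F})}$ is a $\pi_0$-equivalence without knowing what $G(\tl{\bu}{\F})$ is.

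One minor overstatement: from $G\circ H \simeq \id$ you may conclude that $G$ is $\pi_0$-surjective and $H$ is $\pi_0$-injective, but not that $G$ and $H$ are inverse $\pi_0$-equivalences --- the other composite $H\circ G$ would require $L$ to generate, which is essentially the content of the main theorem and not available at this point. Your subsequent argument only uses $G\circ H\simeq \id$, so this does not propagate.

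\newcommand{\slbl}[2]{{\llef{#1}\mathcal{#2}}}
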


\begin{proof}
   Consider the functor $\tr{\GG}{L} \to \tr{\GG}{L}$ given by
  \begin{align*}
    \tl{\bu}{D} \mapsto \tltr{\bu}{\FF(L,X)}{\bu}\und{PX}\otimes \tltr{\bu}{\FF(X,L)}{\bu}\und{PL}\otimes \tl{\bu}{D},
  \end{align*}
  this is a $\pi_0$-equivalence since it is the identity on weak equivalence classes. Indeed
  \begin{align*}
    \tltr{\bu}{\FF(L,X)}{\bu}\und{PX}\otimes \tltr{\bu}{\FF(X,L)}{\bu}\und{PL}\otimes \tl{\bu}{D}
    &\simeq \tltr{\bu}{\FF(L,X)}{\bu}\und{PX}\otimes \tltr{\bu}{\FF(X,L)}{\bu}\und{PL}\otimes \tl \bu{\FF D}\\
    &\simeq\tltr{\bu}{\FF(L,L)}{\bu}\und{PL}\otimes \tl \bu{\FF D}\simeq \tl \bu{\FF D}\simeq \tl \bu D
  \end{align*}
  where we in the first equivalence use that the functor above is homotopic by Lemma~\ref{lemma:article:floer-bimodule-is-semifree} and in the second equivalence use Proposition~\ref{prop:article:2} and Lemma~\ref{lem:tensor_lemma} and in the third equivalence use Lemma~\ref{lem:cor:article:7} and that global tensor with $\tltr{\bu}{PL}{\bu}$ is the identity functor.

 It follows that the functor
  \begin{align*}
    F=\tltr{\bu}{\FF(L,X)}{\bu}\und{PX}\otimes -
  \end{align*}
  is $\pi_0$-surjective. The retraction condition implies that $i_X^* \circ r^*$ is the identity functor on $\tl X\GG$. So, by Corollary~\ref{cor:projection_formula} we can write
  \begin{align*}
    F(\tl{\bu}{E}) \simeq & F(\tl\bu \F \fwlp i_X^*(r^*(\tl{\bu}{E}))) \simeq F(\tl{\bu}{\F}) \fwlp  i_L^*(r^*(\tl{\bu}{E})) 
  \end{align*}
  with $\tl{\bu}{E}$ in $\tl{X}{\GG}$. It follows that the functor given by the pullback $p^* = i_L^* \circ r^*$ composed with fiber wise tensor with $F(\tl{\bu}{\F})$ is also $\pi_0$-surjective.

  In turn it follows that this last tensoring with $F(\tl{\bu}{\F})$ must also be $\pi_0$-surjective, which means that there is a $\tl{\bu}{D} \in \tl{L}{\GG}$ such that $F(\tl{\bu}{\F}) \fwlp \tl{\bu}{D} \simeq \tl{\bu}{\F}$. In particular tensoring with $F(\tl{\bu}{\F})$ is in fact a $\pi_0$-equivalence and the claim follows.
\end{proof}

\begin{corollary}
  The map $p_*:\pi_1(L)\to  \pi_1(X)$ is injective.
\end{corollary}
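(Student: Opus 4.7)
The plan is to proceed by contradiction. Suppose there exists a nontrivial element $g \in \ker(p_*) \subset \pi_1(L)$. I would produce a classical local system on $L$ on which $g$ acts nontrivially and derive a contradiction from Proposition~\ref{prop:article:3}.

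First, I would take the regular representation $\rho : \pi_1(L) \to GL(\F[\pi_1(L)])$, which is faithful (since $\rho(g)(e) = g \neq e$), and let $\tl{\bu}{V}$ be the associated classical local system on $L$, regarded as a path local system concentrated in homological degree $0$ via the embedding of classical local systems into $\tl{L}{\GG}$ described in Section~\ref{sec:path-local-systems}. By Proposition~\ref{prop:article:3} there exists $\tl{\bu}{E} \in \tl{X}{\GG}$ together with a weak equivalence $p^* \tl{\bu}{E} \simeq \tl{\bu}{V}$.

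Next, taking fiber-wise homology --- which is compatible with pullback and carries weak equivalences to isomorphisms of classical graded local systems --- I would obtain an isomorphism on $L$
\begin{align*}
  p^*\bigl(H_*(\tl{\bu}{E})\bigr) \;=\; H_*(p^* \tl{\bu}{E}) \;\cong\; H_*(\tl{\bu}{V}) \;=\; V
\end{align*}
concentrated in degree $0$. Consequently, the monodromy representation of the local system $V$ on $L$ factors as $\pi_1(L) \xrightarrow{p_*} \pi_1(X) \to GL(V)$, so $g \in \ker(p_*)$ must act as the identity on $V$. This contradicts $\rho(g) \neq \mathrm{id}$.

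The only real obstacle is the bookkeeping needed to verify that the embedding of classical local systems into path local systems is compatible with pullback and with passage to fiber-wise homology in the supported degree --- so that the weak equivalence $p^* \tl{\bu}{E} \simeq \tl{\bu}{V}$ descends to an honest isomorphism of classical local systems giving the desired factorization through $p_*$. Both compatibilities are essentially tautological from the definitions, as pullback is a fiber-wise operation and $H_0$ of a classical local system viewed as a path local system tautologically recovers the original representation.
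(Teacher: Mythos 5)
Your argument is essentially the same as the paper's: use $\pi_0$-surjectivity of $p^*$ from Proposition~\ref{prop:article:3} on the regular representation $\F[\pi_1(L)]$ (faithful), pass to fiber homology, and observe the monodromy must factor through $p_*$, forcing $\ker(p_*)$ to act trivially --- a contradiction. The logic is sound.

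One point of care that the paper makes and your write-up misses: at this stage of the argument the Maslov class of $L$ is \emph{not} yet known to vanish, so the Floer complexes required by Proposition~\ref{prop:article:3} cannot be graded. One is forced to work in the ungraded case over $\F=\F_2$. Your phrasing ``concentrated in homological degree $0$'' and ``isomorphisms of classical graded local systems'' presupposes a grading that is not available here; in the ungraded setting one simply passes to the homology of the fiber as an ungraded $\F_2$-vector space, and the conclusion that the monodromy factors through $p_*$ goes through verbatim since $H_*$ of an $\Omega L$-module is acted on through $H_0(C_*\Omega L) = \F_2[\pi_1 L] \to \F_2[\pi_1 X]$. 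So: correct structure, but replace $\F$ by $\F_2$, work ungraded, and drop the ``degree $0$'' language.
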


\begin{proof}
  Using the above proposition with $\F=\F_2$ in the ungraded cases (where all Floer complexes are defined) we get that pullback using $p=r \circ i_L$ is $\pi_0$-surjective.

  Assume we have a non-trivial kernel $\ker(p_*)\subset \pi_1(L,x)\xrightarrow{p_*}\pi_1(X,p(x))$. Any element of this kernel would act trivially on the homology of the fiber over $x$ of any pullback $p^*(\tl{\bu}{D})$. However the path local system corresponding to the module $\F_2[\pi_1(L)]$ shows that not all local systems have this property, which yields a contradiction with the $\pi_0$-surjectivity from the proposition above.
\end{proof}

\begin{theorem}
  The Maslov class of $L$ vanishes.
\end{theorem}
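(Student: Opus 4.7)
The plan is to promote Proposition~\ref{prop:article:3} to a graded setting and use the resulting $\pi_0$-surjectivity to force the Maslov periodicity of $L$ to be trivial. Let $\mu_L:\pi_1(L)\to\Z$ be the Maslov homomorphism induced by the chosen extension of the complex volume form, and let $N\geq 0$ be the positive generator of its image; the goal is $N=0$.

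First I would verify that the zero section $X\subset W_X$ has vanishing Maslov class $\mu_X=0$ for any extension. This reduces to the standard computation on $D^*X$ (where a real orthonormal tangent frame to $X$ has constant unit phase): since $W_X\simeq X\vee B$ and the sub-critical handles are attached in the contractible patch $A\subset X$, the restriction to $X$ of any extended non-vanishing complex volume form on $W_X$ is homotopic through such forms to the standard one, so it defines the same Maslov class on $X$, namely zero.

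Next I would rerun Sections~\ref{sec:floer-theory-with}--\ref{sec:proof-main-theorem} with a $\Z/N$-grading in place of the $\Z$-grading. All intersection indices and the sign manipulations in Section~\ref{sec:floer-theory-with} go through unchanged modulo $N$, and the essential results---Proposition~\ref{prop:article:2}, Lemma~\ref{lem:cor:article:7} and Proposition~\ref{prop:article:3}---remain valid in the $\Z/N$-graded setting. This produces a $\pi_0$-surjection from $\Z$-graded path local systems on $X$ (legitimate because $\mu_X=0$) to $\Z/N$-graded path local systems on $L$, implemented (after tensoring with the Floer local system $F(\tl{\bu}{\F})$) by the functor $F(-)=\tltr{\bu}{\FF(L,X)}{\bu}\und{PX}\otimes -$.

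Finally I would extract $N=0$ by tracking the shift functor $\Sigma$. On $\tl{X}{\GG}$ the suspension has infinite order in the $\Z$-graded setting because the graded local systems $\Sigma^k\F$ are pairwise inequivalent for distinct $k\in\Z$ (they have rank-$1$ fiber in different degrees), while the corresponding shift on $\tl{L}{\GG}$ with $\Z/N$-grading satisfies $\Sigma^N\simeq\mathrm{id}$. The graded $\pi_0$-equivalence extracted from Proposition~\ref{prop:article:3} intertwines the two shifts, so compatibility forces $N=0$, i.e.\ $\mu_L=0$. The main obstacle will be this last step: one must carefully define the $\Z/N$-graded versions of all the relevant categories and functors, verify that the equivalence induced by $F$ intertwines the two suspensions (an exercise in chasing the degree conventions of Section~\ref{sec:floer-theory-with}), and rule out the possibility that the equivalence identifies the $\Z$-shift on $X$ with a trivial shift on $L$ rather than with the genuine suspension.
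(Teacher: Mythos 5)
Your approach is genuinely different from the paper's, and it has a gap that I do not think can be closed within the framework you set up. The issue is in the middle step, where you say the machinery ``produces a $\pi_0$-surjection from $\Z$-graded path local systems on $X$ (legitimate because $\mu_X=0$) to $\Z/N$-graded path local systems on $L$.'' That asymmetry is not available. Even though $\mu_X=0$, the Floer bimodule $\tltr{\bu}{\FF(L,X)}{\bu}$ is built out of intersection points $L\cap X$, and the Conley--Zehnder index of such a point is only well-defined modulo the image of the \emph{pair} $(\mu_L,\mu_X)$, i.e.\ modulo $N$. So the Morita functor $F(-)=\tltr{\bu}{\FF(L,X)}{\bu}\und{PX}\otimes(-)$ is only defined as a functor between $\Z/N$-graded categories on \emph{both} sides, and Proposition~\ref{prop:article:3} in this regime says that $p^*:\tl{X}{\GG}_{\Z/N}\to\tl{L}{\GG}_{\Z/N}$ is $\pi_0$-surjective. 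In that setting $\Sigma^N\simeq\mathrm{id}$ on both categories, so the comparison of shift orders produces no contradiction: the ``infinite order'' of $\Sigma$ on $\tl{X}{\GG}$ only holds in the honestly $\Z$-graded category, which is not the category your Floer functor acts on. Your closing sentence identifies exactly this danger---that the equivalence may identify the shift on $X$ with a trivial shift on $L$---but there is no mechanism in your argument to rule it out, and I do not believe one exists without introducing a new geometric input.

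The paper's actual proof supplies that missing input by passing to the cover $\tilde W_X\to W_X$ associated to $\ker(r_*:\pi_1(W_X)\to\pi_1(X))$, which contains $D^*\tilde X$ for $\tilde X$ the universal cover. The earlier injectivity corollary ($p_*:\pi_1(L)\to\pi_1(X)$ injective) forces the preimage $\tilde L$ to have simply connected components, and simply connected Lagrangians have vanishing Maslov class for free, so the Floer complexes of $\tilde L$ with itself on the cover are honestly $\Z$-graded. One then obtains a classical $\Z$-graded local system $A(\tilde L,\tilde L)$ on $\tilde X$ which is the pullback of $A(L,L)$, is non-zero by Lemma~\ref{lem:cor:article:6}, and has finite-dimensional fibers. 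A loop in $L$ with non-zero Maslov index lifts to a path in $\tilde L$ with distinct endpoints, and pulling back gradings along that path shows the two fibers of $A(\tilde L,\tilde L)$ at these endpoints are isomorphic \emph{after} a non-zero grading shift; but they are also unshifted isomorphic, and a non-zero finite-dimensional graded vector space cannot be isomorphic to a non-trivial shift of itself. This covering trick is precisely what replaces the shift-functor bookkeeping in your outline with an honest $\Z$-graded object to which Poincar\'e duality and finiteness can be applied. If you want to salvage your plan, the natural repair is exactly this: lift to the cover so that some Lagrangian in play is authentically $\Z$-graded, rather than try to compare a $\Z$-grading against a $\Z/N$-grading through a functor that only ever sees things modulo $N$.
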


\begin{proof}
  Consider the covering space $\tilde W_X \to W_X$ associated to the kernel of the retraction $\pi_1(W_X)\xrightarrow{r_*}\pi_1(X)$. This is given by considering $D^*\tilde X$ where $\tilde X$ is the universal cover of $X$ with identical (translated) handle attachments along each connected component of the pullback $\tilde A \subset D^*\tilde X$ of $A \subset D^*X$. The retraction $r : W_X \to X$ lifts to a retraction $\tilde r : \tilde W_X \to \tilde X$ and $L\subset W_X$ gives by pullback $\tilde L\subset\tilde W_X$. By the corollary above $\tilde L$ has simply connected components. Indeed, if there was a non-trivial loop in $\tilde L$ this would be a lift of a non-trivial loop in $L$, which would map by $p_*$ to a non-trivial loop in $X$. However, that means that $\tilde r$ maps it to a path with different endpoints in $\tilde X$ - a contradiction.
  
  Let $(f,g,\{X(\alpha)\}_{\alpha \in S})$ be an insulated simplicial Morse function satisfying everything assumed in Section~\ref{sec:cosimplicial-floer-complexes}. Let $\tilde S$ denote the deck transformation invariant semi simplicial structure on $\tilde X$ lifting $S$. The insulated Morse function pulls back to such $(\tilde f,\tilde g,\{\tilde X(\tilde \alpha)\}_{\tilde \alpha \in \tilde S})$ which is an obvious generalization of an insulated simplicial Morse function on $\tilde X$ (which is a possibly open manifold).

  We use this to define a cosimplicial local system on the cover $\tilde X$. Indeed, we define $\tlar{\bu}{\FF(\tilde L_t^-,\tilde L_t^+)}{\bu}{\bu}$ for large $t$ similar to Definition~\ref{def:article:1}. That is, for $\tilde \alpha \in \tilde S$ we define
  \begin{align*}
    \tlar{\bu}{\FF(\tilde L_t^-,\tilde L_t^+)}{\bu}{\alpha}\ =\ \smashoperator{\bigoplus_{\tilde x \in \tilde L_t^-\cap \tilde L_t^+  \cap \tilde r^{-1}\tilde X(\tilde \alpha)}}  \Sigma^{|x|}\,\tl{\bu} P(\tilde L_t^-)^x \otimes \tltr{x}{P(\tilde L_t^+)}{\bu}
  \end{align*}
  which is again well-defined as a cosimplicial local system on $\tilde S$ by Lemma~\ref{lem:article:monotonicity2}. Note in particular that these Floer complexes are well-defined as any pseudo holomorphic curve in $\tilde W_X$ is a lift of such in $W_X$.

  Since $\tilde L$ is component wise simply connected these covering Floer complexes can be defined in a graded category. We may tensor this with $\tr{\F}{\bu}$ and $\tl{\bu}{\F}$ from the sides to get cosimplicial local versions of the standard Floer complex (on the cover). We define $A(\tilde L,\tilde L)$ as the classical graded local system on $\tilde X$ defined by the local homology of this cosimplicial local system representing the standard Floer complex. This is similar to the definition of $A(L,L)$ as a local system on $X$ from Equation~(\ref{eq:article:9}). In fact, we claim that $A(\tilde L,\tilde L)$ is the pullback of $A(L,L)$ to the covering space (where we forget the grading of $A(\tilde L, \tilde L)$ if $A(L,L)$ is not graded). To see this we again appeal to Lemma~\ref{lem:article:monotonicity2}. Indeed, this shows (used up on $\tilde X$) that there are no pseudo holomorphic strips that go from one lift $\tilde X(\tilde \alpha)$ to another lift $\tilde X(\tilde \alpha')$ of an insulator $X(\alpha)$ in $X$, which implies that all the strips in the differential over $\alpha$ lifts bijectively to strips in the differential over $\tilde \alpha$. The claim follows, as the generators are in bijection as well.

  By Lemma~\ref{lem:cor:article:6} the local system $A(L,L)$ is non-zero, hence $A(\tilde L,\tilde L)$ is non-zero. Assuming then that there is a loop in $L$ with non-trivial Maslov class we lift such a loop based in a cotangent fiber over a zero simplex $|\alpha_0|$ to a path in $\tilde L$ which by the above starts and ends in two different cotangent fibers (over two different lifts $|\tilde \alpha_0|$ and $|\tilde \alpha_o'|$). As the Maslov index of this path is non-zero it follows that the two graded vector spaces $A(\tilde L,\tilde L)_{\tilde \alpha_0}$ and $A(\tilde L,\tilde L)_{\tilde \alpha_0'}$ are isomorphic after the shift by the Maslov index of the loop. However, as they are of course unshifted isomorphic, finite dimensional and non-zero that is a contradiction.
\end{proof}

\begin{theorem}
  The composition $p : L \subset W_X \xrightarrow{r} X$ is a homology equivalence.
\end{theorem}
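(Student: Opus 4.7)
The strategy is to identify the local system $A(L,L)$ on $X$ with the trivial rank-one local system in degree zero, whereupon the spectral sequence of Lemma~\ref{lem:cor:article:6} degenerates at $E^2$ and yields $H_*(L)\cong H_*(X)$.

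I first show $A(X,L)$ is a rank-one classical graded local system on $X$ in a single degree. The functor $G = \tltr{\bu}{\FF(X,L)}{\bu}\und{PL}\otimes - : \tl{L}{\GG} \to \tl{X}{\GG}$ is a $\pi_0$-inverse to $F = \tltr{\bu}{\FF(L,X)}{\bu}\und{PX}\otimes -$ by Proposition~\ref{prop:article:2} combined with Lemma~\ref{lem:cor:article:7}. Moreover, the proof of Proposition~\ref{prop:article:3} shows that $p^* : \tl{X}{\GG}\to \tl{L}{\GG}$ is itself a $\pi_0$-equivalence (since $F \simeq F(\tl{\bu}{\F})\fwlp p^*$ factors as two $\pi_0$-equivalences), so $G\circ p^*$ is a $\pi_0$-equivalence of $\tl{X}{\GG}$. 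Corollary~\ref{cor:projection_formula} applied with $\tl{\bu}{D}=\tl{\bu}{\F}$ and $\tl{\bu}{E}=r^*\tl{\bu}{F}$, together with the identity $i_X^*\circ r^* = \id$, yields the factorization
\[
(G\circ p^*)(\tl{\bu}{F}) \ \simeq\ G(\tl{\bu}{\F})\fwlp \tl{\bu}{F},
\]
so fiber-wise tensor with $G(\tl{\bu}{\F})$ is a $\pi_0$-equivalence of $\tl{X}{\GG}$. Hence $G(\tl{\bu}{\F})$ is fiber-wise tensor-invertible, and its fiber-wise homology is a rank-one classical graded local system concentrated in a single degree $d$ (well-defined by the vanishing of the Maslov class and connectedness of $X$). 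Passing to the cosimplicial form via Section~\ref{sec:equiv-glps-gcss} and the small cosimplicial model from Lemma~\ref{lem:article:2-old-prop}, Lemma~\ref{lem:article:5} identifies the fiber of $H_*(G(\tl{\bu}{\F}))$ at a zero-cell $\alpha_0$ with $A(X,L)_{\alpha_0}$. Thus $A(X,L)$ is a rank-one graded local system in degree $d$.

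By Lemma~\ref{lem:article:10} and Equation~(\ref{eq:article:13}),
\[
A(L,L) \ \cong\ A(L,X)\otimes_f A(X,L) \ \cong\ A(X,L)^\dagger \otimes_f A(X,L),
\]
which is fiber-wise the endomorphism local system of a rank-one graded line, hence canonically trivial and concentrated in degree zero. So $A(L,L)\simeq \tl{\bu}{\F}$ as a graded local system on $X$. The spectral sequence of Lemma~\ref{lem:cor:article:6} therefore has $E^2$-page $E^2_{p,q}=H_p(X)$ for $q=0$ and zero otherwise, admits no non-trivial differentials, and degenerates at $E^2$, yielding $H_*(L)\cong H_*(X)$.

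Finally, the cosimplicial filtration producing this spectral sequence is defined via the retraction $r : W_X\to X$, which restricts on $L$ to $p$; tracing through the construction identifies the resulting isomorphism with the map induced by $p$. The main obstacle is the first step: verifying the projection-formula factorization via Corollary~\ref{cor:projection_formula}, and carefully identifying the fiber of $H_*(G(\tl{\bu}{\F}))$ at a zero-cell with $A(X,L)_{\alpha_0}$ using the cosimplicial-path equivalence and the small model lemma.
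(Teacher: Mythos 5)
There are two linked gaps that make the first step of your argument circular, and since everything downstream depends on it the proposal does not go through.

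First, the proof of Proposition~\ref{prop:article:3} only establishes that $p^*:\tl{X}{\GG}\to\tl{L}{\GG}$ is $\pi_0$-surjective, not that it is a $\pi_0$-equivalence. What is actually shown there is that fiber-wise tensoring with $F(\tl{\bu}{\F})$ on $\tl{L}{\GG}$ is a $\pi_0$-equivalence (because it has a weak tensor inverse), and that $F\simeq(F(\tl{\bu}{\F})\fwlp -)\circ p^*$ is $\pi_0$-surjective. To upgrade $p^*$ to a $\pi_0$-equivalence you would need $F$ to be $\pi_0$-injective, and the paper never establishes this: the Morita composite $F\circ G\simeq\id$ (coming from Proposition~\ref{prop:article:2} together with Lemma~\ref{lem:cor:article:7}) shows $F$ is $\pi_0$-surjective and $G$ is $\pi_0$-injective, but the opposite composite $G\circ F\simeq\id$, i.e. $\tltr{\bu}{\FF(X,L)}{\bu}\und{PL}\otimes\tltr{\bu}{\FF(L,X)}{\bu}\simeq\tltr{\bu}{PX}{\bu}$, is not proved anywhere. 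It is essentially the statement that $L$ generates, which is morally equivalent to what you are trying to prove. So $G$ is not known to be a $\pi_0$-inverse of $F$, and $G\circ p^*$ is not known to be a $\pi_0$-equivalence; the claimed tensor-invertibility of $G(\tl{\bu}{\F})$ and hence the rank-one conclusion for $A(X,L)$ is unsupported.

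Second, even granting rank-one, the step $A(L,L)\cong A(X,L)^\dagger\otimes_f A(X,L)$ misuses Lemma~\ref{lem:article:10}, which is a \emph{single-fiber} duality $A(L,X)_{\alpha_0}\cong A(X,L)_{\alpha_0}^\dagger$, not an isomorphism of local systems on $X$. Remark~\ref{rem:article:3} states explicitly that promoting this to a global duality between $A(L,X)$ and $A(X,L)^\dagger$ requires \emph{first} knowing $A(L,L)$ is trivial — using it to prove triviality of $A(L,L)$ is circular. In fact a rank-one $A(L,L)$ could a priori be a nontrivial monodromy twist.

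The paper instead closes these gaps by a quantitative argument that is structurally quite different: it uses the already-proved vanishing of the Maslov class to extract a relative orientation and a degree $d$ of $p$, passes to the universal cover $\tilde W_X$ to show $A(\tilde L,\tilde L)$ is non-negatively graded and (by the single-fiber self-duality) concentrated in degree $0$ of dimension $k$, and then derives the arithmetic constraints $k\mid d$ (deck transformations identify components) and $k=d^2$ (Euler characteristic of the fiber of $A(X,L)$ via Equation~(\ref{eq:article:13})), forcing $d=\pm1$ and $k=1$. The rank-one conclusion you are reaching for is the \emph{output} of this counting argument, not something available beforehand. Once $k=1$, the triviality of $A(L,L)$ is either automatic over $\F_2$ or follows because otherwise $H_0(L)$ would vanish, and the spectral sequence of Lemma~\ref{lem:cor:article:6} then collapses. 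If you want to prove this theorem, you should reproduce that degree/Euler-characteristic step; the purely formal Morita and projection-formula argument you propose cannot substitute for it.
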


\begin{proof}
  By the above, everything is now assumed to be graded, but initially defined over $\F_2$. The following argument is first done using the ground field $\F=\F_2$. However, it then shows that $H_*(L) \to H_*(X)$ is an $\F_2$ homology equivalence, which implies that they preserve the Wu classes, and hence preserve the Stiefel-Whitney class (as they are determined by the Wu classes by Wu's formulas). This implies that we can then give $L$ a relative pin structure and run the argument again for any $\F$.

  The vanishing Maslov class implies that $L \to X$ is relatively oriented. Indeed, the mod 2 reduction of the Maslov class is the relative first Stiefel-Whitney class. Fixing an orientation on $\tilde X$ and such a relative orientation we may define the degree of $L\to X$, which we denote $d\in \Z$. Indeed, the relative orientation (pulled back from $L \to X$) provides $\tilde L$ with a preferred orientation so this can be defined on the cover (in fact on the lift to any oriented cover of $X$).

  The argument in the proof above on the covering $\tilde W_X$ provides a cosimplicial local system on $\tilde X$ whose total complex is the standard Floer complex of $\tilde L$ with itself. It follows from Lemma~\ref{lem:article:14} (as in Corollary~\ref{cor:article:6}) that there is a spectral sequence converging to $H_*(\tilde L)$ with page 2 given by $H_*(\tilde X; A(\tilde L,\tilde L))$. Since $\tilde X$ is simply connected it follows that $A(\tilde L,\tilde L)$ is non-negatively graded (as $H_*(\tilde L)$ cannot have homology in negative degree). As we saw in the proof above it is the pullback of $A(L,L)$ which by Lemma~\ref{lem:article:10} has self-dual fibers, so it follows that both $A(L,L)$ and $A(\tilde L,\tilde L)$ are supported in degree $0$. As their mutual fibers are finite dimensional these fibers are isomorphic to $\F^k$ for some $k\geq 1$.

  The same spectral sequence on the cover then shows that the $H_0(\tilde L) \cong \F^k$. As each pair of connected components of $\tilde L$ are identified under some deck transformation of $\tilde W_X \to W_X$, they have the same degree and $k$ therefore divides $d$. On the other hand, considering how the grading in the generators of the Floer complex are defined we see that the fiber of $A(L,X)$ (and its dual, the fiber of $A(X,L)$) have Euler characteristic equal to $\pm d$, which means by Equation~(\ref{eq:article:13}) used in a single fiber that $k=d^2$. We conclude that $d^2$ divides $d$ and thus $d=\pm1$ and $k=1$ (as $k$ was not $0$). It follows directly that $\tilde L$ is connected and that $\pi_1(L) \to \pi_1(X)$ is surjective hence an isomorphism. We may change the chosen relative orientation to make $d=1$.

  As the degree of $L \to X$ is 1 it follows from the projection formula for cohomology ($p_!(p^*a \cup b) = a \cup p_!b$) that $H^*(X) \to H^*(L)$ is injective and thus $H_*(L) \to H_*(N)$ is surjective.
  
  In the case $\F=\F_2$ the fact that $A(L,L)$ has rank 1 makes it the trivial rank 1 local system. In the general case the spectral sequence from Lemma~\ref{lem:cor:article:6} shows that the local system $A(L,L)$ is trivial (if not $H_0(L)$ would vanish). In any case the spectral sequence collapses on page 2 as it is supported on the horizontal axis. However, the collapsing of the spectral sequence implies that the total dimension of $H_*(L)$ is the same as $H_*(X)$ hence the surjective map between this is in fact an isomorphism.
\end{proof}

\appendix

\section{Monotonicity lemmas}\label{sec:monotonicity}
Let $K_t^-$ and $L_t^+$ be the large deformations of $K$ and $L$ considered in Section~\ref{sec:cosimplicial-floer-complexes}. We assume that $t$ is sufficiently large.
\begin{lemma}\label{lem:article:monotonicity1}
  All pseudo holomorphic discs used to define the differential in $CF_*(X, L_t^+)$ having the input inside an insulator $X(\alpha)$ have its output inside $X(\alpha)$, furthermore its boundary on $X$ is completely contained in $X(\alpha)$.
\end{lemma}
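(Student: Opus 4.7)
The strategy is a maximum-principle argument on the projection of the disc to the $2$-dimensional symplectic factor afforded by the insulator's product structure.

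The insulator condition provides a tubular neighborhood $U \cong \partial X(\alpha) \times (-1,1)$ with product metric $g = g_{\partial X(\alpha)} + ds^2$; one may arrange $X(\alpha) \cap U = \{s \leq 0\}$ and $\partial_s f \geq \delta_2 > 0$ in a slab $\{|s| \leq \delta_1\}$. The product decomposition induces $T^*U \cong T^*\partial X(\alpha) \times T^*(-1,1)$ compatibly with the symplectic and metric-induced almost complex structures, so the second projection $\pi_2 : T^*U \to T^*(-1,1) \cong \C$ with standard coordinates $s + i p_s$ is holomorphic. For a strip $u$ contributing to the differential, set $C := u^{-1}(T^*U)$ and $\tilde u := \pi_2 \circ u$, which is a holomorphic map on $C$.

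For the boundary conditions: $X$ projects to $\R \subset \C$, while by Lemma~\ref{lem:article:17} we have $L_t^+ = e^{-t}L + \epsilon df$ on $D^*_{1/2}(X \setminus A)$, and since $A$ lies far from $\partial X(\alpha)$ the slab $V_0 := \pi^{-1}(\{|s| \leq \delta_1\})$ is entirely in this region. For $t$ large, the $p_s$-coordinate of $L_t^+$ there is $\epsilon\partial_s f + O(e^{-t}) \geq \epsilon\delta_2/2$, so $\pi_2(L_t^+ \cap V_0) \subset \{p_s \geq \epsilon\delta_2/2\}$; in particular $X \cap L_t^+ \cap V_0 = \emptyset$. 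Assuming for contradiction that the $X$-boundary of $u$ leaves $X(\alpha)$ by crossing $\partial X(\alpha)$, continuity yields a boundary point $p_0$ of the strip with $\tilde u(p_0) = 0$. Open mapping together with Schwarz reflection applied to $\tilde u$ at $p_0$ then forces interior points of $C$ arbitrarily close to $p_0$ at which $\mathrm{Im}(\tilde u) < 0$. But $\mathrm{Im}(\tilde u) = p_s \circ \tilde u$ is harmonic on $C$, vanishing on the $X$-portion of the strip boundary and bounded below by $\epsilon\delta_2/2$ on the $L_t^+$-portion, which by the maximum principle would give $\mathrm{Im}(\tilde u) \geq 0$ on $C$ and produce the desired contradiction.

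The main obstacle is controlling $\mathrm{Im}(\tilde u)$ on the \emph{internal} boundary of $C$, where $u$ enters or exits $T^*U$ through $\pi^{-1}(\{s = \pm\delta_1\})$: on this piece $\mathrm{Im}(\tilde u)$ is \emph{a priori} uncontrolled, and the naive maximum principle fails. Appendix~\ref{sec:monotonicity} addresses this via a uniform action/energy estimate: the Liouville rescaling by $e^{-t}$ together with the positivity $\partial_s f \geq \delta_2$ forces any excursion of $u$ across the collar to consume symplectic area bounded below by a fixed constant, which for $t$ large exceeds the action difference $\mathcal{A}(x_-) - \mathcal{A}(x_+)$ permitted by the intersection points inside $X(\alpha)$. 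Plugging this into Stokes' formula for $\int \tilde u^*(p_s\,ds)$ yields the missing bound on the internal boundary and completes the proof; the containment of the output inside $X(\alpha)$ is then automatic from the containment of the $X$-boundary.
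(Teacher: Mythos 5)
Your setup is the same as the paper's: project the disc into $T^*[-1,1]\cong\C$ using the product structure of $g$ on the collar $\partial X(\alpha)\times(-1,1)$, note that $L_t^+$ projects into $\{p_s\geq\epsilon\}$ while $X$ projects to the real axis, and then argue via the harmonic function $\mathrm{Im}(\tilde u)=p_s\circ\tilde u$. You are also right to flag that the naive maximum principle does not immediately close, because $p_s$ is not controlled on the internal boundary of $u^{-1}(T^*U)$, i.e.\ the arcs along $\{s=\pm\delta_1\}$ where the disc enters or leaves the collar. The paper's proof glosses over exactly this point (it is only the single phrase ``which is not possible, see Figure~\ref{Fig:Stripdefn}''), so identifying the subtlety is a genuine improvement in rigor.

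However, the fix you propose does not work. First, Appendix~\ref{sec:monotonicity} does \emph{not} contain an action/energy estimate of the sort you describe -- it contains precisely this projection argument, so the reference is circular. Second, the quantitative claim is false: the available action gap $\mathcal A(x)-\mathcal A(y)$ for $x,y\in X\cap L_t^+$ tends to $\epsilon(f(y_{\mathrm{crit}})-f(x_{\mathrm{crit}}))$ as $t\to\infty$, i.e.\ is of order $\epsilon$ and bounded away from $0$, while the cost of an ``excursion across the collar'' is also of order $\epsilon\cdot\delta_1$ (and $\delta_1<1$), so for large $t$ neither quantity dominates the other; ``for $t$ large exceeds the action difference'' is unjustified. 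The gap can instead be closed without any energy estimate: the internal boundary arcs of $u^{-1}(T^*U)$ lie on $\{s=\pm\delta_1\}$ and so never enter the open set $B=(-\delta_1,\delta_1)\times(-\infty,0)$; the $X$-arc lies on $\{p_s=0\}$ and the $L_t^+$-arc on $\{p_s\geq\epsilon\}$, so $\pi_2\circ u(\partial(u^{-1}(T^*U)))$ avoids $B$ entirely. Combining this with the a priori bound $\|p\|\leq C$ on the image of $u$ (the usual maximum principle for the plurisubharmonic fiber norm, using that both $X$ and $L_t^+$ lie in $D^*_CX$), the degree of $\pi_2\circ u$ over any point of $B$ is well defined, locally constant on $B$, and equals its value at $(0,-\Lambda)$ with $\Lambda>C$, which is $0$; since the degree of a holomorphic map is nonnegative and counts preimages, the image of the projection avoids $B$, giving $p_s\geq 0$ everywhere and hence the claimed containment. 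This is also the style of argument the paper itself uses in Lemma~\ref{lem:article:monotonicity2}.
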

\begin{proof}
  Make $t$ so large that each $x\in L_t^+$ which lies in the cotangent bundle over the boundary of each isolator $\partial X(\alpha)$ corresponds under $g^*:T^*X \cong TX$ to outwards pointing vectors. This is possible as $\nabla f$ is outward pointing.

  In a tubular neighborhood $T^*(\partial X(\alpha)\times [-1,1])$ in which $g$ is on product form we may project to $T^*[-1,1] \subset \R^2$ preserving the almost complex structure and such that $X(\alpha)$ is mapped to $[-1,0]$. The condition on $L_t^+$ thus means that it projects to be strictly above the first axis (red area in Figure~\ref{Fig:appendix:ProjL} for some $\epsilon>0$).
  \begin{figure}[ht]
    \begin{center} 
      \begin{tikzpicture}[scale=1.3]
        \fill[red!30!white] (2,0.5) -- (2,2.0)  -- (-2,2.0) -- (-2,0.5) -- cycle;
        \draw[red] (0,1.25) node {$L_t^+$};
        \fill (0,0) circle (2pt) node[below] {$\partial X(\alpha)$};
        \draw[thick, dotted] (2,0) -- (0,0);
        \draw[very thick] (-2,0) -- node[below] {$X(\alpha)\quad$} (0,0);
        \draw[dotted] (-2,2) -- (-2,-0.3);
        \draw[dotted] (2,2) -- (2,-0.3);
        \draw (-2,0) node[below] {$-1$};
        \draw (2,0) node[below] {$1$};
        \draw[->] (-2.4,0) node[left] {Zero section} -- (-2.1,0);
        \draw (-2,0.5) node[left] {$\epsilon$};
      \end{tikzpicture}
    \end{center}
  \caption{Projection of $L_t^+$} \label{Fig:appendix:ProjL}
\end{figure}
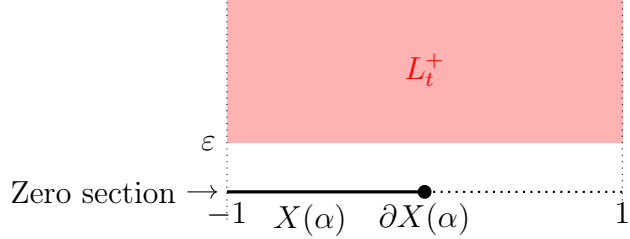
  A pseudo holomorphic curve must project to a holomorphic curve, this implies that the side of the disc which maps to $X$ (which thus projects to the zero section) cannot have a positive derivative as that would put part of the curve below the zero section, which is not possible (see Figure~\ref{Fig:Stripdefn} with $K=X$). It follows that pseudo holomorphic curves can only have their side on $X$ contained in $X(\alpha)$.
\end{proof}
\begin{remark}
  The above lemma also holds if we replace $CF_*(X,L_t^+)$ with $CF_*(K_t^-, X)$, the argument is similar and using that $K_t^-$ projects below the zero section in Figure~\ref{Fig:appendix:ProjL}.
\end{remark}
\begin{lemma}\label{lem:article:monotonicity2}
  All pseudo holomorphic discs used to define the differential in $CF_*(K_t^-, L_t^+)$ having the input inside an insulator $X(\alpha)$ has its output inside $X(\alpha)$.
\end{lemma}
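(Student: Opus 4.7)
The plan is to combine the projection setup from Lemma~\ref{lem:article:monotonicity1} with an energy estimate and Gromov's monotonicity inequality. The direct maximum-principle argument from that lemma does not extend verbatim because neither $K_t^-$ nor $L_t^+$ lies on the zero section, so no boundary arc of the projected disc sits on the $x_1$-axis. Instead, I would control the geometry by observing that action differences between intersection points are of order $\epsilon$, while a disc crossing $\partial X(\alpha)$ must have macroscopic image, forcing a contradiction for $\epsilon$ sufficiently small.

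In more detail: in the tubular neighborhood $T^*U = T^*(\partial X(\alpha) \times [-1,1])$ with product metric, the projections $\pi(K_t^-)$ and $\pi(L_t^+)$ to $T^*[-1,1] \cong \R^2$ are graphs lying strictly below and above the $x_1$-axis (by an amount proportional to $\epsilon$, using that $\nabla f$ is outward pointing at $\partial X(\alpha)$). In particular $K_t^- \cap L_t^+ \cap T^*U = \varnothing$, so for $t$ large all intersection points of $K_t^- \cap L_t^+$ lie outside $T^*U$ and within $O(e^{-t})$ of the critical points of $f$ by Lemma~\ref{lem:article:17}. A short computation of the primitives of $K_t^-$ and $L_t^+$ (each modified from the scaled primitives $e^{-t}f_K, e^{-t}f_L$ by the order-$\epsilon$ contribution of the Hamiltonian isotopies $\psi_{\pm\epsilon}$) gives $\mathcal A(z) = -2\epsilon f(\mathrm{crit}) + O(e^{-t})$ at any such intersection point $z$, where $\mathrm{crit}$ is the nearby critical point of $f$. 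Since $f$ is bounded on $X$, one obtains $|\mathcal A(z)| \leq C_1 \epsilon$ uniformly in $t$ for some constant $C_1$, and Stokes' theorem then gives $E(u) = \mathcal A(x) - \mathcal A(y) \leq 2C_1 \epsilon$ for any disc contributing to the differential.

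Now suppose for contradiction that $r(x) \in X(\alpha)$ but $r(y) \notin X(\alpha)$. For each $\alpha$ the critical points of $f$ inside $X(\alpha)$ are separated from those outside by a positive distance, and taking the minimum over the finitely many $\alpha \in S$ yields some $d_0 > 0$ independent of $t$ and $\epsilon$. For $t$ large the image $u(D^2)$ therefore has diameter at least $d_0/2$. The standard Gromov monotonicity inequality for pseudo-holomorphic curves with totally real Lagrangian boundary gives $E(u) \geq c(d_0/2)^2$ for a constant $c > 0$, and combining this with the energy bound produces $c d_0^2 / 4 \leq 2 C_1 \epsilon$, which fails if $\epsilon$ is chosen sufficiently small at the outset (a choice allowed by Lemma~\ref{lem:article:17}). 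The main obstacle is securing the uniformity of $c$ in $t$; this follows because $K_t^-$ and $L_t^+$ converge in $C^1$ near the critical set of $f$ to the fixed Lagrangians $X \pm \epsilon df$, so the bounded-geometry hypotheses behind the monotonicity inequality hold uniformly as $t \to \infty$.
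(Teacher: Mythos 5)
Your argument is correct in spirit but takes a genuinely different route from the paper. The paper does not resort to energy estimates at all: after observing (as you do) that in the tubular neighborhood of $\partial X(\alpha)$ the projections of $K_t^-$ and $L_t^+$ to $T^*[-1,1]\cong\C$ land strictly in the lower and upper half-planes respectively, the paper runs a topological degree argument. A strip exiting $X(\alpha)$ would have its projected boundary wind $+1$ time over $(0,0)$ along the $L_t^+$-side and $+1$ time (in the opposite traversal sense) along the $K_t^-$-side, forcing the projected holomorphic map to have local degree $-1$ over $(0,0)$ --- impossible for a holomorphic map. This is shorter, requires no quantitative control on $\epsilon$, and is essentially the same winding-number trick exploited later in Lemma~\ref{lem:article:monotonicity3}. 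Your route --- showing that the action differences are $O(\epsilon)$ via the Hamiltonian correction $\epsilon H_f$ to the scaled primitives $e^{-t}f_K$, $e^{-t}f_L$, while Gromov monotonicity near $\partial X(\alpha)$ forces a $\epsilon$-independent lower bound on the energy of any escaping strip --- does work, and it correctly identifies why the argument of Lemma~\ref{lem:article:monotonicity1} does not carry over verbatim. But it buys this at the cost of two extra hypotheses the paper never needs: first, that $\epsilon$ be chosen \emph{sufficiently small} from the outset (an additional constraint imposed on the setup of Section~\ref{sec:cosimplicial-floer-complexes}), and second, a uniformity claim for the Lagrangian monotonicity constant along the family $\{(K_t^-,L_t^+)\}_{t\gg 0, \epsilon\to 0}$. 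You flag the $t$-dependence but not the $\epsilon$-dependence of that constant; it is in fact controllable (the second fundamental forms of $K_t^-$, $L_t^+$ stay bounded as $\epsilon\to 0$), but it is an extra thing to check. Also observe that ``diameter $\geq d_0/2$ implies $E \geq c(d_0/2)^2$'' is not literally the monotonicity lemma; one must first place a ball at a point of the image landing near $\partial X(\alpha)$ and of radius bounded by the distance from $\partial X(\alpha)$ to the critical set, which is what actually produces the $\epsilon$- and $t$-independent lower bound. So your proof is repairable and correct, but the degree-theoretic argument the paper uses is cleaner and avoids all of this bookkeeping.
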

\begin{proof}
  Consider the same projection to $T^*[-1,1]$ as in the proof of Lemma~\ref{lem:article:monotonicity1} above. For large $t$ the two Lagrangians project to different parts of $T^*[-1,1]$.
  \begin{figure}[ht]
    \begin{center} 
      \begin{tikzpicture}[scale=1.3]
        \fill[red!30!white] (2,0.5) -- (2,2.0)  -- (-2,2.0) -- (-2,0.5) -- cycle;
        \draw[red] (0,1.25) node {$L_t^+$};
        \fill[brown!30!white] (2,-0.5) -- (2,-2.0)  -- (-2,-2.0) -- (-2,-0.5) -- cycle;
        \draw[brown] (0,-1.25) node {$K_t^-$};
        \fill (0,0) circle (2pt);
        \draw[thick, dotted] (2,0) -- (0,0);
        \draw[very thick] (-2,0) -- node[below] {$X(\alpha)$} (0,0);
        \draw[dotted] (-2,2) -- (-2,-2);
        \draw[dotted] (2,2) -- (2,-2);
        \draw (-2,0) node[below] {$-1$};
        \draw (2,0) node[below] {$1$};
        \draw (-2,0.5) node[left] {$\epsilon$};
        \draw (-2,-0.5) node[left] {$-\epsilon$};
        \fill[green!70!black] (0,0.3) circle (1pt) node[right] {$A$};
        \fill[red] (0,-0.3) circle (1pt) node[right] {$B$};
      \end{tikzpicture}
    \end{center}
    \caption{Projections of $K_t^-$ and $L_t^+$ }\label{Fig:appendix:projLK}
  \end{figure}
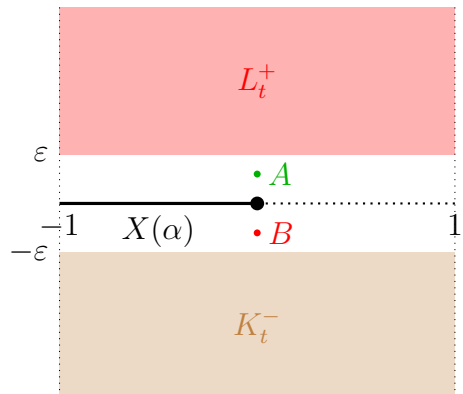
  Indeed, for large $t$ we can assume that $K_t^-$ and $L_t^+$ projects to $[-1,1] \times (-\infty,-\epsilon)$ and $[-1,1] \times (\epsilon,\infty)$ respectively (See Figure~\ref{Fig:appendix:projLK}). This in turn rules out any pseudo holomorphic strip which exits an $X(\alpha)$. Indeed, such a strip would go \emph{over} $(0,0)$ along $L_t^+$ algebraically a total of 1 time (and thus back \emph{under} $(0,0)$ along $K_t^-$ totally $1$ time), which would imply that the part of the strip landing in $T^*[-1,1]$ has degree $-1$ over $(0,0)$, which is impossible for a holomorphic map.
\end{proof}
\begin{lemma}\label{lem:article:monotonicity3}
  All pseudo holomorphic discs used to define the co-product $\mu_2:CF_*(K_t^-, L_t^+)\to CF_*(K_t^-,X)\otimes CF_*(X, L_t^+)$ from Section~\ref{sec:floer-theory-with} having the input inside an insulator $X(\alpha)$ have both its outputs inside $X(\alpha)$, furthermore its boundary on $X$ is completely contained in $X(\alpha)$.
\end{lemma}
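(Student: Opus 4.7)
The plan is to adapt the tubular projection arguments of Lemmas~\ref{lem:article:monotonicity1} and~\ref{lem:article:monotonicity2}. Working in a product tubular neighborhood $T^*U$ of $T^*\partial X(\alpha)$ with $U=\partial X(\alpha)\times(-1,1)$, the projection $\pi:T^*U\to T^*[-1,1]\cong\R^2$ sends $X(\alpha)\cap U$ to $[-1,0]\times\{0\}$. For $t$ sufficiently large the three Lagrangians project into three disjoint strata: $K_t^-$ into the strict lower half-plane $\{y<-\epsilon\}$, $X$ onto the zero section $\{y=0\}$, and $L_t^+$ into the strict upper half-plane $\{y>\epsilon\}$. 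The input $x\in K_t^-\cap L_t^+$ lies outside $T^*U$ for large $t$, since the projections of $K_t^-$ and $L_t^+$ are disjoint there.

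First I would rule out excursions of the $X$-side, following Lemma~\ref{lem:article:monotonicity1}: any segment of the $X$-boundary of the disc that lies in $T^*U$ projects to an arc on the zero section, and since the projected disc sits on a definite side of that arc (forced by the positions of $K_t^-$ and $L_t^+$), the Cauchy--Riemann equations force the projected arc to be monotone in the first coordinate. Hence the $X$-side cannot leave $X(\alpha)$ and return.

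Second I would show $y,z\in X(\alpha)$ by an intersection positivity argument in the spirit of Lemma~\ref{lem:article:monotonicity2}. Suppose $y\notin X(\alpha)$. Then the $K_t^-$-side of $\partial D^2$ connects a point whose projection (or boundary of the projection region) lies above $[-1,0]$ to the corner at $y$, which lies on the far side of $\partial X(\alpha)$ in $T^*U$, while staying strictly below the zero section. Choose a test point $p=(c,d)$ with $0<c<1$ and $0<d<\epsilon$, so that $p$ lies strictly above $X$, strictly below $L_t^+$, and to the right of $\partial X(\alpha)$. The winding of the projected boundary $\pi\circ u|_{\partial D^2}$ around $p$ receives contribution $-1$ from the $K_t^-$-side (which passes from left to right below $p$) and zero from the $X$- and $L_t^+$-sides (which both lie on one side of $p$). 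Thus the algebraic intersection number of the holomorphic map $\pi\circ u$ with $p$ is $-1$, contradicting positivity of intersections. The case $z\notin X(\alpha)$ is symmetric, using a mirror test point just below the zero section.

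The main technical obstacle I expect is justifying cleanly the winding decomposition in the three-Lagrangian setup, since the moduli space of Figure~\ref{Fig:FLoercoprod} has a more complex topology than a strip. However, because the three projected strata are vertically separated and disjoint, each boundary arc contributes to the winding around $p$ independently, and the signed count reduces to the same positivity contradiction as in Lemma~\ref{lem:article:monotonicity2}; only bookkeeping changes are required.
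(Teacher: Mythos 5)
Your overall strategy — use the product projection $T^*U\to T^*[-1,1]$ and derive a positivity-of-degree contradiction from the winding of the projected boundary — is exactly the paper's strategy; the discrepancies are in the two steps. In Step~1 the claim that ``the projected disc sits on a definite side of'' the $X$-arc is false for the $\mu_2$ disc: near the corner $y\in K_t^-\cap X$ the interior of the disc projects \emph{below} the zero section, while near $z\in X\cap L_t^+$ it projects \emph{above}, so the collar of the $X$-arc changes sides somewhere along the arc (the projected map has an interior branch point there). The monotonicity argument of Lemma~\ref{lem:article:monotonicity1}, which crucially uses that the whole disc lies on one side of the zero section, therefore does not carry over in the three-Lagrangian setting, and ``cannot leave and return'' does not follow. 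This is why the paper proves containment of the outputs $y,z$ first (by a degree count) and only then deduces containment of the $X$-boundary.

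In Step~2 the winding bookkeeping has two concrete problems that trace back to the loss of Step~1. First, when $y\notin X(\alpha)$ but $z\in X(\alpha)$ the $X$-arc runs from an inside output to an outside output, so it crosses the vertical line $\{c=c_p\}$ and contributes $\pm\tfrac12$; your assertion that it contributes zero ``because it lies on one side of $p$'' only rules out crossings of the \emph{horizontal} line through $p$, not the vertical one, and so implicitly relies on the just-failed Step~1. Second, once the $X$-arc's contribution is kept, your test point is on the wrong side of the zero section: tracing through the winding (with the standard ccw orientation $x\xrightarrow{L_t^+}z\xrightarrow{X}y\xrightarrow{K_t^-}x$) gives total winding $0$ around any $p$ with $0<d_p<\epsilon$ in the case $y\notin X(\alpha),\,z\in X(\alpha)$, but $-1$ around a point with $-\epsilon<d_p<0$; so for $y$ outside you must take the test point \emph{below} the zero section (the paper's $B$), and for $z$ outside \emph{above} (the paper's $A$). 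You have the assignment reversed; your choice only yields a contradiction in the special case where both $y$ and $z$ are outside. Fixing this — drop Step~1, use both of the paper's test points $A$ and $B$, and recover the $X$-arc containment as a consequence at the end — reproduces the paper's proof.
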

\begin{proof}
  Again we use the projection to $T^*[-1,1]$ from the proof of Lemma~\ref{lem:article:monotonicity1}. A disc used to define $\mu_2$ with an input inside $X(\alpha)$ and any output outside $X(\alpha)$ have three parts of the boundary and degree argument similar to the proof of Lemma~\ref{lem:article:monotonicity2} using the points $A$ and $B$ in Figure~\ref{Fig:appendix:projLK} shows that the degree at either $A$ or $B$ is $-1$ which is impossible. If both outputs are inside $X(\alpha)$ it follows that the boundary on $X$ stays within $X(\alpha)$.
\end{proof}

\section{Boundary paths have finite length}

In this appendix we prove a few lemmas about the moduli spaces of discs used when defining Floer complexes. We only consider mapping spaces and moduli spaces which occur in the paper.

\begin{lemma} \label{lem:article:13}
  Let $\bM$ be one of the solution spaces of pseudo holomorphic discs $u : D^2 \to W$ with marked points and Lagrangian boundary conditions used in the paper. Then the map $\bM \to C^\infty(D^2,W)$ is well defined and continuous.
\end{lemma}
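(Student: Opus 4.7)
The plan is to reduce the lemma to two standard ingredients from the analysis of pseudo-holomorphic curves: Banach-manifold regularity of parametrized solution spaces, and elliptic bootstrapping. Each moduli space $\bM$ in the paper is constructed as the quotient of a parametrized solution space $\widetilde{\bM}\subset W^{k,p}(D^2,W)$ by a reparametrization group $G$ of conformal automorphisms of $D^2$ fixing the marked points (the one-parameter group of translations for strips, and a finite or trivial group for discs with $\geq 3$ marked points). We take $k\geq 1$ and $p>2$ so that $W^{k,p}\hookrightarrow C^0$, under which $\widetilde{\bM}$ is by standard transversality a Banach submanifold, and the inclusion $\widetilde{\bM}\hookrightarrow W^{k,p}(D^2,W)$ is continuous by construction. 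Locally around each class in $\bM$ we can pick a smooth slice for the $G$-action and obtain a continuous lift $\bM\to\widetilde{\bM}$; this gives a well-defined map into $C^\infty(D^2,W)$ up to the action of $G$ (which acts smoothly on $C^\infty(D^2,W)$).

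To upgrade continuity from the $W^{k,p}$-topology to the $C^\infty$-topology, I would apply elliptic bootstrapping. If $u_n\to u$ in $W^{k,p}$ with each term satisfying the Cauchy-Riemann equation $\bar\partial_J u=0$ and the Lagrangian boundary conditions on $K$ and $L$, then the interior and boundary Calderon-Zygmund estimates for $\bar\partial_J$ with totally real boundary conditions give convergence in $W^{k+1,p}$ on compact subsets of $D^2\setminus\{\text{punctures}\}$. Iterating yields $W^{m,p}$-convergence for every $m$, and Sobolev embedding then produces $C^\infty$-convergence on compact subsets away from the punctures.

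To promote this to $C^\infty$-convergence on all of $D^2$, I would invoke the exponential decay estimates near the strip-like ends: pseudo-holomorphic discs converge exponentially fast in every $C^k$-norm to their asymptotic intersection points at the punctures, with decay constants uniform on a neighborhood of any fixed solution. Patching the compact-set convergence with the decay near the punctures gives the global $C^\infty$-continuity asserted by the lemma.

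The main obstacle is assembling the two ingredients cleanly. Both are standard in the pseudo-holomorphic curves literature (for instance in \cite{MR2441780}), so rather than doing new analysis the proof amounts to citing the correct Banach-manifold setup, the elliptic estimates with totally real boundary conditions, and Floer's asymptotic decay estimates, and then observing that the composition of a continuous local slice with the $W^{k,p}\hookrightarrow C^\infty$ upgrade is the required continuous map.
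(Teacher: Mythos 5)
The paper's proof of this lemma is a single sentence --- ``This is standard elliptical boot-strapping'' --- and your proposal is a careful unpacking of exactly that argument: Banach-manifold setup in $W^{k,p}$, interior and boundary elliptic estimates for $\bar\partial_J$ with totally real boundary conditions to bootstrap to $C^\infty$ on compact subsets, and Floer's exponential asymptotic decay at the strip-like ends to close up the punctures. You take essentially the same approach as the paper, with considerably more detail than the author chose to record.
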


\begin{proof}
  This is standard elliptical boot-strapping.
\end{proof}

For any smooth map $u:D^2 \to W$ and two points $x,y\in S^1$ we may consider the path from $u(x)$ to $u(y)$ defined by $u$ restricted to the arc on the boundary that goes positively from $x$ to $y$. This we may reparametrize to an arc length parametrized path and this is continuous in the $C^\infty$ topology. This thus defines a map $\bM \to \tltr{u(x)}{\PP W}{u(y)}$.

For any planer tree configurations of discs there is an associated moduli space of pseudo holomorphic maps $\bM$. If we pick two of these marked points and denote them $x$ and $y$ there is again a unique way of tracing concatenated boundary arcs (that jump between the discs at gluing pairs) positively around the tree. Given any tree compatible pseudo holomorphic map $u=(u_1,\dots,u_k)$, we may arc length parametrizing the images of these arcs and concatenating them to define a path from $u(x)$ to $u(y)$. As we parametrize by arc length this is well defined on the equivalence class $[u] \in \bM$ and thus defines a continuous map $\bM \to \tltr{u(x)}{\PP W}{u(y)}$.

\begin{lemma} \label{lem:article:12}
  Let $\MM$ be a compactified moduli space of pseudo holomorphic discs with Lagrangian boundary conditions as in the paper. Let $x$ and $y$ be two of the marked points on the disc tree configurations defining this moduli space. Then the map $\MM \to \tltr{u(x)}{\PP W}{u(y)}$ defined above on each strata is continuous.
\end{lemma}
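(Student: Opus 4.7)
My plan is to reduce the continuity question to three separately continuous operations: passage to $C^\infty$ maps on each component (given by Lemma~\ref{lem:article:13}), arc-length reparametrization of each boundary arc, and concatenation of the resulting Moore paths. Fix a stratum of $\MM$ corresponding to a tree configuration with components $(u_1,\dots,u_k)$, and let $e_1,\dots,e_m$ be the sequence of boundary arcs on these discs traversed in the positive direction from $x$ to $y$ (at the nodes where the tree glues two discs together, the corresponding marked points coincide, so the concatenation is well-defined). Lemma~\ref{lem:article:13} gives a continuous map from the stratum to $\prod_i C^\infty(D^2,W)$, from which each $e_j$ depends $C^\infty$-continuously on the point in the stratum.

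Next I would show that the map sending a $C^\infty$ arc $\gamma:[0,T]\to W$ to its arc-length reparametrization as a Moore path $(\tilde\gamma,L(\gamma))$ is continuous from the $C^1$-topology to the Moore path topology. Setting $\phi(t)=\int_0^t|\gamma'(\tau)|\,d\tau$, the path $\tilde\gamma$ is characterized by $\tilde\gamma\circ\phi=\gamma$; this is well-defined even when $\phi$ fails to be a bijection, since $\gamma$ is constant on any interval on which $\phi$ is constant. If $\gamma_n\to\gamma$ in $C^1$, then $\phi_n\to\phi$ uniformly and $L_n\to L$; picking for each $s$ a $t_n$ with $\phi_n(t_n)=s$, the estimate
\[
  |\tilde\gamma_n(s)-\tilde\gamma(s)| \leq |\gamma_n(t_n)-\gamma(t_n)| + |\tilde\gamma(\phi(t_n))-\tilde\gamma(s)|
\]
combined with $|\phi(t_n)-s|=|\phi(t_n)-\phi_n(t_n)|\to 0$ and uniform continuity of $\tilde\gamma$ yields the desired uniform convergence.

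Finally, concatenation of Moore paths with matching endpoints is manifestly continuous as a map $\tltr{a}{\PP W}{b}\times\tltr{b}{\PP W}{c}\to\tltr{a}{\PP W}{c}$, since it only translates the second path and adds the length parameters. Composing these three continuous operations gives the lemma on each stratum.

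The one subtlety — and the only point that requires genuine care — is the behavior of the arc-length reparametrization when $|\gamma'|$ can vanish, but working in the Moore path model (rather than unit-interval paths) is precisely what makes this manageable, as we only need $C^1$-continuity of $\phi$ and do not need $\phi$ to be a diffeomorphism. Since on a fixed stratum each $u_i$ is a non-constant pseudo-holomorphic disc, one in fact has that the zero set of $|\gamma'|$ is discrete on each arc, but the argument above does not require this.
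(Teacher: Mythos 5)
Your argument only establishes continuity \emph{within} a fixed stratum of $\MM$ — you explicitly fix a tree-configuration stratum at the outset and conclude ``gives the lemma on each stratum.'' But the real content of the lemma is continuity on the \emph{compactified} moduli space $\MM$, i.e.\ at the boundary strata, where a sequence of pseudo-holomorphic discs Gromov-converges to a broken configuration with more components. Continuity within a single open stratum was already asserted in the paragraph preceding the lemma (where the map $\bM \to \tltr{u(x)}{\PP W}{u(y)}$ is introduced), so if the lemma only claimed that, it would be redundant; and indeed the paper's proof is entirely about Gromov breaking: sequences $(u_1,\dots,u_k)$ converging to a larger limit tree $(u_{d_i^1},\dots,u_{d_i^{j_i}})_i$, with each boundary arc converging into the corresponding broken path.

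Your three-step decomposition cannot be pushed across strata as written. Lemma~\ref{lem:article:13} gives $C^\infty$-continuity of $u$ only on a fixed solution space with a fixed domain; under Gromov degeneration the maps do not converge in $C^\infty$ on $D^2$ — the convergence is only modulo reparametrization and after extracting bubbles, and in particular the limit object lives on a different (larger) disc tree. So the input to your arc-length-and-concatenation pipeline is not continuous at the boundary of $\MM$, and the argument breaks down exactly where the lemma has content. What the paper uses — and what your proof is missing — is that arc-length parametrization is insensitive to the domain reparametrizations in Gromov convergence, so the boundary arc of the degenerating sequence, read by arc length, converges uniformly to the concatenation of the arc-length arcs of the bubble tree (including collapsing to constant Moore paths of shrinking length where neck regions pinch). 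Your observation about arc-length reparametrization being continuous in the Moore path topology is correct and is part of the reason the statement is true, but you need to feed it the Gromov-convergent sequence rather than a $C^\infty$-convergent one.
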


\begin{proof}
  The standard proof of Gromov compactness has sequences $(u_1,\dots,u_k)$ of tree configuration maps converging to a possible larger collection of limit tree configuration maps $(u_{d_1^1},\dots,u_{d_1^{j_1}},\dots,u_{d_k^1},\dots,u_{d_k^{j_k}})$, such that each $u_i$ Gromov converges to the sub-tree $(u_{d_i^1},\dots,u_{d_i^{j_i}})$. The way these converge easily implies that each arc between two marked points converges into the associated broken paths on the limit tree.
\end{proof}

\section{Isotopies}

\begin{lemma}\label{lem:article:11}
  For any $L \subset T^*X$ and a chart $\varphi :D_\delta^n \to X$ there is a Hamiltonian isotopy of $L$ to an $L'$ and a small $\delta'\in (0,\delta)$ so that $L'$ in the chart $\varphi^*: T^*D_{\delta'}^n \to T^*X$ is on the form $\bigsqcup_i  D_{\delta'}^n\times \{(a_i,0,\dots,0)\} $.
\end{lemma}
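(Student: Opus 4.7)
The plan proceeds in three stages. First, by a small generic Hamiltonian isotopy we arrange that $L$ is transverse to the cotangent fibre $T^*_{q_0}X$ at $q_0 = \varphi(0)$, so that the intersection consists of finitely many points $p_1, \dots, p_k$. After possibly another small Hamiltonian perturbation (say of the form $\epsilon q_1 \cdot \rho(q)$ for a small cutoff $\rho$ supported near $0$) we may also assume the first components of $p_1, \dots, p_k$ are pairwise distinct. By the implicit function theorem there exists $\delta' \in (0, \delta)$ such that in the chart $\varphi^*$ the set $L \cap T^* D_{\delta'}^n$ is a disjoint union of smooth graphs of closed $1$-forms $\alpha_i$ on $D_{\delta'}^n$; since $D_{\delta'}^n$ is contractible we have $\alpha_i = -df_i$ for smooth $f_i : D_{\delta'}^n \to \R$ normalized by $f_i(0) = 0$.

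Second, I would set $a_i := -\partial_{q_1} f_i(0)$, which by the previous paragraph are pairwise distinct, and consider the affine interpolation
\begin{align*}
  f_i^t(q) = (1-t)\, f_i(q) + t\cdot(-a_i q_1), \qquad t\in[0,1].
\end{align*}
At $t=0$ the graphs of $-df_i^t$ recover $L \cap T^*D_{\delta'}^n$, and at $t=1$ they are precisely the flat sections $D_{\delta'}^n\times\{(a_i,0,\dots,0)\}$. By further shrinking $\delta'$ we ensure that for all $t\in [0,1]$ and all $i$ the graphs $\mathrm{graph}(-df_i^t)$ are pairwise disjoint and lie in pairwise disjoint open tubular neighbourhoods $U_i$, all contained in $T^*D_{\delta_0}^n$ for some $\delta' < \delta_0 < \delta$. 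This shrinking uses the $C^1$-proximity of $\alpha_i$ to the constant $1$-form $\alpha_i(0)$ on small disks together with the distinctness of the $a_i$.

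Finally, to realize the deformation as an ambient Hamiltonian isotopy I would choose smooth cutoffs $\chi_i$ on $T^*X$ supported in $U_i$ and equal to $1$ on an open neighbourhood of the full interpolating family $\bigcup_{t\in[0,1]} \mathrm{graph}(-df_i^t)$, and set
\begin{align*}
  H^t(q, p) = \sum_i \chi_i(q, p)\cdot\bigl(-a_i q_1 - f_i(q)\bigr),
\end{align*}
compactly supported in $\varphi^*(T^*D_{\delta_0}^n)$ and extended by zero to $T^*X$. Near the $i$-th graph, $\chi_i \equiv 1$, so $H^t$ reduces to $\dot f_i^t(q) = -a_iq_1 - f_i(q)$, a function of $q$ alone; its Hamiltonian flow simply adds $-s\, d\dot f_i^t$ to the fibre coordinate and therefore carries $\mathrm{graph}(-df_i^t)$ to $\mathrm{graph}(-df_i^{t+s})$. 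The time-$1$ flow then gives the required Hamiltonian isotopy taking $L$ to an $L'$ with the prescribed form inside $T^*D_{\delta'}^n$. The main obstacle is the bookkeeping of the two nested shrinkings $\delta' < \delta_0$: one must ensure that the cutoffs $\chi_i$ can simultaneously be made identically $1$ on the entire interpolating family of graphs and still have pairwise disjoint supports, which is precisely what sufficient shrinking of $\delta'$ (relative to the spacing of the $a_i$ and the $C^1$-size of the $\alpha_i$) guarantees.
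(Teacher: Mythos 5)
Your overall strategy is the same as the paper's: make $L$ transverse to the fibre over $\varphi(0)$, write each sheet as a graph of an exact $1$-form $-df_i$ on a small disk, and Hamiltonianly straighten each sheet to a constant $1$-form via a cut-off linear interpolation. The paper structures this slightly differently — it first moves the finitely many intersection points with the fibre into the real axis $\{(ia,0,\dots,0)\}\subset i\R^n$ by a compactly supported Hamiltonian of the form $H_t(x,y)=\langle X_t(y),x\rangle$, and \emph{then} makes each $df_i$ constant — whereas you do both in one interpolation $f_i^t=(1-t)f_i+t(-a_iq_1)$ with $a_i=-\partial_{q_1}f_i(0)$. That is a harmless reorganization; the paper's order has the minor advantage that the intersection points with the fibre are stationary throughout the straightening, whereas in your version they move, and you have to observe (as you implicitly do) that their \emph{first} components are fixed so the sheets never collide.

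The one genuine gap is the perturbation you propose to make the $a_i$ distinct. A Hamiltonian of the form $H(q,p)=\epsilon q_1\rho(q)$ depends on $q$ alone, so its time-$s$ flow is $(q,p)\mapsto(q,\,p-s\,d(\epsilon q_1\rho))$; it translates every graph by the \emph{same} exact $1$-form, and in particular adds the same constant to the first component of every intersection point $p_i\in T^*_{q_0}X$. It therefore cannot separate coinciding first components. To separate them you need something that acts differently on the different $p_i$: for instance choose small pairwise disjoint balls $B_i$ around the $p_i$ in $T^*X$ and use Hamiltonians supported in $B_i$ that translate within the fibre by different amounts (this is exactly what the paper's $H_t(x,y)=\langle X_t(y),x\rangle$ cut off near each $p_i$ achieves). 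With that one repair, the rest of your argument — disjointness of the interpolating family for small $\delta'$, and the cut-off Hamiltonian $H=\sum_i\chi_i\cdot(-a_iq_1-f_i)$ generating the straightening — goes through.
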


\begin{proof}
  Let $q=\varphi(0)$. We may first isotope $L$ to make it transverse to the fiber over $q$. We may then pick a compactly supported Hamiltonian isotopy close to this fiber that moves these intersecting points to points in the chart all of the form $\{(ia_i,0,\cdots,0)\} \in i\R^n \subset \C^n = \R^{2n}$, indeed, any isotopy $\psi_t$ of $i\R^n$ with time dependent generating vector field $X_t:\R^n \to \R^n$ can be extended to a Hamiltonian isotopy by the formula
  \begin{align*}
    H_t(x,y) = \inner{X_t(y),x}
  \end{align*}
  for $(x,y) \in \R^n \times i \R^n =\C^n$. Now we may shrink to smaller $\delta$ so that near each intersection point with the fiber over $q$ the new Lagrangian is a graph over $D_\delta^n$. This implies that each small piece equals $df$ for some $f: D_\delta^n \to \R$. It is now easy to Hamiltonian isotope $L$ by convexly changing these functions so that each $df$ is constant in a small neighborhood of $q$. We then pick $\delta'\in (0,\delta)$ so that $D_{\delta'}^n$ is inside this neighborhood for all the pieces of $L$.
\end{proof}

\bibliographystyle{plainurl}
\bibliography{Mybib}

\end{document}